\let\@fnsymbol\@arabic
\newtheorem{proposition}{Proposition}[section]
\newtheorem{corollary}[proposition]{Corollary}
\newtheorem{lemma}[proposition]{Lemma}
\newtheorem{theorem}[proposition]{Theorem}
\theoremstyle{definition}
\newtheorem{define}[proposition]{Definition}
\newtheorem{remark}[proposition]{Remark}
\newtheorem{model}{Model}
\newtheorem{examp}{Example}[section]
\newcommand{\aut}{\operatorname{Aut}}
\newcommand{\caut}{\operatorname{CAut}}
\newcommand{\aaut}{\operatorname{AAut}}
\newcommand{\normal}{\unlhd}
\newcommand{\wl}{\operatorname{WL}}
\newcommand{\id}{\operatorname{Id}}
\newcommand{\identity}{{\mathbb I}}
\newcommand{\allone}{{\mathbb J}}
\newcommand{\gap}{{\sf GAP\ }}
\newcommand{\noproof}{\hfill \qedsymbol}
\newcommand{\setchoose}[2]{\genfrac {\{} {\}} {0pt} 1 {#1} {#2}}
\newcommand{\mtrx}[4] {\begin{pmatrix} #1 & #2 \\ #3 &
    #4 \end{pmatrix}}
\newcommand{\correction}[1]{{#1}}
\newcommand{\strike}[1]{}
\author{Mikhail Klin\thanks{Ben Gurion University, Beer Sheva, Israel, klin@math.bgu.ac.il}\\
Mikhail Muzychuk\thanks{Ben Gurion University, Beer Sheva, Israel, muzychuk@bgu.ac.il}\\
Sven Reichard\thanks{Dresden International University,  Dresden, Germany, sven.reichard@tu-dresden.de}}
\date{\today}
\title{Proper Jordan schemes exist. First examples, computer search, patterns of reasoning. An essay}
\thanks{Ben Gurion University, Beer Sheva, Israel, muzychuk@bgu.ac.il}\\
\thanks{Dresden International University,  Dresden, Germany}},
\begin{document}

\maketitle
\renewcommand{\theenumi}{\alph{enumi}}

\begin{center}
Dedicated to the memory of Luba Bron (1930 - 2019) 
\end{center}

\begin{abstract}
A special class of Jordan algebras over a field \(F\) of characteristic
zero is considered. Such an algebra consists of an \(r\)-dimensional
subspace of the vector space of all square matrices of a fixed
order \(n\) over \(F\). It contains the identity matrix, the all-one
matrix; it is closed with respect to \correction{matrix transposition}, Schur-Hadamard (entrywise)
multiplication \correction{and} the Jordan product \(A*B=\frac 12 (AB+BA)\),
where \(AB\) is the usual matrix product.

The suggested axiomatics (with some natural additional requirements)
implies an equivalent reformulation in terms of symmetric binary
relations on a vertex set of cardinality \(n\). The appearing
graph-theoretical structure is called a Jordan scheme of order \(n\) and
rank \(r\). A significant source of Jordan schemes stems from the
symmetrization of association schemes. Each such structure is called a
non-proper Jordan scheme. The question about the existence of proper
Jordan schemes was posed a few times by Peter J. Cameron.

In the current text an affirmative answer to this question is
given. The first small examples presented here have orders
\(n=15,24,40\). Infinite classes of proper Jordan schemes of rank 5 and
larger are introduced. A prolific construction for schemes of rank 5
and order \(n=\binom{3^d+1}{2}\), \(d\in {\mathbb N}\), is outlined. 

The text is written in the style of an essay. The long exposition relies
on initial computer experiments, a large amount of diagrams, and
finally is supported by a number of patterns of general theoretical
reasonings. The essay contains also a historical survey and an
extensive bibliography.
\end{abstract}

\begin{small}
\textbf{\emph{Keywords---}}
Finite permutation group, coherent configuration,
association scheme, centralizer algebra, Jordan product of matrices,
symmetric matrix, proper Jordan scheme, Siamese color graph,
switching, distance regular graph, antipodal cover, line graph of the
Petersen graph, Klein graph,
Wallis-Fon-Der-Flaass strongly regular graph, computer algebra, COCO,
\gap
\end{small}

\pagebreak
\tableofcontents
\pagebreak
\section{Introduction}
\label{sec:orga22b9b1}
\label{orgb33f855} 
A Jordan algebra \(J\) over a field \(F\) is a \correction{commutative} non-associative algebra
with binary operation \(*\) which \strike{is commutative and which} satisfies
the following Jordan identity: 
\[
  (x*y) * (x*x) = x*(y*(x*x)).
  \]

This concept was introduced by Pascual Jordan (1933) in the
framework of his attempts to axiomatize observables in quantum
mechanics.

Soon after, Jordan algebras became a subject of independent interest
in mathematics. This field is still developing and flourishing, more
or less independently of its links with physics.

In the current text we are interested in a special kind of Jordan
matrix algebras over a field \correction{$F$} of characteristic zero, as a rule
\({\mathbb C}\), which we call \emph{coherent Jordan algebras}. This
concept appears as an amalgamation of Jordan algebras with coherent
algebras, which were introduced by D.G. Higman (1970).

By a coherent Jordan algebra \(J\) over the field \correction{$F$} we
understand a vector space of square matrices of order \(n\) which
contains the identity matrix \({\mathbb I}_n\), the all-one matrix
\({\mathbb J}_n\), and which is closed under transposition as well as under
two binary operations: the Schur-Hadamard (entrywise) product
\(A\circ B\) and the Jordan product \(A*B\), defined as
\[
  A*B = \frac 12 (AB + BA),
  \]
where \(AB\) is the usual product of the matrices \(A\) and \(B\).

Following a classical pattern of reasoning in Algebraic Graph Theory
(AGT for short) for coherent algebras, we associate to each coherent
Jordan algebra a certain relational structure. 

Indeed, the introduced axioms for a coherent Jordan algebra \(J\) of
order \(n\) and rank \(r\) (that is, its dimension as a vector
space over \correction{the base field}) easily imply that the algebra \(J\) has a
unique standard basis \(\{A_0, A_1, \ldots, A_{r-1}\}\) consisting of
(0,1)-matrices. Here \(\sum_{i=0}^{r-1} A_i = {\mathbb J}_n\), and
the sum of a suitable subset of basic matrices \(A_i\) is equal to
\({\mathbb I}_n\).

Let us denote by \(\Omega=\Omega_n\) a fixed set of cardinality
\(n\). Then each basic matrix \(A_i\) can be interpreted as the
adjacency matrix of a basic graph \(\Gamma_i=(\Omega, R_i)\). The set
\(\{R_0, R_1, \ldots, R_{r-1}\}\) forms a partition of the cartesian
square \(\Omega^2\) of \(\Omega\). Each of those basic relations \(R_i\)
is either symmetric or antisymmetric.

Establishing such bijections between the sets of basic matrices
\(\{A_i\}\), basic relations \(\{R_i\}\), and basic graphs
\(\{\Gamma_i\}\), \(i\in\{0,1,\ldots, r-1\}\), we are able to freely
switch between the three parallel languages of matrices, relations
and graphs. Frequent exploitation of this standard paradigm in AGT
will be also quite essential in the current text.

Thus, following this pattern, let us associate with a given Jordan
algebra \(J\) with standard basis \(\{A_0, A_1, \ldots, A_{r-1}\}\) a
combinatorial structure \({\frak X} = (\Omega, {\cal R})\), were
\({\cal R} = \{R_0, \ldots, R_{r-1}\}\) is the set of basic relations
on the vertex set \(\Omega\). We will call the arising structure a
\emph{coherent Jordan configuration}. Note that in the classical case in
AGT considered above, the relational analogue of a coherent algebra
is a coherent configuration (CC).

Recall also that a very significant particular case of CC's
corresponds to the symmetric association schemes (AS's). Here, each
basic relation \(R_i\) is symmetric, while each basic graph \(\Gamma_i\)
is undirected. Also, exactly one of the basic relations, typically
denoted by \(R_0\), is the identity relation \(\id_n\) on the set
\(\Omega\), while all other basic relations are antireflexive; the
amount \(r-1\) of such relations is sometimes denoted by \(d\).

Let us call the corresponding special case of coherent Jordan
configurations with unique reflexive basic relations \emph{Jordan
schemes} (JS's). Although at the beginning of the text some general
properties of coherent Jordan configurations will be considered,
mainly this paper deals with JS's.

At this moment we are prepared  to discuss the motivation, obtained
results in the considered project, as well as the style of the
current text. (Note that below in this section, as a rule, exact
references are postponed to the main body of the paper.)

In fact, the concept of a Jordan scheme goes back to a few
publications and recent presentations by Peter J. Cameron. The main
source of such objects is the symmetrization of AS's (homogeneous
CC's in alternative terminology). Assume that \({\frak X}=(\Omega,
  {\cal R})\) is an arbitrary AS, \({\cal R}=\{R_0, R_1, \ldots,
  R_d\}\). Recall that \(R_i^T = \{(y,x) \mid (x,y)\in R_i\}\) is also a
class (basic relation) of \({\frak X}\). If \(R_i\) is an antisymmetric
relation, then \(R_i^T\cap R_i = \emptyset\); otherwise, \(R_i^T =
  R_i\). Denote by \(\tilde{\frak X}=(\Omega, \correction{\widetilde{\mathcal R}})\) the
symmetrization of \({\frak X}\). Here \(\tilde {\cal R} = \{\tilde
  R_i\mid R_i\in{\cal R}\}\), while \(\tilde R_i=R_i\cup R_i^T\) for all
\(i\), \(0\le i\le d\). If \({\frak X}\) is a symmetric AS then \(\tilde
  {\frak X} = {\frak X}\) and thus \(\tilde {\frak X}\) is an AS and  hence
also a Jordan scheme. Otherwise, \(\tilde {\frak X}\) is not
obligatorily an AS, however it is easy to observe that \(\tilde
  {\frak X}\) is still a JS. Such a JS will be called the
symmetrization of the AS \({\frak X}\). Let us call such a
symmetrization a \emph{non-proper JS}.

In the terms introduced above, the main question posed by P. Cameron
is: Are there examples of \emph{proper Jordan schemes}, that is, JS's
which do not appear as symmetrizations of suitable AS's.

In this text we provide an affirmative answer to this question,
introducing a number of such objects of small orders, as well as
some infinite classes of proper JS's. To the best of our knowledge
this is a pioneering result; no such structures were known before.

The present text is quite long, more than one hundred pages in the
selected format, supplemented by an extensive bibliography.

Sections \ref{orgf276f8a} and  \ref{org4a0fd08} fulfill the role
of preliminaries, introducing basic facts in three areas: AGT,
computer algebra, and the newly developed theory of \correction{Jordan schemes}. The presentation here is not self-contained. Some extra facts
are postponed to further sections. A lot of classical material with
more details can be borrowed from a number of provided sources in
the three touched areas. We will also refer a few times to the paper
\cite{KliMR} in preparation, where a more formal, refined version of
the current text will be presented.

The first, in fact the smallest, example of a proper JS, denoted by
\(J_{15}\), has order 15, rank 5, its automorphism group has order 12
and is isomorphic to the alternating group \(A_4\). The group has
orbits of lengths 3 and 12 on the vertex set \(\Omega_{15}\). 

The proper JS \(J_{15}\) is subject of an unproportionally detailed
consideration, see Sections \ref{orgeb5dfbd}-\ref{org4553fba}. We start
from the consideration of computer data, related to one of the two
so-called Siamese color graphs of order 15. Both structures were
discovered by the author S.R. in his Ph.D. thesis, fulfilled under
the guidance of M.K. The automorphism group of this graph is \((A_4,
  \Omega_{15})\). The proper JS \(J_{15}\) appears as a merging of colors
in the CC associated to \((A_4, \Omega_{15})\). At the beginning the
structure appears quite suddenly, like a jack-in-the-box.

The ongoing further presentation related to this new structure
\(J_{15}\) is arranged deliberately at a particularly slow pace. Each
step of the considerations is supplied by related computer data and,
in addition, by diagrams of the appearing graphs.

In Section \ref{orgaf4339e} the antipodal distance transitive graph
\(\Delta\) of valency 4 and diameter 3 is treated. This graph \(\Delta
  = L(\Pi)\) is the line graph of the Petersen graph \(\Pi\); it appears
as a 3-fold cover of the complete graph \(K_5\). Classical properties
of the number 6 are extensively exploited. This allows to construct
a non-proper rank 5 Jordan scheme \(NJ_{15}\) as the symmetrization of a
non-commutative imprimitive rank 6 AS \({\cal M}_{15}\) of
order 15. It turns out that \(\aut(NJ_{15}) = \aut({\cal M}_{15})
  \cong A_5\); here \(A_5\) is the smallest non-abelian simple group
of order 60. After revisiting the AS \({\cal M}_{15}\) in Section
\ref{org105b323}, the Siamese combinatorial structures of order 15 are
discussed in Section \ref{org4e457e4}. Though in the context of the
current text this is a kind of deviation, a brief acquaintance of
the reader with the Siamese combinatorial objects may shed extra
light on the origins of the discovery to which the whole text is
devoted. 

In Section \ref{org8e3100c} a Cayley color graph over the group \(A_4\) of
order 12 is considered. In its terms an auxiliary structure \(Y_{12}\)
is defined, consisting of three copies \(TT_i\), \(0\le i\le 2\), which
share a common spread \(4\circ K_3\), and the identity relation
\(\id_{12}\). Here the graphs \(TT_i\) are isomorphic copies of the
truncated tetrahedron of valency 3.

In Section \ref{orga11617b} the intransitive action \((A_4, \Omega_{15})\),
as well as its related rank 21 CC \(Y_{15}\) are reconsidered once
more. The symmetrization \(\tilde Y_{15}\) of rank 13  is
discussed. In the next Section \ref{org94d2fe9} in terms of \(\tilde
  Y_{15}\) one more color graph \(Pre_{15}\), called the pregraph of
order 15, is introduced. Its vertex set \(\Omega_{15}\) is split to
the continent \(\Omega_1\) of size 12 and the island \(\Omega_2\) of
size 3. The colors are: the identity relations \(\id_{12}\) and
\(\id_3\) on the continent and island, respectively, the graph \(K_3\)
on the island, the graphs
\(4\circ K_3\), \(TT_i\), \(0\le i\le 2\) on the continent, which is
identified with \(Y_{12}\). Altogether already seven colors
(relations) are defined. In addition, three more relations, called
bridges, denoted by \(Br_i\), \(0\le i \le 2\), are defined. The union
of these disjoint relations corresponds to the complete bipartite
graph \(K_{3,12}\) with the island and continent as vertex sets. In
each bridge any vertex on the island has valency 4, while any vertex
on the continent has valency 1. Thus the defined pregraph \(Pre_{15}\)
has rank 10. All further considerations are fulfilled in the
framework of this pregraph. 

In Section \ref{orgaf6cddd} we describe switching procedures \(Sw_i\)
between \(NJ_{15}\) and \(J_{15}\). For example, in the switching \(Sw_0\)
the induced graphs on the continent and the island are not changed,
nor is the bridge \(Br_0\), while the bridges \(Br_1\) and \(Br_2\) are
simply transposed. The switchings \(Sw_1\) and \(Sw_2\) are defined
similarly. It turns out that in the framework of \(Pre_{15}\) six
color graphs of rank 5 are defined as mergings of colors of the
pregraph. Three of these graphs are isomorphic to \(NJ_{15}\), the
other three to \(J_{15}\). Each of the three defined switchings maps
a non-proper Jordan scheme to a proper one. A sketch of a formal
proof of this crucial fact is given in Section \ref{org4553fba}.

We wish to stress that the huge part of the text from Sections
\ref{orgeb5dfbd} to \ref{org5fceb33} is designed as an instructive example for a
reader who is sufficiently motivated and patient. Such a reader will
be able to follow how \(J_{15}\) was initially discovered, how,
relying on data obtained by computer and on a redundant amount of
visual aids based on these data, the initial ideas for the future
general proof were formed.

The next order for which we know a proper Jordan scheme is 24. The
corresponding structure \(J_{24}\) shares some common features with
\(J_{15}\). It has rank 5, three basic distance regular graphs of
valency 7, each isomorphic to the classical Klein graph \(Kle_{24}\)
and having a common spread of type \(8\circ K_3\). We really enjoy
this structure \(J_{24}\), and in Section \ref{org5e0938c} share our
pleasure with the reader, constructing the graph \(Kle_{24}\) from
scratch and repeating the proof of its uniqueness up to isomorphism,
due to A. Jurišić (1995). This allows to present in Section
\ref{org508eb9e}  a self-contained description of \(J_{24}\) and to
investigate its symmetry. As in the first part of the text, visual
images play a significant role in the justifications. Nevertheless,
hopefully the reader this time will be more tolerant to such mode of
reasoning: fewer diagrams, and each diagram plays a concrete clearer role,
which can be comprehended immediately. 

Sections \ref{org68efaa1} to \ref{org495439e} (about 15 pages) form the
main part of the current text. The style of presentation here
differs essentially from the Sections \ref{orgeb5dfbd} to \ref{org508eb9e},
following the usual standards of a research paper. In Section
\ref{org68efaa1} a general formulation of the model "island and continent"
is described for orders \(n=3(m+1)\), \(3|(m-1)\). The starting
structure is a suitable rank 6 non-commutative imprimitive AS with
three thin (valency 1) relations and three thick relations of
valency \(m\), each defining an antipodal distance regular cover of
the complete graph \(K_{m+1}\). It is proved that the symmetrization
of this AS \({\frak X}_n\) defines a non-proper Jordan scheme \(NJ_n\)
of order \(n\). The selection of island corresponds to the selection
of a fiber, that is a connected component of the spread \((m+1)\circ
  K_3\) of the AS \({\frak X}_n\). 
 The union of the remaining fibers forms the continent. In
Section \ref{org68efaa1}, relying on such a selection in \({\frak X}_n\), a
pregraph \(Pre_n\) is defined, still having ten colors. Some
properties of \(Pre_n\) and its automorphism groups are formulated and
justified. This allows in Section \ref{orgd4c6594} to formulate and prove
conditions for the existence of proper Jordan schemes of order
\(n\). In particular, Theorem \ref{org54afef5} and Corollary
\ref{org2853378} claim one of the central results of the current text:
the existence of three combinatorially isomorphic proper Jordan
schemes of rank 5 and order \(n\).

Now in Section \ref{org3941a9a} an infinite series of non-commutative rank
6 AS's of order \(n\) and related proper Jordan schemes of rank 5 and
order \(n\) are constructed. Here, \(n=3(q+1)\), where \(q\equiv 1\mod 3\)
is a prime power. The rank 6 AS \({\frak X}_n\) appears as the
centralizer algebra of a suitable imprimitive action of
\(PSL(2,q)\). The corresponding results are Theorem \ref{orga416f9e} and
Corollary \ref{orgf5082e9}. The cases \(q=4,7\) lead to the proper JS's
\(J_{15}\) and \(J_{24}\), respectively. A few next values of \(n\) are
also mentioned.

Section \ref{org30a0d76} deals with the further generalization of already
formulated and rigorously proved results for proper rank 5 JS's. 
\correction{This time we start} from a rank \(2l\) non-commutative AS of order
\(n=l(m+1)\), with \(l\) thin and \(l\) thick relations, the latter of
valency \(m\). A suitable multiplication table for such AS \({\cal
  M}_n\) is presented. A more general mode of switching is defined. A
more general theoretical result is formulated: still proper Jordan
schemes of order \(n\) do appear. All details are postponed to
\cite{KliMR}. Relying on this generalization, in Section \ref{orgb36d93b}
a list of proper Jordan schemes of order \(n\) is given, where \(n\)
varies from 63 to 748. As a rule, the automorphism group of such a
scheme \(J_n\) is isomorphic to \(AGL(1,m)\) of order \(m(m-1)\). Note
that \(A_4\cong AGL(1,4)\), while \(\aut(J_{24})\) has order 42.

In fact, Section \ref{orgb36d93b} starts from the consideration of Siamese
color graphs of order 40. In this framework two proper Jordan AS's
of rank 7 and order 40 are constructed, both with valencies 1, 1, 2,
9, 9, 9, 9. 

In Section \ref{org682bc3a} a prolific construction for proper rank
5 Jordan schemes of order \(n={{3^d+1}\choose {2}}\), \(d\in{\mathbb
  N}\), is outlined. The construction is based on a particular case of
the classical \strike{WFDF} strongly regular graphs (SRG's), due to
W.D. Wallis and D. Fon-Der-Flaass \correction{(a WFDF-graph, for short)}. Three basic graphs here are
primitive SRG's of the same valency \(k=3^{d-1}\cdot \frac{3^d-1}{2}\)
and with \(\lambda=\mu={{3^{d-1}}\choose 2}\).  Again, a more detailed
consideration, including accurate proofs, is postponed to
\cite{KliMR}. Section \ref{orgcbc9bab} deals with the smallest case \(d=1\),
which corresponds to the symmetrization of the centralizer algebra
of the regular action of the smallest non-abelian group (of order
6). The appearing non-proper JS of order 6 was known already to
B.V. Shah (1959). We provide four isomorphic models of this almost
trivial structure \(NJ_6\).

In Section \ref{orgf5cb3c0} the first non-trivial case \(d=2\), leading to
schemes of order 45, is treated, mainly with the aid of a
computer. Here the theoretical results from Section \ref{org3f2a5c7} serve
just as a motivation of research. In fact, we rely on the full
catalogue of SRG's with the parameters \((45, 12, 3, 3)\), due to Ted
Spence. Up to isomomorphism, there are 78 such SRG's. Some natural
conditions are considered for these SRG's to appear as 
basic graphs of a JS of rank 5.

Three graphs, together with the required spread \(5\circ K_9\) in
their complements, passed a formulated test, which was checked with
the aid of a computer. Though computations were not finished, at
least 340 Jordan schemes of rank 5 (up to isomorphism) of order 45
were enumerated this way. Striking news are that all of them are
proper JS's.

Section \ref{org5dcbc7f} reports on the computer-aided enumeration of
Jordan schemes of small orders \(n\). At this stage the developed
computer programs are not sophisticated. An almost naive, brute
force version was tested for orders \(n=8,9,10\). The cases \(n=11,13\)
(where \(n\) is a prime) were covered, using some additional simple
theoretical arguments. Finally, the cases \(n=12,14\) required more
efforts; here computations still are not finished. The current result
is that there is no proper scheme of order $n$,
$n\in [1,11]\cup\{13\}$. Our wild guess is that there are no proper
schemes of orders $12$ and $14$ neither; this is subject to rigorous
confirmation in the future.

The final Sections \ref{org495439e}-\ref{org7fa6e54} (about 15 pages)
are very specific even for the selected genre of the entire
text. Diverse miscellanea are postponed from the main body of the
paper to Section \ref{org495439e}. Among other discussed topics
here the reader faces issues such as:
\begin{itemize}
\item computational aspects of diverse groups associated to color graphs;
\item difference in formats used in {\sf GAP} and COCO;
\item remarkable properties of some actions of \(A_5\) as well as extra
beauties of the Klein graph \(Kle_{24}\);
\item the crucial role of  Hoffman colorings of the graphs under consideration;
\item delicate issues in the  accepted terminology and notation in AGT.
\end{itemize}

Section \ref{org03b7322} presents a short historical survey, which is
literally split into a few subsections. In Subsection
\ref{org7b1a622} the authors' acquaintance and accumulated vision of
the history of Jordan algebras is reflected. We think this might be
of help for all experts in AGT. Subsection \ref{orgf71f8d9}, devoted
to \correction{CCs} and WL-stabilization, is oriented mainly to the experts in
\correction{algebra} and Jordan algebras, in particular those who may be
challenged to enter to AGT. Subsection \ref{org94cd7b7} touches
Siamese combinatorial objects and Tatra AS's. Though both these
concepts remain in a relative shadow in the current text, familiarity
with them reveals the origins of the introduced discovery, which are
pretty clear to the authors, while they were not so visible to the
reader. 
Finally, Subsection \ref{org6b70302} is briefly summing up the
significant stages in the reported project: from many years of
implicit preparation to the last half year brain-storming explicit
attack. 

The concluding discussion in Section \ref{org7fa6e54} is quite informal,
touching a lot of issues related to many facets in the text, from
its style and genre to open questions. Probably, it is a good idea
for any reader to look first through the sections
\ref{orgb33f855} and \ref{org7fa6e54}, postponing the decision how to
digest the entire text.

A few more remarks: We frequently use abbreviations. Usually they
are evidently introduced at the first occurence, like AGT; a few
times they are clear in the considered context.

The reader is already able to understand unusual features of the
created text. This is \correction{neither} a regular paper nor a small
book. Probably closer to an extended preprint. We prefer to
attribute to this genre the word "essay" At this stage the text will
be posted on arXiv, hopefully a couple of updates will be created,
depending on many factors.  

Last but not least in the attempts to accurately attribute the style
of the created essay is reference to the concept of
\href{https://en.wikipedia.org/wiki/Abductive\_reasoning}{abductive reasoning}. Indeed, we start from the construction of
a lucky set of pioneering examples. After that we seek for the most
simple and likely explanations of the detected nice mathematical
creations. At the beginning, that is in the current essay, too
accurate and rigorous argumentation is regarded, as a rule, a bit
annoying and not always appropriate.

Of course, finally, being mathematicians, we switch to the normal
style of reasonings, to be presented in \cite{KliMR}.

One may regard George Pólya among the creators of such an approach
in mathematics. Our interpretation of abduction seems to be a bit
less accepted in mathematics and closer to diverse branches of
natural sciences.

We finish this Section \ref{orgb33f855} with a kind of informal
disclaimer. The roles, distribution of activities, vision of the
desired structure of the text, attention to its diverse parts, are
pretty different between the members of our trio. In principle, it
was possible to write a kind of table, showing somehow the inputs of
the authors to each of the sections. It seems that both the
preparation and the consumption of the obtained matrix would be
annoying to everyone involved.

This is why we finally claim that the entire trio feels collective
\correction{involvement} for the major part of the created text in spite of the fact of evident
distinctions in the authors inputs and attention to its diverse
parts.

Disclaimer of the second author: Much of the
  material of this essay was prepared by my coauthors. My
  participation in the text writing was focused on the Sections 3.2,
  12, 16, 18 and 20.
\section{Preliminaries from AGT and computer algebra}
\label{sec:org2d1b29f}
\label{orgf276f8a}

It is assumed that the major part of the expected audience of the
current text is familiar with the background concepts from
AGT. Nevertheless, in order to make the preprint reasonably
self-contained, below  a brief outline of the most significant
notions is provided. Many texts, which were written at different
times in diverse styles, might be quite helpful. Here is a small
sample: \cite{Bai04}, \cite{BanI84}, \cite{Cam99}, \cite{KliG15},
\cite{KliPRWZA10}, \cite{KliRW09}. A number of other significant texts
will be mentioned later on in an ad hoc manner. 

Our initial concept is a finite graph \(\Gamma\) with the \emph{vertex set} \(\Omega\)
of cardinality \(|\Omega| = n\). Sometimes for a concrete value of \(n\)
the set will be denoted as \(\Omega_n\) to stress which specific set
is currently considered. The number \(n\) is usually called the
\emph{order} of \(\Gamma\). A \emph{directed graph} \(\Gamma=(\Omega,R)\) is a
pair consisting of the vertex set \(\Omega\) and the \emph{arc set}
\(R\subseteq \Omega^2\), where \(\Omega^2 = \{(x,y)\mid
  x,y\in\Omega\}\). 

An \emph{undirected graph} \(\Gamma=(\Omega, E)\) consists of the vertex
set \(\Omega\) and the \emph{edge set} \(E\subseteq\setchoose \Omega 2\),
here \(\setchoose \Omega 2 = \{\{x,y\}\mid x,y\in \Omega, x\neq
  y\}\). An arc \((x,x)\) of a directed graph \(\Gamma\) is called a
\emph{loop}. In case when \(\Gamma\) does not contain loops it is sometimes
convenient to identify \(\Gamma = (\Omega, R\cup R^T)\) with the
undirected graph \(\tilde \Gamma=(\Omega, E)\), where \(E=\{\{x,y\}\mid
  (x,y)\in R\}\) for \(\Gamma=(\Omega,R)\). The graph \(\tilde\Gamma\) is
usually called the \emph{symmetrization} of \(\Gamma=(\Omega, R)\). 

We also consider a \emph{full color graph} \(\Gamma=(\Omega, {\cal R})\),
where \({\cal R} = \{ R_0, R_1, \ldots, R_d\}\) is a partition of the
set \(\Omega^2\). Then the number \(d+1\) of different \emph{binary
relations} \(R_i\), \(0\le i \le d\), is called the \emph{rank} of
\(\Gamma\). Frequently the rank of a color graph will be denoted by
\(r\). Each graph \(\Gamma_i=(\Omega, R_i)\), \(0\le i \le d\), will be
called a \emph{unicolor graph} of the considered color graph \(\Gamma\).

To each directed graph \(\Gamma = (\Omega, R)\) we associate its
\emph{adjacency matrix} \(A(\Gamma)\) of order \(n\). It is defined as
follows: \(A(\Gamma) = (a_{ij})_{1\le i,j\le n}\), where
\[
  a_{ij} = \begin{cases}
  0 & \mbox{if $(i,j)\not\in R$}\\
  1 & \mbox{if $(i,j)\in R$.}
  \end{cases}
  \]
For a color graph \(\Gamma=(\Omega,{\cal R})\), defined as above, we may also
consider its adjacency matrix
\[
  A(\Gamma) = \sum_{i=0}^d iA(\Gamma_i),
  \]
where \(\Gamma_i = (\Omega, R_i)\) is a unicolor graph of \(\Gamma\),
\(0\le i \le d\).

In many situations, when this does not imply any misunderstandings,
it might be convenient to identify  the graph \(\Gamma\),
its arc set \(R\), and its adjacency matrix \(A(\Gamma)\), freely
switching between these three parallel languages.

A \emph{coherent configuration} (CC) \({\frak X}=(\Omega, {\cal R})\) is a
color graph of rank \(r\) with the set \({\cal R}=\{R_0, R_1, \ldots,
  R_{r-1}\}\) of colors (relations) which satisfies the following extra conditions: 
\begin{description}
\item[CC1] There exists a subset $I_0$ of the set $I=\{0,\ldots, r-1\}$
   of indices of colors, such that $\bigcup_{i\in I_0} R_i = \operatorname{Id}_n$,
   here $\operatorname{Id}_n=\{(x,x)\mid x\in \Omega\}$ is the {\em identity
   relation} on the $n$-element set $\Omega$;
\item[CC2] For each $i\in I$ there exists $i'\in I$ such that $R_{i'}=R_i^T$,
    where $R_i^T = \{(y,x)\mid (x,y)\in R_i\}$;
\item[CC3] There exists a set of constant numbers $p_{ij}^k$, $i,j,k\in I$,
    such that 
    \[
    \left| \left\{ z \in \Omega \mid
      (x,z)\in R_i \wedge (z,y)\in R_j \right\} \right |
      = p_{ij}^k
    \]
    provided that $(x,y)\in R_k$.
\end{description}
The numbers \(p_{ij}^k\) are usually called \emph{intersection numbers} (or
\emph{structure constants}) of the CC  \({\frak X}=(\Omega, {\cal R})\).

To each CC  \({\frak X}=(\Omega, {\cal R})\) we associate an algebra
\(W\) over the field \(\mathbb R\) or \(\mathbb C\) which has a standard
basis consisting of the matrices \(A_i=A(\Gamma_i)\), \(i\in I\),
denoted by \(W=\left<A_0, \ldots, A_{r-1}\right>\). The elements of \(W\)
are all linear combinations of the basic matrices of \({\frak X}\). The
definitions of a CC and its corresponding coherent algebra \(W\) (of
order \(n\)) imply that each coherent algebra \(W\) contains the
identity matrix \({\mathbb I}={\mathbb I}_n\) of order \(n\), the matrix
\({\mathbb J}_n\) with all entries equal to 1. A coherent algebra
\(W\) is also closed with respect to transposition of
matrices. Besides the usual multiplication, the algebra \(W\) is also
closed with respect to \emph{Schur-Hadamard} (SH) product of matrices,
that is, entrywise multiplication.

The algebra \(W\) defined above, relying on a given CC \(\frak X\), is
called a \emph{coherent algebra}. Clearly, a coherent algebra can be
defined axiomatically, thus CCs and coherent algebras are equivalent
concepts.

\correction{Given} an arbitrary CC  \({\frak X}=(\Omega, {\cal R})\), the identity
relation \(\id_n\) appears as a union of identity relations on
disjoint subsets
of \(\Omega\), which uniquely define a partition of \(\Omega\). These
subsets are called \emph{fibers} of \(\frak X\).

A CC with one fiber is called an \emph{association scheme} (AS) or a
\emph{homogeneous CC}. A coherent algebra corresponding to an AS is
called its \emph{adjacency algebra}. An AS
\({\frak X}=(\Omega, {\cal R})\) is called \emph{symmetric} if all its
basic relations are symmetric. The adjacency algebra of a symmetric
AS is usually called its \emph{Bose-Mesner algebra} (BM-algebra).  Each symmetric AS has a
commutative adjacency algebra, however, there are some classes of
non-symmetric ASs which also have commutative adjacency algebras. 

A classical result by Higman \cite{Hig75} guarantees that each AS of rank at most
5 is commutative.

In the case of ASs usually the labelling of basic relations starts
from \(R_0=\id_n\). 

One of the main sources of CCs comes from permutation groups. Let
\((G,\Omega)\) be a \emph{permutation group} acting on the finite set
\(\Omega\), that is, \(G\) is a subgroup of the symmetric group
\(S_n=S(\Omega)\), consisting of all permutations of the set
\(\Omega\). Following H. Wielandt, let us define the set
\(2-orb(G,\Omega) = \{R_i\mid i\in I\}\). By definition,
\(2-orb(G,\Omega)\) is the partition of the cartesian square
\(\Omega^2\) into \emph{2-orbits} of \((G,\Omega)\). Each 2-orbit \(R_i\)
appears as \(\{(x,y)^g\mid g\in G\}\) for a suitable pair
\((x,y)\in\Omega^2\). Here for \(g\in G\), \((x,y)^g = (x^g,y^g)\), where
\(x^g\) is the \emph{image} of \(x\in\Omega\) under the action of \(g\in
  G\). The 2-orbits of the kind \(\{(x,x)^g\mid g\in G\}\) are called
\emph{reflexive} 2-orbits, all others are \emph{non-reflexive} 2-orbits. The
cardinality \(r=|I|\) of the set of 2-orbits of \((G,\Omega)\) is called
the \emph{rank} of \((G,\Omega)\).

For a transitive permutation group, which has exactly one reflexive
2-orbit, the rank coincides with the \correction{number of orbits of the point stabilizer \(G_x=\{g\in G\mid x^g = x\}\) where \(x\in\Omega\) is an arbitrary point. } In such a case the 2-orbits of
\((G,\Omega)\) are frequently called its \emph{orbitals.}

It is easy to check that the pair \((\Omega, 2-orb(G,\Omega))\) forms
a CC. Each such CC is called \emph{Schurian} CC, otherwise it is called
\emph{non-Schurian}. According to \cite{KliZA17} all CCs of order \(n\le
  13\) are Schurian, while there exists an example of a non-Schurian CC
of order 14. Similar wording is used for the particular case of ASs.

The introduced concepts can be expressed in the parallel language of
matrices. Recall that to an arbitrary permutation \(g\in S_n\) we may
associate a \emph{permutation matrix} \(P=P(g)\) of order \(n\), with rows
and columns labeled by elements of \(\Omega\). Here for \((\alpha,
  \beta)\in\Omega^2\), the entry \(P_{\alpha,\beta}\) is defined as 
\[
  P_{\alpha,\beta} = \begin{cases}
  1 &  \mbox{if $\beta = \alpha^g$}\\
  0 & \mbox{otherwise.}
  \end{cases}
  \]
Clearly, each permutation matrix is a (0,1)-matrix containing
exactly one entry equal to 1 in each row and each column. 

Now we may define the \emph{centralizer algebra} 
\[
  \frak{V}(G,\Omega) = \{A\in M_n(F)\mid AP(g)=P(g)A, \forall
  g\in G\},
  \]
where \(F\) is the field under consideration, usually \(\mathbb R\) or
\(\mathbb C\), and \(M_n(F)\) is the algebra of all square matrices of
order \(n\) over \(F\). 

It is easy to check that the defined set \(\frak{V}(G,\Omega)\) of all
square matrices of order \(n\) which are commuting with all
permutation matrices \(P(g)\), \(g\in G\), is indeed an
algebra. Moreover, such an algebra has a \emph{standard basis} consisting
of the adjacency matrices \(A(\Gamma_i)\), \(\Gamma_i=(\Omega, R_i)\) of
the basic graphs \(\Gamma_i\), defined by the 2-orbits \(R_i\), \(i\in
  I\), of \((G,\Omega)\). Thus, the CC defined by 2-orbits of
\((G,\Omega)\) and the centralizer algebra \(\frak{V}(G,\Omega)\) are
equivalent concepts. Therefore we can speak of Schurian coherent
algebras, that is, coherent algebras corresponding to Schurian
coherent configurations, in other words, centralizer algebras of
suitable permutation groups.

Because each CC  \({\frak X}=(\Omega, {\cal R})\) is a particular case
of a color graph, we may consider its symmetrization \correction{\(\tilde{\frak
  X}\).} As we will see later on, the symmetrization of CC is not
obligatorily a CC.

The concept of a coherent algebra \(W\) can be introduced as a
non-empty set of square matrices of order \(n\) (over a field, say
\({\mathbb R}\)) which is closed with respect to a few operations of
arity 0, 1, and 2. This implies that for an arbitrary non-empty set
\(X\) of matrices there exists the unique \emph{smallest} coherent algebra
\(W=WL(X)\) which contains all matrices from \(X\). This coherent algebra  \(WL(X)\) is
nowadays called the \emph{coherent closure} of \(X\) or the
\emph{Weisfeiler-Leman closure} (briefly WL-closure) of \(X\). There exists
an efficient (polynomial time) algorithm for the computation of
\(WL(X)\) from \(X\), which goes back to \cite{WeiL68}. There is a
number of texts where algorithmic aspects of this procedure, as well
as its complexity, are discussed, see e.g., \cite{KliRRT99},
\cite{BabCKP10}. We note also that a reasonable historical survey of
related activities appears in \cite{FarKM94}.

The introduced concept of WL-closure allows, in particular, to
provide clear and compact definitions of a few main classes of
structures in AGT. First, we will call a unicolor graph \(\Gamma\)
\emph{coherent} if it is not "ruined" by WL-stabilization; in other words
if \(\Gamma\) is a basic graph of \(WL(\Gamma)\).

A regular connected graph \(\Gamma\) of diameter \(d\) is called a
\emph{distance regular graph} (DRG) if the rank of \(WL(\Gamma)\) is equal
to \(d+1\). A DRG of diameter 2 is called \emph{strongly regular}
(SRG). Usually to a DRG \(\Gamma\) a full or short set of parameters
is associated, which is called the \emph{intersection array} of
\(\Gamma\). The advantage of the intersection array is, in particular,
that the 3-dimensional tensor of structure constants of
\(WL(\Gamma)\) can be calculated relying on the intersection array
(which may be described with the aid of a special diagram), see
\cite{BroCN89} for details.

For an SRG \(\Gamma\) we consider its short \emph{parameter set} in a
notation which goes back to R.C. Bose \cite{Bos63}. This is
\((n,k,\lambda,\mu)\), where \(n=|\Omega|\) is the order of \(\Gamma\),
\(k\) its valency, \(\lambda\) and \(\mu\) are the valencies of arbitrary
edges and non-edges, respectively.

A DRG \(\Gamma\) is \emph{primitive} if all non-reflexive basic graphs of
\(WL(\Gamma)\) are connected, otherwise, \(\Gamma\) is imprimitive. Note
that there is a tradition that for the connected imprimitive SRG
\(\Gamma\) its complement \(\bar\Gamma\) is also regarded as an
SRG. Clearly such an SRG \(\bar \Gamma\) has the form \(t\circ K_m\),
\(n=t\cdot m\), where \(t\circ K_m\) denotes the disjoint union of \(t\)
copies of the complete graph \(K_m\) with \(m\) vertices.

As was mentioned in Section \ref{orgb33f855}, the current
preprint heavily depends on the use of a few computer packages.

The oldest one of them is COCO (COherent COnfigurations); it was
created in 1990-91 by Igor Faradžev with algorithmic and logistical
support of M.K. The main programs are written in C, while a few ones
even in assembler. Its definite advantage is very simple behavior of
a user: just "pushing buttons". Unfortunately, due to the collapse
of the USSR, the full version of COCO, as it was "dreamed" by the
authors, never was created. The main programs, appearing in COCO,
are discussed with enough details in \cite{FarKM94}, though the name
COCO does not appear there. Formally, the package was announced in
\cite{FarK91}, a short discussion appears in
\cite{KliPRWZA10}. Note that the source files of COCO can be
downloaded from the homepage \cite{Bro} of A. Brouwer.

For a long while a few younger colleagues of M.K. have been thinking of
the creation of a modern version of COCO, denoted by COCO II. There
is a common understanding that such a version should be finally
regarded as a computer package associated with the computer algebra
system {\sf GAP}, see \cite{GAP}, and then will also rely on
standard tools in {\sf GAP}, related to processing of graphs
\cite{GRAPE}, and B. McKay's classical program nauty \cite{nauty}.

The ideology of the future COCO II is also briefly mentioned in
\cite{KliPRWZA10}. It seems that the current version COCO IIR,
created by S.R., is closer to the one which may be exploited by
other users. It is a pity, however, still the project COCO IIR is
not finished. One of the reasons is the absence of a permanent
employment of S.R., see the end of the paper for more comments.

In this context we have to mention a few groups which will be
frequently considered in the current text. 

First, for a color graph \({\frak X} = (\Omega, {\cal R})\), \({\cal R} =
  \{R_i \mid i \in I\}\), the \emph{automorphism group} \(\aut({\frak X}) =
  \bigcap_{i\in I} \aut(\Gamma_i)\) is defined, where \(\aut(\Gamma_i) = \{g\in
  S(\Omega) \mid R_i^g = R_i\}\), \(\Gamma_i = (\Omega, R_i)\). The
\emph{color group} is defined as \(\caut({\frak X}) = \{g\in S(\Omega)\mid {\cal R}^g =
  {\cal R}\}\). Here \({\cal R}^g={\cal R}\) means that the permutation
\(g\in S(\Omega)\) preserves the set of relations \({\cal R}\), that is
for each \(i\in I\) the image \(R_i^g=R_j\) for a suitable \(j\in
  I\). Clearly, \(\aut(\frak X) \normal \caut(\frak X)\). 

Finally for an algebra \({\frak X}\) (coherent or Jordan) with the
tensor of structure constants \(p_{ij}^k\) we also consider the
\correction{\emph{group of algebraic automorphisms}} \(\aaut({\frak X})\) which acts on the set \(I\) of indices
of basic elements of \({\frak X}\), aka basic graphs/relations of the
corresponding relational configuration. Here,
\[
  \aaut({\frak X}) = \{f\in S(I)\mid p_{i^fj^f}^{k^f} = p_{ij}^k,
  i,j,k\in I\}.
  \]

It is easy to understand that the quotient group \(\caut({\frak
  X})/\aut({\frak X})\) can be embedded as a subgroup of \(\aaut({\frak
  X})\). If \(\aaut({\frak X})\) also contains elements which do not
appear via such an embedding, then they are called \emph{proper algebraic
automorphisms}. 

Note that COCO contains a program for the computation of \(\aut({\frak
  X})\). It works surprisingly efficiently for primitive
CCs. Unfortunately, programs for the calculation of \(\caut({\frak
  X})\) and \(\aaut({\frak X})\) were not created. 

To help the reader to get a very simple idea of computer-aided
activities and further manipulation with the obtained data, we
provide below a simple example. Some results, fulfilled in COCO,
intentionally are described in a manner which should be more
comfortable for the reader. 

\begin{examp}
\begin{itemize}
\item We start from the dihedral group \(D_5=\left<g_1,g_2\right>\), where
\(g_1 = (0,1,2,3,4)\), \(g_2=(1,4)(2,3)\).
\item Using the command \emph{ind} of COCO, we obtain an intransitive group
\((G,\Omega)\) of degree 10. As an abstract group, \(G\cong
    D_5\). Here \(G=\left<\tilde g_1, \tilde g_2\right>\), where \(\tilde
    g_1 = (0,1,2,3,4)(5,6,7,8,9)\), \(\tilde g_2=(1,4)(2,3)(6,9)(7,8)\),
\(\Omega=[0,9]\), that is the set of integers from 0 to 9.
\item Now with the aid of the command \emph{cgr} of COCO we describe all
2-orbits of \((G,\Omega)\). It turns out that \(|2-orb(G,\Omega)| =
    12\); the information about the representatives of this set looks
as follows:
\begin{center}
\begin{tabular}{llr}
R\(_{\text{i}}\) & (x,y)\(\in\) R\(_{\text{i}}\) & val(R\(_{\text{i}}\))\\
\hline
R\(_{\text{0}}\) & (0,0) & 1\\
R\(_{\text{1}}\) & (0,1) & 2\\
R\(_{\text{2}}\) & (0,2) & 2\\
R\(_{\text{3}}\) & (0,5) & 1\\
R\(_{\text{4}}\) & (0,6) & 2\\
R\(_{\text{5}}\) & (0,7) & 2\\
R\(_{\text{6}}\) & (5,0) & 1\\
R\(_{\text{7}}\) & (5,1) & 2\\
R\(_{\text{8}}\) & (5,2) & 2\\
R\(_{\text{9}}\) & (5,5) & 1\\
R\(_{\text{10}}\) & (5,6) & 2\\
R\(_{\text{11}}\) & (5,7) & 2\\
\hline
\end{tabular}
\end{center}

Here \(R_0, R_9\) are reflexive 2-orbits, the pairs \((R_3,R_6)\), \((R_4,
    R_7)\), \((R_5,R_8)\) provide antisymmetric 2-orbits, all other
2-orbits are symmetric.
\item In principle, COCO does not provide in the standard command
sequence the lists of all 2-orbits, though this job can be
obtained via a sequence of extra inducings. For the reader's
convenience we provide lists of some of 2-orbits, as follows:

\begin{align*}
R_1 &= \{ \{  0,  1 \},
         \{  0,  4 \},
         \{  1,  2 \},
         \{  2,  3 \},
         \{  3,  4 \}\}, \\
R_2 &= 
       \{ \{  0,  2 \},
         \{  0,  3 \},
         \{  1,  3 \},
         \{  1,  4 \},
         \{  2,  4 \}\}, \\
R_3 &= 
   \{ (  0,  5 ),
      (  1,  6 ),
      (  2,  7 ),
      (  3,  8 ),
      (  4,  9 ) \}, \\
R_4 &= 
  \{ (  0,  6 ),
    (  0,  9 ),
    (  1,  5 ),
    (  1,  7 ),
    (  2,  6 ),
    (  2,  8 ),
    (  3,  7 ),
    (  3,  9 ),
    (  4,  5 ),
    (  4,  8 ) \}, \\
 R_{10} &= 
    \{ \{  5,  6 \},
      \{  5,  9 \},
      \{  6,  7 \},
      \{  7,  8 \},
      \{  8,  9 \}\}.
\end{align*}
\item On the next step we are using the command \emph{inm}, which provides
the intersection numbers of the rank 12 CC \(W=(\Omega,
    2-orb(G,\Omega))\). Typically, the user does not need to work
directly with this tensor; it is processed by COCO automatically.
\item Now we are interested in all coherent \emph{proper} subalgebras \({\frak
    X}_i\) of the coherent algebra defined by \(W\). Here proper means
that the rank of \({\frak X}\) is smaller than the rank of \(W\) and
larger than 2.

Recall that in the language of CCs, coherent subalgebras
correspond to \emph{mergings} (or fusions) of \(W\). 

In fact, COCO is able to determine only those mergings which are
ASs. There are also options to find only symmetric or primitive
mergings, as well as some combinations of such properties.

Such a job is fulfilled with the aid of the command \emph{sub}, the
input for which is the output of \emph{inm}. The obtained mergings are
ordered according to their rank. Each merging is described as a
partition of the set of indices \(i\in I\) of basic relations
\(R_i\). In our cases, \(I=[0,11]\).

Thus COCO returns the following ten proper mergings:

\begin{itemize}
\item m1 = \{\{0,9\},\{5,8\},\{2,10\},\{1,11\},\{3,7\},\{6,4\}\}, r=6,
\item m2 = \{\{0,9\},\{5,8\},\{2,11\},\{3,6\},\{4,7\},\{1,10\}\}, r=6,
\item m3 = \{\{0,9\},\{5,8\},,\{2,3,6,11\},\{4,7,1,10\}\}, r=4,
\item m4 = \{\{0,9\},\{5,8\},\{2,1,10,11\},\{3,6,4,7\}\}, r=4,
\item m5 = \{\{0,9\},\{5,8,3,6,4,7\},\{2,10\},\{1,11\}\}, r=4,
\item m6 = \{\{0,9\},\{5,8,3,6,7\},\{2,11\},\{1,10\}\}, r=4,
\item m7 = \{\{0,9\},\{5,8,3,6,4,7\},\{2,1,10,11\}, r=3,
\item m8 = \{\{0,9\},\{5,8\},\{2,3,6,4,7,1,10,11\}\}, r=3,
\item m9 = \{\{0,9\},\{5,8,2,10\},\{3,6,4,7,11,11\}\}, r=3,
\item m10 = \{\{0,9\},\{5,8,1,11\},\{2,3,6,4,7,10\}\}, r=3.
\end{itemize}
\end{itemize}

Denote by \({\frak X}_i\) the AS which appears as merging of \(W\)
defined by the partition \(mi\) of the set \(I\), \(1\le i \le
    10\). COCO informs us that all mergings, besides \({\frak X}_1\), are
symmetric and that the rank 3 mergings \({\frak X}_7\) and \({\frak
    X}_{10}\) are the only two primitive mergings, which are generated
by an SRG with the parameters \((10,3,0,1)\).
\begin{itemize}
\item On the last step COCO returns the order and rank of the
automorphism groups \(G_i = \aut({\frak X_i})\). Comparing the ranks
of the groups \(G_i\) with the ranks of the ASs \({\frak X}_i\), we
conclude that all obtained mergings are Schurian.

The orders of the groups obtained by the command \emph{aut} (including
the 2-closure of the starting group \(G\) denoted by \(G^{(2)}\), that
is \(G^{(2)} = \aut(W)\)), are 10, 20, 20, 240, 320, 200, 200, 120, 28800, 3840, 120.
\item At this stage COCO finishes its job. Additional information
discussed below was obtained with the aid of COCOIIR.

First, we obtain that \(\caut(W)\) has order 40, while \(\aaut(W)\)
has order 4 and is isomorphic to the elementary abelian group
\(E_4\). Analysing additional generators of \(\caut(W)\) versus \(G\),
it is easy to understand that they have a very natural
explanation: \(g_3=(0,5)(1,6)(2,7)(3,8)(4,9)\) transposes two
isomorphic fibers of \(W\), while \(g_4=(1,2,4,3)(6,7,9,8)\) together
with the induced "natural" cycle \(\tilde g_1\) generates the group
\(AGL(1,5)\) of order 20.
\item On the next step we obtain that 10 mergings of \(W\) split with
respect to \(\caut(W)\) into 9 orbits. Namely, we have the
non-trivial orbits  \(\{{\frak
    X}_7,{\frak X}_{10}\}\), while all other mergings lie in orbits of
length 1.

In principle one may get, using COCOIIR, which of the mergings are
isomorphic and which are algebraically isomorphic, though not
combinatorially isomorphic. In the considered quite simple example
the latter options are not relevant.
\item The next stage is the explanation (interpretation) of the obtained
computer-aided results, see the end of the text for a discussion
of these issues.

Currently this step is fulfilled without essential use of a
computer. A crucial possible automatical procedure would rely on
the analysis of the Hasse diagram of all obtained mergings. Then
we will need for each considered merging \({\frak X}_i\) to detect a
suitable graph \(\Delta_i\), such that \({\frak X}_i =
    WL(\Delta_i)\). Ideally, \(\Delta_i\) might be a basic graph, or a
union of a "few" basic graphs, or (not full) color graph with a
few colors. 

In this example the results were obtained in a reasonably
human-friendly creative manner. They are presented below with the
aid of diagrams. Hopefully, the reader will agree that these
diagrams add a bit to the declared flavour of essay.
\item Thus below for each merging \({\frak X}_i\) its WL-generator is
presented and sometimes discussed.

\begin{figure}
\begin{center}
\input{figure.delta.1b.tex}    
\end{center}
\caption{\(\Delta_1\)}
\end{figure}

\begin{figure}
\begin{center}
\input{figure.delta.2.tex}    
\end{center}
\caption{\(\Delta_2\)}
\end{figure}

\begin{figure}
\begin{center}
\input{figure.non-matching.tex}    
\end{center}
\caption{\(\Delta_3\)}
\end{figure}

\begin{figure}
\begin{center}
\input{figure.delta.3.tex}    
\end{center}
\caption{\(\Delta_4\)}
\end{figure}

\begin{figure}
\begin{center}
\input{figure.delta.5.tex}    
\end{center}
\caption{\(\Delta_5\)}
\end{figure}

\begin{figure}
\begin{center}
\input{figure.delta.6.tex}    
\end{center}
\caption{\(\Delta_6\)}
\end{figure}

\begin{figure}
\begin{center}
\input{figure.delta.9.tex}    
\end{center}
\caption{\(\Delta_7\)}
\end{figure}

\begin{figure}
\begin{center}
\input{figure.delta.8.tex}    
\end{center}
\caption{\(\Delta_8\)}
\end{figure}

\begin{figure}
\begin{center}
\input{figure.matching.tex}    
\end{center}
\caption{\(\Delta_9\)}
\end{figure}

\begin{figure}
\begin{center}
\input{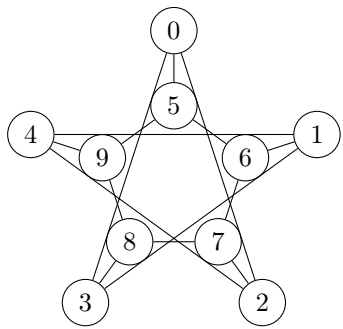}    
\end{center}
\caption{\(\Delta_{10}\)}
\end{figure}

Consideration of the diagrams immediately implies the description
of the groups \(G_i=\aut({\frak X_i})\). Indeed, we obtain that
\(G_2=D_{10}\),
\(G_3=S_2\times S_5\),
\(G_4=D_5\wr S_2\),
\(G_5 = G_6 \cong S_2\wr D_5\),
\(G_7\cong G_{10} \cong S_5\). 
\(G_8 \cong S_2\wr S_5\),
\(G_9 = S_5\wr S_2\),

Note that here \(K\times L\) is the direct product of permutation
groups \((K,\Omega_1)\) and \((L,\Omega_2)\), while \(K\wr L\) is the
wreath product of the corresponding groups. (We use the so-called
"orthodox" notation for the wreath product, due to L.A. Kalužnin,
cf \cite{FarKM94}.)

\item It remains to pay special attention to the merging \({\frak X}_1\) and its group
\(G_1\), which are less evident and thus are of reasonable
independent interest.

Recall that, according to the online catalog \cite{HanM} of small
ASs, the smallest order of a non-commutative AS, which is not
\emph{thin}, is 10. Recall that a thin AS is coming as a set of
2-orbits of a suitable regular permutation group.

It turns out that \({\frak X}_1\) is exactly this AS. It is Schurian
and it appears from the induced action of the group
\(AGL(1,5)=\left<g_1,g_3\right>\), where \(g_3=(1,2,4,3)\), on the set
\(\setchoose{[0,4]}{2}\) of 2-element subsets of \([0,4]\). Because
\(AGL(1,5)\) acts 2-transitively, this induced acction \(G_1\) is
transitive.

To adjust to the currently used labelling, we need to establish a
bijection between the sets \([0,9]\) and \(\setchoose{[0,4]}{2}\). In
presentation below it remains behind the scenes. We simply list
below the diagrams of the non-reflexive basic graphs \(\Gamma_i\) of
\({\frak X}_1\).
\end{itemize}

The non-reflexive basic graphs of \({\frak X}_1\) are: \(\Delta_1\),
\correction{\(\Delta_1^T\)}, \(\Delta_5\), \(\Delta_5^2\), \(\Delta_9\).

We observe that \({\frak X}_1\) is not symmetric, thus indeed it can
be, in principle, non-commutative.

To observe that \({\frak X}_1\) is in fact non-commutative, let us
trace paths of length 2 from vertex 0 via \(\Delta_1\) and then
\(\Delta_9\): We can reach 1 and 4. Now we walk from 0 first via
\(\Delta_9\) and then via \(\Delta_1\): We reach 2 and 3. This
observation is enough to confirm non-commutativity.
\end{examp}

\begin{remark}
The reader is welcome to compare nice and more ugly diagrams
\(\Delta_7\) and \(\Delta_{10}\) of the Petersen graph. With more
attention this situation is discussed in \cite{Sta18}, see also at
the end of our essay.
\end{remark}

\section{Coherent Jordan configurations: Basic definitions and simple facts}
\label{sec:org6af43ec}
\label{org4a0fd08}

As in the case of CC's the starting definitions are formulated in
parallel in terms of relations and matrices. The reader is expected
to be ready to easily switch from one language to another and vice
versa without any difficulties, adjusting to such slightly loose
mode of behaviour.

Let us start from the consideration of the concept of a \emph{Jordan
algebra} as it is usually considered on an abstract level. This is a
vector space over the field \(F\) of characteristic zero,  with a
bilinear operation \(*\) which satisfies two 
requirements: 
\begin{equation}\tag{JA1}\label{equation:JA1}
A* B = B* A
\end{equation}
\begin{equation}\tag{JA2}\label{equation:JA2}
(A* B)* (A* A) = A*(B*(A* A)).
\end{equation}
In other words we keep commutativity of the multiplication, however
we relax the traditional axiom of associativity, which may or may
not be fulfilled for concrete classes of Jordan algebras.

As a rule in this text, \(F\) \strike{may be} \correction{is} assumed to be \strike{the field \({\mathbb
  C}\) of complex numbers, or even the field \({\mathbb R}\) of real
numbers}\correction{${\mathbb C}$ or ${\mathbb R}$}. 

The origins of this concept go back to the physicist Pascual Jordan,
see Section \ref{org03b7322} for a brief historical review and some
significant references.

\subsection{Jordan matrix algebra with a fixed basis}
\label{sec:org62c22b0}

Let \(J\) be a vector subspace of dimension \(r\) of the vector space
\(M_n(F)\) of square matrices of order \(n\) over the field \(F\) of
characteristic zero. We require that \(J\) have a special basis
\(\{A_0,A_1, \dots, A_{r-1}\}\) consisting of symmetric
\((0,1)\)-matrices. We require that the identity matrix
\(\identity_n\) belong to \(J\), and that
\(\sum_{i=0}^{r-1}A_i=\allone_{n}\), where \(\allone_{n}\) is the
square matrix of order \(n\) all of whose entries are equal to 1.

Clearly, all matrices in \(J\) are symmetric. Finally we require that
\(J\) will be closed with respect to the \emph{Jordan product} \(A*B\) of
matrices \(A\) and \(B\) defined as follows:
\[
   A*B = \frac 12 (A\cdot B + B\cdot A).
   \]
(Of course, for commuting matrices \(A\) and \(B\) the usual product
\(A\cdot B\) and the Jordan product \(A*B\) coincide.)

\begin{proposition}
The defined operation of \emph{Jordan multiplication} \(A*B\) of matrices
\(A\) and \(B\) satisfies the axioms \eqref{equation:JA1} and
\eqref{equation:JA2}. 
\end{proposition}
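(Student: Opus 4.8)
The plan is to verify the two axioms directly from the definition $A*B=\frac12(AB+BA)$, using throughout only the associativity and distributivity of ordinary matrix multiplication in $M_n(F)$. Axiom \eqref{equation:JA1} is immediate, since $A*B=\frac12(AB+BA)=\frac12(BA+AB)=B*A$. For axiom \eqref{equation:JA2} the first thing I would record is that the Jordan square coincides with the ordinary square: $A*A=\frac12(AA+AA)=A^2$. This is the observation that keeps the computation manageable, because it turns the repeated factor $A*A$ on both sides of \eqref{equation:JA2} into the honest matrix power $A^2$, so that every expression in sight is a linear combination of ordinary products of powers of $A$ with a single factor $B$.

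With this in hand I would simply expand both sides. Writing $X=A*B=\frac12(AB+BA)$, the left-hand side becomes
\[
(A*B)*(A*A)=X*A^2=\tfrac12\left(XA^2+A^2X\right)=\tfrac14\left(ABA^2+BA^3+A^3B+A^2BA\right),
\]
where I have used $A\cdot A^2=A^3$. For the right-hand side, writing $Y=B*A^2=\frac12(BA^2+A^2B)$, one obtains
\[
A*(B*(A*A))=A*Y=\tfrac12\left(AY+YA\right)=\tfrac14\left(ABA^2+A^3B+BA^3+A^2BA\right).
\]
Both sides are therefore equal to $\frac14\!\left(A^3B+BA^3+ABA^2+A^2BA\right)$, which establishes \eqref{equation:JA2}.

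I do not expect any genuine obstacle: the whole content is a bookkeeping verification. The only point that requires care is that, since ordinary matrix multiplication is not commutative, none of the four monomials $A^3B$, $BA^3$, $ABA^2$, $A^2BA$ may be collapsed into one another, so the argument rests precisely on matching the two unordered collections of four terms rather than on any further cancellation. I would finally remark that this calculation uses nothing about matrices beyond associativity, so the same argument shows that \emph{any} associative algebra becomes a Jordan algebra under the product $\frac12(AB+BA)$; the matrix algebra $M_n(F)$ merely furnishes a concrete (associative-enveloped) instance of this general fact.
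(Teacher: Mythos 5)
Your proof is correct and is exactly the ``trivial inspection'' that the paper invokes without writing out: a direct expansion of both sides of \eqref{equation:JA2} using $A*A=A^2$ and associativity, matching the four monomials $A^3B$, $BA^3$, $ABA^2$, $A^2BA$. Your closing observation that the argument works in any associative algebra is a pleasant (and standard) extra, but the approach is the same as the paper's.
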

\begin{proof}
Trivial inspection.
\end{proof}

\begin{define}
A vector subspace \(J\) together with a special basis \(\{A_0, \ldots,
   A_{r-1}\}\) which is closed with respect to the introduced operation
of Jordan multiplication of matrices is called a \emph{(matrix) Jordan
algebra of order \(n\) and rank \(r\).}
\end{define}

In analogy to coherent algebras we define the following

\begin{define}
  A \emph{coherent Jordan algebra} is a Jordan algebra consisting of
  symmetric matrices of order $n$ which is closed under Schur-Hadamard
  multiplication and which contains the identity matrix ${\mathbb
    I}_n$ and the all-one matrix ${\mathbb J}_n$.
\end{define}

\begin{examp}
Each Bose-Mesner algebra of order \(n\) is a coherent Jordan algebra.
\end{examp}

As in the previous section for the CCs and ASs considered there, we
may translate the concept of a coherent Jordan algebra to the parallel
language of  binary relations over the set \(\Omega=\Omega_n\) of
cardinality \(n\). Then our requirements start from the consideration
of a pair \((\Omega, \{R_0, \dots, R_{r-1}\})\), where \(\{R_0,
   \dots, R_{r-1}\})\) provides a partition of the set \(\Omega^2\) into
a set of  binary relations over \(\Omega\). Then we talk of
\emph{basic relations} \(R_i\) instead of \emph{basic matrices} \(A_i\).

The general relational structure \strike{CJC} which corresponds to a
coherent Jordan
algebra \(J\) may be called a \emph{coherent Jordan configuration} \correction{(CJC or JC, for short)} of order \(n\) and
rank \(r\), or, following the terminology of D.G. Higman for the case
of CC's, a coherent Jordan \emph{rainbow} (see extra discussion at the end of the
paper). At this stage we prefer the term CJC.

The definitions for CJC's presented above provide the most general background
for the considerations which, in principle, are possible in the
current text. They will be briefly discussed in the forthcoming
paper \cite{KliMR}.

From this moment we however are restricting our attention to the
particular case when all basic relations \(R_i\) (and hence the
corresponding basic matrices \(A_i\)) are symmetric. Once more, this
assumption is now fulfilled till the very end of the current text. 

 A very particular case of CJC, like for CC's previously, corresponds
to the case when \(A_0=\identity_n\) is one of the basic matrices of
the Jordan algebra \(J\). In other words, the identity relation \(\id\)
on \(\Omega\) is equal to the basic relation \(R_0\). In this case we
will talk of a \emph{homogeneous} coherent Jordan algebra, or a 
\emph{coherent Jordan scheme} in
the parallel relational language. (We can also talk of a Jordan
scheme (JS) if the extra feature "coherent" is clear from the
context.) 

In what follows we will mainly consider Jordan schemes, though at
this general section both concepts of CJC and JS are equally
significant.

Note that the existence of a CJC implies the existence of its tensor
of \emph{structure constants}, aka \emph{intersection numbers}, which this
time are introduced in terms of Jordan multiplication instead of
the usual multiplication of matrices.

Assume that \(W\) is a non-symmetric CC, in particular, a
non-symmetric AS. For each antisymmetric basic matrix \(A_i\) let us
denote \(\tilde A_i = A_i + A_i^T\). Then we can consider the
\emph{symmetrization} \(\tilde W\). Its basic matrices are the matrices
\(\tilde A_i\) instead of the antisymmetric basic matrices, while
symmetric basic matrices remain unchanged.

In particular, the symmetrization of a BM-algebra coincides with
itself. It is an easy exercise to prove the following claim:

\begin{proposition}
The symmetrization \(\tilde W\) of an arbitary CC \(W\) is a Jordan
configuration. 
\end{proposition}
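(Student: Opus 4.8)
The plan is to verify directly that the span $\tilde W$ of the symmetrized basic matrices satisfies the four defining requirements of a coherent Jordan algebra: that it consists of symmetric matrices, contains $\identity_n$ and $\allone_n$, is closed under the Schur-Hadamard product, and is closed under the Jordan product $*$. Once closure under $*$ is established, the identities \eqref{equation:JA1} and \eqref{equation:JA2} hold automatically by the Proposition asserting that matrix Jordan multiplication satisfies the Jordan axioms, so there is no separate identity to check.

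First I would record the elementary bookkeeping. Since $W$ is closed under transposition, the map $i\mapsto i'$ defined by $A_{i'}=A_i^T$ is an involution on the index set; the symmetric basic matrices are its fixed points, and the antisymmetric ones come in disjoint pairs $\{i,i'\}$. Each $\tilde A_i=A_i+A_i^T$ is a symmetric $(0,1)$-matrix, because antisymmetry gives $R_i\cap R_i^T=\emptyset$, so $A_i$ and $A_i^T$ have disjoint support. The distinct matrices among the $\tilde A_i$ therefore form a partition of $\Omega^2$ into symmetric $(0,1)$-matrices, whence $\tilde A_i\circ\tilde A_j=\delta_{ij}\tilde A_i$, and closure under the Schur-Hadamard product follows by bilinearity. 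Moreover $\identity_n$ is a sum of (necessarily symmetric) reflexive basic matrices and $\allone_n=\sum_i A_i$, so both lie in $\tilde W$; and every generator is symmetric, hence so is every element of $\tilde W$.

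The substantive step is closure under the Jordan product, where the key observation is that $\tilde W$ is precisely the space of symmetric matrices inside the coherent algebra $W$. Indeed $\tilde W\subseteq W$, since each $\tilde A_i$ is a sum of basic matrices of $W$; conversely a combination $\sum_i c_i A_i$ is symmetric if and only if $\sum_i c_i A_i=\sum_i c_i A_i^T=\sum_i c_i A_{i'}$, which by linear independence of the standard basis forces $c_i=c_{i'}$ for all $i$, and the combinations satisfying this are exactly the span of the $\tilde A_i$. Now take any $A,B\in\tilde W$. The ordinary products $AB$ and $BA$ lie in $W$ because coherent algebras are closed under matrix multiplication, so $A*B=\tfrac12(AB+BA)\in W$; and since $A,B$ are symmetric, $(AB)^T=B^T A^T=BA$, so $A*B$ is symmetric. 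Being a symmetric element of $W$, it lies in $\tilde W$ by the identification just established, giving $A*B\in\tilde W$.

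This completes the verification that $\tilde W$ is a coherent Jordan algebra, that is, a Jordan configuration. I expect the only genuinely delicate point to be the identification of $\tilde W$ with the symmetric part of $W$: it is exactly what upgrades the a priori weak conclusion $A*B\in W$ to the desired $A*B\in\tilde W$, and it rests on the linear independence of the standard basis. The remaining facts — disjoint supports of $A_i$ and $A_i^T$, the $\delta_{ij}$-behaviour of the Schur-Hadamard product, and the symmetry of $A*B$ for symmetric $A,B$ — are routine.
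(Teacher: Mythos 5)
Your proof is correct and complete. The paper offers no proof of this proposition at all --- it is dismissed with the remark ``It is an easy exercise to prove the following claim'' --- and your argument is precisely the natural way to fill that gap: the identification of $\tilde W$ with the symmetric part of the coherent algebra $W$ is indeed the one substantive point, since it is what upgrades the easy conclusion $A*B\in W$ (closure of $W$ under ordinary multiplication, plus symmetry of $A*B$ for symmetric $A,B$) to the required $A*B\in\tilde W$, while the remaining items (disjoint supports of $A_i$ and $A_i^T$, the partition property giving Schur--Hadamard closure, and membership of $\identity_n$ and $\allone_n$) are exactly the routine verifications the paper's authors had in mind.
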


In what follows we will call the symmetrization of any CC a
\emph{non-proper} CJC. A \emph{proper} CJC is one that cannot be obtained by
symmetrization. Similar definitions clearly work for a particular
case of CJCs, that is, for Jordan schemes. 

As was claimed in Section \ref{orgb33f855}, the main goal of this text is to
present first examples of proper Jordan schemes.

\subsection{Jordan schemes of small rank}
\label{sec:orgedec6a7}

The theory of general Jordan algebras is developed reasonably
well. We refer for a number of classical initial facts to 
texts such as \cite{Mal86} and \cite{See72}. Below is one of them.

\begin{proposition}
Let \(Sym_n(F)\) be the vector space of all symmetric matrices of
order \(n\) over the field \(F\). Then any subspace \(W\) of it is a
Jordan algebra iff it is closed with respect to taking squares.
\end{proposition}

Note that \(Sym_n(F)\) is the largest possible coherent Jordan algebra of
order \(n\) over \(F\), because it is the symmetrization of \(M_n(F)\),
the CC of order n and rank \(n^2\), which appears as the largest CC
of order \(n\) (i.e., the centralizer algebra of the identity
permutation group of degree \(n\)). The coherent configuration of
rank \(n^2\) is usually called the \emph{discrete configuration}.

The statement below was proved in \cite{Bai04} (part of Proposition 12.5). We
provide here the proof to make the text self-contained. See also
Section \ref{sec:org9d08348} for further discussion.

\begin{proposition}
\label{org163c63e} Let \({\frak X} = (\Omega, {\cal R} = \{R_0 , R_1
   , R_2 , \dots, R_{r-1} \})\) be a symmetric regular coloring of the
complete graph on \(\Omega\), \(A_i\) an adjacency matrix of \(R_i\) and
\({\cal A} = \left< A_0 , \dots, A_{r-1} \right>\) the linear span of
the adjacency matrices \(A_i\) . Then \({\frak X}\) is a (coherent)
Jordan scheme iff \((A_i + A_j )^2 \in A\) holds for all \(i, j \in
   \{1, \dots, r-1\}\).
\end{proposition}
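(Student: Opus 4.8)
The plan is to reduce everything to the squaring criterion recorded above, namely that a subspace of $Sym_n(F)$ is a Jordan algebra if and only if it is closed under taking squares. First I would check that, under the stated hypotheses, every clause in the definition of a coherent Jordan scheme except closure under the Jordan product holds automatically for ${\cal A}$: the matrices $A_i$ are symmetric by assumption; since they are $(0,1)$-matrices partitioning $\allone_n$ one has $A_i\circ A_j=\delta_{ij}A_i$, so ${\cal A}$ is closed under the Schur--Hadamard product; $\allone_n=\sum_{i}A_i\in{\cal A}$; and the reflexive color supplies $A_0=\identity_n\in{\cal A}$. Hence the whole statement collapses to the single equivalence: ${\cal A}$ is closed under $*$ (equivalently, under squaring) if and only if $(A_i+A_j)^2\in{\cal A}$ for all $i,j\in\{1,\dots,r-1\}$.

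The forward implication is immediate: if ${\frak X}$ is a Jordan scheme then ${\cal A}$ is a Jordan algebra, hence closed under squaring, and $A_i+A_j\in{\cal A}$ gives $(A_i+A_j)^2\in{\cal A}$. For the converse the engine is polarization. Expanding, $(A_i+A_j)^2=A_i^2+A_j^2+2\,(A_i* A_j)$, so that $A_i* A_j=\frac12\bigl[(A_i+A_j)^2-A_i^2-A_j^2\bigr]$. Taking $j=i$ in the hypothesis yields $4A_i^2=(A_i+A_i)^2\in{\cal A}$, whence $A_i^2\in{\cal A}$ for every $i\ge1$; the index $0$ is harmless since $A_0^2=\identity_n$ and $A_0* A_j=A_j$. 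Feeding these back into the polarization identity shows $A_i* A_j\in{\cal A}$ for all $i,j\in\{0,\dots,r-1\}$.

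It then remains to propagate closure from the basis products to an arbitrary element $A=\sum_i c_iA_i\in{\cal A}$. Writing $A^2=\sum_i c_i^2A_i^2+\sum_{i<j}2c_ic_j\,(A_i* A_j)$ and using that each summand lies in ${\cal A}$, I conclude $A^2\in{\cal A}$; by the quoted proposition ${\cal A}$ is a Jordan algebra, and together with the automatic clauses it is a coherent Jordan scheme. The step needing the most care is this last bookkeeping together with the observation that the hypothesis, though stated only for $i,j\ge1$, already captures all squares and mixed Jordan products once the trivial role of $A_0=\identity_n$ is accounted for; verifying that no further axiom of a Jordan scheme can fail (in particular that the structure constants $p_{ij}^k$ defined by $A_i* A_j=\sum_k p_{ij}^kA_k$ exist, which is automatic from the linear independence of the $(0,1)$-basis) is what makes the reduction to the squaring criterion complete.
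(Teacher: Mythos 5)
Your proof is correct and follows essentially the same route as the paper: the paper's own (very terse) proof invokes exactly the two identities \(X*X=X^2\) and \((X+Y)^2=X^2+2X*Y+Y^2\), i.e.\ the polarization argument you spell out, combined with the preceding squaring criterion for subspaces of \(Sym_n(F)\). Your version merely makes explicit the bookkeeping the paper leaves to the reader (the automatic Schur--Hadamard closure, the role of \(A_0=\identity_n\), and the extension from basis products to arbitrary elements), which is a faithful expansion rather than a different approach.
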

\begin{proof}
 The statement follows from two simple identities: \(X * X = X^ 2\)
and \((X + Y )^ 2 = X^ 2 + 2X *  Y + Y^2\).
\end{proof}

Let us now consider Jordan schemes of small rank.

\textbf{Rank 2}: A Jordan scheme of rank 2 and order \(n\) has two trivial
basic graphs: the identity graph \(\Gamma_0=(\Omega_n, \id_n)\),
which corresponds to the full reflexive relation on \(\Omega_n\),
and the complete graph \(K_n\) (without loops) on the vertex set
\(\Omega_n\). 

\textbf{A Jordan scheme of rank 3} is also pretty clear. It corresponds to
a pair of complementary SRG's \(\Gamma_1\) and \(\overline\Gamma_1\)
with common vertex set \(\Omega\). The formal proof of such a claim
is an easy exercise on the edge of the theory of SRG's and simple
facts treated in the current section. We prefer even not to
evidently formulate  the corresponding claim. An interested reader
is referred to a helpful introductory text \cite{CarV04}.

The next propositions seem to be less trivial. Recall that each
time we use the term \emph{regular symmetric color graph of rank \(r\)} on
a vertex set \(\Omega\) for any partition of \(\Omega^2\) into
symmetric regular relations including the identity relation.

\begin{proposition}
Let \({\frak X} = (\Omega, \{R_0, R_1, R_2, R_3\})\) be a regular
symmetric graph of rank 4. Then \({\frak X}\) is a JS if and only if
it is an AS.
\end{proposition}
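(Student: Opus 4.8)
I would treat the two implications separately, the forward one being immediate and the converse carrying all the content. If $\mathfrak X$ is an AS then, being symmetric by hypothesis, its adjacency algebra is a Bose--Mesner algebra, and by the Example above every Bose--Mesner algebra is a coherent Jordan algebra; hence $\mathfrak X$ is a JS. For the converse the goal is to show that the linear span $\mathcal A = \left< A_0, A_1, A_2, A_3 \right>$ is closed under ordinary matrix multiplication, which (together with the features already built into a coherent Jordan algebra) is exactly the assertion that $\mathfrak X$ is an AS.

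The first step is a reduction. Since $\mathfrak X$ is a JS we have $A_i * A_j = \frac12(A_iA_j + A_jA_i) \in \mathcal A$ for all $i,j$, so $A_iA_j \in \mathcal A$ if and only if the commutator $S_{ij} := A_iA_j - A_jA_i$ lies in $\mathcal A$; as $S_{ij}$ is antisymmetric while $\mathcal A$ consists of symmetric matrices, this happens precisely when $S_{ij}=0$. Thus $\mathfrak X$ is an AS iff all commutators vanish. Now each $A_i$ commutes with $A_0 = \identity_n$ and with $\allone_n = A_0+A_1+A_2+A_3$, so $0 = [A_i,\allone_n-\identity_n] = \sum_{t=1}^{3}[A_i,A_t]$. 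For rank $4$ this collapses the three commutators onto one: writing $S := S_{12}$, the relations force $S_{23}=S$ and $S_{13}=-S$. Hence the whole proposition reduces to the single claim $S=0$.

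To finish I would attack $S=0$ through the quadratic invariant $\|S\|^2 = -\operatorname{tr}(S^2) = 2\operatorname{tr}(A_1^2A_2^2)-2\operatorname{tr}((A_1A_2)^2)\ge 0$, which vanishes iff $S=0$, and try to bound it from above using two structural inputs. First, $\mathcal A$ is a \emph{special} Jordan algebra, hence closed under the Jordan triple product $\{A,B,C\}=\frac12(ABC+CBA)$; in particular $A_1A_2A_1\in\mathcal A$ and $A_1^2*A_2\in\mathcal A$, which forces $[A_1,S] = A_1^2A_2+A_2A_1^2-2A_1A_2A_1\in\mathcal A$ (and similarly for $A_2,A_3$). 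Second, the identities $S=S_{12}=S_{23}=-S_{13}$ together with $A_3=\allone_n-\identity_n-A_1-A_2$ let me rewrite $3\|S\|^2$ as a single trace expression in $A_1,A_2$ alone, after which the JS-identities $A_iA_j+A_jA_i\in\mathcal A$ (which pin down the symmetric parts of all products exactly) should be used to cancel the antisymmetric contribution.

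The hard part is precisely this last step. I expect the purely linear ``walk-counting'' relations among the intersection numbers to be insufficient: one checks that the entrywise linear system they impose is consistent with a nonzero $S$, so a genuinely quadratic or spectral argument is unavoidable — either a positivity bound on $\|S\|^2$, or an analysis on the common invariant subspace $\mathbf 1^{\perp}$ (the orthogonal complement of the all-ones vector), where $A_1+A_2+A_3$ acts as $-\identity$ and one must show that $A_1$ and $A_2$ commute. This is also where rank $4$ is essential: having only three non-identity classes is exactly what forces all commutators to coincide up to sign, whereas from rank $5$ onward a nonzero common commutator can survive, which is the very mechanism realized by the proper Jordan schemes constructed later in this paper.
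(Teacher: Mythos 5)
Your reduction is correct as far as it goes, but the proof is incomplete, and the gap sits exactly where all the content of the proposition lies. The forward direction, the observation that for symmetric matrices $A_iA_j\in\mathcal A$ if and only if $[A_i,A_j]=0$, and the collapse (via $[A_i,\mathbb{J}_n]=0$) of the three commutators to a single antisymmetric matrix $S=S_{12}=S_{23}=-S_{13}$ are all sound. But you never prove $S=0$: everything after the reduction is phrased conditionally (``I would attack\dots'', ``should be used\dots'', ``a spectral argument is unavoidable''), and none of the proposed devices --- the positivity of $\|S\|^2=-\operatorname{tr}(S^2)$, the Jordan triple-product identities, the analysis on $\mathbf 1^{\perp}$ --- is actually carried through to a contradiction with $S\neq 0$. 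You concede this yourself. Note moreover that the collapse to a single $S$ uses only $\sum_i A_i=\mathbb{J}_n$ and symmetry, not the Jordan closure; the only place the JS hypothesis has entered your argument is the linear statement that symmetric parts of products lie in $\mathcal A$, and you observe yourself that the resulting linear relations are consistent with a nonzero $S$. So what remains unproven is not a routine verification but essentially the whole theorem.

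The idea you are missing is the paper's dimension count on powers. Since a Jordan algebra is closed under squaring, and powers of a single matrix commute with one another, every power $A_i^k$ lies in the four-dimensional space $\mathcal A$; hence the minimal polynomial of each $A_i$ has degree at most $4$. If some $A_i$ attains degree $4$, then $\{\mathbb{I}_n,A_i,A_i^2,A_i^3\}$ is a basis of $\mathcal A$, so $\mathcal A=\mathbb R[A_i]$ is a commutative matrix algebra, and $\mathfrak X$ is a commutative AS --- in particular $S=0$ with no spectral estimate at all. Otherwise each $A_i$, $i=1,2,3$, is a regular graph with at most three eigenvalues, hence strongly regular, and the paper then invokes the known theorem (see \cite{vDam03, vDamM10}, and the amorphic rank 4 schemes of \cite{GolIK94}) that a partition of the edge set of the complete graph into three strongly regular graphs is an association scheme. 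That second case rests on a genuinely nontrivial cited result, which calibrates how much work your single unproven claim $S=0$ would have to do; it was never going to fall to the linear manipulations you set up, consistent with your own diagnosis. If you want to salvage your route, the power/minimal-polynomial argument is the quantitative use of ``rank exactly 4'' that your outline lacks.
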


\begin{proof}
The direction AS \(\Rightarrow\) JS is trivial. To prove the opposite
implication, consider \({\cal A} = \left<A_0, A_1, A_2,
   A_3\right>\), that is the space spanned by the matrices \(A_i\). Then
\(A_i^k\in{\cal A}\) for each \(k\in{\mathbb N}\). Therefore, the
minimal polynomial of each \(A_i\) has degree at most 4. If this
bound is reached for some \(i\) then we get that \({\cal A} = {\mathbb
   R}[A_i]\), and \({\cal A}\) is commutative, therefore \({\frak X}\) is a
(commutative) AS.

Otherwise, the minimal polynomials of all \(A_i\) have degree at
most 3. This means that all basic graphs \(\Gamma_i\), \(i=1,2,3\), are
strongly regular. It is a well known fact in AGT, see, e.g.,
\cite{vDam03, vDamM10}, that a partition of the edge set of a complete
graph into three strongly regular graphs defines an AS; namely
amorphic rank 4 AS in the sense of \cite{GolIK94}. 
\end{proof}

The presented simple facts allow to formulate efficient sufficient
conditions for the existence of a rank 5 JS such that at least one
of its basic graphs is an SRG. These conditions will be considered
later on in the text (starting from Section \ref{org5fceb33}), when their
emergence will become reasonably clear to the reader.

\subsection{Jordan closure and WL closure}
\label{sec:org98eb02f}

Because the polynomial-time procedure of the Weisfeiler-Leman
stabilization may be described in terms of the usual multiplication
of matrices, a similar procedure may be defined also for Jordan
algebras. The result, which should be called the Jordan closure of
a given set \(X\) of symmetric square matrices of order \(n\) over a
field, say, \({\mathbb R}\), is the smallest Jordan algebra \(JA(X)\)
which contains the prescribed set \(X\) of matrices.

 The definitions presented above provide the most general background
for the considerations which, in principle, are possible at the
current text. They will be briefly discussed in the forthcoming
paper \cite{KliMR}.

The fact of the existence of the coherent Jordan closure \(JA(X)\)
of a given non-empty set \(X\) of symmetric square matrices of order
\(n\) is pretty clear. Indeed, \(JA(X)\) is defined as the smallest
coherent Jordan algebra which contains the set \(X\). Take into
account that a coherent Jordan algebra is defined as a set of
square matrices of order \(n\) over the field \({\mathbb R}\) which is
closed with respect to two operations of arity zero, one unary
operation and two binary operations. This immediately implies the
correctness of the definition of \(JA(X)\). 

A brief justification of the correctness of such a definition can
be found in the slides \cite{Cam18} of a lecture by P.J. Cameron,
given at Będlewo, a Polish analogue of Oberwolfach.

\begin{remark}
In fact, a simple version of the Jordan stabilization procedure of
a given set of square matrices of order \(n\) was implemented and was
quite frequently used in the course of our project. Some of the
discussed theoretical claims were helpful to test the correctness
of the created program, especially in the initial stages of its
development. 
\end{remark}

One may also introduce the definition of a \emph{genuine} JS
\(J\). This is a Jordan scheme \(J\) which is coarser than its
WL-stabilization. Each proper \(JS\) clearly is genuine. Small
examples of non-proper genuine JS's of orders \(n=6,10,12\) will be discussed
later on in this text. In principle, other more sophisticated classes
of genuine JS's may be considered, though this will not become subject
of a special evident attention at the current preprint. At this stage
we just provide a simple helpful claim.

\begin{proposition}
Let \({\frak X}\) be an AS, \(\tilde{\frak X}\) its
symmetrization. Assume that \(\tilde{\frak X}\) is a genuine
\(JS\). Then \({\frak X}\) is non-commutative.
\end{proposition}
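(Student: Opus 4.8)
The plan is to prove the contrapositive: assuming \({\frak X}\) is commutative, I will show that \(\tilde{\frak X}\) is not genuine. Unwinding the definition, \(\tilde{\frak X}\) is genuine precisely when it is strictly coarser than its WL-stabilization, i.e. when the linear span of the symmetrized basic matrices fails to be a coherent algebra. Note that this span is contained in the adjacency algebra of \({\frak X}\), which is itself coherent, so \(\wl(\tilde{\frak X})\) can never outgrow \({\frak X}\); genuineness is therefore equivalent to the symmetrization \(\tilde{\frak X}\) not being a coherent configuration. Thus the entire task reduces to verifying that, for commutative \({\frak X}\), the symmetrization \(\tilde{\frak X}\) is itself a CC.

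First I would identify the relevant subspace explicitly. Let \(A_0 = \identity_n, A_1, \dots, A_d\) be the standard basis of the adjacency algebra of \({\frak X}\), with \(A_i^T = A_{i'}\). The basic matrices of \(\tilde{\frak X}\) are \(A_i\) for the symmetric indices and \(\tilde A_i = A_i + A_i^T\) for each antisymmetric pair, and their span \(S\) is exactly the set of symmetric matrices lying in the adjacency algebra of \({\frak X}\) (a matrix \(\sum_i c_i A_i\) is symmetric iff \(c_i = c_{i'}\) for all \(i\)).

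The crux is then a one-line computation exploiting commutativity: if \(X, Y \in S\), then both are symmetric and lie in the commutative adjacency algebra, so \((XY)^T = Y^T X^T = YX = XY\); hence \(XY\) is symmetric and, being a product inside the algebra, lies again in \(S\). Thus \(S\) is closed under ordinary matrix multiplication. Since \(S\) also contains \(\identity_n\) and \(\allone_n = \sum_i A_i\), consists entirely of symmetric matrices (so it is trivially transposition-closed), and is spanned by the \((0,1)\)-matrices \(\tilde A_i\) partitioning \(\Omega^2\) (so it is Schur--Hadamard closed), it satisfies all axioms of a coherent algebra. Therefore \(\wl(\tilde{\frak X}) = \tilde{\frak X}\), so \(\tilde{\frak X}\) is not genuine, establishing the contrapositive.

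I expect the only genuinely delicate point to be the reduction in the first paragraph, namely recognizing that ``genuine'' is equivalent to ``the symmetrization fails to be a CC,'' together with the sandwiching \(\operatorname{span}\{\tilde A_i\} \subseteq \wl(\tilde{\frak X}) \subseteq {\frak X}\) that prevents the WL-closure from producing anything new. Once this framing is in place, closure of \(S\) under multiplication is immediate from symmetry combined with commutativity, and checking the remaining coherent-algebra axioms is routine inspection.
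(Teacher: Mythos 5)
Your proof is correct and follows essentially the same route as the paper: both argue the contrapositive, using the fact that commutativity of \({\frak X}\) forces its symmetrization to be an association scheme (hence a coherent configuration fixed by WL-stabilization, hence not genuine). The only difference is that you spell out the standard verification — closure of the symmetric part of the adjacency algebra under multiplication via \((XY)^T = Y^TX^T = YX = XY\), plus the routine coherent-algebra axioms — which the paper's proof invokes as a known fact without proof.
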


\begin{proof}
Assume that \({\frak X}\) is commutative, then the symmetrization
\(\tilde{\frak X}\) is an AS. Therefore, \(WL(\tilde{\frak
   X})=\tilde{\frak X}\). This contradicts the assumption that
\(\tilde{\frak X}\) is genuine.
\end{proof}

\subsection{Walk regular graphs}
\label{sec:org22e8ce6}

The first examples of proper Jordan schemes, presented below,
consist of unicolor graphs, which are "very regular". For example,
the scheme \(J_{15}\) consists of distance regular graphs of
valencies 2 and 4.

This is why it is natural from the beginning to consider quite
strong necessary conditions for the basic graphs of new structures
we are searching for.

Recall, see, e.g., \cite{BroH12}, \cite{GodR01}, that a simple
graph \(\Gamma\) is called \emph{walk regular} if for any \(l\ge 1\) each
power \(A(\Gamma)^l\) of its adjacency matrix has a constant diagonal
(depending on the value of \(l\)). The concept goes back to
\cite{GodM80}, who introduced a couple equivalent definitions and
provided some feasibility conditions for the parameters of a
walk regular graph (WRG). Clearly each WRG is regular.

It is easy to understand that each coherent (simple) graph is a
WRG. The same holds for any basic graph of a Jordan scheme.

Let us call a WRG \(\Gamma\) \emph{properly walk regular} (PWRG) if it is
a WRG and the coherent closure \(WL(\Gamma)\) has at least two
fibers. Any union \(\Delta\) of basic graphs of a putative Jordan
scheme \(\frak{S}\) should be a WRG. However in principle, provided
\(\frak{S}\) is proper, the result of \(WL(\Delta)\) may be a
non-homogeneous CC, in other words, \(\Delta\) might be a PWRG.

Let us discuss some examples. A classical example of a PWRG is the
Hoffman graph \(H_{16}\) of order 16, which is cospectral with the
4-dimensional cube \(Q_4\). As will be mentioned below its Jordan
closure coincides with the symmetrisation \(SWL(H_{16})\) of
\(WL(H_{16})\). 

Taking into account that the proper scheme \(J_{15}\) was initially
approached via a lucky guess, there was sense to try to construct
similar structures of smaller orders. We took two different
approaches. 

The second, systematical approach will be discussed elsewhere. In
particular, we expect to consider all small PWRG's.

The first approach is more empirical, though again it relies on the
use of a computer.

The paper \cite{GodM80} contains a diagram of a graph \(GM_{13}\) of
order 13 and valency 4, presented there in Figure 2. From the
 context of the paper it was natural to assume that this graph
\(GM_{13}\) is walk regular. Surprisingly, the computer did not
confirm such an assumption, and afterwards the claim became
immediately clear via visual arguments. Indeed, the vertex at the
center does not belong to any triangle, while all 12 other vertices
do.

Being disappointed by such an observation, we started looking for
further information. On this way we faced a report by Mark Farrell
\cite{www:farrell}    about his search for smaller WRGs. A slight
difficulty was that Mark's email address is currently not working,
thus he, in a sense, remains "invisible".

Finally, a small ad hoc program was written in order to search for
all PWRGs of order 12. It turns out that up to isomorphism and
complements there are exactly two such graphs, denoted by
\(\Gamma_{12,4}\) and \(\Gamma_{12,5}\), of valencies 4 and 5,
respectively.

We further processed the first of these graphs, of
valency 4. We created the diagram presented in Figure \ref{figure:3.1}.

\begin{figure}
\begin{center}
\input{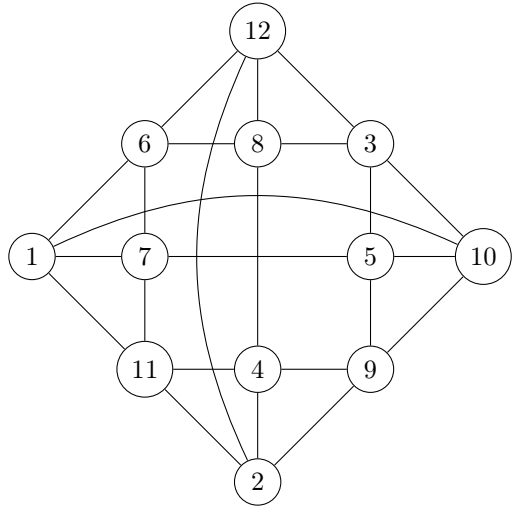}    
\end{center}
\caption{The proper walk regular graph \(\Gamma_{12,4}\) \label{figure:3.1}}
\end{figure}

The reader is welcome to compare it with the diagram of the graph
\(GM_{13}\) and to conclude that the current one appears by removing
of the central vertex of \(GM_{13}\), and interpreting four edges
incident to it as one horizontal and one vertical edge on the new
diagram. This pretty well explains the origin of the mistake: A kind
of typographical error.

Playing with the graph \(\Gamma_{12,4}\), its automorphism group was
first described with the aid of nauty and \gap and later on in a
more human-friendly manner.

On the next stage one more ad-hoc program was written. The goal was
to search for a "nice" imitation of a rank 5 proper Jordan scheme of
order 12. The program produced a color graph \(\frak{S}\) with the
following adjacency matrix:

\[
  \left(\begin{array}{rrrrrrrrrrrr}
  0&1&1&2&4&3&3&4&4&3&3&4\\
  1&0&1&3&2&4&4&4&3&4&3&3\\
  1&1&0&4&3&2&4&3&4&3&4&3\\
  2&3&4&0&1&1&4&3&3&4&3&4\\
  4&2&3&1&0&1&3&4&3&3&4&4\\
  3&4&2&1&1&0&3&3&4&4&4&3\\
  3&4&4&4&3&3&0&1&1&2&3&4\\
  4&4&3&3&4&3&1&0&1&4&2&3\\
  4&3&4&3&3&4&1&1&0&3&4&2\\
  3&4&3&4&3&4&2&4&3&0&1&1\\
  3&3&4&3&4&4&3&2&4&1&0&1\\
  4&3&3&4&4&3&4&3&2&1&1&0\\
  \end{array}\right)
  \]

In this rank 5 color graph all unicolor graphs are WRG's. The graphs
\(\Gamma_1=4\circ K_3\) and \(\Gamma_2=6\circ K_2\) are
vertex-transitive. The graphs \(\Gamma_3\) and \(\Gamma_4\) are both
isomorphic to the PWRG \(\Gamma_{12,4}\). In fact, \(\Gamma_3\) literally
coincides with the copy depicted in Figure \ref{figure:3.1}.

\begin{proposition}
\begin{enumerate}
\item The color graph \(\frak{S}_{12}\) provides an example of a rank 5
graph with all unicolor graphs walk regular; the two graphs of
valency 4 are isomorphic PWRGs.
\item \label{orgba815bb} \(\frak{S}_{12}\) is not a Jordan scheme.
\item \label{org69b2e24}\(\aut(\Gamma_{12,4})\cong E_{16}:{\mathbb Z}_2\) is a group of
order 32, with orbits of lengths 4 and 8 on the vertices.
\item \(\aut(\frak{S}_{12})\) is a group of order 2 generated by the
permutation \[h_1=(0,4)(1,3)(2,5)(6,9)(7,11)(8,10)\].
\item \(\caut(\frak{S}_{12})=\left<h_1,h_2\right> \cong E_4\), where
\(h_2=(0,1)(3,4)(6,9)(7,10)(8,11)\).
\end{enumerate}
\end{proposition}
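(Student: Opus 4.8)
The plan is to treat the five claims in the natural order, exploiting that $\mathfrak{S}_{12}$ is a rank-$5$ color graph with relations $R_0=\id_{12}$, with $R_1,R_2$ the vertex-transitive graphs $4\circ K_3$ and $6\circ K_2$, and $R_3,R_4$ two copies of $\Gamma_{12,4}$. For part (1) I would first recall that every vertex-transitive graph is walk regular, which immediately handles $\Gamma_1$ and $\Gamma_2$; the identity graph is trivially a WRG. For the two valency-$4$ graphs it suffices to exhibit an explicit isomorphism $\Gamma_3\to\Gamma_4$ (a permutation of $\Omega_{12}$ carrying the edge set of one onto the other) and to invoke the fact, already established when $\Gamma_{12,4}$ was introduced, that $\Gamma_{12,4}$ is a PWRG, i.e. a WRG whose coherent closure $WL(\Gamma_{12,4})$ has two fibers. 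Thus all five unicolor graphs are walk regular and the two valency-$4$ graphs are isomorphic PWRGs.

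For part (2) I would apply Proposition~\ref{org163c63e}: the color graph is a Jordan scheme precisely when $(A_i+A_j)^2\in\mathcal{A}=\langle A_0,\dots,A_4\rangle$ for all $i,j\in\{1,\dots,4\}$. Since the $A_k$ are $(0,1)$-matrices summing to $\allone_{12}$ and forming a partition of $\Omega_{12}^2$, a symmetric matrix lies in $\mathcal{A}$ if and only if it is constant on each relation $R_k$. So it is enough to find one pair $(i,j)$, one color $k$, and two pairs $(x,y),(u,v)\in R_k$ for which the $(x,y)$- and $(u,v)$-entries of $A_iA_j+A_jA_i$ differ; this entry counts the length-$2$ walks between the endpoints whose two steps use colors $i$ and $j$. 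Because $\Gamma_3$ is a PWRG, its coherent closure refines $\mathfrak{S}_{12}$, so I expect the relevant count (e.g. the number of common $\Gamma_3$-neighbors, arising from $A_3^2=\tfrac14(A_3+A_3)^2$, or a mixed product such as $A_1*A_3$) to fail to be constant on some relation; exhibiting the explicit witnessing pair of positions finishes this part.

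Part (3) is the main obstacle. For the lower bound I would exhibit an explicit generating set for a subgroup of $\aut(\Gamma_{12,4})$ of order $32$, identify inside it a normal elementary abelian subgroup of order $16$ together with a complementing involution, thereby reading off the abstract structure $E_{16}:\mathbb{Z}_2$, and record the two orbits (of sizes $4$ and $8$) on the vertex set. The delicate direction is the upper bound, proving that no further automorphisms exist. Here I would use that $\Gamma_{12,4}$ is not vertex-transitive (being a genuine PWRG), so that any automorphism must preserve the two fibers of $WL(\Gamma_{12,4})$; a local analysis of the fibers -- counting triangles and common-neighbor patterns, which distinguish the size-$4$ from the size-$8$ fiber even though every vertex lies in exactly two triangles -- then pins down the order of a point stabilizer, and $|\aut(\Gamma_{12,4})|=|\text{orbit}|\cdot|\text{stabilizer}|$ yields exactly $32$. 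This is the step where the original computation with nauty and \gap was used; a human-readable version rests on isolating the right combinatorial model of the graph.

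Finally, parts (4) and (5) follow by intersection and color-bookkeeping. Since $\aut(\mathfrak{S}_{12})=\bigcap_i\aut(\Gamma_i)\le\aut(\Gamma_3)$ and $|\aut(\Gamma_3)|=32$ by part (3), I would run through this group and retain only those elements that also preserve $R_1,R_2,R_4$; verifying that $h_1$ does and that it is the only nontrivial survivor gives $\aut(\mathfrak{S}_{12})=\langle h_1\rangle$ of order $2$. For the color group, any element must fix the reflexive relation $R_0$ and preserve valencies, so it fixes $R_1$ (valency $2$) and $R_2$ (valency $1$) and can at most interchange $R_3$ and $R_4$ (both valency $4$); hence $\caut(\mathfrak{S}_{12})/\aut(\mathfrak{S}_{12})$ embeds into $\mathbb{Z}_2$. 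Checking that $h_2$ realizes the swap $R_3\leftrightarrow R_4$ while fixing $R_0,R_1,R_2$, and that $h_1,h_2$ are commuting involutions, shows $\caut(\mathfrak{S}_{12})=\langle h_1,h_2\rangle\cong E_4$.
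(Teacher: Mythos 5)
Your reduction of part (b) via Proposition \ref{org163c63e} is correct as far as it goes (a symmetric matrix lies in \({\cal A}\) iff it is constant on each \(R_k\), so a single witnessing pair of positions suffices), but the reason you give for expecting such a witness to exist is not a valid inference, and this is the genuine gap. You argue: ``because \(\Gamma_3\) is a PWRG, its coherent closure refines \(\frak{S}_{12}\), so I expect the relevant count to fail to be constant on some relation.'' Inhomogeneity of the coherent closure is an obstruction to \(\Gamma_3\) being a basic graph of a \emph{coherent configuration}, not of a Jordan scheme --- indeed, the whole point of the section in which \(\frak{S}_{12}\) appears (and of the entire paper) is that these two things are independent: the proper Jordan scheme \(J_{15}\) has a coherent closure with two fibers of sizes 3 and 12, yet it \emph{is} a Jordan scheme. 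So PWRG-ness of \(\Gamma_3\) carries no information about whether the Jordan test fails; if your heuristic were sound, it would equally ``prove'' that \(J_{15}\) is not a Jordan scheme. Consequently part (b) is established only once the explicit entries of some \((A_i+A_j)^2\) violating constancy on a color are actually exhibited (or computed); your proposal never produces them, and the paper itself settles this point by computer.

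The remaining parts are sound in outline and essentially amount to the finite verifications that the paper delegates to the machine: (a) by vertex-transitivity of \(4\circ K_3\) and \(6\circ K_2\) plus the established properties of \(\Gamma_{12,4}\); (d) by intersecting the order-32 group of part (c) with the stabilizers of the other colors; (e) by the valency argument forcing \(\caut(\frak{S}_{12})/\aut(\frak{S}_{12})\) to embed into \({\mathbb Z}_2\), realized by \(h_2\). For part (c), note that your upper-bound step (``a local analysis of the fibers then pins down the order of a point stabilizer'') is still only a plan; the paper's own hand argument is more concrete and you may prefer it: the induced action of \(\aut(\Gamma_{12,4})\) on the orbit \(\{3,6,9,11\}\) is \(D_4\) of order 8, the kernel of this action is \(E_4\), and the group acts faithfully and transitively on the remaining eight vertices, which yields the order \(8\cdot 4=32\) together with the two orbit lengths.
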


\begin{proof}
All results were obtained with the aid of a computer.

Part \ref{org69b2e24} can be easily justified by hand. Indeed, the group
\(D_4\) of order 8, acting on the orbit \(\{3,6,9,11\}\) of
\(\aut(\Gamma_{12,4})\) is pretty visible from the diagram. Making
extra small efforts, we find the kernel of the action of
\(\aut(\Gamma_{12,4})\) on the above set. The kernel is isomorphic to
\(E_4\). Finally, the group acts transitively and faithfully on the set
\(V(\Gamma_{12,4})\setminus\{3,6,9,11\}\).

In principle the other parts can also be justified without the use of a
computer. 
\end{proof}

We see a few goals of this concluding part of Section
\ref{org4a0fd08}:
\begin{itemize}
\item To show elements of our technology, which combines an initial wild
guess, further computer-aided efforts, and finally a human glance
on the obtained result;
\item to introduce the structure \(\frak{S}_{12}\) as a kind of "exotic
flower", which is not a (proper) Jordan scheme, but imitates it in
some extent;
\item to announce special kind of activities which in future might be
exploited much more seriously.
\end{itemize}

\section{Proper Jordan scheme \(J_{15}\): An initial glance}
\label{sec:orgde4e04e}
\label{orgeb5dfbd}
In this section, in principle, we follow the style and notation in
\cite{KliRW09}. Some small adjustments however are done in a fashion
which is more convenient to the current presentation.

Thus we start from the group \(A_4=\left<g_1, g_2\right>\), where \(g_1=(0,1,2)\),
\(g_2=(1,2,3)\); \(A_4\) acts 2-transitively on the set
[0,3]=\{0,1,2,3\}. Using COCO (see \cite{FarK91}), we construct an
induced intransitive action of \(A_4\) on the set \(\Omega\) of
cardinality 15. Here \(\Omega=\Omega_1\cup\Omega_2\), where
\(\Omega_{\text{1}}\)=(0,1)\(^{\text{A}_{\text{4}}}\), \(\Omega_{\text{2}}\)=\{\{0,1\},\{2,3\}\}\(^{\text{A}_{\text{4}}}\) are sets of size
12 and 3, respectively. Clearly the restriction \((A_4, \Omega_1)\)
coincides with the regular action of \(A_4\). In the COCO labelling
\(\Omega\)=[0,14], here we have the following COCO map, see Table
\ref{table:15aa.map}. 

\begin{table}
\begin{center}
$
\begin{array}{|r|r|}
\hline
  0&(0,1)\\
  1&(1,2)\\
  2&(0,2)\\
  3&(2,0)\\
  4&(2,3)\\
  5&(1,0)\\
  6&(0,3)\\
  7&(3,0)\\
  8&(3,1)\\
  9&(2,1)\\
 10&(1,3)\\
 11&(3,2)\\
 12&\{\{0,1\},\{2,3\}\}\\
 13&\{\{0,3\},\{1,2\}\}\\
 14&\{\{0,2\},\{1,3\}\}\\
 \hline
\end{array}
$
\end{center}
\caption{Elements of \(\Omega\) \label{table:15aa.map}}
\end{table}

It turns out that the action \((A_4, \Omega)\) has rank 21. This was
obtained with the aid of COCO, though it may be easily confirmed
using the CFB (orbit counting) Lemma (cf. \cite{KliPosRos88b}).

In Table \ref{table:2-orbits} we list representatives of the
2-orbits of the intransitive group (A\(_{\text{4}}\), \(\Omega\)), as they were
automatically generated by COCO.

\begin{table}
\begin{center}
\begin{tabular}{|r|c|c||r|c|c|}
\hline
2-Orbit & Subdegree & Representative&
2-Orbit & Subdegree & Representative\\
\hline
0&1&(0,0)&12&1&(0,12)\\
1&1&(0,1)&  13&1&(0,13)\\

2&1&(0,2)&  14&1&(0,14)\\

3&1&(0,3)&  15&4&(12,0)\\
4&1&(0,4)&  16&4&(12,1)\\
5&1&(0,5)&  17&4&(12,2)\\
6&1&(0,6)&  18&1&(12,12)\\
7&1&(0,7)&  19&1&(12,13)\\
8&1&(0,8)&  20&1&(12,14)\\
9&1&(0,9)& &&\\
10&1&(0,10)&&&\\
11&1&(0,11)& &&\\ 
\hline
\end{tabular}
\end{center}
\caption{2-orbits of \((A_4,\Omega)\) \label{table:2-orbits}}
\end{table}

COCO returns twelve rank 6 mergings, all having a group of
order 60. Using \gap we may confirm that all these mergings are
isomorphic to the unique rank 6 AS, which appears from the
transitive action of A\(_{\text{5}}\) of degree 15. In the framework of the
current presentation these AS's are not immediately needed; they
will be reconsidered in further sections.

Using \gap we obtain 24 coherent Jordan schemes of rank 5. They are
split into two isomorphism classes with 12 elements each. One class
consists of non-proper schemes corresponding to the symmetrization
of the rank 6 scheme above. The other class consists of proper
Jordan schemes; we denote a representative of this class by
\(J_{15}\). 
Each scheme consists of a spread \(5\circ K_3\) and three
isomorphic copies of the unique DTG \(\Delta\) of valency 4 and
diameter 3. The graph \(\Delta\) is nothing else but the antipodal cover
of K\(_{\text{5}}\), which is a DTG. Again, \(\Delta\) will be considered with more
details in the next sections.

Here we just present the adjacency matrix A=A(J\(_{\text{15}}\)) of the object,
which from this moment will be denoted by \(J_{15}\) and called the
\emph{proper Jordan scheme of order 15.} In fact, A=\(\sum_{\text{i=0}}^{\text{4}}\) i\(\cdot\)
A\(_{\text{i}}\), where A\(_{\text{i}}\) is the adjacency matrix of the corresponding basic
graph \(\Gamma_{\text{i}}\)=(\(\Omega\), R\(_{\text{i}}\)), forming the entire color graph
\(\Gamma\). Moreover, \(\Gamma_1\cong 5\circ K_3\), while the three
graphs \(\Gamma_2\), \(\Gamma_3\), \(\Gamma_4\) are isomorphic to the DTG \(\Delta\).

\[
  A=A(J_{15}) = \left(\begin{array}{rrrrrrrrrrrrrrr}%
0&1&1&2&3&4&2&3&4&2&3&4&2&3&4\\%
1&0&1&3&4&2&3&4&2&3&4&2&3&4&2\\%
1&1&0&4&2&3&4&2&3&4&2&3&4&2&3\\%
2&3&4&0&1&1&3&2&4&2&4&3&4&3&2\\%
3&4&2&1&0&1&2&4&3&4&3&2&3&2&4\\%
4&2&3&1&1&0&4&3&2&3&2&4&2&4&3\\%
2&3&4&3&2&4&0&1&1&4&3&2&2&4&3\\%
3&4&2&2&4&3&1&0&1&3&2&4&4&3&2\\%
4&2&3&4&3&2&1&1&0&2&4&3&3&2&4\\%
2&3&4&2&4&3&4&3&2&0&1&1&3&2&4\\%
3&4&2&4&3&2&3&2&4&1&0&1&2&4&3\\%
4&2&3&3&2&4&2&4&3&1&1&0&4&3&2\\%
2&3&4&4&3&2&2&4&3&3&2&4&0&1&1\\%
3&4&2&3&2&4&4&3&2&2&4&3&1&0&1\\%
4&2&3&2&4&3&3&2&4&4&3&2&1&1&0\\%
\end{array}\right)
  \]

Having this matrix, any sufficiently  motivated reader
may in principle confirm that \(\left<A_i\mid 0\le i\le 4\right>\) is indeed a proper
Jordan algebra. We however warn the reader that this fact will be
reconfirmed in the forthcoming sections a couple of times and in a
more comfortable manner in comparison with a brute force inspection
possible now.

\section{The antipodal distance transitive graph of valency 4 and order 15}
\label{sec:org0fc2b73}
\label{orgaf4339e}
In this section we introduce and investigate the graph \(\Delta\) of
valency 4, diameter 3 and order 15, which is unique up to
isomorphism and is an antipodal distance transitive graph. 

Though part of the arguments initally rely on the use of a computer,
finally they can be justified in a human-friendly manner. The use of
suitable diagrams will be an essential feature of our style of
presentation. 

Recall the classical model for the famous Petersen graph \(\Pi\). Its
vertices are 2-element subsets of the 5-set [0,4], adjacency means
that the 2-subsets are disjoint. In other words,
\(\Pi=\overline{T_5}=\overline{L(K_5)}\), where \(T_5\) is the
\emph{triangular graph}, i.e., the line graph of the complete graph
\(K_5\). Figure \ref{figure:petersen} presents the well-known diagram of \(\Pi\).

\begin{figure}
\begin{center}
\input{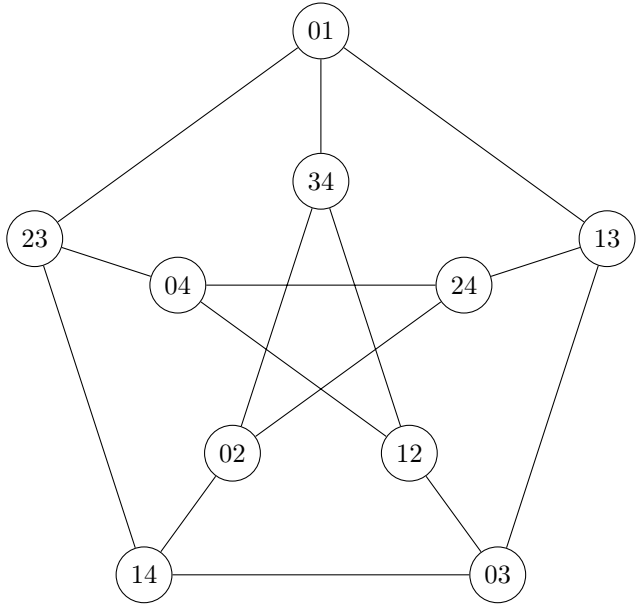}
\end{center}
\caption{Petersen graph \label{figure:petersen}}
\end{figure}

\begin{proposition}
\begin{enumerate}
\item The Petersen graph \(\Pi\) is a primitive SRG with the parameter
set \[(n,k,\lambda,\mu) = (10,3,0,1)\].
\item The parameter set of \(\Pi\) defines it uniquely up to isomorphism.
\item \(\operatorname{Aut}(\Pi)\cong S_5\), the symmetric group of order 120.
\end{enumerate}
\end{proposition}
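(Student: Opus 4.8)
The plan is to verify the three claims in sequence, working throughout with the explicit model $\Pi=\overline{L(K_5)}$ whose vertices are the $2$-subsets of $[0,4]$ and whose edges join disjoint pairs.

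For (a) I would read off the parameters directly from this model. A fixed $2$-subset is disjoint from exactly $\binom{3}{2}=3$ others, so $k=3$. Two adjacent (i.e.\ disjoint) $2$-subsets together occupy four of the five points, so a common neighbour would have to be a $2$-subset contained in the single remaining point; this is impossible, giving $\lambda=0$. Two non-adjacent $2$-subsets share exactly one point and so jointly occupy three points, leaving a unique $2$-subset on the other two points as their only common neighbour, giving $\mu=1$. Finally, since $\mu=1>0$ the graph is connected of diameter $2$, and because $0<\mu<k$ it is primitive (an SRG is imprimitive precisely when $\mu\in\{0,k\}$). This settles (a).

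For (b) the idea is a reconstruction from the parameters alone. Because $\lambda=0$ the graph is triangle-free, and $\mu=1$ forbids $4$-cycles, so its girth is at least $5$; together with $k=3$ and diameter $2$ the vertex count $10=1+3+3\cdot2$ shows that any such graph meets the Moore bound and is therefore the $(3,5)$-Moore graph, whose uniqueness is classical. If one prefers a self-contained argument, I would fix a vertex $x$ with its three pairwise non-adjacent neighbours $a,b,c$; the remaining six vertices each have, by $\mu=1$, a unique neighbour among $a,b,c$, and the resulting ``grandchild'' pairs are disjoint, so they split as $\{a_1,a_2\},\{b_1,b_2\},\{c_1,c_2\}$. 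Using $\lambda=0$ and $\mu=1$ once more, each $a_i$ is forced to join exactly one of $\{b_1,b_2\}$ and exactly one of $\{c_1,c_2\}$, and a short finite check then shows the remaining adjacencies are determined up to relabelling. I expect this reconstruction (or the appeal to Moore-graph uniqueness) to be the main obstacle, since it is the only part requiring genuine case analysis rather than direct counting.

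For (c) I would first exhibit the embedding $S_5\hookrightarrow\aut(\Pi)$: the natural action of $\operatorname{Sym}([0,4])$ on $2$-subsets preserves disjointness and is faithful, so $|\aut(\Pi)|\ge120$. For the reverse inequality I would pass to the complement, using the fact that $\aut(\Pi)=\aut(\overline{\Pi})$ together with the identification $\overline{\Pi}=T_5=L(K_5)$. By Whitney's line-graph theorem every automorphism of $L(K_5)$ is induced by an automorphism of $K_5$ (the graph $K_5$ is connected and avoids the single exceptional case $K_4$), so $\aut(L(K_5))\cong\aut(K_5)=S_5$, of order $120$. Combining the two bounds gives $\aut(\Pi)\cong S_5$, completing the proof.
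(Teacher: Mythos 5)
Your proposal is correct and follows essentially the route the paper has in mind: the paper declares parts (a) and (b) straightforward and, for part (c), points to the classical Whitney--Young theorem on automorphisms of line graphs, which is precisely the tool you invoke (correctly noting $K_5$ avoids the exceptional case $K_4$, and combining it with $\aut(\Pi)=\aut(\overline{\Pi})=\aut(L(K_5))$). Your explicit parameter count and the Moore-bound/reconstruction argument for uniqueness simply fill in the details the paper leaves to the reader.
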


\begin{proof}
The proof is straightforward; see also \cite{HolS93} as well as
\cite{Har69} for the formulation of the classical Whitney-Young
Theorem about the automorphism group of the line graph L(X) of a
given connected graph X.
\end{proof}

Now we introduce the graph \(\Delta\)=L(\(\Pi\)). Its vertex set of
cardinality 15 consists of structures \{\{a,b\},\{c,d\}\}, where \{a,b,c,d\}
is a subset of [0,4] of size 4. Adjacency means that two structures
have exactly one common 2-subset. Clearly, this implies that
\(\Delta\)=L(\(\Pi\)). In what follows, for brevity \{\{a,b\},\{c,d\}\} will be
substituted by ab,cd, written in the lexicographical order.

\begin{proposition}
\begin{enumerate}
\item \label{orgf1af0d7} The graph \(\Delta\) is a DRG of diameter 3 with the parameter set
(4,2,1;1,1,4).
\item \label{org3952143} \(\operatorname{Aut}(\Delta)\cong S_5\).
\item \label{org1c683c0} \(\Delta\) is a DTG.
\item \label{org71c050c} \(\Delta\) is an antipodal 3-fold cover of K\(_{\text{5}}\).
\item \label{org15c435f} The DRG \(\Delta\) is uniquely determined by its parameter set.
\end{enumerate}
\end{proposition}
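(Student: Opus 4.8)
The plan is to exploit the explicit model of $\Delta=L(\Pi)$ in which a vertex is a symbol $ab,cd$ recording a partition of a $4$-subset $\{a,b,c,d\}\subseteq[0,4]$ into two disjoint $2$-subsets, together with the single \emph{omitted} element $e\in[0,4]\setminus\{a,b,c,d\}$. First I would record the two elementary facts that drive everything: $\Delta$ has $\binom{5}{4}\cdot 3=15$ vertices, and since $\Pi$ is $3$-regular, each vertex of the line graph has valency $3+3-2=4$. The central observation is that the map sending a vertex to its omitted element $e$ is a surjection $\pi\colon\Delta\to[0,4]$ with fibres of size $3$; I would check directly that two vertices sharing a common $2$-subset necessarily omit \emph{different} elements, and that a fixed vertex $ab,cd$ (omitting $e$) has exactly one neighbour in each of the four fibres $\pi^{-1}(a),\dots,\pi^{-1}(d)$. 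This single computation simultaneously sets up parts (a), (c) and (d).

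From here parts (a) and (d) follow by reading off the distance partition from a base vertex $v=ab,cd$. The four neighbours lie one per fibre $a,b,c,d$; the two remaining vertices of the fibre $\pi^{-1}(e)$ are the other two matchings of $\{a,b,c,d\}$ and turn out to be exactly the vertices at distance $3$; the remaining $15-1-4-2=8$ vertices are at distance $2$. A short count of common neighbours (that adjacent vertices have a unique common neighbour, and that every neighbour of an antipode lies at distance $2$ from $v$) yields the constant intersection numbers and hence the array $(4,2,1;1,1,4)$, proving (a). Since ``distance $3$'' coincides with ``same omitted element'', the distance-$3$ relation is the equivalence relation whose classes are the five fibres of $\pi$; as each vertex has exactly one neighbour per other fibre, $\pi$ realises $\Delta$ as an antipodal $3$-fold cover of $K_5$, which is (d).

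For (b) I would invoke the Whitney--Young theorem (cited above, see \cite{Har69}): since $\Pi$ is connected and is not one of the small exceptional graphs, $\aut(\Delta)=\aut(L(\Pi))\cong\aut(\Pi)\cong S_5$ by the previous proposition. Part (c) then reduces to checking that this $S_5$ acts distance-transitively. The stabiliser of the base vertex $v=ab,cd$ is the stabiliser in $S_5$ of the matching $\{\{a,b\},\{c,d\}\}$ (which fixes $e$), namely a dihedral group of order $8$; I would verify transitivity on each distance sphere: on the $4$ neighbours it acts as the dihedral group on the four points $a,b,c,d$ (transitive), on the $2$ antipodes the transposition $(a\,b)$ already interchanges the two remaining matchings, and on the $8$ vertices at distance $2$ the action is a single (regular) orbit. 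Transitivity on all three spheres gives distance-transitivity, hence (c).

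The main obstacle is the uniqueness claim (e), where one is given only the abstract array $(4,2,1;1,1,4)$. My plan is to recover the line-graph structure intrinsically: from $k=4$ and $\lambda=a_1=1$ the neighbourhood of every vertex is a $1$-regular graph on $4$ vertices, i.e.\ $2K_2$, so by the Krausz clique-partition criterion any such DRG is a line graph $L(X)$ with $X$ a connected cubic graph on $15\cdot 2/3=10$ vertices. The delicate step is then to show that $L(X)$ being distance-regular of diameter $3$ forces $X$ to be a Moore graph of valency $3$; since the unique cubic Moore graph is the Petersen graph, this gives $X\cong\Pi$ and hence $\Delta\cong L(\Pi)$. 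I expect this last reduction (deciding which cubic graphs have a distance-regular line graph) to be the genuinely hard point; if a self-contained argument proves unwieldy, I would instead derive uniqueness by a direct analysis of the antipodal $3$-cover of $K_5$ (the perfect matchings between fibres being pinned down up to relabelling by the condition $a_1=1$), or simply cite the standard classification in \cite{BroCN89}.
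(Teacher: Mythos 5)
Your proposal is correct, and for the group-theoretic parts it coincides with the paper's argument: part (b) via the Whitney--Young theorem, and part (c) by exhibiting the vertex stabilizer as a dihedral group of order $8$ and checking it is transitive on each of the three distance spheres (the paper then gets (a) as a byproduct of this distance-transitivity). Where you genuinely diverge is in (a), (d) and (e): the paper reads (d) off its Figure of $\Delta$ as a cover of $K_5$ and settles (e) by asserting that the structure "is easily reconstructed from the parameter set" from the classical diagram, whereas you replace both diagrams by the explicit fibration $\pi$ sending a vertex $ab,cd$ to its omitted element of $[0,4]$; this gives the distance partition, the array $(4,2,1;1,1,4)$ and the antipodal $3$-fold cover structure by direct counting, with no pictures. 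Your uniqueness argument is also more substantial than the paper's: from $k=4$, $\lambda=1$ the local graph is $2K_2$, so the triangles form a Krausz partition exhibiting $\Delta=L(X)$ for a connected cubic $X$ on $10$ vertices; then $\lambda=1$ forces $X$ triangle-free and $c_2=1$ forces $X$ quadrilateral-free, so $X$ is a cubic graph of girth at least $5$ on $10$ vertices, i.e.\ the $(3,5)$-cage, which is the Petersen graph --- this closes the step you flagged as hard, and is a cleaner route than a bare citation of \cite{BroCN89}. What the paper's approach buys is brevity and visual intuition consistent with its essay style; what yours buys is a self-contained, picture-free verification, at the cost of the Krausz/Moore-bound machinery in (e).
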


\begin{figure}
\begin{center}
\input{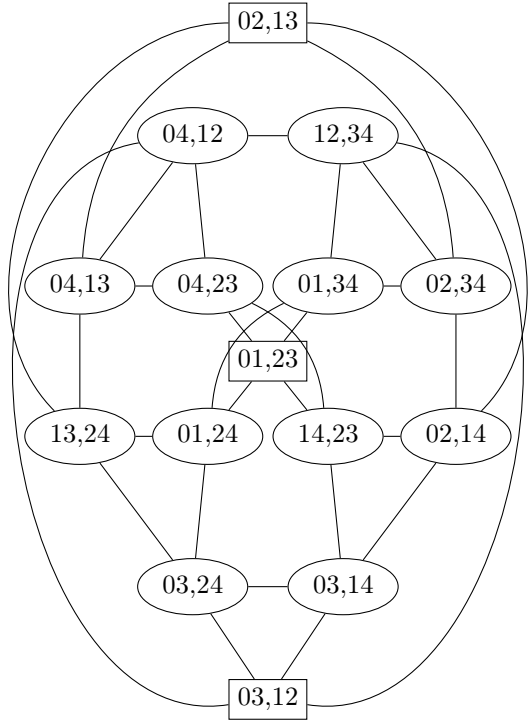}
\end{center}
\caption{Distance diagram of \(\Delta\) with respect to \(01,23\)}
\end{figure}

\begin{proof}
All the claimed results are well-known, see \cite{BroCN89} for exact
credits, in particular to D.H. Smith and A. Gardiner. For
completeness we discuss short outlines of the proofs below. For the
proof of \ref{org3952143} use again the Whitney-Young Theorem. Check that
\(\Delta\) has diameter 3, confirm that the vertex stabilizer of
Aut(\(\Delta\)) is a group of order 8, isomorphic to D\(_{\text{4}}\), which acts
transitively on the sets \(\Delta_{\text{1}}\)(x), \(\Delta_{\text{2}}\)(x) and \(\Delta_{\text{3}}\)(x) of
the vertices at distance i from x, i\(\in\)\{1,2,3\}. This implies \ref{org1c683c0}
and also \ref{orgf1af0d7}. The proof of \ref{org71c050c} follows from Figure
\ref{figure:delta-antipodal}.  For the proof of \ref{org15c435f} consider the diagram of \(\Delta\) presented
in Figure \ref{figure:delta-classical} and confirm that such a structure is easily
reconstructed from the parameter set.
\end{proof}

\begin{figure}
\begin{center}
\input{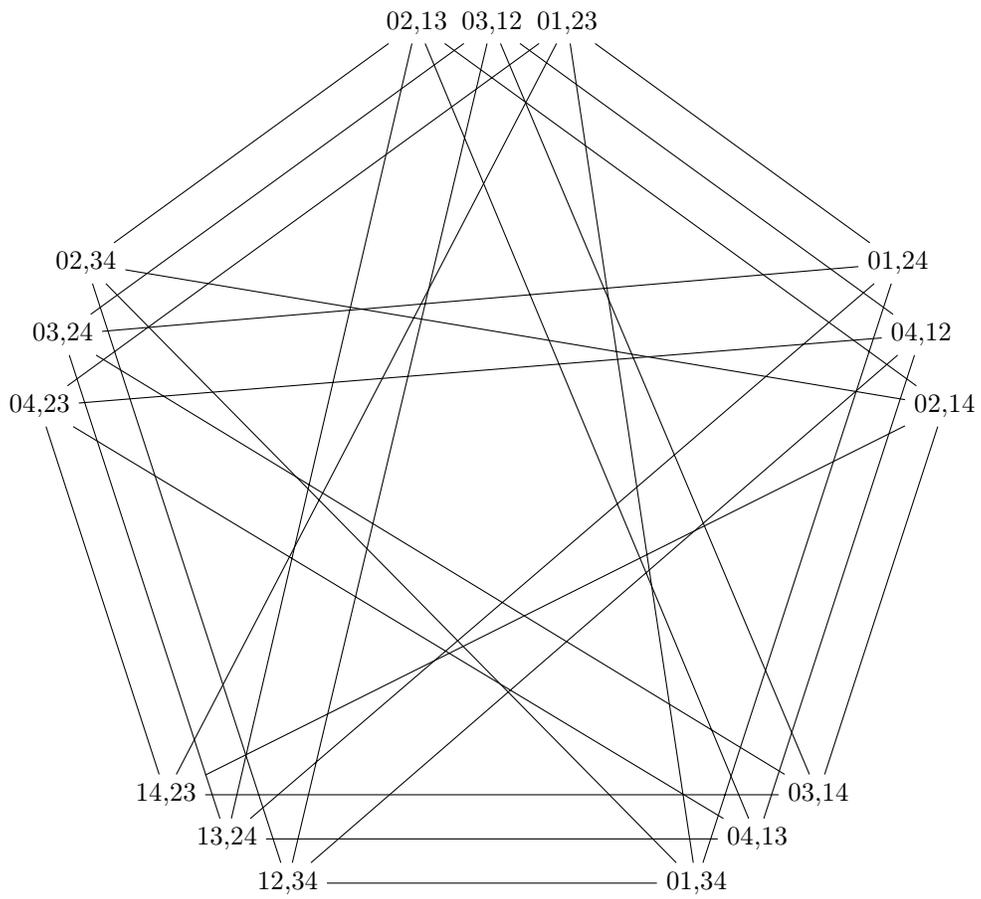}
\end{center}
\caption{\(\Delta\) as antipodal cover of \(K_5\) \label{figure:delta-antipodal}\label{figure:5.3}}
\end{figure}

\begin{figure}
\begin{center}
\input{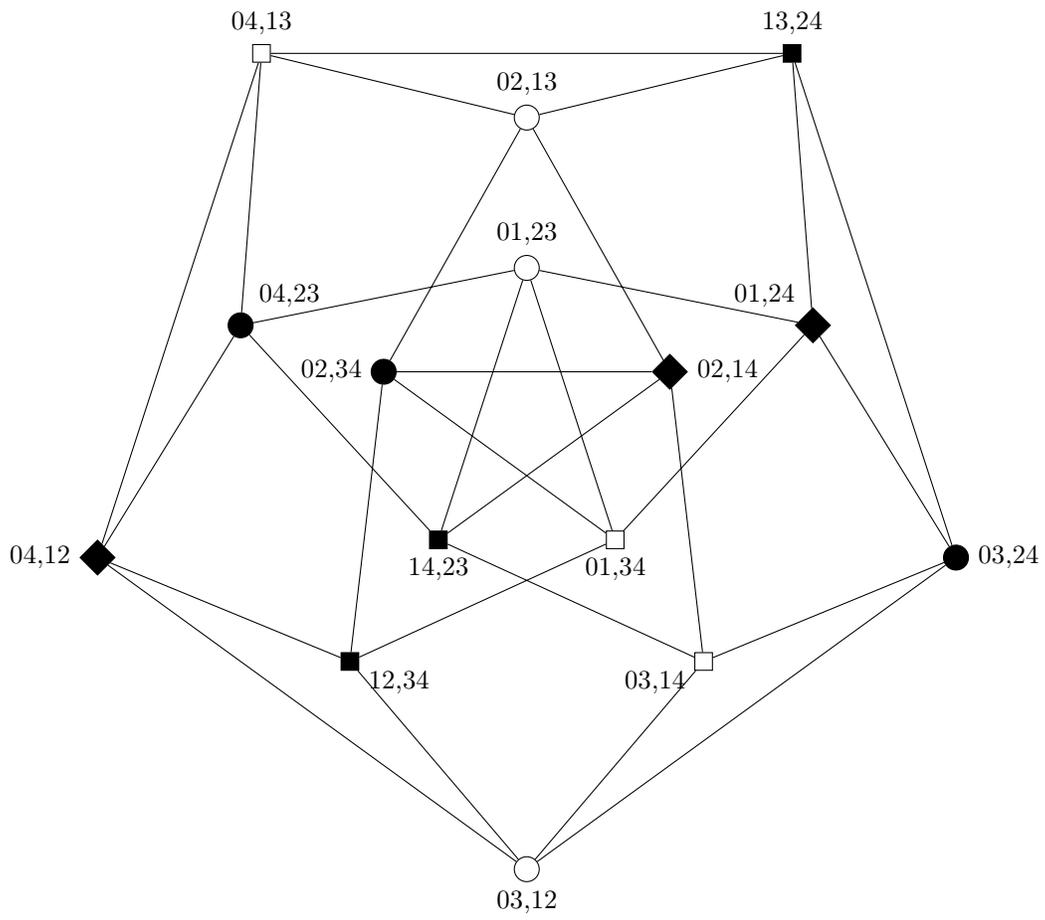}
\end{center}
\caption{Classical diagram of \(\Delta\) \label{figure:delta-classical}}
\end{figure}

Note that Figure \ref{figure:delta-classical} presents a nice
symmetric depiction of \(\Delta\); exactly this diagram appears on p.2
of  \cite{BroCN89}, the classical encyclopedic text in AGT.

\begin{proposition}
Let \(\Delta=\Delta_1=(\Omega,R_1)\) be our DTG of order 15. Let
\(\Delta_3=(\Omega, R_3)\) be the distance 3 graph of \(\Delta\). Define
\(\Sigma=(\Omega, R_1\cup R_3)\). Then \(\Sigma\) is an SRG with the
parameters (n,k,\(\lambda\),\(\mu\))=(15,6,1,3) which is isomorphic to the
complement \(\overline{T_6}\) of the triangular graph T\(_{\text{6}}\).
\end{proposition}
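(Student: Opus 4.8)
The plan is to treat the two assertions separately: that $\Sigma$ is strongly regular with parameters $(15,6,1,3)$, and that $\Sigma\cong\overline{T_6}$. The parameters I would get by a short computation inside the (symmetric, hence commutative) association scheme $\{A_0,A_1,A_2,A_3\}$ of distance matrices of $\Delta$; the isomorphism I would get by exploiting the six perfect matchings of the Petersen graph $\Pi$.

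First I would fix the relations numerically. The intersection array $(4,2,1;1,1,4)$ gives valencies $k_0,k_1,k_2,k_3=1,4,8,2$, so $\Delta_3=(\Omega,R_3)$ has valency $2$ and $\Sigma$ has valency $k_1+k_3=6$. Since $\Delta$ is antipodal of diameter $3$, the relation $R_0\cup R_3$ is the antipodal equivalence, whose classes (fibers) have size $3$; hence $\Delta_3=5\circ K_3$ and $A_3^2=2A_0+A_3$. From the standard intersection numbers I would compute $A_1^2=4A_0+A_1+A_2$ (using $a_1=1$, $c_2=1$) and $A_1A_3=A_2$ (the only nonzero $p_{13}^k$ is $p_{13}^2=b_2=1$), each checked against its row-sum $k_ik_j$. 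Writing $B=A_1+A_3$ and using $A_0=\identity$, $A_0+A_1+A_2+A_3=\allone$,
\[
B^2=A_1^2+2A_1A_3+A_3^2=6\identity+(A_1+A_3)+3A_2=6\identity+B+3(\allone-\identity-B),
\]
which is exactly the strongly-regular identity with $k=6$, $\lambda=1$, $\mu=3$.

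For the isomorphism I would invoke the classical fact that $\Pi$ has exactly six perfect matchings, each edge lying in precisely two of them. A double count (the $\binom{6}{2}=15$ pairs of matchings share $15$ edges in total) combined with the observation that $\Pi$ has no two disjoint $1$-factors (their union would be a spanning even $2$-factor, impossible as $\Pi$ is non-Hamiltonian of girth $5$) forces every pair of matchings to meet in exactly one edge. Regarding each vertex of $\Delta$ as an edge $e$ of $\Pi$, the map $\phi(e)=\{M:e\in M\}$ is then a bijection from $\Omega$ onto the $15$ duads of the six-element set of matchings. The goal is to show that $\Sigma$-adjacency corresponds to \emph{disjointness} of duads, equivalently that the distance-$2$ relation $R_2$ is exactly ``$e,f$ lie in a common matching''; this identifies $\overline\Sigma$ with the triangular graph $T_6$, hence $\Sigma$ with $\overline{T_6}$.

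The main obstacle is precisely this last combinatorial identification, which I would carry out on the missing-point model ``$ab,cd$'' of $\Omega$ from the previous proposition. The key lemma is that every perfect matching of $\Pi$ contains exactly one edge missing each of the five points: the four duads through a fixed point $m$ must be covered by four distinct edges, each necessarily missing a point $\ne m$, so at most one of the five matching edges can miss $m$, and counting forces exactly one. Consequently the fibers of $\Delta$ — the antipodal triples of edges sharing a missing point — never have two members in a common matching, so the two distance-$3$ mates of $e$ have $\phi$-images disjoint from $\phi(e)$; the four distance-$1$ neighbours share a $\Pi$-vertex with $e$ and so also lie in no common matching with $e$. Since $e$ lies in two matchings meeting only in $e$, exactly $8=k_2$ vertices share a matching with $e$, and these must therefore be precisely the $8$ vertices of $R_2$. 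This pins down $R_2$ as the common-matching relation, completing $\Sigma\cong\overline{T_6}$; the parameter count of the first part then reappears as a built-in consistency check.
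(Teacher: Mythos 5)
Your proof is correct, but it runs in essentially the opposite direction from the paper's. The paper argues top--down: it fixes one of the six one-factorizations $F_1$ of $K_6$ (the classical ``property of the number six'', with $\aut(F_1)\cong S_5$), takes $\overline{T_6}$ on the duads of $[0,5]$ --- quoted as the unique SRG of order 15 and valency 6, which is where the parameters $(15,6,1,3)$ come from --- deletes the five triangles of $\overline{T_6}$ determined by the one-factors of $F_1$, and identifies the remaining graph with $\Delta$ by inspection of the diagram in Figure \ref{figure:deleted}. You argue bottom--up: the parameters are derived intrinsically by a Bose--Mesner computation in the metric scheme of $\Delta$, using only the intersection array $(4,2,1;1,1,4)$ and antipodality (giving $A_3^2=2A_0+A_3$, $A_1A_3=A_2$, $A_1^2=4A_0+A_1+A_2$), with no appeal to the uniqueness of the SRG; and the isomorphism is an explicit bijection $\phi$ sending each edge of $\Pi$ to the pair of perfect matchings of $\Pi$ containing it, justified by proving that distance $2$ in $\Delta$ coincides with ``lying in a common matching'' (so that $R_1\cup R_3$ becomes duad disjointness, i.e.\ $\overline{T_6}$-adjacency). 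The two proofs exploit dual incarnations of the same sporadic phenomenon --- the paper the six one-factorizations of $K_6$, you the six perfect matchings of the Petersen graph --- but yours replaces the paper's diagram inspection by a complete combinatorial verification, whose workhorse is your lemma that every perfect matching of $\Pi$ contains exactly one edge missing each point of $[0,4]$; the only classical inputs you need are the count of six matchings and non-Hamiltonicity of $\Pi$. What the paper's route buys is brevity and a visual construction that is then reused throughout Sections \ref{orgaf4339e}--\ref{org105b323} (the spread, the graphs $P({\cal D})$, $P(\overline{\cal D})$, the scheme $NJ_{15}$); what yours buys is a self-contained, fully checkable argument, an explicit isomorphism, and the identification of $R_2$ as the common-matching relation, which is of independent interest.
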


\begin{proof}
Here we exploit a classical remarkable property of the number 6, see
Chapter 6 of \cite{CamvL91}. Namely, the group \(S_5\) has two doubly
transitive actions of degree 5 and 6. The first action is the
natural action, while the second action is defined on the set \({\cal
  F}\) of all one-factorizations of the complete graph K\(_{\text{6}}\). There exist
exactly six such factorizations, let us fix one of them,  F\(_{\text{1}}\), which will
be called the \emph{special factorization}. It turns out that that
Aut(F\(_{\text{1}}\))\(\cong\) S\(_{\text{5}}\). Having F\(_{\text{1}}\), we first define the graph
\(\overline{T_6}\) on the set \({\genfrac {\{} {\}} {0pt} 1 {[0,5]} 2}\) of
all 2-subsets of [0,5], two subsets are adjacent if they are
disjoint. Clearly we get the unique SRG of valency 6 and order 15,
which is isomorphic to \(\overline{T_6}\). Now we remove from the edge
set of \(\overline{T_6}\) five cliques K\(_{\text{3}}\) (five triangles in
\(\overline{T_6}\)), which are defined by one-factors in F\(_{\text{1}}\). The
diagram in Figure \ref{figure:deleted} allows to finish the proof. Note that here we
use the special one-factorization as follows:
\[
  F_1 = \{01|25|34,02|13|45,03|15|24,04|12|35,05|14|23\}.  
  \]
(This time we use the full notation for F\(_{\text{1}}\) which is presented in
lexicographical order.)
\end{proof}

Now we aim to split the edge set of the graph T\(_{\text{6}}\) into two copies,
say \(\Delta_{\text{1}}\) and \(\Delta_{\text{2}}\), of the graph \(\Delta\). For this purpose we
exploit one more classical combinatorial structure: The smallest
non-trivial block design \({\cal D}\) with parameters
(v,b,k,r,\(\lambda\))=(6,10,3,5,2). 

This combinatorial structure may be introduced with the aid of one
more nice diagram, see Figure \ref{figure:5.6}. Note that according to COCO
methodology, here the same numbers are used for a few purposes. We
start with the natural action of A\(_{\text{5}}\) on [0,4] and select a pentagon,
denoted by O, which appears in the center of Figure \ref{figure:5.6}. Its
automorphism group D\(_{\text{5}}\) of order 10 consists of even permutations
only. This implies that the orbit of the pentagon O under the action
of A\(_{\text{5}}\) has length 6. Five more cycles of length 5 (as subgraphs of
K\(_{\text{5}}\)) also appear at the same figure. These cycles are regarded as
points of the coming structure \({\cal D}\). The blocks are
bijectively associated with 10 edges of K\(_{\text{5}}\). A cycle is incident to
an edge if the edge appears in the cycle. In this fashion we get the
following list B of the blocks of \({\cal D}\):
 \begin{align*}
 B = &
 \{\{0,1,3\},\{0,1,4\},\{0,2,4\},\{0,2,5\},\{0,3,5\},\\
& \{1,2,3\},\{1,2,5\},\{1,4,5\},\{2,3,4\},\{
 3,4,5\}\}.
 \end{align*}
The remaining 10 subsets of size 3 of the set [0,5] form the set
\(\overline B\) of the structure \({\overline{\cal D}}\). Note that here
\({\overline{\cal D}}\) is the complement to \({\cal D}\) both in the
sense of \cite{HugP85} and \cite{CamvL91}. 

\begin{proposition}
\begin{enumerate}
\item \label{orgb39d308} The incidence structure \({\cal D}\) is a simple BIBD with
parameters 
\[(v,b,k,r,\lambda)=(6,10,3,5,2).\]
\item \label{org787ed61} The designs \({\cal D}\) and \(\overline{\cal D}\) are isomorphic.
\item \label{orgbffcb51} Aut(\{\cal D\})\(\cong\) A\(_{\text{5}}\)
\item \label{org11b7773} \(Aut(\{{\cal D}, \overline{\cal D}\}) \cong S_5\).
\end{enumerate}
\end{proposition}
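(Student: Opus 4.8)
The plan is to read the entire proposition off the ``pentagons versus edges'' description of $\mathcal{D}$ given just above the statement, and to let the action of $A_5$ on the six pentagons do all the work. Throughout I identify the point set $[0,5]$ with the $A_5$-orbit $\mathcal{O}_1$ of the distinguished pentagon $O$; the blocks are the ten edges of $K_5$, and a point (pentagon) is incident to a block (edge) precisely when the edge lies on the pentagon.

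For part \ref{orgb39d308} I would first settle the parameters by orbit counting. Since $\operatorname{Stab}_{A_5}(O)=D_5$ has order $10$, the orbit $\mathcal{O}_1$ has $v=60/10=6$ points, and there are $b=10$ blocks; each pentagon carries $5$ edges, so $r=5$. Counting the $6\cdot 5=30$ incident pairs and using that $A_5$ is transitive on the ten edges of $K_5$ (the $A_5$-stabiliser of an edge has order $6$) gives exactly $30/10=3$ pentagons of $\mathcal{O}_1$ on each edge, so $k=3$. Because $D_5$ is maximal in $A_5$, the degree-$6$ action on $\mathcal{O}_1$ is the primitive (indeed $2$-transitive) action of $A_5\cong PSL(2,5)$ on $PG(1,5)$; as $A_5$ preserves incidence, the number of blocks on a pair of points is constant, so $\mathcal{D}$ is a $2$-design and $\lambda(v-1)=r(k-1)$ forces $\lambda=2$. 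Simplicity is immediate from the displayed list $B$ (ten distinct triples). I note for later use that $\overline{\mathcal{D}}$ is the \emph{complementary} design: one checks that the complement $[0,5]\setminus b$ of each block $b\in B$ is a non-block, so $\overline{B}=\{[0,5]\setminus b:b\in B\}$.

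Part \ref{org787ed61} is the crux and the one genuinely non-formal step. The key observation is that the complement in $K_5$ of a pentagon is again a pentagon (a $2$-regular graph on five vertices is a single $5$-cycle), which defines an involution $P\mapsto P^{*}$ on the twelve Hamiltonian cycles commuting with the $S_5$-action. Any permutation sending $O$ to $O^{*}$ lies in the coset $g\,D_5$ with $g$ odd (for instance $g=(1\,2\,4\,3)$ sends $(0,1,2,3,4)$ to the pentagram $(0,2,4,1,3)=O^{*}$), and since $D_5\subseteq A_5$ every such permutation is odd; hence no even permutation does, $O^{*}\notin\mathcal{O}_1$, and $*$ interchanges the two $A_5$-orbits $\mathcal{O}_1,\mathcal{O}_2$. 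Moreover $e\in P\iff e\notin P^{*}$, so $*$ carries the complementary design $\overline{\mathcal{D}}$ on $\mathcal{O}_1$ isomorphically onto the pentagons-versus-edges design $\mathcal{D}_2$ on $\mathcal{O}_2$; and any odd permutation $\tau$ of $[0,4]$ maps $\mathcal{O}_2$ onto $\mathcal{O}_1$ preserving edge-incidence, giving $\mathcal{D}_2\cong\mathcal{D}$. Composing yields an explicit point-permutation $\sigma=\tau\circ *$ of $[0,5]$ with $\sigma(B)=\overline{B}$, proving $\mathcal{D}\cong\overline{\mathcal{D}}$. (A safe fallback is to invoke uniqueness of the $2$-$(6,3,2)$ design up to isomorphism, $\overline{\mathcal{D}}$ being again a $2$-$(6,3,2)$ design, or simply to verify $\sigma(B)=\overline{B}$ by inspection.)

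Finally, parts \ref{orgbffcb51} and \ref{org11b7773} follow from a transitivity dichotomy. The inclusion $A_5\le\operatorname{Aut}(\mathcal{D})$ is the construction (faithful since $A_5$ is simple). For the upper bound, $\operatorname{Aut}(\mathcal{D})\le S_6$ contains the $2$-transitive $A_5$, hence is itself $2$-transitive; the $2$-transitive subgroups of $S_6$ are $PSL(2,5)=A_5$, $PGL(2,5)=S_5$, $A_6$ and $S_6$. The last three are at least $3$-transitive, hence transitive on the twenty $3$-subsets of $[0,5]$, so none can stabilise the proper block set $B$ of ten triples; therefore $\operatorname{Aut}(\mathcal{D})\cong A_5$. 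This also shows the $\sigma$ of part \ref{org787ed61} lies outside $A_5$ and genuinely swaps $\mathcal{D}\leftrightarrow\overline{\mathcal{D}}$. For part \ref{org11b7773}, the group $\operatorname{Aut}(\{\mathcal{D},\overline{\mathcal{D}}\})$ acts on the unordered pair with kernel $\operatorname{Aut}(\mathcal{D})\cong A_5$, so its order is $60$ or $120$; the swap $\sigma$ makes it $120$, and the unique order-$120$ subgroup of $S_6$ containing the transitive $A_5$ is $PGL(2,5)\cong S_5$. I expect no obstacle beyond the self-complementarity of part \ref{org787ed61}; the automorphism computations reduce entirely to the orbit counts of part \ref{orgb39d308} and the $3$-transitive-implies-transitive-on-triples dichotomy.
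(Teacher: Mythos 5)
Your proof is correct, but it runs in essentially the opposite direction from the paper's. For parts (c) and (d) the paper first pins down $\aut(\{{\cal D},\overline{\cal D}\})$: it observes that the pair $\{{\cal D},\overline{\cal D}\}$ and the special one-factorization $F_1$ of $K_6$ are reconstructible from one another, invokes the classical ``property of six'' fact $\aut(F_1)\cong S_5$ (Cameron--van Lint), and then halves, using ${\cal D}\cong\overline{\cal D}$, to get $\aut({\cal D})\cong A_5$. You instead bound $\aut({\cal D})$ directly: since it contains the $2$-transitive image of $A_5$ in $S_6$, it must be one of $PSL(2,5)$, $PGL(2,5)$, $A_6$, $S_6$, and the last three, being $3$-transitive, are transitive on all twenty triples and so cannot preserve the ten-element block set; then you double up to $S_5$ for the pair. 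Your argument trades the paper's reconstruction claim (stated without detail) for the classification of $2$-transitive subgroups of $S_6$, which is arguably easier to verify, and it decouples the proposition from the $F_1$ machinery of the surrounding section. You are also more explicit on parts (a) and (b), which the paper dismisses as ``straightforward''/``quite evident'': your orbit-counting for the parameters is fine, and your isomorphism ${\cal D}\cong\overline{\cal D}$ via the pentagon-to-pentagram complementation $P\mapsto P^{*}$ (which swaps the two $A_5$-orbits because the coset of permutations carrying $O$ to $O^{*}$ consists of odd permutations) composed with any odd permutation is a nice self-contained substitute for the paper's appeal to general $2$-design theory; note it does rely on your inspection in part (a) that block-complements coincide with non-blocks, exactly the point the paper flags by saying $\overline{\cal D}$ is the complement ``in both senses.''
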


\begin{proof}
The proof of \ref{orgb39d308} is straightforward. The result \ref{org787ed61} follows from
the general theory of 2-designs, though it is also quite evident. By
construction, both \({\cal D}\) and \(\overline{\cal D}\) are invariant
with respect to A\(_{\text{5}}\). (Here the group in Part \ref{org11b7773} either preserves
\({\cal D}\) and \(\overline{\cal D}\) or swaps them.) Because the
designs \({\cal D}\)  and \(\overline{\cal D}\) are isomorphic, the
group in \ref{org11b7773} is twice larger than in \ref{orgbffcb51}. Finally we take into
account that the pair \(\{{\cal D}, \overline{\cal D}\}\) is
reconstructible from F\(_{\text{1}}\) and vice versa, while Aut(F\(_{\text{1}}\)) \(\cong\) S\(_{\text{5}}\),
see again \cite{CamvL91}.
\end{proof}

At this moment we are prepared to pursue our aim, as was claimed above.

\begin{figure}
\begin{center}
\input{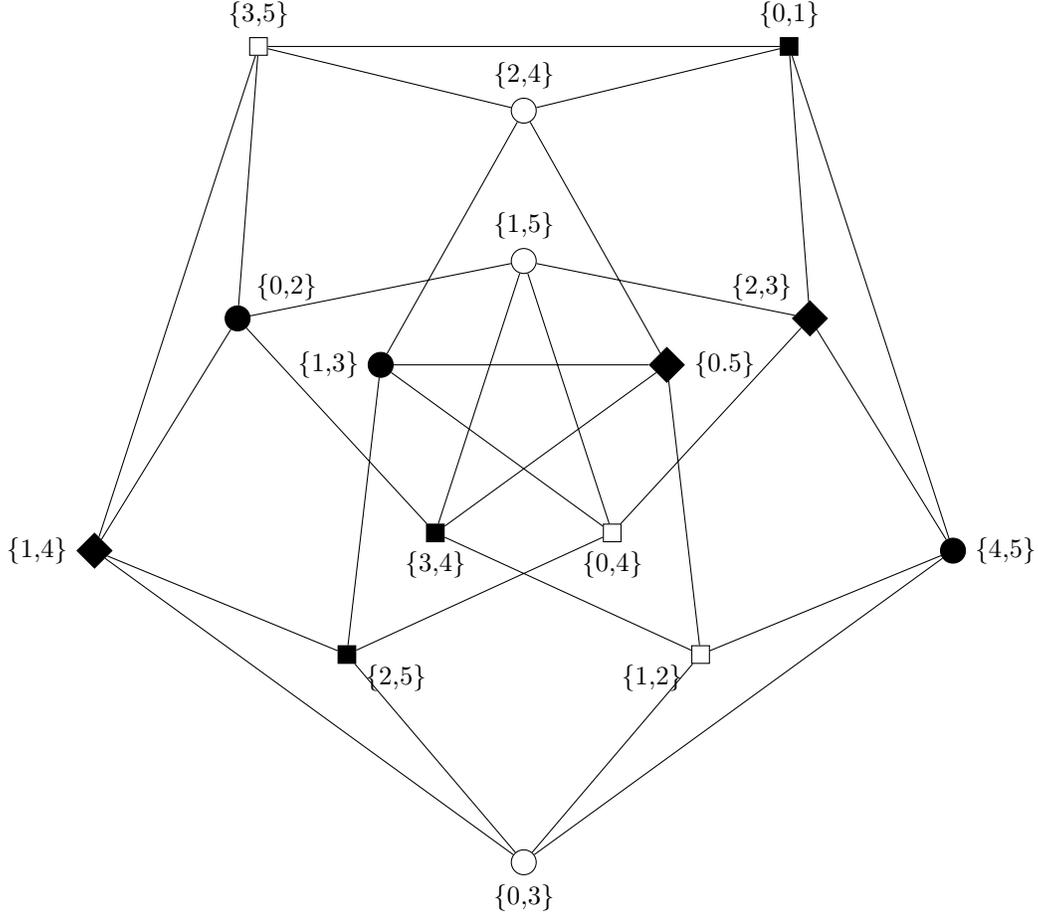}
\end{center}
\caption{Graph \(\Delta\) as \(\overline{T_6}\) with a one-factorization of \(K_6\) deleted \label{figure:deleted}}
\end{figure}

\begin{figure}
\begin{center}
\input{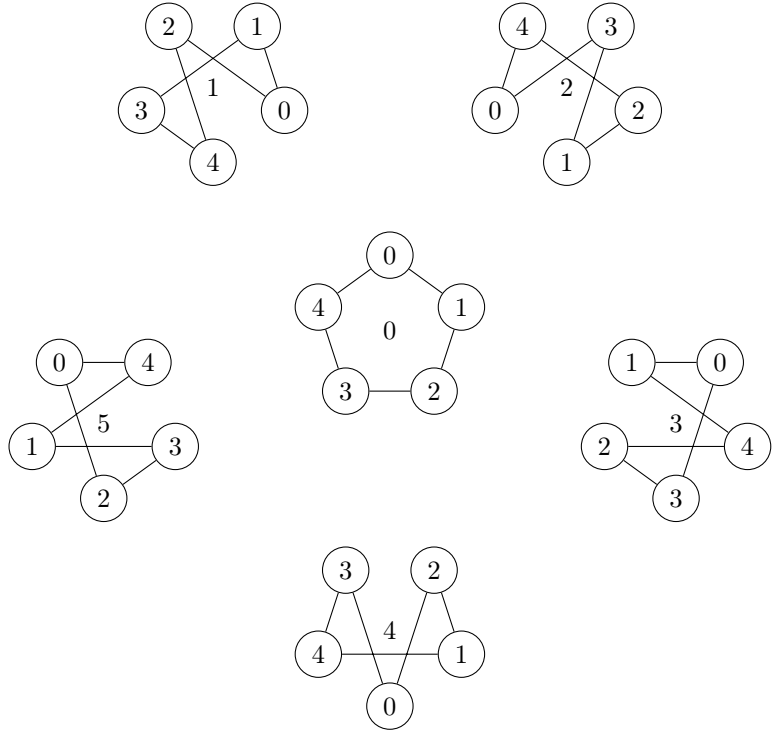}
\end{center}
\caption{Pictorial introduction of the block design \({\cal D}\) \label{figure:5.6}}
\end{figure}

\begin{figure}
\begin{center}
\input{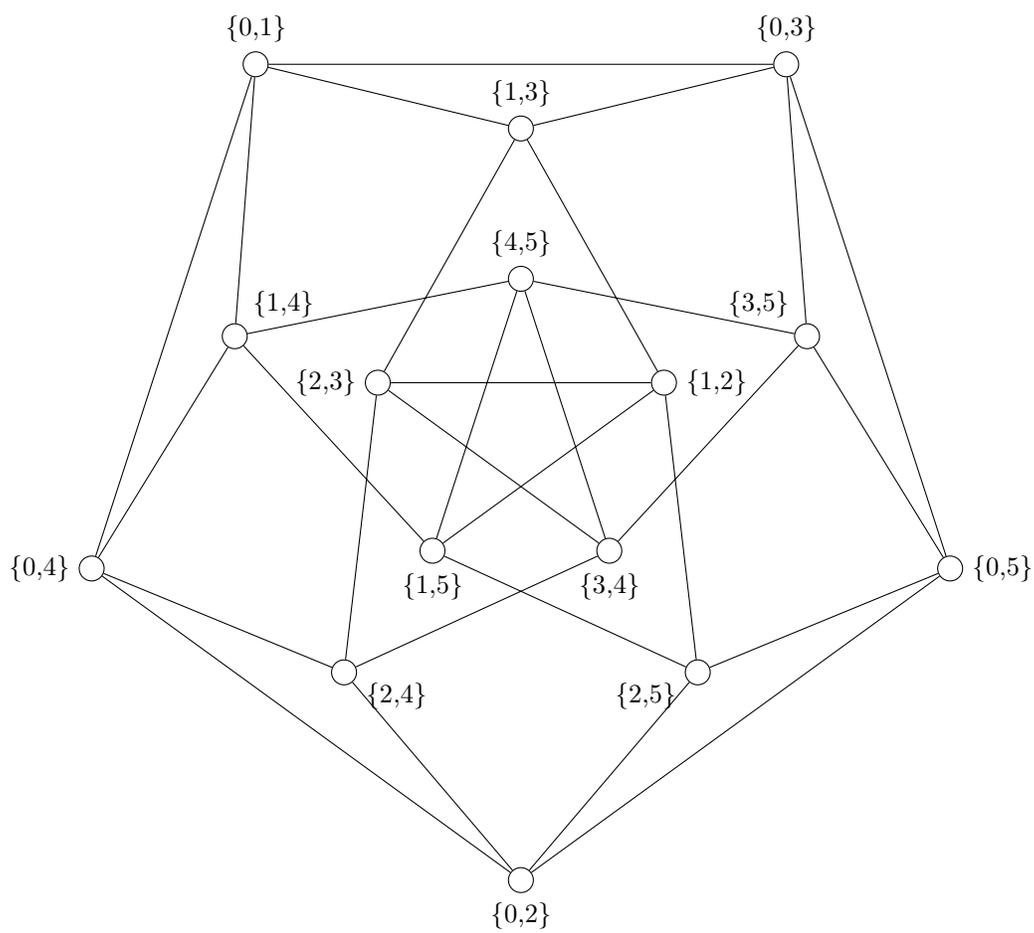}
\end{center}
\caption{Split of the edges of the graph \(\Delta_1\) into 10 triangles formed by the blocks of Design \({\cal D}\) \label{figure:5.7}}
\end{figure}

Having the design \({\cal D}\), let us define the graph \(P({\cal
  D})\). Its vertex set consists of all \emph{pairs}, that is 2-subsets of
the set \([0,5]\), regarded as the point set of \({\cal D}\). Two pairs
\(\alpha, \beta\) are adjacent in \(P({\cal D})\) if and only if
\(\alpha\cup\beta\) is a block in the design \({\cal D}\). The diagram
presented in Figure \ref{figure:5.7} confirms that \(P({\cal
  D})\cong\Delta\); similarly we define the graph \(P(\overline{\cal
  D})\). Because \({\cal D}\cong\overline{\cal D}\) we get that
\(P(\overline{\cal D})\) is also isomorphic to \(\Delta\). 

Finally we are ready to introduce the color graph \(NJ_{15}\)
(the notation will be clarified later on) as follows. Its vertex set
is \(\Omega\) of cardinality 15. Its basic graphs are: The reflexive
graph (consisting of all loops); the spread \(5\circ K_3\), visible in
Figure \ref{figure:5.7}, that is, the deleted one-factorization
\(F_1\) expressed in terms of pairs of \([0,5]\); the remaining copy
\(\Delta_0=\overline T_6\setminus 5\circ K_3\) of the graph \(\Delta\);
and two more copies of the graph \(\Delta\), namely \(\Delta_1=P({\cal
  D})\) and \(\Delta_2=P(\overline{\cal D})\).

\begin{proposition}
Let \(NJ_{15}\) be the color graph \((\Omega, \{\operatorname{Id}, S,
  \Delta_0, \Delta_1, \Delta_2\})\) of rank 5 with vertex set
\(\Omega\). Then
\begin{enumerate}
\item \label{org5f35e4d} \(\operatorname{Aut}(NJ_{15})\cong A_5\).
\item \label{orgee51622} \(NJ_{15}\) is the symmetrization of the AS \({\cal
     M}_{15}=V(A_5,\Omega)\), which is defined by the set of 2-orbits
of \((A_5,\Omega)\).
\end{enumerate}
\end{proposition}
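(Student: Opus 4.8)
The plan is to establish both claims from the explicit description of the five colors, using the identifications $S\cup\Delta_0=\overline{T_6}$ and $\Delta_1\cup\Delta_2=T_6$. Indeed a pair $\{\alpha,\beta\}$ with $|\alpha\cap\beta|=1$ is joined in $\Delta_1$ (resp.\ $\Delta_2$) exactly when the triple $\alpha\cup\beta$ is a block of $\mathcal{D}$ (resp.\ $\overline{\mathcal{D}}$); since $\mathcal{D}$ and $\overline{\mathcal{D}}$ partition the $20$ triples, this splits the $60$ edges of $T_6$ into two graphs of valency $4$, while $S$ (five triangles, one per one-factor of $F_1$) together with $\Delta_0$ exhausts the $45$ edges of $\overline{T_6}$. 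The key incidence I will exploit is that, because $\Delta_1\cong\Delta=L(\Pi)$ and the Petersen graph $\Pi$ is triangle-free, the only triangles of $\Delta_1$ are the ten ``stars'' $\{xy,xz,yz\}$ arising from the degree-$3$ vertices of $\Pi$; these are precisely the ten triangles sitting on the blocks $\{x,y,z\}$ of $\mathcal{D}$, so triangles of $\Delta_1$ correspond bijectively to blocks of $\mathcal{D}$.

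First I would prove $A_5\le\aut(NJ_{15})$. Let $A_5=\aut(\mathcal{D})$ act on $[0,5]$ and hence on $\Omega$. This action preserves disjointness, so it fixes $\overline{T_6}$ and $T_6$; it fixes $\mathcal{D}$ and $\overline{\mathcal{D}}$ by definition, hence $\Delta_1$ and $\Delta_2$; and since $A_5\le\aut(\{\mathcal{D},\overline{\mathcal{D}}\})\cong\aut(F_1)$ it fixes the one-factorization $F_1$, hence the spread $S$ and therefore $\Delta_0=\overline{T_6}\setminus S$. For the reverse inclusion, any $g\in\aut(NJ_{15})$ preserves $T_6=\Delta_1\cup\Delta_2$, so by the Whitney--Young theorem $g\in\aut(L(K_6))\cong S_6$, realized as a permutation of $[0,5]$. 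Preserving $\Delta_1$, it permutes the triangles of $\Delta_1$, hence sends blocks of $\mathcal{D}$ to blocks; being a point permutation, $g\in\aut(\mathcal{D})=A_5$. This proves part (a).

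For part (b), note that $A_5$ is $2$-transitive on $[0,5]$, hence transitive on $\Omega$, so $\mathcal{M}_{15}=V(A_5,\Omega)$ is a (homogeneous) AS. Each color of $NJ_{15}$ is $A_5$-invariant and symmetric, hence a union of basic relations of the symmetrization $\tilde{\mathcal{M}}_{15}$; it therefore suffices to show $\tilde{\mathcal{M}}_{15}$ has rank $5$, i.e.\ that $\mathcal{M}_{15}$ has rank $6$ with exactly one antisymmetric pair of $2$-orbits. I would compute the orbits of a pair-stabilizer $H=\operatorname{Stab}_{A_5}(\{0,\infty\})$ in the model $A_5\cong PSL(2,5)$ on the projective line, where $|H|=4$ and $H\cong E_4$. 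A direct check yields six $H$-orbits on $\Omega$: the fixed pair $\{0,\infty\}$; two orbits of size $4$ inside $T_6$, which are the $\Delta_1$- and $\Delta_2$-neighbourhoods; one orbit of size $4$ inside $\overline{T_6}$, the $\Delta_0$-neighbourhood; and two orbits of size $1$ inside $\overline{T_6}$. Since $S$ is $A_5$-invariant of valency $2$, its two neighbours of $\{0,\infty\}$ form an $H$-invariant set of size $2$, which must be the union of the two singleton orbits; thus these singletons are exactly the two spread directions and $\mathcal{M}_{15}$ has rank $6$.

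The hard part will be the symmetry bookkeeping that isolates the unique antisymmetric pair. The two size-$4$ orbits in $T_6$ are symmetric, since a reflection in the point-stabilizer $D_5$ interchanges the two members of each defining ordered pair; the size-$4$ orbit in $\overline{T_6}$ is symmetric because its transpose is again a valency-$4$ orbit of ``disjoint'' type, of which it is the only one. For the two singleton orbits $R_1,R_2$ I would argue that any $g\in A_5$ interchanging $\{0,\infty\}$ and $\{1,4\}$ must fix their common one-factor of $F_1$ setwise (distinct one-factors share no pair) and hence fix $\{2,3\}$; the finitely many point-permutations with this pattern are all odd, so none lies in $A_5\le A_6$. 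Consequently $R_1^{T}=R_2$, the pair $(R_1,R_2)$ is the unique antisymmetric pair, $R_1\cup R_2=S$, and $\mathcal{M}_{15}$ is non-commutative of rank $6$. Its symmetrization therefore has the five classes $\id,\,S,\,\Delta_0,\,\Delta_1,\,\Delta_2$, giving $\tilde{\mathcal{M}}_{15}=NJ_{15}$ and completing (b).
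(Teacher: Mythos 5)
Your route parallels the paper's own proof — part (a) reduced to $\aut(\mathcal{D})\cong A_5$, part (b) via the six orbits of the pair-stabilizer $H\cong E_4$ (your orbit pattern $1^3,4^3$ is exactly the paper's computation) — and most of it is sound, indeed more detailed than the paper. But there is a genuine gap in your final step, the one you yourself flag as "the hard part". The claim that every permutation of $[0,5]$ interchanging $\{0,\infty\}$ and $\{1,4\}$ while fixing $\{2,3\}$ setwise is odd is false. There are eight such permutations, and exactly four of them are even: $(0\,1)(4\,\infty)$, $(0\,4)(1\,\infty)$, $(0\,1\,\infty\,4)(2\,3)$ and $(0\,4\,\infty\,1)(2\,3)$. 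So parity cannot exclude them from $A_5\le A_6$; what you actually need is that none of these four lies in the particular transitive $A_5=PSL(2,5)=\aut(\mathcal{D})$. That is true, but it requires separate arguments your proposal does not contain: the two elements of order 4 are excluded because $A_5$ has no elements of order 4, while the two double transpositions are excluded because they are not M\"obius maps (for instance, the unique element of $PGL(2,5)$ sending $0\mapsto 1$, $1\mapsto 0$, $2\mapsto 2$ sends $\infty$ to $3$, not to $4$, so $(0\,1)(4\,\infty)$ does not come from $PGL(2,5)$ at all). As written, the antisymmetry of the thin classes — and hence the rank-5 statement for $\tilde{\mathcal M}_{15}$ — is not established.

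The repair is much simpler than the group-theoretic bookkeeping, and it is what the paper's "comparing valencies" step does implicitly: a \emph{symmetric} relation of valency 1 on $\Omega$ would be a perfect matching of 15 vertices, impossible since 15 is odd (equivalently, restricted to the spread triangle $\{\{0,\infty\},\{1,4\},\{2,3\}\}$ each thin 2-orbit is a valency-1 relation on three points, i.e.\ an orientation of that triangle). Hence both non-reflexive thin 2-orbits are antisymmetric, and since transposition preserves valency and there are only two of them, each is the transpose of the other; their union $S\cong 5\circ K_3$ thus splits as $5\circ\vec C_3$ and its reverse, which is precisely the paper's conclusion. With this substitution your proof of (b), and therefore the whole proposal, goes through.
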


\begin{proof}
Part \ref{org5f35e4d} follows from the previous proposition.

Note that clearly \(A_5\) has up to conjugacy a unique subgroup of
order 4, namely the elementary abelian group \(E_4\), which acts with
orbits of length 1 and 4. Let \((A_5,\Omega)\) be a transitive action
of \(A_5\) of degree 15. Let \(H=E_4\) be the stabilizer of a point
\(x\in\Omega\). We can identify \(H\) with the group \(\{e,(1,2)(3,4),
  (1,3)(2,4), (1,4)(2,3)\}\) in the action of \(A_5\) on \([0,4]\). Then it
is easy to check that in \((H,\Omega)\) we have 6 orbits of lengths
\(1^3,4^3\) with representatives (in brief notation) as follows:
\((12,34),(13,24),(14,23), (01,23),(01,24),(01,34)\). This proves that
the rank of \((A_5,\Omega)\) is 6. Because \(NJ_{15}\) admits
\((A_5,\Omega)\) its basic graphs are unions of basic relations in
\(V(A_5,\Omega)\). Comparing valencies of the regular graphs in \({\cal
  M}_{15}\) and \(NJ_{15}\) we come to the conclusion that the spread \(S\)
of valency 2 is split in \({\cal
  M}_{15}\) into directed graphs of valency 1, which are of the form
\(5\circ \vec C_3\). Here \(\vec C_3\) is a directed cycle of length 3.
\end{proof}

\begin{corollary}
The color graph \(NJ_{15}\) is the symmetrization of the
(non-symmetric) rank 6 AS \({\cal M}_{15}\). In other words, \(NJ_{15}\)
is a non-proper Jordan scheme.
\end{corollary}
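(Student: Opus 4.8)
The plan is to read the corollary off as an immediate consequence of the preceding proposition, supplemented only by the observation that the association scheme $\mathcal{M}_{15}$ is genuinely non-symmetric. Part \ref{orgee51622} of that proposition already identifies $NJ_{15}$ with the symmetrization $\tilde{\mathcal M}_{15}$ of the rank 6 AS $\mathcal{M}_{15} = V(A_5,\Omega)$. By the earlier general proposition, the symmetrization of any CC is a Jordan configuration; and since $\mathcal{M}_{15}$ is homogeneous, its reflexive relation $\id_{15}$ survives intact and remains the unique reflexive basic relation, so $\tilde{\mathcal M}_{15}$ is in fact a Jordan scheme rather than merely a CJC. By the very definition adopted in the text, a Jordan scheme that arises as the symmetrization of a coherent configuration is non-proper; hence $NJ_{15}$ is a non-proper Jordan scheme.

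First I would confirm that $\mathcal{M}_{15}$ is non-symmetric, so that the word ``symmetrization'' is doing non-trivial work here (otherwise, were $\mathcal{M}_{15}$ already symmetric, one would only have $\tilde{\mathcal M}_{15}=\mathcal{M}_{15}$ and the statement would be vacuous). This is visible already from the ranks: $\mathcal{M}_{15}$ has rank 6, whereas $NJ_{15}$ has rank 5. Symmetrization lowers the rank precisely by collapsing each transposed pair $\{R_i,R_i^T\}$ of antisymmetric relations into a single symmetric relation $\tilde R_i=R_i\cup R_i^T$, while fixing every symmetric relation. A drop from $6$ to $5$ therefore forces exactly one such antisymmetric pair in $\mathcal{M}_{15}$. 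Concretely, the proof of the proposition already exhibited it: the spread $S$ of valency $2$ in $NJ_{15}$ is the union of a directed graph of type $5\circ \vec C_3$ of valency $1$ with its transpose. This antisymmetric basic relation of valency $1$ witnesses the non-symmetry (indeed the non-commutativity) of $\mathcal{M}_{15}$.

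Since all the structural work was carried out in the preceding proposition, there is no real obstacle remaining; the only point requiring care is purely definitional. One must check that the ambient notion of \emph{non-proper} is applied correctly: the definition declares a CJC --- in particular a JS --- non-proper exactly when it is the symmetrization of \emph{some} coherent configuration, and we have produced such a CC, namely the Schurian AS $\mathcal{M}_{15}=V(A_5,\Omega)$. Thus the corollary is a clean restatement of part \ref{orgee51622} together with the non-symmetry observation, and no further computation is needed.
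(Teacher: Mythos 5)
Your proposal is correct and takes essentially the same route as the paper, which states this corollary without proof as an immediate consequence of part (b) of the preceding proposition together with the definition of a non-proper Jordan scheme; your rank-drop argument for non-symmetry matches the paper's identification of the two directed \(5\circ \vec C_3\) relations inside the spread \(S\). One small caution: your parenthetical claim that this antisymmetric relation also witnesses the \emph{non-commutativity} of \({\cal M}_{15}\) is unjustified---the presence of antisymmetric basic relations does not imply non-commutativity---and indeed the paper proves non-commutativity of \({\cal M}_{15}\) separately, by a path-counting argument, in the following section; fortunately that aside is not needed for the corollary.
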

The last corollary explains the essence of the notation \(NJ_{15}\).

\section{The non-symmetric rank 6 AS of order 15 revisited}
\label{sec:org3b77452}
\label{org105b323}
At this stage we are aware of the structure of the non-symmetric rank
6 AS \({\cal M}_{15}\) with the basic relations \(\operatorname{Id}, S_1,
  S_2, R_0, R_1, R_2\) of valencies 1,1,1,4,4,4, repsectively. Here the
graphs \((\Omega, R_i)\), \(i=0,1,2\) are all isomorphic to the DTG
\(\Delta\). These graphs were introduced evidently via the pictorial way
in Section \ref{orgaf4339e}.

It is also clear that the graphs \((\Omega, S_1)\) and  \((\Omega,
  S_2)\) are both isomorphic to \(5\circ\vec C_3\), while
\(S_2=S_1^T\). Recall that the symmetrization \(S=S_1\cup S_2\) defines
the spread \(5\circ K_3\). If we select an arbitrary copy of 5
connectivity components then, clearly, there are just two options for
its cyclic orientation.

\begin{proposition}
The cyclic orientation of one of the five components of the spread
\(S\) uniquely defines the cyclic orientation of the remaining four
components of \(S\).
\end{proposition}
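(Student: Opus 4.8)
The plan is to show that the only two cyclic orientations of the spread $S$ compatible with the structure of $\mathcal{M}_{15}$ are $S_1$ and its reverse $S_2 = S_1^T$, and that these two disagree on \emph{every} component. The proposition is then immediate: fixing the orientation of one triangle selects one of $S_1, S_2$, which pins down all the others.

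First I would record the combinatorial reformulation. A cyclic orientation of $S$ is a choice, for each of the five components (fibers) of the spread $5\circ K_3$, of one of the two directed triangles $\vec C_3$ supported on it; a priori there are $2^5$ such orientations. Two of them are realized by the basic relations $S_1$ and $S_2$ of $\mathcal{M}_{15}$, and since transposition reverses the orientation of each directed $3$-cycle, $S_2 = S_1^T$ disagrees with $S_1$ on every one of the five fibers. Thus it suffices to prove that $S_1$ and $S_2$ are the only two orientations invariant under $\aut(\mathcal{M}_{15}) \cong A_5$: then an arbitrary orientation of one fiber coincides with the restriction of exactly one of $S_1, S_2$, and invariance propagates this choice to the remaining four fibers.

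The heart of the argument is a parity observation about the fiber stabilizer. The group $A_5$ permutes the five fibers as it permutes the points of $[0,4]$, so it acts transitively on them, and the stabilizer of a fiber is a point stabilizer isomorphic to $A_4$. This $A_4$ acts on the three vertices of that fiber (the three partitions of the complementary $4$-set into pairs) through the natural homomorphism $A_4 \to A_3 \cong \mathbb{Z}_3$; in particular its image consists of \emph{even} permutations of the three vertices, i.e. of rotations of the triangle, so the stabilizer preserves each of the two cyclic orientations of its own fiber rather than interchanging them. Given any $A_5$-invariant orientation $\mathcal O$ and any $g \in A_5$ carrying a fiber $F$ to a fiber $F'$, we then have $\mathcal O|_{F'} = g(\mathcal O|_F)$, and the parity fact guarantees this is independent of the choice of $g$ (two such elements differ by a stabilizer element, which fixes $\mathcal O|_F$). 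Hence an $A_5$-invariant orientation is determined by its restriction to a single fiber, so by transitivity there are at most two of them; both occur, namely $S_1$ and $S_2$, which is the desired dichotomy.

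The step I expect to be the crux — and the only one that is not pure bookkeeping — is the parity observation that the fiber stabilizer $A_4$ maps into $A_3$ and therefore cannot reverse the cyclic order of a fiber; everything else (transitivity on fibers, the identification $S_2 = S_1^T$, and the propagation of the orientation) is formal once this is in place. As an alternative route that avoids the group entirely, I would also note that coherence of $\mathcal{M}_{15}$ forces $S_1 R_i = R_{\sigma(i)}$ for a permutation $\sigma$ of $\{0,1,2\}$, which, since $S_1$ is a product of disjoint $3$-cycles and no color can be $S_1$-invariant (a matching between fibers is injective, so $a$ and its successor $s(a)$ never share a neighbourhood), must be a $3$-cycle; the resulting relation $(R_{\sigma(i)})_{a,c} = (R_i)_{s(a),c}$ then recovers the successor map $s = S_1$ from the colours up to the single global reversal $S_1 \leftrightarrow S_2$, giving the same conclusion.
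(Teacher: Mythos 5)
Your main argument is correct and is essentially the paper's own proof. Both rest on the same two pillars: that $A_5=\aut({\cal M}_{15})$ preserves each of $S_1$ and $S_2$ individually and permutes the five components transitively, and that the stabilizer of a component is $A_4$, whose action on that component has kernel $E_4$ and therefore induces only the rotation group ${\mathbb Z}_3\cong A_3$ on its three vertices, so that no stabilizer element can reverse a cyclic orientation. Where you differ is only in the bookkeeping after this point, and your version is tidier: you propagate the orientation by transitivity and use the parity fact to check that the transported orientation is independent of the choice of carrying element, whereas the paper finishes more loosely, invoking a semiregular element of order $3$ (whose five orbits are not all fibers --- it directly orients only the two components it fixes setwise) and then a practical transport along the one-factors that each valency-$4$ basic graph induces between two components. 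Your second, coherence-based sketch is a genuinely different route, and it is in effect the ``more intelligent proof'' the paper explicitly postpones: the identity $S_1R_i=R_{\sigma(i)}$ with $\sigma$ fixed-point-free, hence a $3$-cycle, is precisely the statement that the thin relations cyclically permute the thick ones, i.e.\ the rules $R_i\Delta_j=\Delta_{i+j}$ in the multiplication table \eqref{eq:multiplication} of the general model of Section \ref{org68efaa1}. That alternative buys more than the group-theoretic argument: it forces the dichotomy $S_1$ versus $S_2=S_1^T$ from the structure constants alone, with no appeal to $\aut({\cal M}_{15})$, and so it is the form of the argument that survives in the later, possibly non-Schurian, generalizations.
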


\begin{proof}
Let \(\{_iS^c, i\in[0,4]\}\) be the set of components of \(S\). Because
\(A_5=\operatorname{Aut}({\cal M}_{15})\), the group \(A_5\) leaves invariant
each of the basic relations of \({\cal M}_{15}\), in particular the
spread \(S\) and its orientations \(S_1\) and \(S_2\). Clearly, \(A_5\) acts
naturally on the five components \(_iS^c\). Thus the stabilizer of,
say, the component \(_4S^c\) coincides with \(A_4\), and \(A_4\) has
orbits of length 1 and 4 in action on the components. The kernel of
the action \((A_4, {_4S^c})\) is \(E_4\) of order 4, thus the induced
action \((A_4, {_4S^c})\) coincides with the regular action of the cyclic
group \({\mathbb Z}_3\) of order 3. Now the kernel \(E_4\) acts
regularly on the remaining four components of the spread, while the
induced group \({\mathbb Z}_3\) acts semiregularly on the set \(\Omega\)
of cardinality 15 with 5 orbits of length 3. This semiregular
action defines an orientation of all 5 components of the spread.

To employ the latter conclusion practically we select an orientation
of \(_4S^c\) and exploit the fact that at the basic graph \((\Omega,
  \Delta_i)\) between any two components of \(S\) there appears a
one-factor. 
\end{proof}

Relying on Figure \ref{figure:delta-classical}, let us use the procedure
described above. Then we get the following description of the
relation \(S_1\):

\begin{align*}
S_1 = \{ & (0,10), (10,11), (11,0),\\
& (1,4),(4,9),(9,1),\\
& (2,13),(13,12),(12,2),\\
& (3,8),(8,7),(7,3),\\
& (5,14),(14,6),(6,5)\}.
\end{align*}
The relation is presented as a set of 15 pairs; it corresponds to
the partition of \(\Omega\) into fibers \(\{0,10,11\}\), \(\{1,4,9\}\),
\(\{2,13,12\}\), \(\{3,8,7\}\), \(\{5,14,6\}\). Here we use the map of the
elements of \(\Omega\) as in Table \ref{table:map-3}.

\begin{table}
\begin{center}
\begin{center}
\begin{tabular}{rl}
\hline
0 & 01,23\\
1 & 12,34\\
2 & 01,34\\
3 & 04,23\\
4 & 13,24\\
5 & 04,12\\
6 & 01,24\\
7 & 02,34\\
8 & 03,24\\
9 & 14,23\\
10 & 02,13\\
11 & 03,12\\
12 & 04,13\\
13 & 03,14\\
14 & 02,14\\
\hline
\end{tabular}
\end{center}
\end{center}
\caption{Map of 15 elements from \(\Omega\) \label{table:map-3}}
\end{table}
\begin{proposition}
\label{orgc664f85}
The AS \({\cal M}_{15}\) is non-commutative.
\end{proposition}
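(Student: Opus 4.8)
The plan is to exploit the fact that $\mathcal{M}_{15}$ is the centralizer algebra $\mathfrak{V}(A_5,\Omega)$ of the transitive action $(A_5,\Omega)$ of degree $15$, and to detect non-commutativity through the multiplicities of the irreducible constituents of the associated permutation representation. Recall from the proof of Proposition preceding Corollary (in Section~\ref{orgaf4339e}) that the point stabilizer is the Klein four-group $H=E_4=\{e,(12)(34),(13)(24),(14)(23)\}$, whose three non-identity elements all lie in the single $A_5$-class of double transpositions. By Wedderburn theory the centralizer algebra decomposes as $\bigoplus_i M_{m_i}(\mathbb{C})$, where $m_i$ is the multiplicity of the $i$-th irreducible constituent of the permutation character $\pi=\mathrm{Ind}_H^{A_5}\mathbf{1}$, and its dimension $\sum_i m_i^2$ equals the rank, which is $6$. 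The algebra is commutative precisely when every $m_i\le 1$.

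First I would compute the $m_i$ by Frobenius reciprocity, $m_i=\langle \pi,\chi_i\rangle=\frac14\sum_{h\in H}\chi_i(h)=\frac14\bigl(\chi_i(e)+3\chi_i(\tau)\bigr)$, where $\tau$ is a double transposition. Reading the values of the irreducible characters of $A_5$ of degrees $1,3,3,4,5$ on $\tau$, namely $1,-1,-1,0,1$, this yields multiplicities $1,0,0,1,2$. Thus $\pi=\chi_1+\chi_4+2\chi_5$ with $\chi_5$ the $5$-dimensional irreducible; the checks $1+4+2\cdot5=15$ and $1+1+4=6$ confirm both the degree and the rank. Since the $5$-dimensional constituent occurs with multiplicity $2$, the centralizer algebra contains a block isomorphic to $M_2(\mathbb{C})$ and is therefore non-commutative.

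The only genuine obstacle is the bookkeeping: one must be sure that the stabilizer is really the Klein four-group (already established earlier) and must read the character table of $A_5$ correctly on the class of double transpositions; everything else is a two-line count. An appealing feature is that the conclusion is forced by arithmetic alone — with only five irreducible characters available, the equation $\sum m_i^2=6$ cannot be solved with all $m_i\le 1$, so some multiplicity is at least $2$ and non-commutativity is unavoidable.

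As an alternative entirely in the concrete spirit of the data just assembled, one can argue as in the non-commutativity check for $\mathfrak{X}_1$ in Section~\ref{orgf276f8a}. The relation $S_1$ is the permutation $\sigma$ consisting of the five oriented triangles listed above, so commutativity of $\mathcal{M}_{15}$ would force $\sigma\in\aut(\Gamma)$ for each basic graph $\Gamma=(\Omega,R_i)\cong\Delta$. Using Table~\ref{table:map-3} and Figure~\ref{figure:delta-classical} to write down the neighbourhoods in one copy $R_0$, I would fix a single vertex $x$ and compare the two length-two walks: first along $S_1$ and then along $R_0$, reaching $N_{R_0}(\sigma(x))$, against first along $R_0$ and then along $S_1$, reaching $\sigma(N_{R_0}(x))$. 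Exhibiting one vertex where these two $4$-sets differ shows $A(S_1)A(R_0)\neq A(R_0)A(S_1)$ and settles non-commutativity; here the main nuisance is simply transcribing the edge set of $R_0$ without error.
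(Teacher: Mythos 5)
Your main argument is correct, and it takes a genuinely different route from the paper's. The paper's own proof is deliberately ``naive'': it fixes the vertex $\{0,1\}$ and counts the sixteen paths of length two in the relational products $\Delta_0\cdot\Delta_1$ and $\Delta_1\cdot\Delta_0$, finding that the first product has $4$ paths ending at $\{3,4\}$ and none at $\{2,5\}$, while the second has the opposite pattern, so the two product matrices differ. You instead exploit the fact (which in the paper is the very definition of ${\cal M}_{15}$) that this scheme is the centralizer algebra of the transitive action of $A_5$ with point stabilizer $E_4$, so that Wedderburn theory plus Frobenius reciprocity reduces the question to the character table of $A_5$: your values $1,-1,-1,0,1$ on the double-transposition class are correct, the decomposition $\pi=\chi_1+\chi_4+2\chi_5$ checks out in degree ($1+4+10=15$) and rank ($1+1+4=6$), and the multiplicity $2$ produces an $M_2({\mathbb C})$ block, hence non-commutativity. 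Your counting-free variant is especially clean: since $A_5$ has only five irreducible characters, the equation $\sum_i m_i^2=6$ forces some $m_i\ge 2$ regardless of any character values. As for what each approach buys: the paper's path count is purely combinatorial, needs nothing beyond the two diagrams already drawn, and would apply verbatim to a scheme known only combinatorially; your argument is conceptual, essentially immune to transcription errors, and yields strictly more information (the full Wedderburn structure ${\mathbb C}\oplus{\mathbb C}\oplus M_2({\mathbb C})$ of the adjacency algebra), but it leans on the Schurian identification ${\cal M}_{15}=V(A_5,\Omega)$ and on the rank-$6$ computation established earlier in the paper. Your fallback argument --- showing the permutation $\sigma$ defined by $S_1$ is not an automorphism of a thick basic graph --- is the paper's style of proof with a thin relation substituted for one thick factor; that substitution is a legitimate simplification, since commuting with a permutation matrix is exactly invariance under the permutation, and the thin--thick products $S_1\Delta_0$ and $\Delta_0 S_1$ do indeed land on different basic relations.
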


\begin{proof}
At this stage our proof is quite naive. Let us count the product
\(\Delta_0\cdot \Delta_1\), here \(\Delta_0\) and \(\Delta_1\) are
presented in Figures \ref{figure:deleted} and \ref{figure:5.7},
respectively. We arbitrarily select the starting point \(\{0,1\}\) of
\(\Omega\) and consider 16 paths of length 2 in the relational product 
\(\Delta_0\cdot \Delta_1\). Note that \(\{\{0,1\},\{2,5\},\{3,4\}\}\)
form one of the fibers of \(S\) in the current notation. Check that 4
paths end in the point \(\{3,4\}\) and none in \(\{2,5\}\). Similarly
considering \(\Delta_1\cdot\Delta_0\) we find 4 paths ending in
\(\{2,5\}\) and none in \(\{3,4\}\). 
\end{proof}

We postpone discussions of ideas for more intelligent proofs to
further sections. 
\begin{remark}
 Originally, the proof of Proposition \ref{orgc664f85} was
given around the year 2002, observing the tensor of structure
constants of \({\cal M}_{15}\), which was produced by COCO. 
\end{remark}
\section{Siamese combinatorial objects of order 15.}
\label{sec:orgfa7a5c1}
\label{org4e457e4}
The concepts which are briefly discussed in this section were
originally at the centre of the used methodology. It initiated
understanding that one of the two existing Siamese color graphs of
order 15 is a very promising candidate to form an example of a
proper Jordan scheme. Finally, this methodology, with all its quite
interesting details remains on the periphery of the current
paper. Nevertheless for completeness we present it below as a kind
of slight deviation. For more details the reader is referred to
\cite{KliRW05} and \cite{KliRW09}, as well as to the discussion at
the end of this paper.

\begin{define}
Let \(W=(\Omega, \{\operatorname{Id}_\Omega, S, R_1, \ldots,
  R_t\})\) be a color graph for which
\begin{enumerate}
\item \((\Omega, S)\) is an imprimitive disconnected SRG, i.e., a
partition of \(\Omega\) into cliques of equal size. It is usually
called a \emph{spread.}
\item For each \(i\), the graph \((\Omega,R_i)\) is an imprimitive DRG of
diameter 3 with antipodal system \(S\).
\item For each \(i\), the graph \((\Omega, R_i\cup S)\) is an SRG.
\end{enumerate}

Then W is called a \emph{Siamese} color graph of Siamese rank \(t\).
\end{define}

Usually we assume that all SRGs which form a Siamese color graph \(W\)
have the same parameter set, indicated by \((n,k,\lambda,\mu)\), where
\(n=|\Omega|\). 

\begin{define}
An association scheme \({\cal M}=(\Omega, \{
  \operatorname{Id}_\Omega, S_1, \dots, S_f, R_1, \dots, R_t \})\) is
called a \emph{Siamese AS} if \((\Omega, \{
  \operatorname{Id}_\Omega,\cup_{i=1}^f S_i,  R_1, \dots, R_t \})\) is a
Siamese color graph.
\end{define}

In other words we allow that in the process of the WL-stabilization
of a given Siamese color graph its spread \(S\) is split into the
union of suitable (regular) graphs \((\Omega, S_i)\), \(i=1,\dots,
  f\). In such a case we also say that the considered Siamese color
graph admits a Siamese AS.

\begin{define}
A Siamese color graph \(W\) is said to be \emph{geometric} if each of its
SRGs \((\Omega, R_i\cup S)\) is the point graph of a suitable
generalized quadrangle. It is called \emph{pseudo-geometric} if each SRG
is pseudo-geometric, in the general context of AGT, se, e.g.,
\cite{CamvL91}. 
\end{define}

\begin{proposition}
\label{org5527514}
Let \(W\) be a Siamese color graph of order \(n=(q^4-1)/(q-1)\) with
\(t=q+1\) basic relations \(R_i\), such that each SRG \(\Gamma_i=(\Omega,
  R_i\cup S)\) has parameters \((n, q(q+1), q-1, q+1)\) and is
geometric. For each point graph \(\Gamma_i\) construct the
corresponding GQ. Denote by \(B\) the union of all lines in the
resulting GQ. Then the incidence structure \({\frak S}=(\Omega, B)\) is a
Steiner design \(S(2,q+1, (q^4-1)/(q-1))\).
\end{proposition}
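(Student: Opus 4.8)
The plan is to read off the type of generalized quadrangle from the strongly regular parameters, to identify the spread cliques with a set of lines common to all the quadrangles, and then to verify the two Steiner axioms by sorting the pairs of points according to the colour of the connecting edge. The collinearity graph of a $GQ(s,t)$ is strongly regular with parameters $\bigl((s+1)(st+1),\,s(t+1),\,s-1,\,t+1\bigr)$; matching $\lambda=s-1=q-1$ and $\mu=t+1=q+1$ forces $s=t=q$, after which $k=s(t+1)=q(q+1)$ and $(s+1)(st+1)=(q+1)(q^2+1)=n$ check out. Hence each $\Gamma_i$ is the point graph of a $GQ(q,q)$: every line carries exactly $s+1=q+1$ points, every point lies on $t+1=q+1$ lines, and a spread of the quadrangle (a set of lines partitioning the points) consists of $n/(q+1)=q^2+1$ lines.

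The geometric core is the triangle-freeness of a GQ: any three pairwise collinear points are concurrent, so by induction every clique of the collinearity graph lies on a single line, and the maximal cliques are precisely the lines, each of size $q+1$. I would apply this inside each $\Gamma_i$. Since $S\subseteq R_i\cup S$, every clique $C$ of the spread is a clique of $\Gamma_i$, whence $|C|\le q+1$. To force equality I would use that $(\Omega,R_i)$ is an antipodal distance-regular graph of diameter $3$ whose fibres are the cliques of $S$, i.e. an antipodal $c$-cover of $K_m$ with $cm=n$ and valency $m-1=q(q+1)-(c-1)$; eliminating $m$ yields $c^2-(q^2+q+2)c+(q+1)(q^2+1)=0$, with roots $c=q+1$ and $c=q^2+1$. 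As a clique cannot exceed a line, $c=q+1$, so each spread clique is a full line shared by all $q+1$ quadrangles, and these $q^2+1$ cliques form a common spread.

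Finally I would verify the axioms. Every block of $B$ is a GQ line, hence has exactly $q+1$ points. For the pair axiom, fix distinct $x,y$; the edge $\{x,y\}$ carries exactly one colour among $S,R_1,\dots,R_t$, since these partition the edge set of the complete graph on $\Omega$. If the colour is $S$, then $x,y$ lie in a common spread clique $C$, which is the unique line through them in every $\Gamma_i$, so $C$ is the only block of $B$ on $\{x,y\}$. If the colour is $R_j$, then $x,y$ are collinear in $\Gamma_j$ and determine a unique line $\ell$ of $GQ_j$; for $i\ne j$, disjointness of the colours gives $(x,y)\notin R_i\cup S$, so $x,y$ are non-adjacent in $\Gamma_i$ and lie on no line of $GQ_i$, leaving $\ell$ as the only block. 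Thus each pair lies on exactly one block and $(\Omega,B)$ is an $S(2,q+1,n)$. As a numerical check, $|B|=(q^2+1)+(q+1)(q^3+q)=(q^2+1)(q^2+q+1)=\binom{n}{2}\big/\binom{q+1}{2}$, which is exactly the number of blocks of such a design.

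I expect the main obstacle to be the step asserting that the spread cliques are complete lines rather than proper sub-cliques: this is where the triangle-freeness of the quadrangle and the antipodal-cover parameter bookkeeping must be combined, in particular to rule out the spurious fibre size $c=q^2+1$. Once the spread is recognised as a set of lines common to all the quadrangles, the remainder is a clean dictionary between the colour of an edge and the incidences in the corresponding quadrangle, and the two-line colour analysis closes the argument.
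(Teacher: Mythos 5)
Your proof is correct, and it is worth noting that the paper itself offers no argument for this proposition: its proof reads, in full, ``This is evident.'' Your write-up therefore supplies content the paper omits, and does so soundly. The parameter matching forcing $s=t=q$, the colour-of-the-edge dictionary for the pair axiom (an $S$-pair lies only on its spread clique; an $R_j$-pair lies only on its unique $GQ_j$-line, being non-collinear in every other quadrangle), and the closing block count $(q^2+1)(q^2+q+1)$ are all accurate. More importantly, you correctly isolated the one step that is not a tautology: that each spread clique is an entire line shared by all $q+1$ quadrangles. Without this, an $S$-pair could determine different lines in different quadrangles and the Steiner pair axiom would fail, so this is exactly what the paper's ``evident'' glosses over. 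Your two-pronged argument for it is complete: the antipodal-cover bookkeeping (valency $m-1$ of the cover plus valency $c-1$ of the spread equals $q(q+1)$, and $cm=n$) yields $c^2-(q^2+q+2)c+(q+1)(q^2+1)=0$ with roots $c=q+1$ and $c=q^2+1$, and the triangle-freeness of a generalized quadrangle bounds every clique of $\Gamma_i$ by the line size $q+1$, excluding the spurious root; a clique of size $q+1$ is then contained in, hence equal to, a line. This is a legitimate, self-contained proof of a statement the paper asserts without justification.
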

\begin{proof}
This is evident.
\end{proof}

Thus each geometric Siamese color graph \(W\) provides a Steiner
design \({\frak S}\) with a spread. Moreover, removing the spread, we
may arrange a partition of the remaining blocks in \({\frak S}\) into
\(t=q+1\) subsets, each of which is the set of lines of a GQ of order
q. We call it a Siamese partition of \({\frak S}\). 

\begin{proposition}
\label{orge3cf94f}
\begin{enumerate}
\item \label{orgf271c6f} Up to isomorphism there exist exactly two Siamese color graphs of
order 15.
\item Both color graphs are geometric.
\item The WL-closure of the first graph is the Schurian AS \({\cal
     M}_{15}\) of rank 6 introduced above.
\item The WL-closure of the second graph is the centralizer algebra of
rank 21 of the intransitive action of \(A_4\) with orbits of
lengths 3 and 12.
\end{enumerate}
\end{proposition}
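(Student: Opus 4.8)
The plan is to force the parameters, translate the problem into one about a decorated Steiner triple system, carry out a finite classification of the latter, and finally identify the two surviving configurations by their coherent closures. First I would pin down all parameters. A Siamese color graph of order $15$ has a spread $S=m\circ K_c$ with $mc=15$, and each $(\Omega,R_i\cup S)$ is an SRG on $15$ vertices, whose only feasible types are $T_6=(15,8,4,4)$ and $\overline{T_6}=(15,6,1,3)$. Requiring $(\Omega,R_i)$ to be an antipodal DRG of diameter $3$ with antipodal classes the cliques of $S$ rules out every option except $S=5\circ K_3$, $R_i\cong\Delta$ of valency $4$, and $\Gamma_i=R_i\cup S\cong\overline{T_6}$; equivalently this is the case $q=2$, $t=3$ of Proposition~\ref{org5527514}. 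Since $\overline{T_6}$ is the collinearity graph of the unique generalized quadrangle $\mathrm{GQ}(2,2)$, each $\Gamma_i$ is geometric, which already settles part~(2).

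Next I would use Proposition~\ref{org5527514} to pass to combinatorial data. The three valency-$4$ graphs $R_1,R_2,R_3$ are edge-disjoint and, together with $S$, partition the edge set of $K_{15}$ (indeed $3\cdot 30+15=\binom{15}{2}$). Because $\lambda=1$ in $\overline{T_6}$, each edge lies in a unique line of its $\mathrm{GQ}(2,2)$, so the $15$ lines of $\Gamma_i$ (five being the spread triples of $S$, ten lying inside $R_i$) partition the edges of $\Gamma_i$. Consequently the $5+3\cdot 10=35$ triples assemble into a Steiner triple system $S(2,3,15)$ carrying a distinguished parallel class $S$ together with a Siamese partition of the remaining thirty triples into three $\mathrm{GQ}(2,2)$ line sets; conversely such a decorated system reconstructs $W$. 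Thus the classification is equivalent to enumerating these decorated triple systems up to isomorphism.

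The count of exactly two is where the real work lies. A spread corresponds to a one-factorization of $K_6$, and classically there are exactly six of these, forming a single orbit under $\aut(\overline{T_6})\cong S_6$ with point stabilizer $\cong S_5$; hence up to isomorphism the spread may be fixed. It then remains to classify, modulo the stabilizer of the spread, the partitions of the ninety non-spread edges of $K_{15}$ into three copies of $\Delta$, each completing $S$ to a $\mathrm{GQ}(2,2)$. This is a finite but delicate rigidity analysis---carried out, with computer assistance, in \cite{KliRW05,KliRW09}---and it yields precisely two isomorphism types, which the next step separates by their symmetry. I expect this exactly-two count to be the main obstacle: controlling how three copies of $\mathrm{GQ}(2,2)$ can interlock on a common spread, without resorting to brute force, is the only genuinely non-formal point in the whole argument.

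Finally I would compute the two coherent closures. The first type is (isomorphic to) the color graph $NJ_{15}$, which admits $\aut(NJ_{15})\cong A_5$; hence $WL(W)\subseteq V(A_5,\Omega)$, an algebra of rank $6$. Since $W$ itself is not coherent---a coherent homogeneous rank-$5$ structure would be a commutative AS by Higman's theorem, whereas here WL-stabilization splits $S$ into its two cyclic orientations $S_1,S_2$, reflecting the non-commutativity of ${\cal M}_{15}$ (Proposition~\ref{orgc664f85})---the closure has rank strictly larger than $5$, forcing $WL(W)={\cal M}_{15}$, the rank $6$ Schurian AS. For the second type the symmetry drops to the intransitive action $(A_4,\Omega)$ with orbits of lengths $3$ and $12$, so $WL(W)\subseteq V(A_4,\Omega)$, and the coherent stabilization recovers the full rank $21$ centralizer algebra of $(A_4,\Omega)$ from Section~\ref{orgeb5dfbd}. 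This gives parts~(3) and~(4).
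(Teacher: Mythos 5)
Your proposal agrees with the paper's proof exactly where it matters most: the paper's entire argument for part (a) is a citation of the computer classification in \cite{Rei03}, with parts (b)--(d) delegated to \cite{KliRW09}, and you likewise defer the ``exactly two'' count to the computer-assisted analysis of \cite{KliRW05,KliRW09}. So on the central claim there is no divergence, and your honest flagging of that count as the one genuinely non-formal step is accurate. What you add beyond the paper is real content: the parameter forcing ($S=5\circ K_3$, each $R_i\cong\Delta$, each $R_i\cup S\cong\overline{T_6}$, since an antipodal cover of $K_{m+1}$ has valency $m$, which kills the $(15,8,4,4)$ alternative), which yields (b) for free and reduces the classification to Siamese partitions of an $STS(15)$, in the spirit of Proposition \ref{org5527514}; and, for (c), a sandwich argument that the paper never spells out. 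That argument is sound: since the first graph admits $A_5$, its coherent closure is a coherent subalgebra of ${\cal M}_{15}$ containing the five given adjacency matrices, hence of dimension $5$ or $6$; dimension $5$ would make the graph a symmetric (hence commutative) rank-$5$ AS, contradicting the failure of $A(\Delta_0)$ and $A(\Delta_1)$ to commute established by the path count of Proposition \ref{orgc664f85}. This reduces (b) and (c) to (a) with no further computation, which is more than the paper offers in situ.

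Part (d), however, is not established by your argument, and this is a genuine gap rather than a presentational one. The sandwich closes for (c) only because the rank-$5$ span and the rank-$6$ algebra are adjacent in the subalgebra lattice: there is literally nothing in between. For the second graph you obtain only the one-sided containment $\wl(W')\subseteq V(A_4,\Omega)$, and between a rank-$5$ color algebra and a rank-$21$ centralizer algebra there is, a priori, ample room for intermediate coherent algebras (the fusion lattice of $Y_{15}$ is rich -- the paper reports 108 nontrivial homogeneous mergings alone, apart from non-homogeneous ones). Moreover, even the weaker claim that $W'$ is not itself coherent cannot be read off from Higman's theorem here: unlike in (c), you have no pair of non-commuting basic matrices handed to you, since commutativity of $W'$ as a putative rank-$5$ AS produces no immediate contradiction without further knowledge of its structure constants. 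So the sentence ``the coherent stabilization recovers the full rank $21$ centralizer algebra'' asserts the conclusion rather than derives it; this step still requires either the explicit WL-stabilization computation or, exactly as in the paper, the citation to \cite{KliRW09}.
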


\begin{proof}
Part \ref{orgf271c6f} was established with the aid of a computer, see
\cite{Rei03}. The other parts are considered in \cite{KliRW09}. 
\end{proof}

\begin{proposition}
\label{orga3f805d}
\begin{enumerate}
\item Up to isomorphism there exist exactly two Siamese Steiner triple
systems of order 15.
\item The first system is the classical structure
\(\operatorname{PG}(4,2)\) which is partitioned into a spread and
three isomorphic GQs of order 2 having a common spread.
\item The second system is a special partition of the STS(15) \(\#7\)
according to the classical catalogue \cite{MatPR83, MatPR83b}.
\item The automorphism groups of the existing Siamese partitions of
order 15 have orders 360  and 72, respectively (with known
structure of the groups and their actions).
\end{enumerate}
\end{proposition}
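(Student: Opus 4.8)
The plan is to reduce the statement to the already-established classification of Siamese color graphs of order $15$ (Proposition~\ref{orge3cf94f}) through the geometric correspondence of Proposition~\ref{org5527514}. Specialising the latter to $q=2$ gives $n=(q^4-1)/(q-1)=15$, Siamese rank $t=q+1=3$, SRG parameters $(15,6,1,3)$, and a resulting Steiner design $S(2,3,15)$, i.e.\ an STS$(15)$. By construction this STS comes equipped with a \emph{Siamese partition}: the spread $S=5\circ K_3$ together with a partition of the remaining triples into the line sets of three GQs of order $2$ that share $S$. A count confirms the bookkeeping: each $W(2)$ has $15$ lines, of which $5$ form the common spread, so the three quadrangles contribute $3\cdot 10=30$ further triples, and $30+5=35$ is exactly the number of blocks of an STS$(15)$.

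First I would upgrade this correspondence to a bijection on isomorphism classes. The forward map (geometric Siamese color graph $\mapsto$ Siamese STS) is Proposition~\ref{org5527514}. For the inverse, from a Siamese STS$(15)$ I recover, for each of the three quadrangles, its collinearity graph, which is the SRG $\Gamma_i=(\Omega,R_i\cup S)$ with parameters $(15,6,1,3)$; deleting the common spread $S$ returns the relations $R_i$, and $(\Omega,\{\id,S,R_1,R_2,R_3\})$ is the associated color graph. The one point needing care is that $R_i=\Gamma_i\setminus S$ is again an imprimitive DRG of diameter $3$ with antipodal system $S$; this is precisely the reverse of the identity $\Delta\cup S=\overline{T_6}$ proved earlier, and it makes the two constructions mutually inverse and isomorphism-preserving. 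Granting this, Proposition~\ref{orge3cf94f} (exactly two Siamese color graphs of order $15$, both geometric) yields exactly two Siamese STS$(15)$, which is part~1.

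Next I would name the two systems. The first color graph is $NJ_{15}$, whose WL-closure is the rank $6$ scheme $\mathcal M_{15}$ with $\aut=A_5$; its three copies of $W(2)$ share $S$, and their non-spread lines together with $S$ fill all $35$ triples of a projective geometry, which I would identify with the classical $\operatorname{PG}(4,2)$ --- either by realising the three $W(2)$ as the totally isotropic line sets of three symplectic forms with a common spread, or by matching invariants against the catalogue \cite{MatPR83,MatPR83b}. The second color graph is $J_{15}$, with $\aut=A_4$; the same matching identifies its STS with system $\#7$ of the catalogue. I expect this identification to be the main obstacle: attaching the two abstractly described systems to their catalogue names is essentially a verification (as in \cite{Rei03}) rather than a structural deduction, and distinguishing STS$(15)\ \#7$ from the other $79$ systems requires a genuine invariant computation.

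Finally, for the automorphism groups I would note that an automorphism of a Siamese partition is exactly a permutation of $\Omega$ preserving the STS, its distinguished spread, and the unordered triple of quadrangle line sets; equivalently it fixes the reflexive color $\id$ and the unique valency-$2$ color $S$ while permuting the three isomorphic valency-$4$ colors. Hence the automorphism group of the partition equals $\caut(W)$ of the associated color graph, and $\caut(W)/\aut(W)\hookrightarrow S_3$. For $W=NJ_{15}$ one has $\aut=A_5$ of order $60$ and the full $S_3$ is realised on the three quadrangles, giving $60\cdot 6=360$; for $W=J_{15}$ one has $\aut=A_4$ of order $12$ and again the full $S_3$, giving $12\cdot 6=72$. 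This establishes part~4, and with the identifications above completes the proof.
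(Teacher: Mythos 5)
The first thing to note is that the paper does not actually prove Proposition \ref{orga3f805d}: its entire proof reads ``The proof appears in \cite{KliRW09}.'' So there is no in-paper argument to compare yours against; what can be judged is whether your reconstruction stands on its own. Your reduction of part (a) to Proposition \ref{orge3cf94f} via the geometric correspondence of Proposition \ref{org5527514} is the natural route and is sound: for \(q=2\) the correspondence produces an \(STS(15)\) with a Siamese partition, the block count \(5+3\cdot 10=35\) is right, and the inverse map is well defined because in a \(GQ(2,2)\) the lines are exactly the triangles of the point graph (\(\lambda=1\)), so the partitioned design and the rank~5 color graph determine each other, isomorphisms included. Parts (b)--(c) you explicitly leave as a catalogue verification; that is honest, but it means your proposal establishes less than the statement claims (the paper, to be fair, does not carry this out either).

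The genuine gap is in part (d). Your argument only gives the upper bound: the automorphism group of the partition equals \(\caut\) of the color graph, and \(\caut/\aut\) embeds in \(S_3\), so the orders divide \(360\) and \(72\). Equality requires exhibiting point permutations of \(\Omega_{15}\) inducing the full \(S_3\) on the three thick colors, and this is precisely the delicate point: the paper stresses that the switchings \(Sw_i\) are \emph{not} induced by point permutations, so ``the three quadrangles are isomorphic'' does not by itself yield color automorphisms. For \(NJ_{15}\) the missing ingredients can be assembled from the paper: conjugation by the permutation \(h_1\) whose graph is the thin relation \(S_1\) sends \(\Delta_j\) to \(\Delta_{j+1}\) (this is the computation \(R_i^{-1}\Delta_jR_i=\Delta_{j-2i}\) behind Lemma \ref{org1cc5c32}), giving the \(3\)-cycle, while an odd element of \(\aut(\{{\cal D},\overline{\cal D}\})\cong S_5\) fixes the spread and \(\Delta_0\) but transposes \(\Delta_1=P({\cal D})\) and \(\Delta_2=P(\overline{\cal D})\); together these force \(|\caut(NJ_{15})|=60\cdot 6=360\). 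For \(J_{15}\) a separate argument is needed, since \(h_1\) alone does \emph{not} preserve its color set; one can use \(h_1h_2\) (global fiber rotation composed with the island rotation), which maps \(T_j\cup Br_k\) to \(T_{j+1}\cup Br_{k+2}\) and hence cycles \(\Delta_0,\Delta'_{0,1},\Delta'_{0,2}\), together with the involution \(g_4\) from the \(Pre_{15}\) analysis for the transposition, giving \(12\cdot 6=72\). Without some such exhibition, part (d) of your proof is an assertion rather than a proof.
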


\begin{proof}
The proof appears in \cite{KliRW09}.
\end{proof}

\begin{remark}
\begin{enumerate}
\item There exist a couple of computer-free proofs of the two last
propositions. They will be presented in \cite{KliMRRW}.
\item One of the proofs in \cite{KliMRRW} is based on the elegant
computer-free description, by Alex Rosa, of all \(\operatorname{STS}(15)\) having
a substructure isomorphic to \(\operatorname{GQ}(2)\). This proof
does not require the knowledge of all 80
\(\operatorname{STS}(15)\), as in \cite{MatPR83}.
\item The advantage of the use of the language of Siamese partitions vs
Siamese color graphs is that the automorphism group of the
Siamese partition of a considered design coincides with the color
group of the corresponding color graph. In a certain sense, the
symmetries of the Siamese partition might be more visible for a
human.
\end{enumerate}
\end{remark}
\section{Color Cayley graph of order 12 over \(A_4\)}
\label{sec:org8d275df}
\label{org8e3100c}
Let us restart again from AS \({\cal M}_{15}\) and its group
\(A_5=\operatorname{Aut}({\cal M}_{15})\). As was mentioned, the group
\(A_5\) acts naturally on the set of five fibers which form a spread
of \({\cal M}_{15}\) or of its symmetrization \(NJ_{15}\). We are now
interested in the restriction of \(NJ{15}\) to four fibres of the
spread \(S\). Let us denote it by \(\operatorname{Con}_{12}\). Clearly,
\(\operatorname{Con}_{12}\) is a rank 5 color graph, its basic graphs
are regular of valencies 1,2,3,3,3, respectively.

We start with the consideration of the restriction of
\(\Delta=\Delta_0\). Let us assume that the fiber
\(_4S^c=\{(01,23),(02,13),(03,12)\}\), as in Figure \ref{figure:5.3},
is removed. Here the index 4 indicates that the element \(4\in[0,4]\)
does not appear in the labelling of the elements of the removed
fiber. The restriction \(TT_0\) is depicted in Figure
\ref{figure:8.1}. Here the four distinct kinds of designation for
the vertices indicate four different components of the spread \(S\)
with respect to the graph \(\Delta_0\).

\begin{figure}
\begin{center}
\input{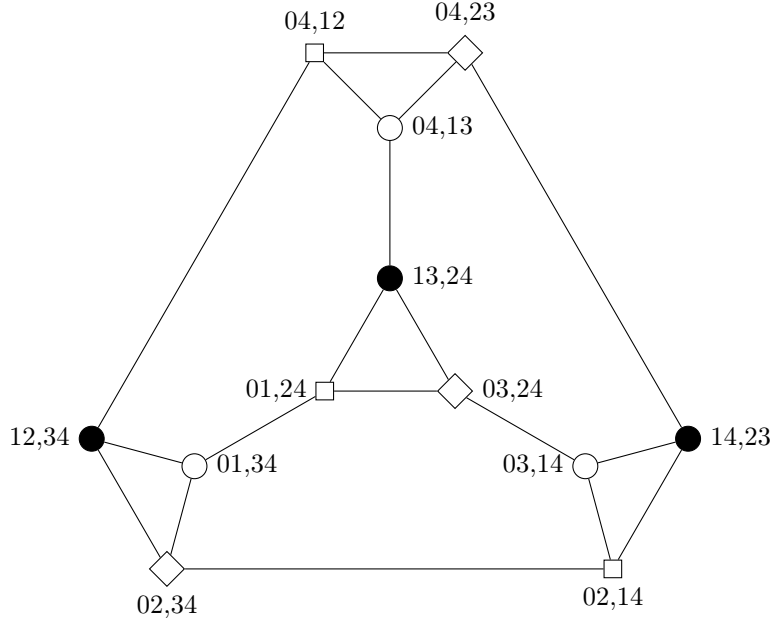}
\end{center}
\caption{Truncated tetrahedron \(TT_0\) \label{figure:8.1}}
\end{figure}

The reader is welcome to recognize in the diagram the \emph{truncated
tetrahedron,} this clarifies the used notation. Note the fibers of
\(\Delta_0\) remain to be the fibers of the graph \(TT_0\).

\begin{proposition}
\begin{enumerate}
\item The removal of one fiber from the graph \(\Delta\) leads to the
graph \(TT\) of valency 3 isomorphic to the truncated tetrahedron.
\item The graph \(TT\) is vertex transitive with the group
\(\operatorname{Aut}(TT)\cong S_4\).
\item The graph \(TT\) is of diameter 3 and is an antipodal cover of
\(K_4\).
\item The graph \(TT\) is not a DRG.
\item The graph \(TT\) admits a regular action of the group \(A_4\).
\end{enumerate}
\end{proposition}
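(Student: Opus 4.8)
The plan is to transport everything from the ambient graph $\Delta$ and its group $\aut(\Delta)\cong S_5$, rather than to reason inside $TT$ from scratch. Recall that $\Delta$ is the antipodal $3$-cover of $K_5$, so the covering projection $\Delta\to K_5$ makes each neighbourhood map bijectively onto the four other base vertices: every vertex has exactly one neighbour in each of the other four fibers, between two fibers the edges form a perfect matching, and the five fibers are permuted by $\aut(\Delta)\cong S_5$ as the natural action on $[0,4]$. Deleting one fiber ${}_4S^c$ therefore removes exactly one neighbour of every surviving vertex, giving valency $3$ at once. To identify $TT$ with the truncated tetrahedron I would first extract the triangles. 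Since $\lambda=a_1=1$ for $\Delta$, every edge lies in a unique triangle and every vertex lies in exactly two triangles, which split its four incident edges into two pairs. A surviving vertex $v$ has a single edge $vw$ into ${}_4S^c$; the unique triangle on $vw$ contains $w$ and is destroyed, while $v$'s second triangle uses only continent edges and survives. Hence each of the $12$ continent vertices lies in exactly one surviving triangle, so the surviving triangles partition the continent into four vertex-disjoint triangles ($12$ edges), leaving $6$ cross edges, one at each vertex.

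Next I would install the group action and use it both to finish (a) and to get (e). The setwise stabiliser $\mathrm{Stab}_{\aut(\Delta)}({}_4S^c)\cong S_4$ preserves the continent, hence acts on $TT$; its even part $A_4=\mathrm{Stab}_{A_5}({}_4S^c)$ (with $A_5\le S_5=\aut(\Delta)$) is the regular action $(A_4,\Omega_1)$ of Section 4. I would verify regularity directly: for $v$ in the continent, $\mathrm{Stab}_{A_4}(v)=\mathrm{Stab}_{A_5}(v)\cap A_4=E_4\cap\mathrm{Stab}_{A_5}({}_4S^c)$, and since $E_4=\mathrm{Stab}_{A_5}(v)$ already fixes the fiber of $v$, this intersection sits inside $\mathrm{Stab}_{A_5}$ of two base points, namely $\mathbb{Z}_3$; an intersection of subgroups of coprime orders $4$ and $3$ is trivial. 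Thus $A_4$ acts transitively (so $TT$ is vertex-transitive) and freely, which is (e) and exhibits $TT$ as a Cayley graph of $A_4$. Because $A_4$ is $2$-transitive on the four surviving triangles (its point stabiliser $\mathbb{Z}_3$ is transitive on the remaining three), the number of cross edges between any ordered pair of triangles is constant; with six cross edges and six pairs this is exactly one per pair. Hence $TT$ is $K_4$ with each vertex truncated into a triangle, i.e.\ the truncated tetrahedron, completing (a).

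For (b) I would sandwich $\aut(TT)$ between two copies of $S_4$. The map $S_4=\mathrm{Stab}_{\aut(\Delta)}({}_4S^c)\hookrightarrow\aut(TT)$ is faithful, since an element acting trivially on the continent fixes the matched continent-neighbours of each vertex of ${}_4S^c$ and hence fixes ${}_4S^c$ pointwise. Conversely, the four triangles are the only triangles of $TT$ (a cross edge lies in none, as the two disjoint triangles it joins share no common neighbour of its endpoints), so every automorphism permutes them; collapsing the triangles gives $K_4$ and a homomorphism $\aut(TT)\to\aut(K_4)\cong S_4$ whose kernel fixes each triangle setwise and each of the unique cross edges between two triangles, and is therefore trivial. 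The two injections force $|\aut(TT)|=24$ and $\aut(TT)\cong S_4$.

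Finally (c) and (d). Collapsing the four fibers defines $\pi\colon TT\to K_4$; since each continent vertex has exactly one neighbour in each of the other three fibers, $\pi$ restricts to a bijection on neighbourhoods and is a genuine $3$-fold covering whose fibers are precisely the antipodal classes of $\Delta$ surviving into the continent, which is the sense in which $TT$ is an antipodal cover of $K_4$. Computing the distance distribution from one vertex, which by vertex-transitivity is the same from all, yields $(1,3,4,4)$, so the diameter is $3$, giving (c). For (d) I would display the failure of distance-regularity concretely: fixing $x$ and comparing two vertices $y_1,y_2$ at distance $2$ from $x$, the numbers of neighbours of $y_i$ again at distance $2$ from $x$ come out unequal (in the truncation model one obtains $0$ for one choice and $1$ for the other), so an intersection number depends on more than the mutual distances and $TT$ is not a DRG; note this is consistent with (c), since ``antipodal cover'' here is meant in the covering sense and not the distance-regular sense. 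The only genuinely delicate point is the uniform distribution of the six cross edges in (a); I resolve it by the $2$-transitivity of $A_4$ on the four triangles rather than by an ad hoc case analysis.
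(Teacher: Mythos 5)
Your proof is correct, but it takes a genuinely different route from the paper, whose entire proof of this proposition is the single sentence ``Follows from visual observations'' --- that is, the reader is expected to inspect the diagram of \(TT_0\) (Figure \ref{figure:8.1}) together with the earlier pictures of \(\Delta\). You instead transport all the structure from the ambient graph: the covering \(\Delta\to K_5\) gives valency \(3\) after deleting a fiber; \(\lambda=1\) in \(\Delta\) gives the decomposition of \(TT\) into four vertex-disjoint surviving triangles plus a perfect matching of six cross edges; the fiber stabilizer \(S_4\le\aut(\Delta)\cong S_5\) and its even part \(A_4\) give vertex-transitivity, the regular action in (e), and, via the sandwich \(S_4\hookrightarrow\aut(TT)\hookrightarrow S_4\), part (b); and the \(2\)-transitivity of \(A_4\) on the four triangles forces exactly one cross edge between each pair of triangles, pinning down the truncated tetrahedron with no case analysis and no picture. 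What your route buys is a self-contained, diagram-free argument of the kind the authors postpone to their ``dry'' companion paper; what the paper's route buys is brevity and conformity with its declared essay style, and indeed fragments of your argument appear elsewhere in the text in different clothing: Section \ref{org8e3100c} re-derives \(TT_0\) as a Cayley graph over the regular group \(A_4\) (your part (e)), and the Miscellanea observe that \(TT\) has two kinds of edges, those in triangles (\(\lambda=1\)) and those not (\(\lambda=0\)), which is a one-line substitute for your distance-count argument in (d). Two small points you could make fully explicit: the \(2\)-transitivity of \(A_4\) on the triangles holds because any transitive \(A_4\)-action on four points with point stabilizers of order \(3\) is the natural one (\(A_4\) has no normal subgroup of order \(3\)); and your care in (c) about reading ``antipodal cover'' in the covering-theoretic sense is exactly right, since the distance-\(3\) relation in \(TT\) is not an equivalence relation, so only that reading is compatible with (d).
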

\begin{proof}
Follows from visual observations.
\end{proof}

The last proposition implies that the color graph
\(\operatorname{Con}_{12}\) of order 12 is invariant with respect to
the regular group \(A_4\). We aim to present below a suitable 
description of \(\operatorname{Con}_{12}\) in terms, similar to the
methodology of Schur rings.

Each vertex of the graph \(TT_0\), as it is depicted in the Figure
\ref{figure:8.1}, is labelled by a symbol \(\{\{a,b\},\{c,4\}\}\),
where \(\{a,b,c\}\subseteq[0,3]\), \(|\{a,b,c\}|=3\). Let us bijectively
associate to this label (we use here again its full designation) the
ordered pair \((c,d)\), where \(\{d\}=[0,3]\setminus\{a,b,c\}\). This
new (intermediate) notation appears in Figure \ref{figure:8.2.a}.

The group \(A_4\) acts sharply 2-transitively on the set
\([0,3]\). Therefore there exists a natural bijection between ordered
pairs of distinct elements from \([0,3]\) and elements of \(A_4\). For
example, if \(g=\begin{pmatrix} 0&1&2&3\\x&y&z&u\end{pmatrix}\in
  A_4\), then we associate to \(g\) the pair \((x,y)\). In this fashion we
finally are able to regard the graph \(TT_0\) as a Cayley graph over
\(A_4\). The diagram, depicted in the Figure \ref{figure:8.2.b} presents
such a new labelling. We use here the classical notation for
permutations. 

\begin{figure}
\begin{center}
\input{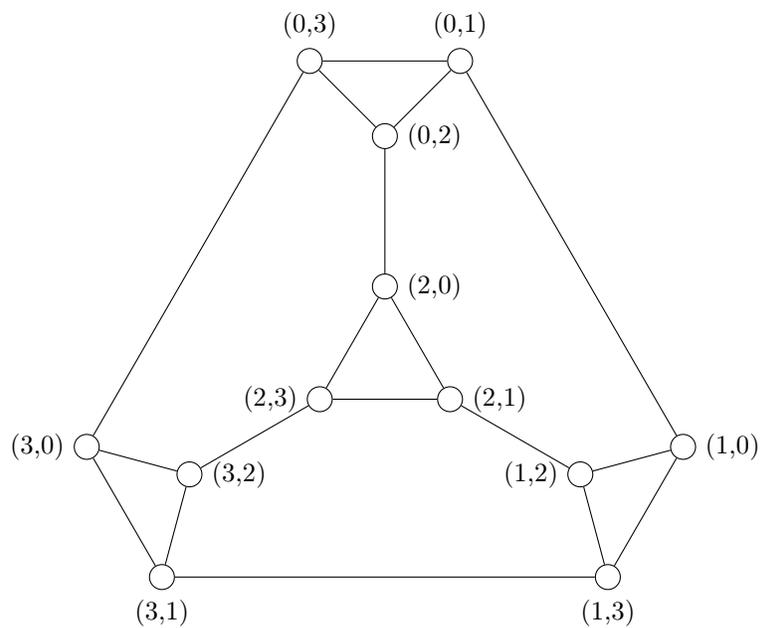}
\end{center}
\caption{Vertices as ordered pairs \label{figure:8.2.a}}
\end{figure}

\begin{figure}
\begin{center}
\input{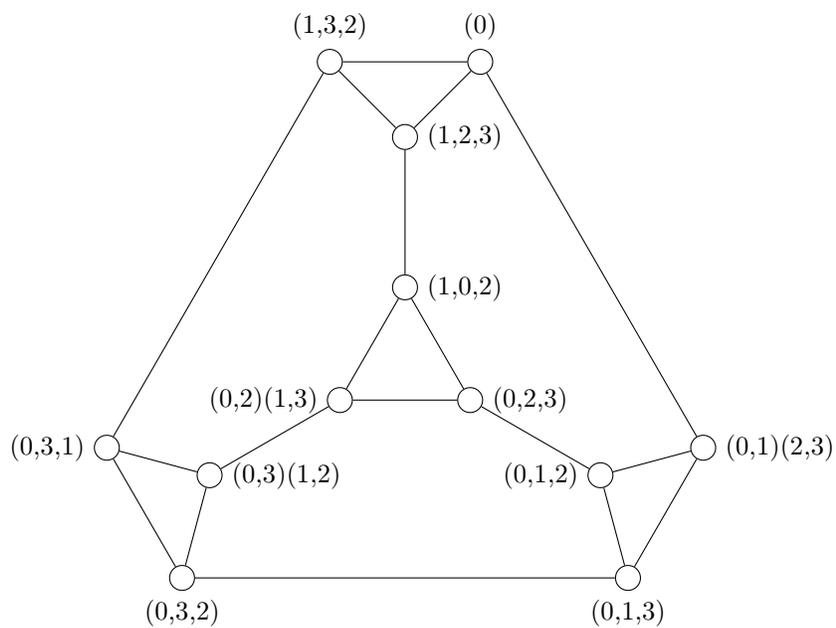}
\end{center}
\caption{Vertices as elements of \(A_4\) \label{figure:8.2.b}}
\end{figure}

The reader is welcome to observe from the diagram, which is depicted
in Figure \ref{figure:8.2.b} that the graph \(TT_0\) is a Cayley
graph with respect to the group \(A_4\), which is defined by the
connection set \(T_0=\{(0,1)(2,3), (1,2,3), (1,3,2)\}\). It is also
visible that the connection set \(\{(0,2,3), (0,3,2)\}\) defines a
disconnected graph \(4\circ K_3\) invariant under \(A_4\), which is a
spread of \(TT_0\). 

It turns out that the same procedure may be arranged for the graphs
\(\Delta_1\) and \(\Delta_2\), each time resulting in an isomorphic copy
of truncated tetrahedra \(TT_1\) and \(TT_2\), respectively, both are
invariant with respect to the same copy of the regular group \(A_4\). 

Let us define a rank 5 color Cayley graph
\[
   Y_{12}=\operatorname{Cay}(A_4, \{e\}, \{(0,2,3),(0,3,2)\}, T_0,
   T_1, T_2),
   \]
where
\begin{align*}
T_0&=\{(0,1)(2,3), (1,2,3),(1,3,2)\},\\
T_1&=\{(0,2)(1,3), (0,1,2), (0,2,1) \},\\
T_2&=\{(0,3)(1,2), (0,1,3), (0,3,1) \}.
\end{align*}

\begin{proposition}
\begin{enumerate}
\item The rank 5 color Cayley graph \(Y_{12}\) is isomorphic to the color
graph \(\operatorname{Con}_{12}\).
\item \(\operatorname{WL}(Y_{12})\) is the full rank 12 color Cayley graph over \(A_4\).
\item \(\operatorname{Aut}(Y_{12}) \cong A_4\).
\item \(\operatorname{CAut}(Y_{12})\cong (A_4\times {\mathbb
      Z}_3):{\mathbb Z}_2\) is a group of order 72,
which acts as \(S_3\) on the colors.
\item \(Y_{12}\) is not a Jordan scheme.
\item The Jordan stabilization of \(Y_{12}\) appears as the
symmetrization of \(\operatorname{WL}(Y_{12})\) and has rank 8.
\end{enumerate}
\end{proposition}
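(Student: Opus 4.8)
The plan is to translate everything into the language of the Schur ring (S-ring) generated over $A_4$ by the four connection sets, using the standard dictionary under which the adjacency matrix of a Cayley color is the corresponding simple quantity of the group algebra and ordinary matrix multiplication becomes multiplication in $\mathbb{C}[A_4]$. Throughout write $t_0=(0,1)(2,3)$, $t_1=(0,2)(1,3)$, $t_2=(0,3)(1,2)$ for the three involutions, and group the eight $3$-cycles into the four inverse pairs $S,\,U_0,\,U_1,\,U_2$ carried by the four Sylow $3$-subgroups; by inspection each $T_i$ consists of $t_i$ together with one such pair, while $S$ is a single pair. Part (1) is essentially the content of the relabelling described before the statement: under the bijection $A_4\to V(TT_0)$ of Figures \ref{figure:8.2.a}--\ref{figure:8.2.b} the graph $TT_0$ becomes $\operatorname{Cay}(A_4,T_0)$, the same identification turns $TT_1,TT_2$ into $\operatorname{Cay}(A_4,T_1),\operatorname{Cay}(A_4,T_2)$, and the common spread into $\operatorname{Cay}(A_4,S)\cong 4\circ K_3$, so it remains only to verify that these connection sets are the ones listed, which is a direct computation.

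For part (2) I would run the coherent (S-ring) stabilization by hand. The decisive first step is the square $A_{T_0}^2$, which I compute to be $3\identity+U_0+U_1+U_2$; since this takes the value $0$ on the $t_i$-positions but $1$ on every $3$-cycle position, the stabilization splits each color $T_i$ into the singleton $\{t_i\}$ and its $3$-cycle pair, producing the symmetric refinement of rank $8$ with classes $\{e\},\{t_0\},\{t_1\},\{t_2\},S,U_0,U_1,U_2$. To reach rank $12$ I would then exhibit non-symmetric products among the newly separated pieces: for instance $A_{t_0}A_S$ is supported only on $\{(0,2,1),(0,3,1)\}$ and not on their inverses, which forces the corresponding pairs $\{c,c^{-1}\}$ to split into directed singletons; iterating such products (e.g. $A_{t_1}A_S$, then products of the resulting singletons) separates all four pairs, so the closure is the full thin scheme $\mathfrak{V}((A_4)_{\mathrm{reg}})$ of rank $12$. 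This separation argument is the main obstacle, since everything downstream rests on identifying the coherent closure exactly, and one must check carefully that the non-symmetric products really distinguish $c$ from $c^{-1}$ for each of the four pairs. Granting it, part (3) is immediate: $\aut(Y_{12})=\aut(\wl(Y_{12}))$, and the automorphism group of the full thin scheme on a group $G$ is its left regular representation, here $A_4$.

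For part (4) the upper bound is cheap: algebraic (hence combinatorial color) automorphisms preserve valency, so they fix the valency-$1$ color $\id$ and the valency-$2$ color $S$ and can only permute the three valency-$3$ colors $T_0,T_1,T_2$; thus $\caut(Y_{12})/\aut(Y_{12})\hookrightarrow\aaut(Y_{12})\hookrightarrow S_3$. I would realize all six color permutations by two families of symmetries of the Cayley structure: right translations by the point stabilizer $\mathrm{Stab}_{A_4}(1)\cong\mathbb{Z}_3$, which conjugate the Sylow $3$-subgroups and hence cyclically permute $\{T_0,T_1,T_2\}$ while fixing $S$; and the automorphism of $A_4$ induced by an odd permutation of $\{0,1,2,3\}$ fixing the point $1$ (e.g. conjugation by $(0,2)$), which supplies the needed transposition. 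Since the right translations commute with the left regular $A_4=\aut(Y_{12})$ and have trivial intersection with it, while the odd automorphism normalizes both, assembling these gives a group of order $12\cdot 3\cdot 2=72$ of the shape $(A_4\times\mathbb{Z}_3):\mathbb{Z}_2$; matching it against the $S_3$ upper bound shows it is all of $\caut(Y_{12})$.

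Finally, parts (5) and (6) are short once $A_{T_0}^2=3\identity+U_0+U_1+U_2$ is in hand. For (5), this matrix is not in the span $\langle A_0,\dots,A_4\rangle$, because every element of that span has equal coefficients on $t_0$ and on the pair $U_0$ inside $T_0$, whereas here those coefficients are $0$ and $1$; hence the span is not closed under squaring and, by the criterion of Proposition \ref{org163c63e}, $Y_{12}$ is not a Jordan scheme. For (6), the inclusion $JA(Y_{12})\subseteq SWL(Y_{12})$ holds because the symmetrization of the coherent closure is a coherent Jordan algebra containing the symmetric colors $A_0,\dots,A_4$, and it has rank $8$ by part (2); for the reverse inclusion I would extract each symmetrized basic matrix inside the coherent Jordan closure, using the Schur--Hadamard products $A_{T_i}^2\circ A_{T_i}=U_i$ to isolate the three $3$-cycle pairs, then $t_i=A_{T_i}-U_i$ to isolate the involutions, and $A_S=\allone-\identity-\sum_i A_{T_i}$ to recover $S$. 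This produces all eight basic matrices, so $JA(Y_{12})=SWL(Y_{12})$ has rank $8$.
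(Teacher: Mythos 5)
Your proposal is correct, and it executes in full the route that the paper only gestures at. The paper's own proof is structural and computer-assisted: it starts from \(\aut(TT_0)\cong S_4\) acting geometrically on the vertices, identifies \(\wl(TT_0)\) as a rank 7 S-ring over \(A_4\) with valencies \(1,1,2,2,2,2,2\), treats \(TT_1\) and \(TT_2\) analogously, and then combines the three S-rings, stating that COCO, hand computations and {\sf GAP} were used and adding only that ``everything may be repeated by routine reasonings and hand calculations in the group ring \({\mathbb Z}[A_4]\).'' Your proof is precisely that hand calculation: the single identity \(\underline{T_0}^2=3e+\underline{U_0}+\underline{U_1}+\underline{U_2}\) (which checks out) simultaneously yields the rank 8 symmetric refinement, the failure of \(Y_{12}\) to be a Jordan scheme (part 5), and, via the Schur--Hadamard extractions \(A_{T_i}^2\circ A_{T_i}=A_{U_i}\), the identification of the Jordan closure with \(S\wl(Y_{12})\) (part 6); the non-inverse-closed product \(t_0\cdot\underline{S}\) then forces the splitting up to the full thin scheme of rank 12 (part 2), and parts 3--4 follow from explicit translations and the automorphism induced by \((0,2)\), sandwiched against the valency bound \(\caut/\aut\hookrightarrow S_3\). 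What your route buys is a fully computer-free, self-contained verification of all six claims from one displayed group-ring identity; what the paper's route buys is economy, reusing the already-exhibited geometric symmetry of the truncated tetrahedra and delegating the bookkeeping to software. Two harmless slips in your write-up: \(\underline{T_0}^2\) takes the value 1 only on the six 3-cycles outside \(S\) (the pair in \(S\) gets 0), not on ``every'' 3-cycle position --- this does not affect the induced refinement, since \(S\) is already a single color; and the support \(\{(0,2,1),(0,3,1)\}\) you give for \(A_{t_0}A_S\) corresponds to the composition convention opposite to your stated left-regular normalization (the other convention gives \(\{(0,1,2),(0,1,3)\}\)), but under either convention the support meets \(U_1\) and \(U_2\) in one element each and is disjoint from its set of inverses, which is all the splitting argument needs.
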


\begin{proof}
We start with the automorphism group \(\operatorname{Aut}(TT_0)\) of
order 24, which has a natural geometric action on the vertex set of
\(TT_0\). It defines an S-ring of rank 7 over \(A_4\), generated by
\(TT_0\). The valencies of this S-ring are 1,1,2,2,2,2,2, the basic
quantities are visible in Figure \ref{figure:8.2.b}. After that,
two analogous S-rings generated by \(TT_1\) and \(TT_2\) are
considered. The combination of the obtained information easily
leads to the necessary results. On this way there were used COCO,
hand computations, and {\sf GAP}. Finally, everything may be repeated by
routine reasonings and hand calculations in the group ring
\({\mathbb Z}[A_4]\).
\end{proof}
\section{Coherent configuration of order 15 and rank 21 with two fibers of size 3 and 12}
\label{sec:org92777de}
\label{orga11617b}
Starting from the group \(A_5=\aut({\cal M}_{15})\), where \({\cal
  M}_{15}\) is a non-commutative AS of rank 6, we reached its
symmetrization \(NJ_{15}\), regarded as a non-proper Jordan scheme of
rank 5, and having also group \(A_5=\aut(NJ_{15})\). Considering the
subgroup \(A_4\) in this action of degree 15, we investigated its
non-faithful rank 3 action on the orbit of length 3 and the regular
faithful action (of rank 12) on the orbit of length 12. Now we need
to consider simultaneously these two actions, that is, an intransitive
action of \(A_4\) with two orbits of lengths 3 and 12. In what
follows, the shorter orbit with be called \emph{island}, while the longer
orbit will be called \emph{continent} (this clarifies the notation
introduced in the previous section). We return (cf. Table
\ref{table:15aa.map}) to the intermediate notation for elements of
\([0,14]\) which was created by COCO, see Table \ref{table:9.1}.

\begin{table}
\begin{center}
$
\begin{array}{|r|r|}
\hline
  0&(0,1)\\
  1&(1,2)\\
  2&(0,2)\\
  3&(2,0)\\
  4&(2,3)\\
  5&(1,0)\\
  6&(0,3)\\
  7&(3,0)\\
  8&(3,1)\\
  9&(2,1)\\
 10&(1,3)\\
 11&(3,2)\\
 12&\{\{0,1\},\{2,3\}\}\\
 13&\{\{0,3\},\{1,2\}\}\\
 14&\{\{0,2\},\{1,3\}\}\\
 \hline
\end{array}
$
\end{center}
\caption{COCO labelling for elements in the action of \((A_4,\Omega)\) \label{table:9.1}}
\end{table}

\begin{proposition}
The rank of the intransitive action \((A_4,\Omega)\) is equal to 21.
\end{proposition}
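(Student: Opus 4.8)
The plan is to count the $2$-orbits of $(A_4,\Omega)$ directly by exploiting the decomposition of $\Omega^2$ induced by the two orbits, namely the continent $\Omega_1$ (of size $12$) and the island $\Omega_2$ (of size $3$). Since every $g\in A_4$ preserves each of $\Omega_1$ and $\Omega_2$, each of the four blocks $\Omega_1\times\Omega_1$, $\Omega_1\times\Omega_2$, $\Omega_2\times\Omega_1$ and $\Omega_2\times\Omega_2$ is a union of $2$-orbits. Hence the rank equals the sum of the numbers of orbits in these four blocks, and it suffices to determine each summand.

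First I would treat $\Omega_1\times\Omega_1$. Recall that $(A_4,\Omega_1)$ is the regular action; under the diagonal action a pair $(x,y)$ is determined up to $A_4$ by the invariant $x^{-1}y$, which ranges over all of $A_4$, so this block contributes exactly $|A_4|=12$ orbits. For the two mixed blocks I would fix the first coordinate and use transitivity on the relevant orbit: the number of $A_4$-orbits on $\Omega_1\times\Omega_2$ equals the number of orbits of a point stabilizer $(A_4)_{x_0}$, $x_0\in\Omega_1$, on $\Omega_2$; since the action on the continent is regular this stabilizer is trivial, so each of the $3$ island points gives its own orbit, and $\Omega_1\times\Omega_2$ contributes $3$. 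By transposition symmetry $\Omega_2\times\Omega_1$ likewise contributes $3$. Finally, for $\Omega_2\times\Omega_2$ I would use that the action $(A_4,\Omega_2)$ factors through $A_4/E_4\cong\mathbb{Z}_3$, where $E_4$ is the normal Klein four-subgroup acting trivially on the island, and that the quotient $\mathbb{Z}_3$ acts regularly on the three points; as in the regular case this contributes $3$ orbits. Summing, the rank is $12+3+3+3=21$.

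The computation is essentially routine; the only step that requires genuine care is the identification of the induced action on the island, i.e.\ verifying that $E_4$ lies in the kernel and that $A_4$ acts on the three partitions as a regular copy of $\mathbb{Z}_3$. As an independent cross-check I would invoke the Cauchy--Frobenius--Burnside lemma already alluded to in the text, computing $\tfrac{1}{|A_4|}\sum_{g\in A_4}\mathrm{fix}_\Omega(g)^2$: the identity fixes all $15$ points; each of the three involutions fixes no point of the continent (regular action) yet all $3$ points of the island (being in $E_4$), so $\mathrm{fix}=3$; and each of the eight elements of order $3$ fixes no point of either orbit, so $\mathrm{fix}=0$. This gives $\tfrac{1}{12}\bigl(15^2+3\cdot 3^2+8\cdot 0^2\bigr)=\tfrac{252}{12}=21$, confirming the count.
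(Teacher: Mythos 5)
Your proof is correct and takes essentially the same route as the paper's: decompose $\Omega^2$ into the four blocks determined by the continent $\Omega_1$ and the island $\Omega_2$, and count $12+3+3+3=21$ orbits. The only differences are minor — for the mixed blocks you stabilize a continent point (trivial stabilizer, hence $3$ orbits on the island), whereas the paper stabilizes an island point (stabilizer $E_4$, acting semiregularly with $3$ orbits on the continent), and your explicit Cauchy--Frobenius--Burnside computation carries out in full a cross-check that the paper only mentions in passing.
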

\begin{proof}
Let \(\Omega=\Omega_1\cup \Omega_2\), \(\Omega_1=[0,11]\),
\(\Omega_2=[12,14]\). We are already aware that
\(\operatorname{rank}(A_4,\Omega_1)=12\), 
\(\operatorname{rank}(A_4,\Omega_2)=3\). 

The stabilizer of, say, \(b=\{\{0,1\},\{2,3\}\}\) (12 in COCO
notation), clearly coincides with the group \(E_4\) of order 4. This
group sends the pair \((0,1)\) to the pairs \((0,1)\), \((1,0)\), \((2,3)\)
and \((3,2)\). In other words, in COCO notation the 2-orbit of
\((12,0)\) of the group \((A_4,\Omega)\) contains the pairs \((12,i)\),
where \(i\in\{0,4,5,11\}\). This implies that there are three 2-orbits
from \(\Omega_1\) to \(\Omega_2\) and vice versa. Altogether we get
\(12+3+6=21\) 2-orbits.
\end{proof}

\begin{table}
\begin{center}
\begin{tabular}{|r|c|c||r|c|c|}
\hline
2-Orbit & Subdegree & Representative&
2-Orbit & Subdegree & Representative\\
\hline
0&1&(0,0)&12&1&(0,12)\\
1&1&(0,1)&  13&1&(0,13)\\

2&1&(0,2)&  14&1&(0,14)\\

3&1&(0,3)&  15&4&(12,0)\\
4&1&(0,4)&  16&4&(12,1)\\
5&1&(0,5)&  17&4&(12,2)\\
6&1&(0,6)&  18&1&(12,12)\\
7&1&(0,7)&  19&1&(12,13)\\
8&1&(0,8)&  20&1&(12,14)\\
9&1&(0,9)& &&\\
10&1&(0,10)&&&\\
11&1&(0,11)& &&\\ 
\hline
\end{tabular}
\end{center}
\caption{2-orbits of the group \((A_4,\Omega)\) \label{table:9.2}}
\end{table}

Table \ref{table:9.2} shows notation and representatives of all
21 2-orbits. Note that here we get the following pairs \((i,j)\) of
antisymmetric 2-orbits \((R_i,R_j)\): \((1,3)\), \((2,6)\), \((7,10)\),
\((8,9)\), \((12,15)\), \((13,17)\), \((14,16)\), \((19,20)\).

\begin{corollary}
\begin{enumerate}
\item The symmetrization \(\tilde Y_{15}\) of \(Y_{15}=V(A_4,\Omega)\) has
rank 13.
\item The WL-stabilization of this symemtrization coincides with the
rank 21 CC \(Y_{15}=V(A_4,\Omega)\).
\end{enumerate}
\end{corollary}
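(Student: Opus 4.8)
The plan is to treat the two assertions separately: part (a) by direct bookkeeping on the list of $2$-orbits, and part (b) by a stabilization argument organized around the fiber partition $\Omega=\Omega_1\cup\Omega_2$. For (a), recall that symmetrization leaves every symmetric basic relation unchanged and fuses each antisymmetric relation $R_i$ with its transpose $R_i^T$. The preceding discussion exhibits exactly eight antisymmetric pairs, namely $(1,3),(2,6),(7,10),(8,9),(12,15),(13,17),(14,16),(19,20)$, which account for $16$ of the $21$ basic relations; the remaining five, namely the two reflexive relations $R_0,R_{18}$ together with the three relations attached to the involutions of $A_4$ on the continent, are symmetric. Hence $\tilde Y_{15}$ has $5+8=13$ basic relations, which is the asserted rank.

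For (b), I would first record the trivial half of the sandwich: since $\tilde Y_{15}$ is a merging of the coherent configuration $Y_{15}$, the closure $\wl(\tilde Y_{15})$ cannot be finer than $Y_{15}$, i.e. every basic relation of $\wl(\tilde Y_{15})$ is a union of basic relations of $Y_{15}$. It therefore suffices to show that the stabilization re-separates each of the eight fused pairs, equivalently that $\wl(\tilde Y_{15})$ attains rank $21$. The first reduction is the fiber argument. The reflexive relations $R_0$ and $R_{18}$ survive symmetrization, so the fibers of $\tilde Y_{15}$ are precisely $\Omega_1$ (continent) and $\Omega_2$ (island); and in any coherent refinement each basic relation is contained in a single block $\Omega_i\times\Omega_j$. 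Consequently the three fused bridges $\tilde B_j=B_j\cup B_j^T$, whose two halves lie in $\Omega_1\times\Omega_2$ and $\Omega_2\times\Omega_1$ respectively, split automatically, which recovers the six directed bridge relations, i.e. the pairs $(12,15),(13,17),(14,16)$.

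It then remains to split the single island pair $(19,20)$ and the four continent pairs $(1,3),(2,6),(7,10),(8,9)$, and here I would use path counts through the now-available directed bridges. For island points $u,v$, the number of continent points $c$ with $(u,c)\in B_i^T$ and $(c,v)\in B_j$ is an intersection number of $\wl(\tilde Y_{15})$; because $A_4$ acts on the island as the orientation-preserving group $\mathbb Z_3$ rather than as $S_3$, these counts discriminate the arc $R_{19}$ from $R_{20}$, so the island triangle becomes oriented in the closure. With the island arcs $A_{19}$ and $A_{20}=A_{19}^T$ and the directed bridges in hand, the continent pairs are split by the products $B_i\,A_{19}\,B_k^T$, which land in $\Omega_1\times\Omega_1$ and are asymmetric precisely because of the coupling between the orientation of the spread and that of the island established earlier (the proposition that the orientation of a single spread component determines the orientations of the remaining four). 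Thus $\wl(\tilde Y_{15})$ separates all $21$ relations, and being no finer than $Y_{15}$ it coincides with $Y_{15}$.

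I expect the genuine obstacle to be this last step: verifying that the relevant products, or equivalently the relevant intersection numbers, are truly asymmetric, i.e. that the stabilization perceives the chirality that symmetrization discarded. This is exactly the place where the slogan ``the symmetrization of a CC need not be a CC'' is inverted into a recovery statement, and, in keeping with the style of the essay, the asymmetry can ultimately be read off from the tensor of structure constants of $Y_{15}$ produced by COCO.
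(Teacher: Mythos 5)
Your proposal is correct, and it supplies an argument where the paper offers essentially none: the paper's entire proof of this corollary is the sentence ``The proof is evident.'' For part (a), your counting (eight antisymmetric pairs fuse; the five symmetric classes $R_0$, $R_{18}$, $R_4$, $R_5$, $R_{11}$ survive; hence rank $8+5=13$) is surely the intended reasoning. For part (b), the paper in its essay style implicitly leans on the COCO computations, whereas your three-stage argument --- the coarseness sandwich, the fiber argument forcing the fused bridges to split, then matrix products through the island to recover the directed island arcs and the directed continent classes --- is a genuine computer-free proof. It is worth noting that this is essentially the same mechanism (block decomposition with respect to island and continent, products of block matrices, support/Schur--Wielandt arguments) that the paper itself only deploys later, in the proof of Theorem \ref{org54afef5}; so your route anticipates the paper's general method and, applied here, shows transparently that the closure regains rank $21$.

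One soft spot, which you yourself flagged as the crux: you justify the asymmetry of the products $B_iA_{19}B_k^T$ by appealing to the proposition that orienting one spread component of ${\cal M}_{15}$ orients the other four. That statement concerns the $A_5$-invariant scheme and is not really the right tool here. The clean verification is internal to $Y_{15}$: identify the continent with $A_4$ in its regular action and the island with $A_4/E_4\cong{\mathbb Z}_3$ via the natural epimorphism $\pi$; the three bridges are then exactly the three $A_4$-equivariant maps $g\mapsto c+\pi(g)$, $c\in{\mathbb Z}_3$. Composing two distinct directed bridges gives $4$ times a single directed island arc (not its symmetrization), which splits the pair $(19,20)$; and $B_iA_{19}B_k^T$ is the Cayley relation of a nontrivial coset of $E_4$, which contains exactly one element from each inverse pair of order-$3$ elements, so intersecting its support with the fused classes $\tilde R_1,\tilde R_2,\tilde R_7,\tilde R_8$ (which the closure refines) splits all four continent pairs. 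Since these claims are true and elementary to check, this is a completion of your argument rather than a correction; the proof stands.
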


The proof is evident.\noproof

COCO returns that \(Y_{15}\) has 108 nontrivial homogeneous mergings
with ranks varying from 3 to 6. Twelve of the rank 6 mergings have
automorphism groups of order 60 of rank 6 with valencies
1,1,1,4,4,4. Clearly, they are all isomorphic to the AS \({\cal
  M}_{15}\). Their symmetrization provide 12 copies of the rank 5
Jordan scheme \(NJ_{15}\). With the aid of \gap we detect 12 copies of
\(PJ_{15}=J_{15}\) with the group \(A_4\), which appear as mergings of
unicolor graphs of \(\tilde Y_{15}\). We aim to pick up just one copy
of \(NJ_{15}\) and those copies of \(J_{15}\), which are relatives in
some sense of the selected copy of \(NJ_{15}\). Everything will be
described in terms of \(\tilde Y_{15}\).

Using the established notation, we present a pictorial description of
all basic relations of the color graph \(\tilde Y_{15}\). 

Thus Figures \ref{figure:9.1.a} through \ref{figure:9.1.k}, depict
all symmetric basic graphs (non-reflexive) which constitute the
color graph \(\tilde Y_{15}\). We have:
\begin{itemize}
\item four copies of \(4\circ K_3\) consisting of disjoint triangles;
\item three copies of \(6\circ K_2\), that is, partial 1-factors with 12 vertices;
\item three copies of \(3\circ St_{1,4}\), here \(St_{1,4}\) is a \emph{star} with a
center of valency 4 and four leaves of valency 1.
\item one graph of type \(K_3\).
\end{itemize}

The first two kinds of graphs have three isolated vertices whereas
the last one has 12 isolated vertices. Together with the two
reflexive graphs we obtain the requested basic graphs of \(\tilde
  Y_{15}\).

These 13 basic graphs form a kind of "kindergarten constructor",
which permits to describe all \emph{intermediate} unicolor graphs as
unions of suitable basic graphs of \(\tilde Y_{15}\). Each such an
intermediate graph is invariant with respect to \(A_4\cong
  \operatorname{Aut}(Y_{15})\). Some suitable unions of the obtained intermediate
graphs might be invariant with respect to the transitive action
\((A_5, [0,14])\) regarded as overgroup of \((A_4, [0,14])\). We also
may manipulate by the knowledge of the color groups of \(\tilde
  Y_{15}\) and \({\cal M}_{15}\). Each element of the color group of the
considered color graph, by definition, sends any intermediate graph
to its isomorphic copy. 

With the understanding of the rules of the game let us now introduce
a certain new color graph \(\operatorname{Pre}_{15}\), which will be a
suitable merging of \(\tilde Y_{15}\) and will be jargonically called
the \emph{pregraph} of order 15.

The main advantage of \(\operatorname{Pre}_{15}\) will be that it has
to contain (quite transparently) a copy of \(NJ_{15}\) and a few
copies of \(J_{15}\). 

In a certain sense the object \(\operatorname{Pre}_{15}\) will serve
as methodological bridge between all computer aided activities,
which led to the discovery of \(J_{15}\), and further theoretical
reasonings. The fact that each intermediate graph may be pretty
well obtained with the aid of the diagrams mentioned above is
expected to be helpful for the reader.

\begin{figure}
\begin{center}
\input{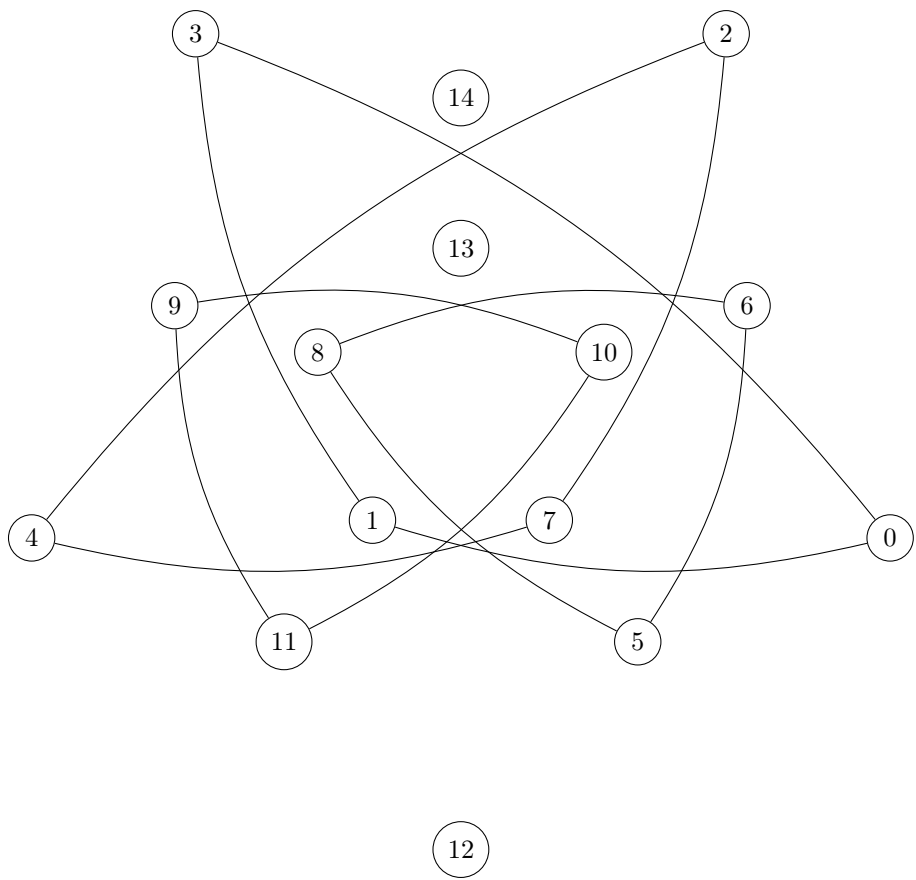}
\end{center}
\caption{Basic graph \(\tilde R_1\cong 4\circ K_3\) \label{figure:9.1.a}}
\end{figure}

\begin{figure}
\begin{center}
  \input{figure.9.1.b.tex}
\end{center}
\caption{Basic graph \(\tilde R_2\cong 4\circ K_3\)}
\end{figure}

\begin{figure}
\begin{center}
\input{figure.9.1.c.tex}
\end{center}
\caption{Basic graph \(\tilde R_4\cong 6\circ K_2\)}
\end{figure}

\begin{figure}
\begin{center}
  \input{figure.9.1.d.tex}
\end{center}
\caption{Basic graph \(\tilde R_5\cong 6\circ K_2\)}
\end{figure}

\begin{figure}
\begin{center}
  \input{figure.9.1.e.tex}
\end{center}
\caption{Basic graph \(\tilde R_7\cong 4\circ K_3\)}
\end{figure}

\begin{figure}
\begin{center}
\input{figure.9.1.f.tex}
\end{center}
\caption{Basic graph \(\tilde R_8\cong 4\circ K_3\)}
\end{figure}
\begin{figure}
\begin{center}
\input{figure.9.1.g.tex}
\end{center}
\caption{Basic graph \(\tilde R_{11}\cong 6\circ K_2\)}
\end{figure}

\begin{figure}
\begin{center}
\input{figure.9.1.h.tex}
\end{center}
\caption{Basic graph \(\tilde R_{12}\cong 3\circ St_{1,4}\)}
\end{figure}

\begin{figure}
\begin{center}
\input{figure.9.1.i.tex}
\end{center}
\caption{Basic graph \(\tilde R_{13}\cong 3\circ St_{1,4}\)}
\end{figure}

\begin{figure}
\begin{center}
\input{figure.9.1.j.tex}
\end{center}
\caption{Basic graph \(\tilde R_{14}\cong 3\circ St_{1,4}\) \label{figure:9.1.j}}
\end{figure}

\begin{figure}
\begin{center}
\input{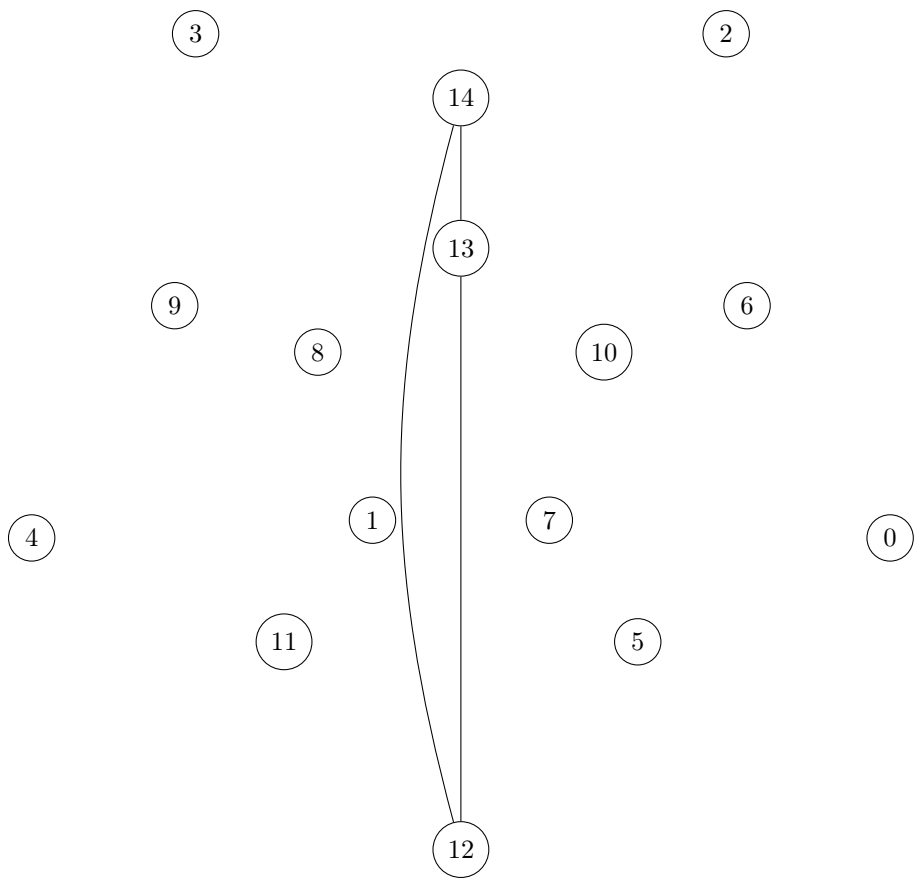}
\end{center}
\caption{Basic graph \(\tilde R_{19}\cong K_3\) on the island \label{figure:9.1.k}}
\end{figure}

\section{Rank 10 pregraph of order 15: Model "Island and Continent"}
\label{sec:org3fe023e}
\label{org94d2fe9}
We are using notation for \(\Omega=[0,14]\) and the relations \(\tilde
  R_i\) on \(\Omega\) as introduced in the previous section, relying on
the results obtained with the aid of COCO.

Let us start with the description of all unicolor graphs forming the
"pregraph" \(Pre_{15}\) announced above. Recall that \(\Omega=[0,14]\)
is partitioned as \(\Omega=\Omega_1\cup \Omega_2\), where
\(\Omega_1=[0,11]\), \(\Omega_2=[12,14]\). The pregraph \(Pre_{15}\) will
consist of the following parts:
\begin{itemize}
\item the \emph{"continent"} \(Con_{12}\), that is, the full color subgraph of
\(Pre_{15}\), induced by \(\Omega_1\);
\item the \emph{"island"} \(Isl_3\), that is, the full color subgraph of
\(Pre_{15}\) induced by \(\Omega_2\);
\item the \emph{bridges}, that is the three unicolor graphs which appear as
ingredients of \(Pre_{15}\) in its subgraph \(K_{3,12}\), that is the
full bipartite color graph between the continent and the island.
\end{itemize}

Let us first discuss some general issues of a possible independent
interest.

Let \(H\) be a group, \((H,H)\) its right regular
representation. Here  \(g\in H\) acts by right multiplication:
 \(h^{g} = hg\) for \(h\in H\). Denote
by \(Thi(H) = V(H,H)\) the centralizer algebra of \((
  H,H)\), the \emph{thin} AS in P.-H. Zieschang's terminology, see, e.g.,
\cite{Zie05}, which is defined by the group \(H\).

Let \(Hol(H)=\caut(Thi(H))\), that is the color group of the thin
\correction{scheme} \(Thi(H)\).

\begin{proposition}
\begin{enumerate}
\item The group \(Hol(H)\) is isomorphic to a semidirect product
\correction{\(H\rtimes\aut(H)\)}, where \(\aut(H)\) is the automorphism group of \(H\).
\item There exists a natural bijection between orbits of the group
\((\aut(H), H)\) and the 2-orbits of the group \((Hol(H),H)\).
\item \(|Hol(H)| = |H|\cdot |\aut(H)|\).
\end{enumerate}
\end{proposition}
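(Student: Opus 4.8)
The plan is to describe the thin scheme $\operatorname{Thi}(H)$ concretely and then read off its color group directly. First I would record the basic relations. Since $(H,H)$ acts regularly via $h^{g}=hg$, its point stabilizer is trivial, so the rank equals $|H|$ and the $2$-orbits are
\[
T_s=\{(h,sh)\mid h\in H\},\qquad s\in H,
\]
indexed by the ``left difference'' $s=(\text{second})\cdot(\text{first})^{-1}$. A permutation $\sigma$ of $H$ lies in $\aut(\operatorname{Thi}(H))$ iff it preserves every $T_s$, i.e.\ $\sigma(sh)=s\,\sigma(h)$ for all $s,h$; putting $h=e$ gives $\sigma(s)=s\,\sigma(e)$, so $\sigma$ is right translation by $\sigma(e)$. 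Hence $\aut(\operatorname{Thi}(H))=R(H)$, the right regular representation, which is regular and isomorphic to $H$.

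Next I would compute $Hol(H)=\caut(\operatorname{Thi}(H))$. By definition $\sigma\in Hol(H)$ permutes the set $\{T_s\}$, so (identifying indices with $H$ via $s\mapsto T_s$) there is a bijection $f\colon H\to H$ with $\sigma(T_s)=T_{f(s)}$, equivalently $\sigma(sh)=f(s)\,\sigma(h)$. Setting $h=e$ and $c:=\sigma(e)$ gives $\sigma(s)=f(s)c$; substituting this back yields $f(sh)=f(s)f(h)$, so $f$ is a bijective endomorphism, i.e.\ $f\in\aut(H)$. Thus every $\sigma\in Hol(H)$ has the form $\sigma=R_c\circ f$ with $c\in H$, $f\in\aut(H)$, and conversely each such map sends $T_s$ to $T_{f(s)}$. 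To obtain part (a) as an internal semidirect product I would note that $\aut(\operatorname{Thi}(H))=R(H)\normal Hol(H)$ (the general fact $\aut\normal\caut$ recalled earlier), that the stabilizer of $e$ is exactly the set of $\sigma$ with $c=e$, namely $\aut(H)$ acting naturally, and that $R(H)\cap\aut(H)=\{\mathrm{id}\}$ while $R(H)\cdot\aut(H)=Hol(H)$. Hence $Hol(H)=R(H)\rtimes\aut(H)\cong H\rtimes\aut(H)$. Part (c) is then immediate: the factorization is faithful, since $R_c\circ f=\mathrm{id}$ forces $c=e$ (evaluate at $e$) and then $f=\mathrm{id}$, so $|Hol(H)|=|R(H)|\cdot|\aut(H)|=|H|\cdot|\aut(H)|$.

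For part (b) I would use that $Hol(H)$ is transitive on $H$ (it contains the regular group $R(H)$) with point stabilizer $\operatorname{Stab}_{Hol(H)}(e)=\aut(H)$ in its natural action, as just computed. The standard correspondence recalled in the preliminaries --- for a transitive group the $2$-orbits biject with the orbits of a point stabilizer via $R\mapsto\{y\mid(e,y)\in R\}$ --- then yields a bijection between the $2$-orbits of $(Hol(H),H)$ and the orbits of $(\aut(H),H)$. The only genuinely delicate point throughout is bookkeeping the left/right conventions forced by $h^{g}=hg$: with this convention the relations $T_s$ are ``left multiplication'' classes while $\aut(\operatorname{Thi}(H))$ comes out as the \emph{right} translations, so one must keep the sides straight both when deriving $f(sh)=f(s)f(h)$ and when checking that $\aut(H)$ acts nontrivially on $R(H)$ (so that $R(H)\cdot\aut(H)$ is a genuine semidirect, not direct, product). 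Once the conventions are fixed, all three parts reduce to short formal verifications.
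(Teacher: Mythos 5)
Your proof is correct and complete. The paper itself offers essentially no computation here — its "proof" merely remarks that the claims follow by comparing this scheme-theoretic definition of the holomorph with the classical one — so your explicit verification (computing the 2-orbits $T_s$, identifying $\aut(Thi(H))$ with the right translations $R(H)$, decomposing each color automorphism as $R_c\circ f$ with $f\in\aut(H)$, and reading off that the stabilizer of $e$ in $Hol(H)$ is $\aut(H)$ acting naturally) is exactly the argument the paper leaves to the reader, carried out along the same lines the paper intends.
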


\begin{proof}
We admit that this definition of the \emph{holomorph} \(Hol(H)\) of a group
\(H\) is given in somewhat non-traditional terms in comparison with
the classical definition. Nevertheless the results follow
immediately from the definition of the color group of an AS and its
application to the provided definitions vs classical ones.
\end{proof}

\begin{proposition}
\begin{enumerate}
\item \correction{\(Hol(A_4)\cong A_4\rtimes S_4\)} is a group of order 288.
\item \(\aut(A_4)\) acts transitively on the elements of order 1, 2 and
3, while the subgroup \(Inn(A_4)\cong A_4\) has two orbits of
length 4 on the elements of order 3.
\item \(Hol(A_4)\) has a subgroup of index 2, the group of order 144,
which is isomorphic to \(A_4\times A_4\).
\end{enumerate}
\end{proposition}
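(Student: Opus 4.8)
The plan is to handle the three parts in sequence, using the structure theorem for $Hol(H)$ from the preceding proposition together with explicit knowledge of the conjugacy data of $A_4$. For Part 1 everything reduces to identifying $\aut(A_4)$. By the preceding proposition $Hol(A_4)\cong A_4\rtimes\aut(A_4)$ and $|Hol(A_4)|=|A_4|\cdot|\aut(A_4)|$, so I would just recall the classical fact $\aut(A_4)\cong S_4$: the Klein four-group $V=\{e,(0,1)(2,3),(0,2)(1,3),(0,3)(1,2)\}$ is the unique Sylow $2$-subgroup, hence characteristic, and since $A_4$ sits normally in $S_4$ with trivial centralizer, conjugation gives an embedding $S_4\hookrightarrow\aut(A_4)$; as $|\aut(A_4)|=|Inn(A_4)|\cdot 2=24=|S_4|$ this is an isomorphism. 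Then $|Hol(A_4)|=12\cdot24=288$ and $Hol(A_4)\cong A_4\rtimes S_4$.

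For Part 2 I would sort the twelve elements of $A_4$ by order: the identity; the three involutions forming $V\setminus\{e\}$; and the eight $3$-cycles. Under $\aut(A_4)\cong S_4$ transitivity on each class is immediate, since $S_4$ permutes the three nonidentity elements of the characteristic subgroup $V$ as the full $S_3=S_4/V$, and it is transitive on the eight $3$-cycles because it is transitive on ordered triples of distinct points of $[0,3]$. For $Inn(A_4)\cong A_4$ acting by conjugation the orbit–stabilizer count is decisive: the centralizer in $A_4$ of a $3$-cycle is exactly the order-$3$ subgroup it generates, so each orbit has length $12/3=4$, forcing precisely $8/4=2$ orbits. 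I would make these explicit by conjugating $(0,1,2)$ by $V$, obtaining the orbit $\{(0,1,2),(0,2,3),(0,3,1),(1,3,2)\}$ (already of size $4$, hence the full $A_4$-orbit), the complementary four $3$-cycles being the second orbit; an odd permutation such as conjugation by $(0,1)$ interchanges them, consistently with transitivity of the full group.

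Part 3 is the heart of the matter, and I expect it to be the main obstacle. The subgroup I would use is
\[ N = A_4\rtimes Inn(A_4)\le A_4\rtimes\aut(A_4)=Hol(A_4), \]
of order $12\cdot12=144$ and index $2$ (hence normal), since $Inn(A_4)\cong A_4$ has index $2$ in $\aut(A_4)\cong S_4$. To identify $N$ with $A_4\times A_4$ I would prove the general lemma that $Z(H)=1$ implies $H\rtimes Inn(H)\cong H\times H$. Writing elements of $Hol(H)$ as pairs with $(h_1,\alpha_1)(h_2,\alpha_2)=(h_1\alpha_1(h_2),\alpha_1\alpha_2)$ and letting $c_g$ denote conjugation by $g$, set
\[ A=\{(h,\id):h\in H\},\qquad B=\{(g^{-1},c_g):g\in H\}. \]
A direct check gives $(g^{-1},c_g)(f^{-1},c_f)=((gf)^{-1},c_{gf})$, so $g\mapsto(g^{-1},c_g)$ is an isomorphism $H\xrightarrow{\sim}B$, while $A\cong H$ is clear. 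The decisive computation is that conjugating $(h,\id)$ by $(x,c_g)$ yields $((xg)\,h\,(xg)^{-1},\id)$, which equals $(h,\id)$ for all $h$ exactly when $xg\in Z(H)$; with the twist $x=g^{-1}$ and $Z(H)=1$ this holds, so $B$ centralizes $A$.

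Finally $A\cap B=1$, since an element of $B$ lies in $A$ only if $c_g=\id$, i.e. $g\in Z(H)=1$, and $|A|\,|B|=|H|^2=|N|$, whence $N=A\times B\cong H\times H$. Applying this with the centerless group $H=A_4$ gives $N\cong A_4\times A_4$, completing Part 3. The main obstacle is exactly the centralizing computation: one must verify that $B$ commutes elementwise with $A$ and that this forces the specific twist $x=g^{-1}$. Everything hinges on $A_4$ having trivial center, which is precisely what converts the twisted semidirect product $A_4\rtimes Inn(A_4)$ into a genuine direct product.
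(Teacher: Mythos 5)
Your proof is correct, and it is precisely the kind of argument the paper intends: the paper's entire proof of this proposition is the sentence ``The proof is a straightforward exercise in elementary group theory,'' so your write-up simply supplies the details that were left to the reader. In particular, your key lemma for part (c) --- that $Z(H)=1$ forces $H\rtimes \operatorname{Inn}(H)\cong H\times H$ via the two elementwise-commuting, trivially intersecting copies $\{(h,\operatorname{id})\}$ and $\{(g^{-1},c_g)\}$ --- is exactly the standard fact the paper itself invokes implicitly in the next proposition's proof, where these two normal copies of $A_4$ inside $\operatorname{Hol}(A_4)$ are identified as the right and left regular representations.
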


\begin{proof}
The proof is a straightforward exercise in elementary group theory.
\end{proof}

Let us now return to the consideration of the rank 21 CC
\(Y_{15}=V(A_4, \Omega_{15})\).

\begin{proposition}
\begin{enumerate}
\item \label{orga3049dc}The color group \(\caut(Y_{15})\) is isomorphic to the
split extension \(A_4:(S_3\times S_4)^+\). Here the group
\((S_3\times S_4)^+\) is the subgroup of index 2 formed by all even
permutations in the direct sum of \(S_3\) and \(S_4\).
\item \((A_4,\Omega_{15}) = \left< g_1, g_2\right>\), here
\begin{align*}
g_1 &= (0,3,1)(2,9,5)(4,10,6)(7,11,8)(12,14,13)\\
g_2 &= (0,6,2)(1,8,4)(3,5,7)(9,10,11)(12,13,14),
\end{align*}
\item \(\caut(Y_{15}) = \left< g_1,g_2, g_3, g_4, g_5\right>\), where
\begin{align*}
g_3 &= (0,5)(1,9)(2,3)(4,11)(6,7)(8,10)(12,13,14)\\
g_4 &= (0,3,7)(1,5,10)(2,9,11)(4,8,6)(12,13,14)\\
g_5 &= (0,6,5,9)(1,11,7,4)(2,8)(13,14)
\end{align*}
\item The group \(\caut(Y_{15})\) has the following orbits on the 21
colors of \(Y_{15}\) (we use the same labelling of 2-orbits that
was presented above):
\begin{itemize}
\item 0
\item 1,   2,   7,   6,   9,   8,   3,  10
\item 4,   5,  11
\item 12,  13,  14
\item 15,  17,  16
\item 18
\item 19,  20
\end{itemize}
\end{enumerate}
\end{proposition}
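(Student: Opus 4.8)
Throughout I identify the continent \(\Omega_1\) with the group \(A_4\) in its regular representation (as in the passage above) and the island \(\Omega_2=\{12,13,14\}\) with the three involutions of the point stabiliser \(E_4\le A_4\). The plan is to first force how a color automorphism acts on each fibre separately, then to read off the single compatibility condition that the bridges impose. The two reflexive colors are \(R_0\) (supported on \(\Omega_1\)) and \(R_{18}\) (supported on \(\Omega_2\)); any \(g\in\caut(Y_{15})\) permutes the reflexive colors, and since their supports have different sizes \(12\neq 3\), it must fix each fibre setwise. Restriction then embeds \(\caut(Y_{15})\) into \(\caut(\mathrm{Con})\times\caut(\mathrm{Isl})\). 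By the holomorph proposition the continent carries the thin scheme \(Thi(A_4)\) (the regular action has rank \(12\)), so \(\caut(\mathrm{Con})=Hol(A_4)\cong A_4\rtimes\aut(A_4)\) has order \(288\); the island carries the rank-\(3\) configuration \(\{\id,\vec C_3,\vec C_3^{-1}\}\) (colors \(18,19,20\)), whose color group is all of \(S_3\), since the two oriented triangles are preserved by the even permutations and swapped by the odd ones. Hence \(\caut(Y_{15})\hookrightarrow Hol(A_4)\times S_3\), a group of order \(1728\).

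\emph{The bridge coupling (the crux).} Write a continent element as a pair \((b,\sigma)\) with \(b\in A_4\) and \(\sigma\in\aut(A_4)\cong S_4\), and an island element as \(\psi\in S_3=AGL(1,3)\) with linear part \(\delta(\psi)=\operatorname{sgn}(\psi)\in\{\pm1\}\). Reading the \(2\)-orbit representatives (the stabiliser of the island point \(12\) is \(E_4\), sending the continent point \(0\) to the subset \(\{0,4,5,11\}=E_4\subset A_4\)), the three bridges \(R_{12},R_{13},R_{14}\) are precisely the \(A_4\)-equivariant matchings \(a\mapsto\pi(\bar a+j)\), \(j\in\mathbb Z_3\), where \(\bar a\) is the image of \(a\) in \(A_4/E_4\cong\mathbb Z_3\) and \(\pi\colon\mathbb Z_3\to\Omega_2\) is a fixed bijection. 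Since \(E_4\) is characteristic, \((b,\sigma)\) induces on \(\mathbb Z_3\) the affine map \(\bar a\mapsto\bar b+\operatorname{sgn}(\sigma)\bar a\), where the scalar \(\operatorname{sgn}(\sigma)\) records the action of \(\sigma\) on \(A_4/E_4\) and one checks it coincides with the sign of \(\sigma\) under \(\aut(A_4)=S_4\). Requiring \(((b,\sigma),\psi)\) to permute \(\{R_{12},R_{13},R_{14}\}\) among themselves forces exactly the equality of linear parts, \(\operatorname{sgn}(\psi)=\operatorname{sgn}(\sigma)\), while the translation part of \(\psi\) remains free; in particular both projections onto \(Hol(A_4)\) and \(S_3\) are onto.

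\emph{Assembling the group and the generators.} The single relation \(\operatorname{sgn}(\psi)=\operatorname{sgn}(\sigma)\) exhibits the image as the fibre product of \(Hol(A_4)\) and \(S_3\) over their sign homomorphisms onto \(\{\pm1\}\), of order \(288\cdot 6/2=864\); this gives the order. For the isomorphism type I take the normal subgroup \(\aut(Y_{15})=A_4\) (continent left-translations paired with the matching island translation, i.e.\ the \(g\) that fix every color, generated by \(g_1,g_2\)) and the complement \(K=\{((e,\sigma),\psi):\operatorname{sgn}(\psi)=\operatorname{sgn}(\sigma)\}\); the assignment \(((e,\sigma),\psi)\mapsto(\psi,\sigma)\) identifies \(K\) with \((S_3\times S_4)^+\), and \(A_4\cap K=1\) together with \(|A_4|\,|K|=864\) yields \(\caut(Y_{15})=A_4:(S_3\times S_4)^+\). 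That \(g_3,g_4,g_5\) lie in \(\caut(Y_{15})\) and, with \(g_1,g_2\), attain order \(864\) is verified by checking each preserves \(\mathcal R\) setwise and computing their restrictions to the two fibres (by direct inspection, or in COCO/{\sf GAP}).

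\emph{The orbits and the obstacle.} The orbit list now follows from the structure: on the continent \(Hol(A_4)\) acts through \(\aut(A_4)\) on the eleven nonidentity elements of \(A_4\), with orbits the three involutions \(\{4,5,11\}\) and the eight \(3\)-cycles \(\{1,2,3,6,7,8,9,10\}\), while \(R_0\) is fixed; the two bridge triples \(\{12,13,14\}\) and \(\{15,16,17\}\) (mutual transposes) are each permuted \(S_3\)-transitively; and on the island \(R_{18}\) is fixed while \(\{19,20\}\) is an orbit of length \(2\), swapped by any odd island element, such as the one underlying \(g_5\). The main obstacle is the bridge coupling of the second paragraph: correctly extracting the incidence from the \(2\)-orbit data and deducing \(\operatorname{sgn}(\psi)=\operatorname{sgn}(\sigma)\), since it is exactly this coupling that cuts \(S_3\times S_4\) down to its even subgroup and pins the order at \(864\) rather than \(1728\).
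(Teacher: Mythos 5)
Your proposal is correct, and at the decisive step it takes a genuinely different (and more constructive) route than the paper. Both arguments share the same skeleton: the two reflexive colors have supports of different sizes, so every color automorphism fixes island and continent setwise, which embeds \(\caut(Y_{15})\) into \(\caut(Y_{15,12})\times\caut(Y_{15,3})=Hol(A_4)\times S_3\) of order \(1728\), and the group is then a subdirect product cut out by an index-\(2\) coupling. The difference is how that coupling is established. The paper argues abstractly: the \({\mathbb Z}_3\)-rotations of the island are induced by the regular \(A_4\) while a transposition cannot be (as \(A_4\) has no normal subgroup of order \(2\)), so the kernel of the island action has index \(2\) in \(Hol(A_4)\); that kernel is then pinned down as \(A_4\times A_4\), and afterwards \(Q_{72}\) as \((S_3\times S_4)^+\), by uniqueness-of-candidate arguments resting on the normal structure of \(Hol(A_4)\); parts (b)--(d) are delegated entirely to the computer. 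You instead parameterize the three bridges explicitly as the \(A_4\)-equivariant maps \(a\mapsto\bar a+j\) through \(A_4/E_4\cong{\mathbb Z}_3\) (correctly read off from the \(2\)-orbit datum that the island point \(12\) is joined to \(\{0,4,5,11\}=E_4\)), and you compute that a pair \(((b,\sigma),\psi)\) permutes the bridge triple if and only if \(\operatorname{sgn}(\psi)=\operatorname{sgn}(\sigma)\); this exhibits \(\caut(Y_{15})\) directly as the fibre product of \(Hol(A_4)\) and \(S_3\) over the sign maps. What this buys: the order \(864\) and the isomorphism type \(A_4:(S_3\times S_4)^+\) come with an explicit complement \(K=\{((e,\sigma),\psi):\operatorname{sgn}(\psi)=\operatorname{sgn}(\sigma)\}\) rather than a ``unique candidate'' claim; the orbit list in part (d) falls out theoretically (continent colors via \(S_4\) acting on the elements of \(A_4\), the two bridge triples which cannot mix since color automorphisms respect the bipartition, and the island pair \(\{19,20\}\) swapped by odd island elements), where the paper used a computer; and it makes transparent that the bridges are the unique combinatorial feature coupling the two fibres. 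The only item you leave to inspection or machine computation --- that the specific permutations \(g_3,g_4,g_5\) lie in \(\caut(Y_{15})\) and, with \(g_1,g_2\), generate a group of order \(864\) (part (c)) --- is likewise left to the computer in the paper, so nothing is lost there.
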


\begin{proof}
Parts b-d, which are of a technical nature, were obtained with the
aid of a computer.

To prove \ref{orga3049dc}, first let us embed \(\caut(Y_{15})\) into the direct sum of
the groups \(\caut(Y_{15,3})\) and \(\caut(Y_{15,12})\). Clearly,
\(\caut(Y_{15,3})=S_3\) is a group of order 6, while \(\caut(Y_{15,12})
  = Hol(A_4)\) is a group of order 288. Because \(E_4\) is a
characteristic subgroup of \(A_4\), while the action of \(A_4\) on
\(Y_{15,3}\) appears as the homomorphic image of the group \(A_4\), the
group \(\caut(Y_{15,12})\) has \(\caut(Y_{15,3})\) as its homomorphic
image. This implies that \(\caut(Y_{15})\), indeed, appears as a
subdirect product of \(\caut(Y_{15,3})\) and \(\caut(Y_{15,12})\).

Note that the quotient group of \(A_4\) (in its action on \(Y_{15,3}\))
with respect to the group \(E_4\) is  isomorphic to \({\mathbb
  Z}_3\). In other words, each permutation from \(({\mathbb
  Z}_3,Y_{15,3})\) is induced by a suitable element of the regular
action of \(A_4\) on \(Y_{15,12}\). On the other hand it is clear that
any transposition from \((S_{3}, Y_{15,3})\) regarded as
\(\caut(Y_{15,3})\), cannot  be induced by a suitable element of \(A_4\),
because \(A_4\) does not contain a normal subgroup of order 2. This
implies that the kernel \(K\) of the action of the group
\(\caut(Y_{15})\) on \(\Omega_{15,3}\) is a group of index 2 in \(Hol\). 

At this stage we are forced to determine the subgroup \(K\) of index 2
in \(Hol(A_4)\) which acts as the kernel of the considered subdirect
product in its action on \(\Omega_{15,12}\). It turns out that there
is just one candidate for this role, namely \(A_4: Inn(A_4) \cong
  A_4\times A_4\). To justify the latter claim, note that there are
exactly two normal subgroups isomorphic to \(A_4\) in \(Hol(A_4)\);
these are the initial regular group \(A_4\) and its counterpart, the
left regular representation. Because \(A_4\) does not contain any
subgroup of order 6, it is clear that each of these invariant
subgroups of \(Hol(A_4)\) is a subgroup of the kernel \(K\). Thus we
have only one choice: \(K = A_4\times A_4\).

Finally, we can determine the abstract structure of the group
\(\caut(Y_{15})\) of order 864. Indeed, \(\caut(Y_{15}) =
  \aut(Y_{15}):Q_{72} = A_4:Q_{72}\). The group \(Q_{72}\) acts on the set
of three bridges between the island and the continent as \(S_3\), and
acts as \(S_4\) on the set of four imprimitivity systems of type
\(4\circ K_3\) in \(A_4\). Similar reasonings show that we get for
\(Q_{72}\) a description in the form of a subdirect product of \(S_3\)
and \(S_4\) with common quotient \({\mathbb Z}_2\). It turns out that
the group \correction{\((S_3\times S_4)^+\)} is the only suitable candidate for \(Q_{72}\).
\end{proof}

\begin{remark}
The provided justification of a) allows to reach a computer-free
proof of the other parts in the last proposition. However again we
prefer to keep the selected proportion of the style of reasoning for
the entire text.
\end{remark}

Let us also consider the symmetrization \(\tilde Y_{15}\) of
\(Y_{15}\). This is a rank 13 color graph with the same group
\(\aut(\tilde Y) \cong A_4\). 

\begin{proposition}
\begin{enumerate}
\item The group \(\caut(\tilde Y_{15})\) is isomorphic to the group
\(\caut(Y_{15})\), which was described above.
\item The groups \(\caut(\tilde Y_{15})\) and \(\caut(Y_{15})\) literally
coincide, regarded as permutation groups of degree 15, acting on
the set \(\Omega=\Omega_{15}\).
\item The quotient group \(Q_{72} = \caut(\tilde Y_{15})/\aut(\tilde
     Y_{15})\) acts on the set of 13 symmetric colors of the graph
\(\tilde Y_{15}\) intransitively with the following orbits:
\begin{itemize}
\item two orbits on loops of lengths 3 and 12;
\item one orbit on the edges of the graph \(K_3\) on the set \(\Omega_{15,3}\);
\item one orbit consisting of three bridges \(Br_i\), aka
\(3\circ St_{1,4}\),  each bridge
between three vertices of \(\Omega_{15,3}\) and twelve vertices
of \(\Omega_{15,12}\), having valency 4 on the island and valency
1 on the continent;
\item one orbit including four graphs of kind \(4\circ K_3\) on the
set \(\Omega_{15,12}\);
\item one orbit consisting of three graphs of kind \(6\circ K_2\) on
the set \(\Omega_{15,12}\).
\end{itemize}
\end{enumerate}
\end{proposition}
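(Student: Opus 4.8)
The plan is to prove parts (a) and (b) simultaneously by establishing the sharper statement that $\caut(\tilde Y_{15})$ and $\caut(Y_{15})$ are literally the same subgroup of $S(\Omega_{15})$, from which the asserted isomorphism is immediate. For the inclusion $\caut(Y_{15})\subseteq\caut(\tilde Y_{15})$ I would argue directly: if $g$ permutes the $21$ colours of $Y_{15}$, say $R_i^g=R_{\sigma(i)}$, then since transposition of relations commutes with the action of any point permutation, $(R_i^T)^g=(R_i^g)^T=R_{\sigma(i)}^T$, whence $\tilde R_i^{\,g}=(R_i\cup R_i^T)^g=\tilde R_{\sigma(i)}$; thus $g$ permutes the $13$ symmetric colours and lies in $\caut(\tilde Y_{15})$. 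For the reverse inclusion I would invoke the corollary already recorded, namely $WL(\tilde Y_{15})=Y_{15}$, together with the equivariance of the Weisfeiler--Leman refinement: since $WL$-stabilization is a canonical operation on the colour partition, it commutes with the $S(\Omega_{15})$-action, so any $g$ fixing the colour partition of $\tilde Y_{15}$ setwise also fixes its canonical refinement $WL(\tilde Y_{15})=Y_{15}$. The two inclusions give the coincidence, proving (a) and (b) at once.

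For part (c) I would first record the numerics. By the preceding proposition $\caut(Y_{15})=A_4:(S_3\times S_4)^+$ has order $864$, while $\aut(\tilde Y_{15})\cong A_4$ has order $12$; hence $Q_{72}=\caut(\tilde Y_{15})/\aut(\tilde Y_{15})$ has order $72$ and is the complement $(S_3\times S_4)^+$. The essential observation is that the passage from the $21$ colours of $Y_{15}$ to the $13$ symmetric colours of $\tilde Y_{15}$ is effected by the merging map that glues each antisymmetric pair $R_i,R_i^T$ to $\tilde R_i$, and by the computation of the first paragraph this map is $\caut(Y_{15})$-equivariant. Since $\caut(Y_{15})=\caut(\tilde Y_{15})$ and $\aut(\tilde Y_{15})$ fixes every colour, the $Q_{72}$-orbits on the $13$ colours are exactly the images under the merging map of the $\caut(Y_{15})$-orbits on the $21$ colours.

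It then remains to transport the orbit list of the previous proposition through the merging map, using the antisymmetric pairs $(1,3),(2,6),(7,10),(8,9),(12,15),(13,17),(14,16),(19,20)$ fixed in this section. The singleton $\{0\}$ maps to the loop-colour $\id_{12}$ and $\{18\}$ to the loop-colour $\id_3$ (the two loop orbits of lengths $12$ and $3$); the orbit $\{19,20\}$ collapses to the single colour $\tilde R_{19}\cong K_3$ on the island; the length-eight orbit $\{1,2,3,6,7,8,9,10\}$ maps onto $\{\tilde R_1,\tilde R_2,\tilde R_7,\tilde R_8\}$, the four copies of $4\circ K_3$; the orbit $\{4,5,11\}$ maps onto $\{\tilde R_4,\tilde R_5,\tilde R_{11}\}$, the three copies of $6\circ K_2$; and the two orbits $\{12,13,14\}$ and $\{15,16,17\}$ both map onto $\{\tilde R_{12},\tilde R_{13},\tilde R_{14}\}$, the three bridges $3\circ St_{1,4}$. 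This gives orbit sizes $1,1,1,4,3,3$ summing to $13$, exactly as claimed.

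The main obstacle is the reverse inclusion $\caut(\tilde Y_{15})\subseteq\caut(Y_{15})$ in the first paragraph; everything else is bookkeeping. That step genuinely requires the canonicity (equivariance) of $WL$-stabilization combined with the already-established identity $WL(\tilde Y_{15})=Y_{15}$, for a priori a colour permutation of the coarser symmetric scheme need not respect the finer oriented partition, and without this the clean coincidence of the two colour groups --- and hence the whole transport argument for (c) --- would break down.
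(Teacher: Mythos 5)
Your proof is correct, but it takes a genuinely different route from the paper's. The paper's own proof is openly computational (``a special ad hoc simple program was written''), accompanied only by a two-step sketch of a theoretical justification: (i) every permutation in $\caut(\tilde Y_{15})$ already comes from $\caut(Y_{15})$, and (ii) no nonidentity color permutation fixes all thirteen symmetric colors while transposing some antisymmetric pairs of $Y_{15}$. You actually prove (i): the easy inclusion $\caut(Y_{15})\subseteq\caut(\tilde Y_{15})$ via the commutation of transposition with point permutations, and the hard inclusion via canonicity of WL-stabilization combined with the earlier corollary $\wl(\tilde Y_{15})=Y_{15}$, i.e.\ $Y_{15}^g=\wl(\tilde Y_{15}^{\,g})=\wl(\tilde Y_{15})=Y_{15}$ for $g\in\caut(\tilde Y_{15})$. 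This is exactly the right tool and is the argument the paper leaves unstated; it buys a computer-free proof of parts (a) and (b). Your handling of (c) --- pushing the orbit list on the $21$ colors (from the preceding proposition) through the $\caut$-equivariant merging map $R_i\mapsto\tilde R_i$, using that images of orbits under an equivariant surjection are orbits --- is also sound, and your bookkeeping with the antisymmetric pairs $(1,3),(2,6),(7,10),(8,9),(12,15),(13,17),(14,16),(19,20)$ correctly yields orbit sizes $1,1,1,4,3,3$. The one place where you lean on citation rather than proof is the paper's step (ii): you import $\aut(\tilde Y_{15})\cong A_4$ from the text preceding the proposition to get $|Q_{72}|=864/12=72$, whereas the paper's sketch flags precisely this kernel statement (no element of $\caut(Y_{15})$ swaps antisymmetric pairs while fixing every symmetric color) as something to be verified; note that WL-canonicity alone does not give it. Since your orbit computation never uses the order of the quotient, this costs nothing for the orbit statement itself, but a fully self-contained proof would still need that kernel argument to justify the label $Q_{72}$.
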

\begin{proof}
For us it was more convenient to rely on the results of
computations. For this purpose a special ad hoc simple program was
written, see discussion at the end of the text.

Theoretical justification is based on the elaboration of the
arguments presented above. Namely, two steps were fulfilled:
\begin{itemize}
\item to prove that each permutation from \((\caut(\tilde
    Y_{15}),\Omega_{15})\) appears from the group
\((\caut(Y_{15}),\Omega_{15})\);
\item to prove that the action of \(\caut(Y_{15})\) on the thirteen colors
of \(\tilde Y_{15}\) does not have a non-trivial kernel; in other
words, there is no non-identical permutation in \(\caut(Y_{15})\) which
transposes some pairs of antisymmetric colors of \(Y_{15}\) while
fixing all symmetric colors.
\end{itemize}
\end{proof}

Now we are ready to consider the procedure of construction of a copy
of the pregraph \(Pre_{15}\) and evaluation of its symmetry. Recall
that the correct language to proceed with the graphs on
\(\Omega_{15,12}\) is to interpret them as Cayley graphs over
\(A_4\). The graphs \(6\circ K_2\) correspond to connection sets
consisting of single involutions in \(A_4\). The graphs \(4\circ K_3\)
correspond to connection sets \(\{x,x^{-1}\}\), where \(x\) is an
element of order 3 in \(A_4\).

Relying on very naive reasonings one can consider the following
sequence of steps on the way toward the desired pregraph:
\begin{itemize}
\item In step zero of the construction we select the two reflexive
relations on the island and the continent. There is exactly one
option.
\item On the first step we select one graph of the form \(4\circ K_3\) to
fulfill the role of the main part of the spread (on the
continent). There are four such options.
\item On the second step select a correspondence between the remaining
relations \(4\circ K_3\) with the relations \(6\circ K_2\). There are
\(3!=6\) such options.
\item Finally observe that we are getting on the continent a spread of
valency 2 and three copies of TT of valency three.
\end{itemize}

Clearly, according to the "principle of multiplication" there are
\(4\cdot 3! = 24\) formal possibilities to construct the desired
pregraph. 

It turns out that not all of them provide isomorphic color graphs
and not all are leading to the desired conditions for the pregraph.

\begin{proposition}
\begin{enumerate}
\item Under the action of the group \(\aut(Y_{15})=\aut(\tilde Y_{15})\)
the 24 possible color graphs of rank 10 split into three orbits
of lengths 4, 8, and 12.
\item Only the color graphs in the orbit of length 4 provide a
possibility to get as their merging the desired \(J_{15}\).
\end{enumerate}
\end{proposition}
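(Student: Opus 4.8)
The plan is to make the symmetries explicit and reduce part (a) to an orbit count for a concrete action of $S_4$. First I would record that the $24$ pregraphs are parametrised by pairs $(C_0,\phi)$, where $C_0$ is the copy of $4\circ K_3$ chosen to play the role of the continental spread (four choices) and $\phi$ is a bijection from the three remaining copies of $4\circ K_3$ to the three copies of $6\circ K_2$ (six choices), each $\phi$ assembling the three truncated tetrahedra $TT_i=C_i\cup M_{\phi(i)}$. Since $\aut(Y_{15})=\aut(\tilde Y_{15})\cong A_4$ fixes every basic colour of $\tilde Y_{15}$, it fixes each construction individually, so the symmetries that genuinely identify pregraphs are the colour automorphisms, acting on the thirteen colours through $Q_{72}=(S_3\times S_4)^+$. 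I would then verify that this action descends to the pairs $(C_0,\phi)$: the $S_3$-part only permutes the three bridges and hence leaves $(C_0,\phi)$ untouched, while, by the description of $Hol(A_4)$ above, the induced action on the continent is that of $\aut(A_4)\cong S_4$, permuting the four Sylow-$3$ graphs $C_i$ naturally and the three involution graphs $M_j$ through the quotient $S_4\to S_4/V_4\cong S_3$. The effective group is therefore $S_4$, acting by $(C_0,\phi)\mapsto(\tau C_0,\bar\tau\,\phi\,\tau^{-1})$, where $\bar\tau$ is the image of $\tau$ in $S_4/V_4$.

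For the orbit count I would fix $C_0$ and use transitivity of $S_4$ on the four triangle graphs to reduce to the stabiliser $H=\operatorname{Stab}_{S_4}(C_0)\cong S_3$ acting on the six bijections $\phi$. The stabiliser of $(C_0,\phi)$ inside $H$ is precisely the set of $\tau$ on which the two isomorphisms $\alpha\colon H\to\operatorname{Sym}(M)$, $\tau\mapsto\bar\tau$, and $\beta_\phi\colon H\to\operatorname{Sym}(M)$, $\tau\mapsto\phi\tau\phi^{-1}$, agree, i.e. the fixed-point set of the inner automorphism $\gamma_\phi=\alpha^{-1}\beta_\phi$ of $S_3$. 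As $\phi$ runs through the six bijections, $\gamma_\phi$ runs through all six elements of $\operatorname{Inn}(S_3)\cong S_3$, yielding fixed-point sets of orders $6,3,2$ according as $\gamma_\phi$ is the identity, of order $3$, or of order $2$. The corresponding orbit lengths are $24/6=4$, $24/3=8$ and $24/2=12$, and the multiplicities $1,2,3$ of each type of $\phi$ over a fixed $C_0$ confirm that each length is realised by a single orbit and that $4+8+12=24$, which proves (a). The only real subtlety here is pinning down that the effective group is $S_4$ rather than the full $Q_{72}$ or the bare $A_4$, which is forced by the coupling of the involution graphs to the triangle graphs on the continent.

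For part (b) I would first note that any rank-$5$ merging of $Pre_{15}$ with the shape of $J_{15}$ must combine the two reflexive colours into $\id_{15}$, fuse $C_0$ with the island $K_3$ into $5\circ K_3$, and form its three valency-$4$ classes as $\Delta_i=TT_i\cup Br_{\pi(i)}$ for a bijection $\pi$ between the truncated tetrahedra and the bridges, since valency $4=3+1$ is the only way to build a valency-$4$ graph from the available colours. By Proposition \ref{org163c63e} this merging is a Jordan scheme only if each $\Delta_i$ completes the truncated tetrahedron (an antipodal $3$-cover of $K_4$) to the distance-regular graph $\Delta$ (the antipodal $3$-cover of $K_5$); this forces the bridge edges to respect the fibre and antipodal structure of $TT_i$, which happens exactly when the triangle–involution pairing $\phi$ is group-theoretically compatible, i.e. when $\gamma_\phi=\operatorname{id}$, and that is precisely the defining condition of the length-$4$ orbit. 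The hard part will be the negative direction: showing that for representatives of the length-$8$ and length-$12$ orbits no choice of $\pi$ produces three copies of $\Delta$. I would settle this by checking that $(A_{\Delta_i}+A_{\Delta_j})^2$ fails to lie in the rank-$5$ span (the criterion of Proposition \ref{org163c63e}), or equivalently that the graphs $TT_i\cup Br_j$ arising from the mismatched pairings are not distance-regular; a short structural argument tracking which island vertex each continental fibre must attach to, or else a direct finite verification, then finishes the proof.
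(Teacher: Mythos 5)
Your part~(a) is correct and takes a genuinely different route from the paper: the paper's own proof of this proposition is a {\sf GAP} computation that merely exhibits one representative colour partition per orbit, explicitly deferring the ``purely theoretical proof \ldots in terms of the analysis of injective mappings'' to \cite{KliMR}. Your argument --- parametrising the $24$ pregraphs by pairs $(C_0,\phi)$, observing that $\aut(Y_{15})\cong A_4$ fixes every colour of $\tilde Y_{15}$ so that only the colour group can move pregraphs (a needed correction of the statement's wording), pushing the action of $Q_{72}=(S_3\times S_4)^+$ through its surjective projection onto $S_4$, and identifying the stabiliser of $(C_0,\phi)$ with the fixed-point set (of order $6$, $3$ or $2$, with multiplicities $1$, $2$, $3$) of the automorphism $\gamma_\phi$ of $S_3$ --- is precisely that missing computer-free proof, and the counts $24/6=4$, $24/3=8$, $24/2=12$, $4+8+12=24$ close it completely.

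Part~(b), however, rests on a false mechanism, and this is a genuine gap. You plan to exclude the length-$8$ and length-$12$ orbits by showing that some $TT_i\cup Br_{\pi(i)}$ is not isomorphic to $\Delta$ (``equivalently \ldots not distance-regular''). That can never work: \emph{every} union $C_i\cup M_j\cup Br_k$ is isomorphic to $\Delta$, for all twelve pairs $(C_i,M_j)$ and all three bridges, no matter which pregraph it sits in. Indeed, the property ``$C_i\cup M_j\cup Br_k\cong\Delta$'' is invariant under $\caut(Y_{15})$; the colour group is transitive on the twelve pairs, because the stabiliser of $C_i$ in $S_4$ maps onto $\mathrm{Sym}\{M_1,M_2,M_3\}$ (this is exactly your isomorphism $\alpha$ from part~(a)), while the bridge $3$-cycles lie in $Q_{72}$ and fix all continental colours, so the bridge choice is immaterial; and one good instance is supplied by part \ref{orgc4c1f8c} of the proposition in Section \ref{org94d2fe9}, which states that all nine unions $TT\cup Br$ inside the chosen $Pre_{15}$ are copies of $\Delta$. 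Hence all $36$ instances follow; equivalently, each pair $(C_i,M_j)$ occurs in exactly one of the four good pregraphs. Consequently a rank-$5$ merging of a ``bad'' pregraph still consists of $\id_{15}$, a $5\circ K_3$ and three graphs each isomorphic to $\Delta$, so no invariant of the individual basic graphs can separate it from $J_{15}$.

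The true obstruction lies in how the three copies interact with the merged spread, and your argument never reaches it. In $J_{15}$ (and in $NJ_{15}$) the unique valency-$2$ basic relation is simultaneously the distance-$3$ (antipodal) relation of all three valency-$4$ basic graphs --- an isomorphism invariant of the colour graph. For a bad pregraph with spread $C_0$ and pairing $\phi$, the antipodal system of $\Delta_i=C_i\cup M_{\phi(i)}\cup Br_{\pi(i)}$ equals $C_{i'}\cup K_3$, where $C_{i'}$ is the spread of the unique good pregraph containing the pair $(C_i,M_{\phi(i)})$; a mismatched $\phi$ forces $i'\neq 0$ for some $i$, so the merged spread fails to be the antipodal system of that $\Delta_i$, and the merging cannot be isomorphic to $J_{15}$. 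This is the ``structural argument tracking fibres'' your proposal gestures at, and together with your part~(a) it would yield a fully theoretical proof of the whole proposition (something the paper itself does not provide, since it certifies~(b) only by {\sf GAP}). Your first-listed test, that $(A_{\Delta_i}+A_{\Delta_j})^2$ escapes the rank-$5$ span (Proposition \ref{org163c63e}, Lemma \ref{orgb745be0}), is the right kind of check, but it is not ``equivalent'' to non-distance-regularity, and as written your proposal never carries it out.
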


\begin{proof}
A purely theoretical proof given in terms of the analysis of injective
mappings from the \([0,2]\) into the set \([0,3]\) in principle may be
provided based on the model to be described in \cite{KliMR}, a refined
version of the current text in preparation.

Here we rely on the computations with the aid of {\sf GAP}. Namely, three
partitions of the colors of \(Y_{15}\), in our initial labelling, are
given below. 

\begin{itemize}
\item A representative of the orbit of length 4 is [ [ [ 0 ], [ 1, 3 ], [ 2, 6, 11 ], [ 4, 8, 9 ], [ 5, 7, 10 ], [ 12, 15 ], [ 13, 17 ], [ 14, 16 ], 
[ 18 ], [ 19, 20 ] ].
\item A representative of the orbit of length 8 is
[ [ 0 ], [ 1, 3 ], [ 2, 4, 6 ], [ 5, 8, 9 ], [ 7, 10, 11 ], [ 12, 15 ], [ 13, 17 ], [ 14, 16 ], 
  [ 18 ], [ 19, 20 ] ].
\item A representative of the orbit of length 12 is
[ [ 0 ], [ 1, 3 ], [ 2, 4, 6 ], [ 5, 7, 10 ], [ 8, 9, 11 ], [ 12, 15 ], [ 13, 17 ], [ 14, 16 ], 
  [ 18 ], [ 19, 20 ] ].
\end{itemize}
\end{proof}

Thus finally we select the above representative in the orbit of
length 4, construct the corresponding rank 10 graph of order 15 and
call it the \emph{pregraph} of order 15, denoted by \(Pre_{15}\). 

\begin{proposition}
\begin{enumerate}
\item \(\aut(Pre_{15}) \cong A_4\).
\item \correction{\(\caut(Pre_{15}) \cong (S_3\times (A_4\rtimes S_3))^+\).} Here \(A_4\) is the
regular action of \(A_4\); \correction{\(A_4\rtimes S_3\)} is the subgroup of index 4 in
\(Hol(A_4)\) which appears from the action via conjugation of a
subgroup \(S_3\) in \(S_4=\aut(A_4)\) on \(A_4\). The group \(H^+\) of a
permutation group \((H,Col)\), as was before in this text, consists
of all even permutations in \(H\), this time acting on the set
\(Col\) of colors.
\end{enumerate}
\end{proposition}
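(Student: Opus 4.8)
The plan is to prove both parts by exploiting that $Pre_{15}$ is a merging (coarsening) of the rank $13$ symmetric coherent configuration $\tilde Y_{15}$, whose coherent closure is $\wl(\tilde Y_{15})=Y_{15}=V(A_4,\Omega_{15})$, and by transporting the already-computed groups $\aut(\tilde Y_{15})\cong A_4$ and $\caut(Y_{15})\cong A_4:(S_3\times S_4)^+$ (order $864$) across this merging.

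For part (a) I would argue by two inclusions. Since $Pre_{15}$ is obtained from $\tilde Y_{15}$ by merging colors, every automorphism of $\tilde Y_{15}$ preserves the coarser coloring, so $A_4\cong\aut(\tilde Y_{15})\le\aut(Pre_{15})$. For the reverse inclusion, any $g\in\aut(Pre_{15})$ fixes the two reflexive colors $\id_3$ and $\id_{12}$, whose supports have different sizes, so $g$ stabilizes the island $\Omega_2$ and the continent $\Omega_1$. The restriction $g|_{\Omega_1}$ fixes each color of the induced continent graph $Con_{12}\cong Y_{12}$ and hence lies in $\aut(Y_{12})\cong A_4$; moreover each bridge $Br_i\cong 3\circ St_{1,4}$ attaches to every island vertex a distinguished $4$-subset of the continent, so $g|_{\Omega_1}$ already determines $g|_{\Omega_2}$. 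Thus restriction to the continent is injective with image in $A_4$, giving $\aut(Pre_{15})\hookrightarrow A_4$, and equality follows.

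For part (b) the key step is to realize $\caut(Pre_{15})$ as a stabiliser inside $\caut(Y_{15})$. First I would verify that $\wl(Pre_{15})=Y_{15}$: on the continent $\wl(Con_{12})=\wl(Y_{12})$ is the full rank $12$ color Cayley algebra over $A_4$, and adjoining the three bridges and the island recovers all $21$ basic relations of $Y_{15}$. Because $\wl$ commutes with vertex permutations, any $g\in\caut(Pre_{15})$ permutes the colors of $\wl(Pre_{15})=Y_{15}$, whence $\caut(Pre_{15})\le\caut(Y_{15})$; conversely an element of $\caut(Y_{15})$ preserving the merging partition of the $21$ colors that defines $Pre_{15}$ lies in $\caut(Pre_{15})$. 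Hence $\caut(Pre_{15})=\operatorname{Stab}_{\caut(Y_{15})}(Pre_{15})$. Since $\caut(Y_{15})$ moves $Pre_{15}$ within an orbit of length $4$ among the $24$ admissible pregraphs, the orbit--stabiliser theorem gives $|\caut(Pre_{15})|=864/4=216$. To identify the isomorphism type, fixing $Pre_{15}$ singles out one of the four $4\circ K_3$-systems as the spread, so the factor $S_4=\aut(A_4)$ of $\caut(Y_{15})$ is cut down to the stabiliser $S_3$ of that spread direction, while the regular $A_4$ and the island $S_3$ survive; the even-parity coupling between the island $S_3$ and the $\aut(A_4)$-part descends to the coupling between the island $S_3$ and the $S_3\le S_4$ sitting inside $A_4\rtimes S_3$. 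This yields $\caut(Pre_{15})\cong A_4:(S_3\times S_3)^+\cong (S_3\times(A_4\rtimes S_3))^+$ of order $216$, with $A_4\rtimes S_3$ exactly the index-$4$ subgroup of $Hol(A_4)=A_4\rtimes S_4$ named in the statement (and isomorphic to $\caut(Y_{12})$).

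The main obstacle, beyond the bookkeeping, is twofold: verifying rigorously that $\wl(Pre_{15})$ is all of $Y_{15}$, so that the stabiliser description is valid, and pinning down the precise index-$2$, ``even permutation'' coupling between the two copies of $S_3$ rather than merely the order $216$. The latter exactly mirrors the $(S_3\times S_4)^+$ phenomenon already established for $\caut(Y_{15})$, and is most transparently settled either through the reconstruction of the overgroup action of $A_5$ on the island and continent or by the \gap computation, with a computer-free version deferred to \cite{KliMR}.
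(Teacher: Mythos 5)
Your proposal is correct, but it takes a genuinely different route from the paper's. The paper's own proof is an outright computation: it lists four generators of $(\caut(Pre_{15}),\Omega_{15})$ returned by \gap together with generators of the quotient $Q_{18}$ acting on the colors of $\tilde Y_{15}$, reads part (b) off these data, and declares that part (a) ``follows implicitly from the previous sections.'' You instead assemble both parts from results established earlier in the text: for (a), the restriction map into $\aut(Y_{12})\cong A_4$ (injective because the bridges let the continent action determine the island action) together with the inclusion $\aut(\tilde Y_{15})\le \aut(Pre_{15})$; for (b), the identification $\caut(Pre_{15})=\operatorname{Stab}_{\caut(Y_{15})}(Pre_{15})$ via $\wl(Pre_{15})=Y_{15}$ --- which anticipates exactly what the paper records only afterwards, in parts (b) and (d) of the next proposition --- followed by orbit--stabilizer ($864/4=216$) and containment in the spread stabilizer $A_4{:}(S_3\times S_3)^+$ to pin down the isomorphism type $(S_3\times(A_4\rtimes S_3))^+$. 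Two remarks. First, your argument still inherits computational inputs from earlier sections (the structure of $\caut(Y_{15})$ of order $864$ and the orbit lengths $4,8,12$ on the $24$ candidate pregraphs); note that the paper attributes that orbit decomposition to $\aut(Y_{15})=\aut(\tilde Y_{15})\cong A_4$, which can only be a slip for the color group, since a group of order $12$ has no orbit of length $8$ --- your reading of it as a $\caut(Y_{15})$-action is the one that makes the count work. Second, the verification of $\wl(Pre_{15})=Y_{15}$ needs slightly more than your continent computation: the two reflexive colors force every basic relation of the closure to lie in a single block of the island/continent decomposition, which splits each symmetric bridge into its two antisymmetric halves, and products of those halves then split the island triangle into the two directed triangles; you correctly flag this as the main obstacle rather than glossing over it. The net trade-off: your route explains conceptually where the order $216$ and the group $(S_3\times(A_4\rtimes S_3))^+$ come from, while the paper's route supplies concrete, machine-verifiable generators.
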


\begin{proof}
\gap returns the following list of generators of the group
\((\caut(Pre_{15}), \Omega_{15})\):
\begin{align*}
&(12,14,13),\\
&(1,3)(2,7)(5,11)(6,10)(8,9)(13,14),\\
&(1,7,6)(2,3,10)(4,11,5),\\
&(0,6,2)(1,8,4)(3,5,7)(9,10,11)(12,13,14).
\end{align*}
The quotient group \(Q_{18}\) in its action on the set of colors of
\(\tilde Y_{15}\) has the generators \((5,7,6)\), \((2,3)(6,7)\), \((1,3,2)\). 

Extra proceeding of the generated groups on \(\Omega_{15}\) and colors
of \(\tilde Y_{15}\) implies b). The proof of a) follows implicitly
from the previous sections. Clearly it was also justified with the
aid of a computer.
\end{proof}

Summing up, let us describe the relations of \(Pre_{15}\) evidently in
terms of \(Y_{15}\):
\begin{itemize}
\item two reflexive relations \(Id_{12}=R_0\) and \(Id_3=R_{18}\), that is,
the identities on the continent \(\Omega_1\) and island \(\Omega_2\);
\item spread \(S_{12}\) on the continent, \(S_{12}=\tilde R_8=R_8\cup
    R_9\cong 4\circ K_3\);
\item a truncated tetrahedron \(TT_0\) on the continent, here
\(TT_0=R_2\cup R_6\cup R_5 = \tilde R_2\cup R_5\);
\item two truncated tetrahedra on the continent, namely \(TT_1=R_1\cup
    R_3\cup R_4 = \tilde R_1\cup R_4\) and \(TT_2=R_7\cup R_{10}\cup
    R_{11} = \tilde R_7\cup R_{11}\);
\item a spread on the island; \(S_3=R_{19}\cup R_{20} = \tilde R_{19}\);
\item three bridges between the island and the continent, namely
\(Br_0=R_{13}\cup R_{17} = \tilde R_{13}\),
\(Br_1=R_{12}\cup R_{15} = \tilde R_{12}\),
\(Br_2=R_{14}\cup R_{16} = \tilde R_{14}\).
\end{itemize}

Altogether, 10 unicolor graphs are presented, forming the complete
color graph \(Pre_{15}\) of rank 10.

\begin{proposition}
\label{org5e01ae2}
\begin{enumerate}
\item \label{org1b8d1ac} The introduced pregraph \(Pre_{15}\) appears as the merging of some
colors of \(\tilde Y_{15}\) and has rank 10.
\item \label{orge7eadc5} \(\wl(Pre_{15}) = Y_{15} = V(A_4,\Omega)\).
\item \label{orgd29c58d} \(\aut(Pre_{15})\cong A_4\), where \((A_4,\Omega)\) is the
intransitive action of degree 15 with orbits of lengths 12 and 3.
\item \label{org5857fa3} \(\caut(Pre_{15})\) is isomorphic to the stabilizer of \(Pre_{15}\)
in the full color group \(\caut(Y_{15})\), where \(Pre_{15}\) is
regarded as a union of colors in \(\tilde Y_{15}\).
\end{enumerate}
\end{proposition}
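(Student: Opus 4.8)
The four assertions can be taken essentially in the stated order, with part (b) carrying the only genuine content. For \textbf{(a)} there is nothing to prove beyond bookkeeping. The ten relations listed just before the statement — the two reflexive relations $R_0,R_{18}$, the continental spread $\tilde R_8$, the three truncated tetrahedra $TT_0=\tilde R_2\cup\tilde R_5$, $TT_1=\tilde R_1\cup\tilde R_4$, $TT_2=\tilde R_7\cup\tilde R_{11}$, the island spread $\tilde R_{19}$, and the three bridges $\tilde R_{13},\tilde R_{12},\tilde R_{14}$ — are pairwise disjoint unions of the thirteen symmetric basic relations of $\tilde Y_{15}$ and together partition $\Omega^2$. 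Hence $Pre_{15}$ is a merging of $\tilde Y_{15}$, and since the ten relations are visibly distinct it has rank $10$. This also records the refinement chain (finer to coarser) $Y_{15}\preceq\tilde Y_{15}\preceq Pre_{15}$, i.e. $\langle Pre_{15}\rangle\subseteq\langle\tilde Y_{15}\rangle\subseteq\langle Y_{15}\rangle$, which drives everything else.

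For \textbf{(b)} I would prove the two inclusions of coherent algebras separately. The inclusion $\wl(Pre_{15})\subseteq Y_{15}$ is automatic: since $\langle Pre_{15}\rangle\subseteq\langle\tilde Y_{15}\rangle$, the coherent closure of $Pre_{15}$ is contained in that of $\tilde Y_{15}$, and it was already shown that $\wl(\tilde Y_{15})=Y_{15}=V(A_4,\Omega)$. The reverse inclusion is the crux. It suffices to show that the coherent algebra $\wl(Pre_{15})$ contains all thirteen basic matrices of $\tilde Y_{15}$, for then it contains the smallest coherent algebra over them, namely $\wl(\tilde Y_{15})=Y_{15}$. The only colours of $\tilde Y_{15}$ fused inside $Pre_{15}$ are the triangle-part and the matching-part of each truncated tetrahedron. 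Writing $A=A(TT_0)$, the Schur--Hadamard product $A\circ A^2$ records on each edge its number of common neighbours in $TT_0$: this number equals $1$ on the triangle edges ($\tilde R_2$) and $0$ on the matching edges ($\tilde R_5$), because in the truncated tetrahedron every edge lies in at most one triangle and the matching edges lie in none. Since $\wl(Pre_{15})$ is closed under ordinary and Schur--Hadamard multiplication, $A\circ A^2=A(\tilde R_2)\in\wl(Pre_{15})$, whence $\tilde R_2$ and $\tilde R_5=TT_0-\tilde R_2$ are both recovered; the identical computation splits $TT_1$ and $TT_2$. Thus all thirteen basic matrices lie in $\wl(Pre_{15})$ and $\wl(Pre_{15})=Y_{15}$.

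Parts \textbf{(c)} and \textbf{(d)} are then formal consequences of (b) together with the equivariance of WL-stabilization. For (c), merging colours can only enlarge the automorphism group, so $A_4=\aut(\tilde Y_{15})\subseteq\aut(Pre_{15})$; conversely $\aut(Pre_{15})=\aut(\wl(Pre_{15}))=\aut(Y_{15})$, and the latter equals $A_4$ because $\wl(\tilde Y_{15})=Y_{15}$ and $\aut(\tilde Y_{15})\cong A_4$ are already known. Hence $\aut(Pre_{15})\cong A_4$. For (d) I would argue that if $g\in\caut(Pre_{15})$ permutes the ten colours of $Pre_{15}$, then $Pre_{15}^{\,g}=Pre_{15}$ as a colour graph, so by canonicity $Y_{15}^{\,g}=\wl(Pre_{15})^{g}=\wl(Pre_{15}^{\,g})=Y_{15}$, i.e. $g\in\caut(Y_{15})$; moreover $g$ carries each $Pre_{15}$-colour to a $Pre_{15}$-colour, hence stabilizes the merging partition. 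Conversely any $g\in\caut(Y_{15})$ stabilizing that partition permutes the $Pre_{15}$-colours and so lies in $\caut(Pre_{15})$. Invoking the already-recorded coincidence $\caut(Y_{15})=\caut(\tilde Y_{15})$ (as permutation groups of degree $15$), this identifies $\caut(Pre_{15})$ with the stabilizer of the merging inside $\caut(\tilde Y_{15})$, as claimed.

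The single genuinely non-formal step is the reverse inclusion in (b); the whole proof rests on the elementary common-neighbour count in the truncated tetrahedron, and once the three fused pairs are separated the rest of the structure is inherited verbatim from the previously analysed $\tilde Y_{15}$. I therefore expect no real difficulty beyond making that separation argument precise.
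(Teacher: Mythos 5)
Your proof is correct, and it is genuinely more self-contained than the paper's own treatment: the paper disposes of this proposition in one line (``simple repetition of some claims presented before'', with part (d) ``implicitly discussed''), and the earlier claims it leans on rest largely on COCO/{\sf GAP} computations. Your reduction is sound — (a) is bookkeeping on the explicit list of ten colors, (b) carries the content, and (c), (d) follow from (b) via the standard canonicity facts $\aut(\Gamma)=\aut(\wl(\Gamma))$ and $\wl(\Gamma^g)=\wl(\Gamma)^g$. The ingredient you supply that appears nowhere in the paper is the computer-free argument for the inclusion $Y_{15}\subseteq\wl(Pre_{15})$: the only colors of $\tilde Y_{15}$ fused in $Pre_{15}$ are, inside each truncated tetrahedron $TT_i$, a copy of $4\circ K_3$ (the triangles) and a copy of $6\circ K_2$ (the matching); writing $A=A(TT_i)$, the matrix $A\circ A^2$ counts common neighbours along edges, which equals $1$ on triangle edges and $0$ on matching edges because the four triangles of $TT_i$ are vertex-disjoint and the matched endpoints of a matching edge have no common neighbour. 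Closure of $\wl(Pre_{15})$ under both products therefore splits each $TT_i$, recovers all thirteen basic matrices of $\tilde Y_{15}$, and gives $\wl(Pre_{15})\supseteq\wl(\tilde Y_{15})=Y_{15}$; with monotonicity this yields (b). What your route buys is a human-checkable proof; what the paper's route buys is only consistency with its earlier machine output. Two small points: you correctly treat $\wl(\tilde Y_{15})=Y_{15}$ and $\caut(Y_{15})=\caut(\tilde Y_{15})$ as imported earlier results (the first is itself only asserted ``evident'' in the paper), and your argument in (d) actually establishes equality of $\caut(Pre_{15})$ with the stabilizer as permutation groups, which is stronger than the stated isomorphism — worth saying explicitly.
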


\begin{proof}
Parts \ref{org1b8d1ac}, \ref{orge7eadc5}, \ref{orgd29c58d} appear as simple repetition of some
claims presented before. Justification of \ref{org5857fa3} again, was
implicitly discussed before.
\end{proof}

\begin{proposition}
\begin{enumerate}
\item The merging of colors of \(Pre_{15}\) presented below and denoted
by \({\frak X}_{15}\) provides a copy of \(NJ_{15}\), that is the
symmetrization of \({\cal M}_{15}\).

Here
\(S=S_{12}\cup S_3\),
\(\Delta_0=TT_0\cup Br_0\),
\(\Delta_1 =TT_1\cup Br_1\),
\(\Delta_2=TT_2\cup Br_2\),
\(Id_{15} = Id_3 \cup Id_{12}\).

\item \label{orgb2fb622} \(\wl({\frak X}_{15}) \cong {\cal M}_{15}\).
\end{enumerate}
\end{proposition}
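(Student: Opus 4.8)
The plan is to establish the two parts in order, using part 1 to reduce part 2 to a statement about the coherent closure of the already-understood scheme \(NJ_{15}\). For part 1 I would match the five merged classes of \({\frak X}_{15}\) with those of \(NJ_{15}\) colour by colour: the valencies \(1,2,4,4,4\) agree, the spread is recovered as \(S=S_{12}\cup S_3\) with \(S_{12}\cong 4\circ K_3\) and \(S_3\cong K_3\) combining to \(5\circ K_3\), and the real content is to see each \(\Delta_i=TT_i\cup Br_i\) as the distance-transitive graph \(\Delta\). For part 2 I would sandwich \(\wl(NJ_{15})\) between \(NJ_{15}\) and the \(A_5\)-orbital scheme \({\cal M}_{15}\) and use non-commutativity to force equality with the latter.

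To identify \(\Delta_i\) with \(\Delta\) I would use the antipodal-cover picture of Section \ref{org8e3100c}. The graph \(\Delta\) is the antipodal \(3\)-cover of \(K_5\) with five fibres of size \(3\), each vertex having exactly one neighbour in each of the four other fibres; deleting one fibre costs every surviving vertex a single neighbour and yields the truncated tetrahedron \(TT\) of valency \(3\). The bridge \(Br_i\) restores precisely one island-neighbour to each of the twelve continental vertices (valency \(1\) on the continent, \(4\) on the island), so \(\Delta_i\) regains constant valency \(4\) and keeps the fibres of \(S\) as its antipodal classes; by the uniqueness claim \ref{org15c435f} it then suffices to confirm diameter \(3\) and intersection array \((4,2,1;1,1,4)\), whence \(\Delta_i\cong\Delta\). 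The conceptually cleanest route is via symmetry: the valency-\(4\) relations \(R_0,R_1,R_2\) and the valency-\(1\) relations \(S_1,S_2\) of \({\cal M}_{15}=V(A_5,\Omega)\) are \(2\)-orbits of the transitive group \((A_5,\Omega)\), and restricting to the fibre-stabiliser \(A_4\) (orbits of sizes \(3\) and \(12\), as in Section \ref{org105b323}) splits each \(R_i\) into its continent part \(TT_i\) and its island part \(Br_i\), and splits \(S_1\cup S_2\) into \(S_{12}\cup S_3\); hence the merging \({\frak X}_{15}\) simply reassembles the symmetrised \(A_5\)-orbitals, so it is \(A_5\)-invariant and coincides with \(NJ_{15}\) colour by colour. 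The delicate point—and the main obstacle of part 1—is that this reassembly requires pairing each \(TT_i\) with the correct bridge \(Br_i\); only the orbit-of-length-\(4\) representative fixed in Section \ref{org94d2fe9} glues truncated tetrahedra to bridges in the pattern that upgrades the visible \(A_4\)-symmetry to the full \(A_5\), and I would check this compatibility directly from the adjacencies of \(\tilde Y_{15}\) in Figures \ref{figure:9.1.a}--\ref{figure:9.1.k}, verifying that the continental partner assigned by \(Br_i\) to each vertex is exactly the fibre-mate demanded by the cover structure.

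For part 2, part 1 lets me replace \({\frak X}_{15}\) by \(NJ_{15}\). Since \(\aut(NJ_{15})\cong A_5\) and the \(2\)-orbits of \((A_5,\Omega)\) are precisely the classes of \({\cal M}_{15}\), the coherent closure satisfies the refinements: \({\cal M}_{15}\) refines \(\wl(NJ_{15})\), which in turn refines \(NJ_{15}\). Thus \(\operatorname{rank}\wl(NJ_{15})\in\{5,6\}\), and it remains only to exclude rank \(5\), that is, to show \(NJ_{15}\) is not already coherent. Were it coherent it would be a homogeneous symmetric configuration, hence a symmetric association scheme and therefore commutative, contradicting Proposition \ref{orgc664f85}. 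The quantitative witness is exactly the path count in that proof: from \(x=\{0,1\}\) there are four walks to \(\{3,4\}\) through \(\Delta_0\cdot\Delta_1\) but none through \(\Delta_1\cdot\Delta_0\), while \(\{0,1\},\{2,5\},\{3,4\}\) form one triangle of \(S\); hence the putative intersection number \(p^{S}_{\Delta_0\Delta_1}\) takes different values on \((x,y)\in S_1\) and on \((y,x)\in S_2\) and is not constant on \(S\). WL-stabilisation is therefore forced to split \(S\) into its two oriented halves, giving \(\operatorname{rank}\wl(NJ_{15})=6\); combined with the refinement by \({\cal M}_{15}\) of equal rank this yields \(\wl({\frak X}_{15})=\wl(NJ_{15})={\cal M}_{15}\), as claimed.
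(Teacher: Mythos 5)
Your proposal is correct, and its backbone coincides with the paper's own proof of part (a), though the execution differs in instructive ways. The paper proves (a) by exhibiting one explicit extra permutation, \(g_3=(1,3)(2,5)(4,13)(6,12)(7,11)(10,14)\), checking (by {\sf GAP}, with hand inspection left to the reader) that \(\left<g_1,g_2,g_3\right>\cong A_5\) preserves \({\frak X}_{15}\); once the merging admits this transitive \(A_5\), its classes are unions of the \(2\)-orbits constituting \({\cal M}_{15}\), and the valencies \(1,2,4,4,4\) force it to be exactly the symmetrization \(NJ_{15}\). You establish the same \(A_5\)-invariance at the level of relations instead — checking from the figures that each bridge \(Br_i\) assigns to every continental vertex precisely the island fibre-mate prescribed by the orbital \(R_i\) of \({\cal M}_{15}\) — rather than producing an explicit automorphism; both versions ultimately defer a finite adjacency check, so the rigor is comparable. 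You were also right to identify the bridge-to-tetrahedron pairing as the crux and to abandon your first sketch: matching classes colour by colour and invoking the uniqueness claim \ref{org15c435f} of \(\Delta\) could never suffice on its own, since \(J_{15}\) has the very same unicolor graphs up to isomorphism, and only the symmetry argument separates \(NJ_{15}\) from \(J_{15}\). For part (b), where the paper says only that the claim ``follows from the previous considerations'', your sandwich argument is a correct and genuinely more self-contained completion: \({\cal M}_{15}\) refines \(\wl({\frak X}_{15})\) because the coherent closure is invariant under \(\aut({\frak X}_{15})=A_5\), while \(\wl({\frak X}_{15})\) refines \({\frak X}_{15}\), so the rank is \(5\) or \(6\); the path count from the proof of Proposition \ref{orgc664f85} shows that the entries of \(A(\Delta_0)A(\Delta_1)\) are not constant on \(S\) (value \(4\) on one oriented half of a fibre, \(0\) on the other), which rules out rank \(5\) and forces \(\wl({\frak X}_{15})={\cal M}_{15}\).

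One small logical slip is worth flagging, though it does not damage the proof: the sentence asserting that a coherent \(NJ_{15}\) would be commutative, ``contradicting Proposition \ref{orgc664f85}'', is a non sequitur as stated, since commutativity of the symmetrization does not formally contradict non-commutativity of the finer scheme \({\cal M}_{15}\). The contradiction only materializes through the quantitative witness you supply immediately afterwards — the non-constancy of the product's entries on \(S\) — which is the actual argument and makes the offending sentence superfluous rather than wrong in effect.
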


\begin{proof}
We claim that \(\aut({\frak X}_{15}) \cong A_5 = \left<g_1,
  g_2,g_3\right>\). Here 
\begin{align*}
g_1 &= (0,1,3)(2,5,9)(4,6,10)(7,8,11)(12,13,14)\\
g_2 &= (0,2,6)(1,4,8)(3,7,5)(9,11,10)(12,14,13)
\end{align*}
are our initial generators of \((A_4,\Omega_{15})\), while \(g_3\)
together with \(g_1,g_2\) generates the group \(A_5\), here 
\[
  g_3 = (1,3)(2,5)(4,13)(6,12)(7,11)(10,14).
  \]
While the results were attained with the aid of {\sf GAP}, easy hand
inspection may be done by the reader. Again, \ref{orgb2fb622} follows from the
previous considerations.
\end{proof}

For the reader's convenience, below we provide the diagram of the
graph \(\Delta_1\) (see Figure \ref{figure:delta.15.1}) on the same canvas of the diagrams which were
presented in Section 9. Note that the graph \(\Delta_0\) in the
current description coincides with the classical diagram of
\(\Delta\).

\begin{figure}
\begin{center}
\input{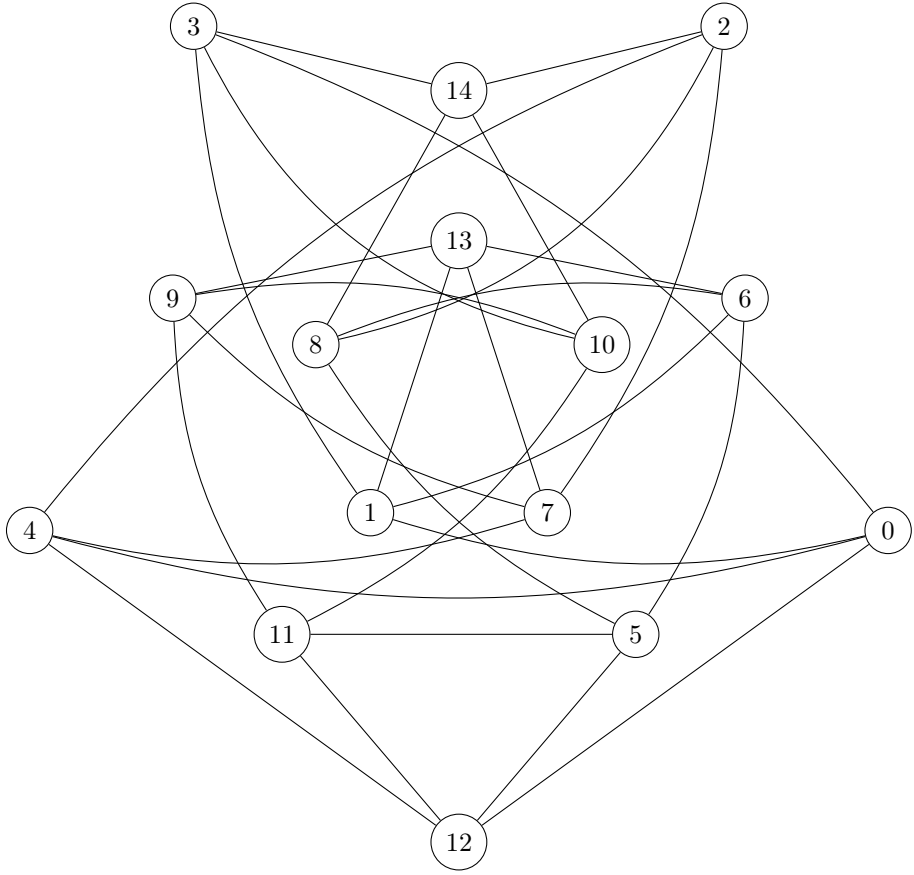}
\end{center}
\caption{Distance transitive graph \(\Delta_1\) \label{figure:delta.15.1}}
\end{figure}

\begin{figure}
\begin{center}
\input{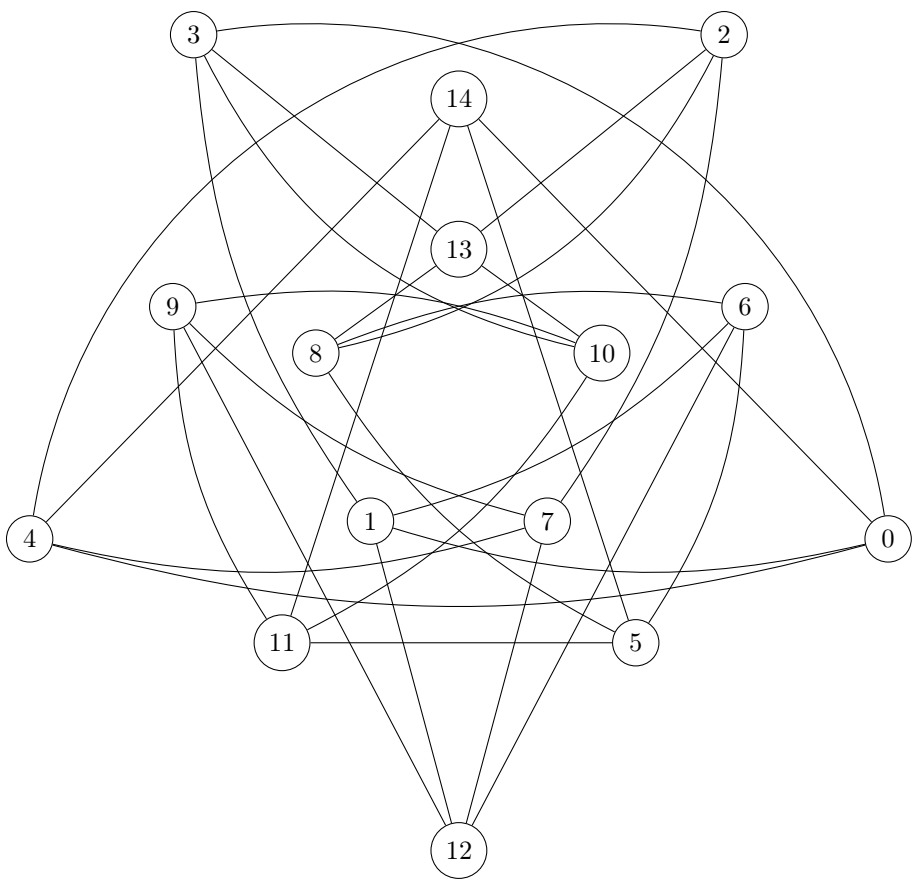}
\end{center}
\caption{Basic graph \(\Delta_1^1\) as merging of colors in \(Pre_{15}\) \label{figure:delta.15.1.1}}
\end{figure}

\begin{proposition}
\begin{enumerate}
\item \label{orgc4c1f8c}Any union of the truncated \correction{tetrahedron} with a bridge in the
pregraph \(Pre_{15}\) provides an isomorphic copy of the DTG \(\Delta\).
\item \label{org1fdb29d}There are 9 copies of the graph \(\Delta\), which appear as unions of
colors of \(Pre_{15}\).
\item \label{orgabbd6c9}The group \(\caut(Pre_{15})\) acts transitively on the set of 9
copies of \(\Delta\), which may be glued from the colors of \(Pre_{15}\).
\end{enumerate}
\end{proposition}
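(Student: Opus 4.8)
The plan is to reduce everything to the three ``diagonal'' unions $TT_i\cup Br_i$, which are already known to be copies of $\Delta$, and then to move them around by the color group. I would begin with the valency bookkeeping that simultaneously proves the enumeration in part (b) and isolates the candidate unions for part (a). Each $TT_i$ is $3$-regular on the $12$ continent vertices and empty on the island, while each $Br_j\cong 3\circ St_{1,4}$ has valency $1$ on the continent and $4$ on the island; the continental relation $S_{12}$ has valency $2$, and the island relations $S_3,\,Id_3$ have valencies $2,1$. Since a copy of $\Delta$ is $4$-regular and loopless, one first observes that every element of $\caut(Pre_{15})$ fixes the two reflexive colors $Id_3,Id_{12}$ (they have different numbers of pairs) and hence preserves the island/continent partition; therefore valency on the island and valency on the continent are invariants of the color action. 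A short case analysis then shows that the only loopless union of colors that is $4$-regular is a union $TT_i\cup Br_j$ of exactly one truncated tetrahedron with exactly one bridge: on the island one must take precisely one bridge (valency $4$), which forces continent-valency $1$ and is completed to $4$ only by a single $TT_i$. This gives exactly the $3\times 3=9$ candidate unions and no others.

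Next I would reduce part (a) to the diagonal case using the color group. Any $g\in\caut(Pre_{15})$ permutes the colors, so it carries $TT_i$ onto some $TT_{i'}$ and $Br_j$ onto some $Br_{j'}$; as a permutation of the vertex set it therefore maps the edge set $TT_i\cup Br_j$ isomorphically onto $TT_{i'}\cup Br_{j'}$. Since the diagonal unions $TT_i\cup Br_i=\Delta_i$ are copies of $\Delta$ by the construction of $NJ_{15}$, it suffices to prove that $\caut(Pre_{15})$ is transitive on the nine pairs $(TT_i,Br_j)$: transitivity forces all nine unions into one orbit, hence all isomorphic to some $\Delta_i\cong\Delta$, which proves (a), completes (b), and is precisely the assertion of (c).

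The heart of the argument, and the step I expect to be the main obstacle, is pinning down the induced color action and checking it remains transitive on the grid. Recall that $\aut(Pre_{15})\cong A_4$ fixes every color, so the effective group on colors is the quotient $\caut(Pre_{15})/\aut(Pre_{15})$, which by the computed structure $\caut(Pre_{15})\cong (S_3\times(A_4\rtimes S_3))^+$ has order $18$. By the island/continent invariant above, the three bridges and the three truncated tetrahedra are permuted in separate blocks: the outer $S_3$ acts as the full symmetric group on $\{Br_0,Br_1,Br_2\}$, while the copy of $S_3\le S_4=\aut(A_4)$ inside $A_4\rtimes S_3$ acts by conjugation on the non-spread order-$3$ pairs of $A_4$ and hence as the full symmetric group on $\{TT_0,TT_1,TT_2\}$. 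The superscript ``$+$'' records that the induced permutation of the six non-fixed colors is even, i.e. $\operatorname{sgn}(\sigma_{Br})=\operatorname{sgn}(\sigma_{TT})$, so the color action is the subdirect product $\{(\sigma,\tau)\in S_3\times S_3:\operatorname{sgn}\sigma=\operatorname{sgn}\tau\}$ of order $18$. It then remains to verify the elementary lemma that this index-$2$ subgroup is still transitive on $\{0,1,2\}\times\{0,1,2\}$: given a target $(a,b)$, choose any $\sigma$ with $\sigma(0)=a$, read off its sign $\epsilon$, and choose $\tau$ with $\tau(0)=b$ and $\operatorname{sgn}\tau=\epsilon$, which is possible because for every value $b$ the group $S_3$ contains elements of both signs sending $0$ to $b$. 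The pair $(\sigma,\tau)$ lies in the subgroup and sends $(0,0)$ to $(a,b)$, so the action is transitive. This yields (c) and, through the reduction above, parts (a) and (b); the only genuinely delicate point is confirming that the sign-coupling in the ``$+$'' construction does not destroy transitivity, which the lemma settles.
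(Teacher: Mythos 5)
Your strategy is sound and genuinely different from the paper's: the paper exhibits one explicit extra permutation $g_4=(1,10)(2,6)(3,7)(4,11)(8,9)(12,14)$, checks that $\left<A_4,g_4\right>\cong S_4$ stabilizes the copy $TT_0\cup Br_0$ of $\Delta$, and concludes by an orbit count (index $216/24=9$ in $\caut(Pre_{15})$), whereas you analyse the induced action of $Q_{18}=\caut(Pre_{15})/\aut(Pre_{15})$ on the six movable colors. Your valency bookkeeping (any loopless $4$-regular union of colors must consist of exactly one bridge and exactly one $TT_i$, giving the nine candidates) and your reduction of (a), (b), (c) to transitivity of the color action on the nine pairs $(TT_i,Br_j)$ are both correct; the bookkeeping in fact makes part (b) sharper than the paper's ``evidently''.

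However, the step you yourself call the heart of the argument contains a genuine error. You claim that the island factor $S_3$ induces the full symmetric group on $\{Br_0,Br_1,Br_2\}$ while fixing each $TT_i$ (it fixes the continent pointwise), and that the factor $S_3\le\aut(A_4)$ induces the full symmetric group on $\{TT_0,TT_1,TT_2\}$ while fixing the bridges. These two claims together would force the color action to contain a copy of $S_3\times S_3$ of order $36$, contradicting the order $18$ of $Q_{18}$ that you quote. Concretely, an island transposition fixing the continent pointwise does \emph{not} preserve the set of bridges: the continent-neighbourhood of $x_k$ in $\Delta_i$ depends only on $i+k \bmod 3$ (this follows from $R_k\Delta_i=\Delta_{i+k}$ in ${\cal M}_{15}$), say $Br_i=\bigcup_k \operatorname{star}(x_k,M_{i+k})$; then $(x_1,x_2)$ sends $Br_i$ to $\operatorname{star}(x_0,M_i)\cup\operatorname{star}(x_1,M_{i+2})\cup\operatorname{star}(x_2,M_{i+1})$, which is no $Br_j$. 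Worse, any color automorphism inducing a transposition of two bridges and the identity on the $TT_i$ would map $NJ_{15}$ onto the switched graph $J_{15,0}$ and make them combinatorially isomorphic, contradicting the central result that $J_{15}$ is proper while $NJ_{15}$ is not. The gap is repairable without identifying which subgroup $Q_{18}$ is: since $Q_{18}$ acts faithfully on the six movable colors and preserves the two blocks $\{TT_i\}$ and $\{Br_j\}$, it embeds into $S_3\times S_3$ as a subgroup of index $2$, and \emph{all three} index-$2$ subgroups of $S_3\times S_3$ (namely $A_3\times S_3$, $S_3\times A_3$, and the sign-coupled one) are transitive on the nine pairs. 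So transitivity, and with it (a), (b), (c), already follows from the order count. Your sign-coupled identification happens to be the correct one --- it is the only one of the three compatible with $J_{15,0}\not\cong NJ_{15}$ --- but the mechanism you offer in support of it is not valid.
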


\begin{proof}
As it was \correction{mentioned} before, the result originally was obtained with the aid of
{\sf GAP}. Nevertheless we invite the reader to witness the proof as
follows.

Consider the permutation \(g_4=(1,10)(2,6)(3,7)(4,11)(8,9)(12,14)\) acting on the set
\(\Omega_{15}\). Check that \(\tilde g_4\),  the induced action of \(g_4\) on the
colors of \(Pre_{15}\),  preserves \(TT_0\) as well as
\(Br_0\). This implies that \(\left<A_4,g_4\right>\) is a group of order
24, which is isomorphic to \(S_4\).

Thus, the observed group serves as the stabilizer of one copy of
\(\Delta\), which is glued from colors of \(Pre_{15}\). Because its
index in \(\caut(Pre_{15})\) is equal to 9, we justify \ref{orgabbd6c9}). This,
evidently, implies \ref{orgc4c1f8c}) and \ref{orgb2fb622}).
\end{proof}

At this point all details in a required "constructor", that is
unicolor graphs of \(Pre_{15}\) are obtained. Also some properties of
certain details are established. We go toward understanding of all
Jordan rank 5 schemes, which can be glued from the presented
constructor set.

\section{Switching between non-proper and proper Jordan schemes of order 15}
\label{sec:org4292925}
\label{orgaf6cddd}
Using the unicolor graphs in the selected copy of \(Pre_{15}\), let us
consider all possibilities to get, as  mergings of \(Pre_{15}\), color
graphs of rank 5 consisting of \(Id_{15}\), the spread \(S=S_{15}\), and
three copies of a regular graph of valency 4. For a moment no extra
requirements are formulated for the obtained merging.

Clearly, we are forced to glue \(Id_{12}\) with \(Id_3\), as well as
\(S_3\) with \(S_{12}\). Thus the only remaining degree of freedom is to
make an assignement between three bridges and three graphs of
valency 3 on the continent.

\begin{proposition}
\begin{enumerate}
\item \label{org4e1acf7} There exist exactly six possibilities to glue from the unicolor
graphs of \(Pre_{15}\) a rank 5 color graph consisting of five regular
graphs of valencies \(1,2, 4^3\), one of them the identity \(Id_{15}\),
another one \(S_{15}=5\circ K_3\).
\item \label{org0a9ec3d} At least one of the above possibilities provides a copy of a
non-proper Jordan scheme \(NJ_{15}\) of rank 5.
\item \label{org4611dac} In fact there are at least three copies of \(NJ_{15}\), which are
obtained as mergings of \(Pre_{15}\).
\end{enumerate}
\end{proposition}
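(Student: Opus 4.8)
The plan is to handle the three parts in turn, with the genuine content concentrated in (c). For part \ref{org4e1acf7} I would argue purely by counting valencies. The only reflexive colours of $Pre_{15}$ are $Id_{12}$ and $Id_3$, so the valency-$1$ relation of any rank $5$ merging is forced to be $Id_{15}=Id_{12}\cup Id_3$; likewise the only colours carrying valency $2$ are $S_{12}$ (valency $2$ on the continent, $0$ on the island) and $S_3$ (valency $2$ on the island, $0$ on the continent), so the valency-$2$ relation must be $S_{15}=S_{12}\cup S_3\cong 5\circ K_3$. The remaining colours are the three truncated tetrahedra $TT_0,TT_1,TT_2$ (valency $3$ on the continent, $0$ on the island) and the three bridges $Br_0,Br_1,Br_2$ (valency $1$ on the continent, $4$ on the island). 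A regular graph of valency $4$ assembled from these must have continental valency $3+1$ and insular valency $0+4$, hence consists of exactly one $TT$ together with one $Br$; using each colour once across the three valency-$4$ relations forces a bijection $\sigma\in S_3$ with the pieces $TT_i\cup Br_{\sigma(i)}$. There are exactly $3!=6$ such bijections, which proves \ref{org4e1acf7}.

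For part \ref{org0a9ec3d} I would simply invoke the preceding results. By item \ref{orgc4c1f8c}, every piece $TT_i\cup Br_{\sigma(i)}$ is an isomorphic copy of $\Delta$, so all six mergings already have the announced shape, namely $Id_{15}$, the spread $S_{15}$, and three copies of $\Delta$. The merging attached to the identity bijection is precisely the object $\mathfrak{X}_{15}$ of the proposition that identified $\mathfrak{X}_{15}$ with $NJ_{15}$; hence at least this one merging is a copy of the non-proper Jordan scheme $NJ_{15}$.

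The real work is in part \ref{org4611dac}, where I would exploit the colour group $\caut(Pre_{15})\cong (S_3\times(A_4\rtimes S_3))^+$. Every $g\in\caut(Pre_{15})$ permutes the ten colours of $Pre_{15}$, hence permutes $\{TT_0,TT_1,TT_2\}$ by some $\alpha\in S_3$ and $\{Br_0,Br_1,Br_2\}$ by some $\beta\in S_3$, and therefore sends the merging indexed by $\sigma$ to the merging indexed by $\beta\sigma\alpha^{-1}$. Since $g$ is a genuine permutation of $\Omega_{15}$, it carries a copy of $NJ_{15}$ to an isomorphic copy of $NJ_{15}$; consequently the entire $\caut(Pre_{15})$-orbit of the identity merging consists of copies of $NJ_{15}$. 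It then suffices to show this orbit has size at least three, i.e. that the orbit of $e\in S_3$ under $\sigma\mapsto\beta\sigma\alpha^{-1}$ contains $\{e,(012),(021)\}$. I would read off the induced pairs $(\alpha,\beta)$ from the explicit generators of $\caut(Pre_{15})$ and verify that the image of $\caut(Pre_{15})$ in $S_3\times S_3$ is the index-$2$ subgroup $\{(\alpha,\beta):\operatorname{sgn}\alpha=\operatorname{sgn}\beta\}$ of order $18$ (matching $|Q_{18}|=18$); then $\beta\alpha^{-1}$ ranges over precisely the even permutations $A_3$, giving an orbit of size three and hence (at least) three copies of $NJ_{15}$.

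The hard part will be this last verification: pinning down how the generators actually permute the three truncated tetrahedra and the three bridges, and confirming that the induced subgroup of $S_3\times S_3$ is the equal-sign subgroup rather than a diagonal copy of $S_3$ (which would collapse $\beta\alpha^{-1}$ to the identity and yield only a trivial orbit). This is exactly the point at which the paper relies on \textsf{GAP}; a computer-free route would track the listed generators on the colours $\tilde R_i$ through the identifications $TT_0=R_2\cup R_6\cup R_5$, $Br_0=\tilde R_{13}$, and so on, which is tedious but elementary. Once the image is determined, \ref{org4e1acf7}--\ref{org4611dac} follow as above, and the complementary three (odd) mergings become the natural candidates for the proper schemes $J_{15}$ produced by switching in the next section.
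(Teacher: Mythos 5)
Your proposal is correct and follows essentially the same route as the paper: part (a) by the same $3!=6$ bijection count, part (b) by citing the earlier identification of the merging $\Delta_i = TT_i\cup Br_i$ with $NJ_{15}$, and part (c) by letting $\caut(Pre_{15})$ act on the colors, noting the orbit of the identity merging has length $3$, and using that color automorphisms are genuine permutations of $\Omega_{15}$ to get combinatorial isomorphism. The only difference is one of explicitness: where the paper simply asserts the orbit length (leaning on its \textsf{GAP}-computed description of $\caut(Pre_{15})$ and $Q_{18}$), you spell out the verification via the image in $S_3\times S_3$ being the equal-sign subgroup of order $18$, which is a faithful elaboration of the same argument rather than a new one.
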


\begin{proof}
Part \ref{org4e1acf7}) is evident, because there are exactly \(3!=6\) bijections
between the bridges and the graphs of valency 3 on the continent.

One way to prove \ref{org0a9ec3d}) is to rely on Proposition
\ref{org5e01ae2}. Another way would be to rely on the use of a
computer, showing that a suitable merging satisfies our
requirements. 

To prove \ref{org4611dac}), let us remember the history of the described copy of
\(NJ_{15}\) in terms of \(Pre_{15}\). Then we consider the action of
\(\caut(Pre_{15})\) on the colors of \(Pre_{15}\) and conclude that this
\(NJ_{15}\) is in the orbit of length 3. Because the action of
\(\caut(Pre_{15})\) on the colors is induced by the action of \(\caut(Pre_{15})\) on
the set \(\Omega_{15}\), all three color graphs in the obtained orbit
are combinatorially isomorphic.
\end{proof}

It remains to understand what is going on with the three other rank 5
color graphs which are glued from \(Pre_{15}\). For this purpose let
us introduce on \(Pre_{15}\) the combinatorial operation of \emph{bridge
switching}, denoted by \(Sw_i\), \(i=0,1,2\). The switching \(Sw_i\)
preserves the bridge \(Br_i\) and transposes the two other
bridges. Note that, by definition, each switching \(Sw_i\) fixes all
other unicolor graphs. We also do not claim that the operation
\(Sw_i\) is induced by a suitable permutation acting on 15
points. Clearly, all switchings generate the symmetric group \(S_3\)
acting faithfully on the set of bridges.

Taking into account that each element of this group \(S_3\) acts on
the set of colors of \(Pre_{15}\), we are allowed to consider the group
\((\tilde S_3,Pre_{15})\). Here the notation \(\tilde S_3\) stresses
that the group acts on the colors, however this time not on the
vertices of \(Pre_{15}\).

\begin{proposition}
\begin{enumerate}
\item \label{org6f46495} Each transposition \(Sw_i\) from \((\tilde S_3, Pre_{15})\) moves the
color graph \(NJ_{15}\) to another rank 5 color graph \(J_{15,i}\),
which is a merging of \(Pre_{15}\).
\item \label{orga2e80b7} Each graph \(J_{15,i}\), \(i\in\{0,1,2\}\), contains three isomorphic
copies of the regular graph \(\Delta\) of order 15 and valency 4.
\item \label{org664be18} Each element  of the cyclic subgroup \((\tilde {\mathbb Z}_3,
     Pre_{15})\) moves the graph  \(NJ_{15}\) to another isomorphic copy
of this graph.
\item \label{orgc6b9949} The group \((\tilde S_3,Pre_{15})\) has two orbits of lengths 3
each on color graphs of rank 5, consisting of the images of
\(NJ_{15}\) and graphs in the set
\(\{J_{15,0},J_{15,1},J_{15,2}\}\).
\item \label{org2d825cf} All color graphs in the second orbit of length 3 are
combinatorially isomorphic.
\end{enumerate}
\end{proposition}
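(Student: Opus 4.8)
The plan is to identify the six rank~5 mergings with the bijections (``pairings'') between the three bridges $\{Br_0,Br_1,Br_2\}$ and the three truncated tetrahedra $\{TT_0,TT_1,TT_2\}$ on the continent, so that a pairing $\pi$ corresponds to the merging whose three graphs of valency~4 are $TT_j\cup Br_{\pi(j)}$. This set of pairings is a torsor for $S_3$, and the copy $NJ_{15}$ built in the previous proposition corresponds to the identity pairing $\pi_0$ (where $\Delta_j=TT_j\cup Br_j$). By definition a switching $Sw_i$ fixes every tetrahedron and transposes the two bridges $Br_j,Br_k$ with $\{j,k\}=\{0,1,2\}\setminus\{i\}$; hence it acts on pairings by left translation $\pi\mapsto Sw_i\circ\pi$. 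For \ref{org6f46495}, the transposition $Sw_i$ sends $\pi_0$ to the transposition pairing $(jk)\neq\pi_0$; the resulting object still consists of $Id_{15}$, the spread $S$ and three graphs of valency~4, each a union of $Pre_{15}$-colors, so it is a genuine rank~5 merging $J_{15,i}$ of $Pre_{15}$. Part \ref{orga2e80b7} is then immediate from part \ref{orgc4c1f8c} of the previous proposition: each valency~4 graph of $J_{15,i}$ has the shape $TT_a\cup Br_b$ and is therefore an isomorphic copy of $\Delta$, giving three copies.

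For \ref{org664be18} and \ref{orgc6b9949} I would pass to the cyclic subgroup $\tilde{\mathbb Z}_3\le\tilde S_3$ generated by a $3$-cycle of the bridges. Acting by left translation on the $S_3$-torsor of pairings, $\tilde{\mathbb Z}_3$ has exactly two orbits, namely the two cosets of the alternating group: the even pairings $\{\pi_0,(012),(021)\}$ and the odd pairings $\{(12),(02),(01)\}$, each of size~$3$. A $3$-cycle carries $\pi_0$ to another even pairing, and the three even pairings form a single orbit under $\caut(Pre_{15})$ (realized by the bridge rotation discussed below) and are precisely the three mergings isomorphic to $NJ_{15}$ produced in part \ref{org4611dac}; this proves \ref{org664be18}. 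The odd coset is exactly $\{J_{15,0},J_{15,1},J_{15,2}\}$, so the two orbits are the images of $NJ_{15}$ and the set of the $J_{15,i}$, which is the content of \ref{orgc6b9949}. (The full group $\tilde S_3$ is transitive on all six pairings by left translation; the two sets of size~$3$ are the orbits of its index-two cyclic subgroup, equivalently its two blocks of imprimitivity, and they are distinguished intrinsically as the non-proper versus the proper isomorphism type.)

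The delicate point, and the one I would treat most carefully, is \ref{org2d825cf}: the switchings are operations on colors only and are explicitly \emph{not} induced by permutations of the $15$ vertices, so switching-equivalence does not by itself yield a combinatorial isomorphism---indeed $Sw_i$ relates the non-proper $NJ_{15}$ to a proper scheme, which are certainly not isomorphic. To prove that the three $J_{15,i}$ are combinatorially isomorphic I must instead exhibit a genuine vertex permutation. I would take the element $(12,14,13)\in\caut(Pre_{15})$ from the description of the color group: it moves only the island, hence fixes all continent colors and in particular every tetrahedron $TT_j$, while cyclically permuting the three bridges. Being a true permutation of $\Omega_{15}$ preserving the colors of $Pre_{15}$, it is a combinatorial isomorphism; on pairings it acts as left translation by a $3$-cycle, which permutes the odd coset $\{J_{15,0},J_{15,1},J_{15,2}\}$ in a single cycle. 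Hence the three $J_{15,i}$ are combinatorially isomorphic, and the same element shows the even coset is one $\caut(Pre_{15})$-orbit, re-deriving the transitivity invoked above.

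The main obstacle is exactly this interplay between the purely color-theoretic switching group $\tilde S_3$ and the genuine symmetries in $\caut(Pre_{15})$: one must verify that the cyclic part of the switching action is realized by actual color-automorphisms (so that it produces isomorphisms), whereas the transpositions are not, and that the chosen element $(12,14,13)$ does fix the tetrahedra and $3$-cycle the bridges. This rests on the explicit structure of $\caut(Pre_{15})$ and on part \ref{orgabbd6c9} of the previous proposition (transitivity of $\caut(Pre_{15})$ on the nine copies of $\Delta$), which together guarantee that the required bridge rotation lies in the color group while no transposition of the bridges does.
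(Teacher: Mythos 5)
Your proposal is correct and takes essentially the same route as the paper: parts (a), (b), (d) are read off from the definition of switching together with the preceding propositions, and the isomorphisms in (c) and (e) come from the island rotation \((12,14,13)\in\caut(Pre_{15})\), which fixes the continent pointwise and cyclically permutes the three bridges --- precisely the ``cyclic group of order 3 acting only on the island'' that the paper's terse proof invokes. Your torsor bookkeeping, and in particular your explicit observation that \(\tilde S_3\) itself is transitive on the six mergings (so the two size-3 classes must be read as orbits of the index-two cyclic subgroup, i.e.\ blocks, distinguished by the proper/non-proper isomorphism type), is a more careful rendering of the same argument.
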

\begin{proof}
Parts \ref{org6f46495},  \ref{orga2e80b7},  \ref{orgc6b9949},  \ref{org2d825cf} are simply reformulations of previous result in
combination with the definition of switching and Part \ref{org664be18}.

To prove  \ref{org664be18},  consider the structure of \(\caut(Pre_{15})\) and conclude
that the cyclic group of order 3, which acts only on the island, is
a subgroup of group \(\caut(Pre_{15})\) in its action on
\(\Omega_{15}\). 
\end{proof}

\begin{corollary}
The cyclic group of rotations on the island cyclically moves the
three copies of \(NJ_{15}\), which are glued from the "details" in
\(Pre_{15}\). 
\end{corollary}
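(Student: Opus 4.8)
The plan is to exhibit the ``cyclic group of rotations on the island'' as a concrete subgroup $C=\langle\rho\rangle$ of $\caut(Pre_{15})$, where $\rho=(12,14,13)$ fixes the continent $\Omega_1$ pointwise and cyclically permutes the three island vertices $\{12,13,14\}=\Omega_2$, and then to track how $\rho$ acts on the ten colors of $Pre_{15}$. Since $\rho$ already appears among the generators of $\caut(Pre_{15})$ listed above, the only thing to establish is that the induced permutation $\tilde\rho$ of the colors is exactly a $3$-cycle on the three bridges $Br_0,Br_1,Br_2$, fixing every other color; this is what places $\tilde\rho$ into the cyclic subgroup $\tilde{\mathbb Z}_3$ of $(\tilde S_3,Pre_{15})$.

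First I would note that $\rho$ fixes both identity pieces $Id_{12},Id_3$, the continental spread $S_{12}$, the three truncated tetrahedra $TT_0,TT_1,TT_2$, and the island spread $S_3$, simply because all of these are built from pairs lying inside $\Omega_1$ or inside $\Omega_2$, and $\rho$ preserves each of $\Omega_1,\Omega_2$ setwise (indeed it is the identity on $\Omega_1$). Hence the only colors $\rho$ can move are the three bridges, each of which is a set of island--continent pairs. As a group of order $3$ acting on the $3$-element set $\{Br_0,Br_1,Br_2\}$, the group $\langle\tilde\rho\rangle$ has all orbits of size $1$ or $3$, so $\tilde\rho$ either fixes all three bridges or cyclically rotates them.

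The hard part --- really the only genuine point --- is ruling out that $\tilde\rho$ is trivial. Here I would argue that if $\tilde\rho$ fixed every bridge it would fix every color, i.e.\ $\rho\in\aut(Pre_{15})\cong A_4$; but every element of $A_4$ that rotates the island necessarily acts nontrivially on the continent, since the action of $A_4$ on $\Omega_1$ is regular while its action on $\Omega_2$ is the quotient $A_4/E_4\cong{\mathbb Z}_3$, so no nonidentity element of $A_4$ fixes $\Omega_1$ pointwise. As $\rho$ does fix $\Omega_1$ pointwise and is nontrivial, $\rho\notin A_4$, whence $\tilde\rho\neq\id$ and therefore $\tilde\rho$ is a $3$-cycle on the bridges, generating $\tilde{\mathbb Z}_3\le\tilde S_3$.

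Finally I would conclude by invoking the previous proposition: the three copies of $NJ_{15}$ obtainable as mergings of $Pre_{15}$ form a single orbit of length $3$ under $(\tilde S_3,Pre_{15})$. Restricting to the cyclic subgroup $\tilde{\mathbb Z}_3=\langle\tilde\rho\rangle$, the stabilizer in $\tilde S_3$ of any such copy has order $6/3=2$ and hence meets the order-$3$ group $\tilde{\mathbb Z}_3$ trivially; thus $\tilde{\mathbb Z}_3$ acts on the three copies with trivial point stabilizers, i.e.\ transitively and cyclically. Equivalently, each copy of $NJ_{15}$ is pinned down by its assignment of the three bridges to $\Delta_0,\Delta_1,\Delta_2$, and cyclically rotating the bridges via $\tilde\rho$ cyclically permutes these assignments, hence the three copies, which is precisely the assertion of the corollary.
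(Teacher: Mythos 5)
Your proof is correct and takes essentially the same route as the paper: the paper's own proof is precisely "combine the two previous propositions," i.e.\ identify the island rotation \((12,14,13)\) as an element of \(\caut(Pre_{15})\) whose induced action on colors is the bridge 3-cycle generating \(\tilde{\mathbb Z}_3\), and then apply the length-3 orbit statement for the copies of \(NJ_{15}\). Your two additions — ruling out triviality of \(\tilde\rho\) via the regular (hence faithful) action of \(\aut(Pre_{15})\cong A_4\) on the continent, and the stabilizer-order count showing \(\tilde{\mathbb Z}_3\) acts with trivial stabilizers on the three copies — are exactly the details the paper leaves implicit, not a different method.
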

The proof appears as a combination of arguments in the two previous
propositions. \noproof

Let us now consider the color graph \(J_{15,0} = \{Id_{15}, S_{15},
  \Delta_0, \Delta'_1, \Delta'_2\}\), here the edge set of \(\Delta'_1\)
consists of the relations \(R_1,R_3,R_4,R_{14}, R_{16}\), the edge set of \(\Delta'_2\) of the
relations \(R_7,R_{10},R_{11}, R_{12}, R_{15}\). At the same time the copy \(\Delta_0\) is formed by
\(R_2, R_5, R_6,R_{13}, R_{17}\). 

For all three copies the relations \(R_0\cup R_{18}\) form the graph
\(Id_{15}\), while the relations \(R_8,R_9,R_{19},R_{20}\) provide a
common spread \(S=S_{15}\) for all the graphs \(\Delta_0, \Delta'_1,
  \Delta'_2\). 

\begin{proposition}
The introduced color graph \(J_{15,0}\) is a proper Jordan scheme of
order 15 and rank 5.
\end{proposition}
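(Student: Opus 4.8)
The statement asks for two things: that $J_{15,0}=\{\id_{15},S,\Delta_0,\Delta'_1,\Delta'_2\}$ is a Jordan scheme of rank $5$, and that it is \emph{proper}. The plan is to dispatch the Jordan property by a short structural argument resting on Proposition~\ref{org163c63e}, and then to concentrate the real effort on properness. Throughout write ${\cal A}=\left<\identity,S,\Delta_0,\Delta'_1,\Delta'_2\right>$ for the span of the five basic matrices, and note at the outset that $\allone_{15}=\identity+S+\Delta_0+\Delta'_1+\Delta'_2\in{\cal A}$, since the valencies $1,2,4,4,4$ sum to $15$.

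For the Jordan property, by Proposition~\ref{org163c63e} it suffices that $(A_i+A_j)^2\in{\cal A}$ for all basic matrices other than the identity. The one structural input I would isolate is that each of the three valency-$4$ graphs $\Delta_0,\Delta'_1,\Delta'_2$ is a copy of the DTG $\Delta$ whose antipodal (distance-$3$) relation is \emph{exactly} the common spread $S$. For $\Delta_0$ this is its construction; for $\Delta'_1=TT_1\cup Br_2$ and $\Delta'_2=TT_2\cup Br_1$ it follows from Proposition~\ref{orgc4c1f8c} together with the fact that all three truncated tetrahedra share the continental spread $S_{12}$, so the five antipodal triples are forced to be the four triangles of $S_{12}$ and the island triangle $S_3$, i.e. $S$. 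Granting this, $S+\Delta_i$ is $\Delta_i$ together with its distance-$3$ relation, which by the Proposition of Section~\ref{orgaf4339e} is the SRG $\overline{T_6}$ with parameters $(15,6,1,3)$; its complement $\Delta_j+\Delta_k=\allone_{15}-\identity-(S+\Delta_i)$ is then the SRG $T_6$ with parameters $(15,8,4,4)$. Hence every sum $A_i+A_j$ with $i\ne j$ is strongly regular, and a strongly regular $A$ satisfies $A^2\in\left<\identity,\allone_{15},A\right>\subseteq{\cal A}$. The diagonal cases are immediate, $S^2=2\identity+S$ and $\Delta_i^2=4\identity+\Delta_i+(\Delta_j+\Delta_k)$, both in ${\cal A}$. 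Proposition~\ref{org163c63e} then gives that $J_{15,0}$ is a Jordan scheme; note that the \emph{ordinary} products do not close, which is consistent with $J_{15,0}$ being proper rather than an association scheme.

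Properness is the hard part, and I would establish it by showing that the coherent closure $\wl(J_{15,0})$ is non-homogeneous. The enabling lemma is elementary: if a Jordan scheme $J$ equals the symmetrization $\tilde{\frak Y}$ of an association scheme ${\frak Y}$, then ${\frak Y}$ is a homogeneous coherent configuration refining $J$; since $\wl(J)$ is the coarsest coherent refinement of $J$, it is coarser than the homogeneous ${\frak Y}$ and hence itself homogeneous. Contrapositively, if $\wl(J_{15,0})$ has more than one fiber, then $J_{15,0}$ is not a symmetrization, i.e. it is proper. Now $J_{15,0}$ is a merging of $Pre_{15}$, and $\wl(Pre_{15})=Y_{15}=V(A_4,\Omega)$ has the two fibers $\Omega_1$ (continent, size $12$) and $\Omega_2$ (island, size $3$) by Proposition~\ref{org5e01ae2}; by monotonicity of WL-stabilization, $Y_{15}$ refines $\wl(J_{15,0})$, so the fibers of $\wl(J_{15,0})$ are unions of these two. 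Everything therefore reduces to one decisive point: WL-stabilization of $J_{15,0}$ must not merge island and continent.

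This non-merging is the main obstacle, and it is exactly where the contrast with $NJ_{15}$ lives: $NJ_{15}$ is also a merging of $Pre_{15}$, yet $\wl(NJ_{15})={\cal M}_{15}$ \emph{is} homogeneous, because $NJ_{15}$ acquires the transitive group $A_5$, whereas $J_{15,0}$ retains only the intransitive $\aut\cong A_4$ with orbits of sizes $3$ and $12$. I would verify the separation by exhibiting a WL-stable local invariant — a count of suitably coloured configurations through a vertex, built from mixed $S/\Delta$-walks that traverse the bridges — whose value at an island vertex differs from its value at a continental vertex; the relevant asymmetry is produced precisely by the bridge swap $Br_1\leftrightarrow Br_2$, which, as stressed in the construction, is \emph{not} induced by any permutation of the $15$ points. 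This is the delicate finite computation (done on the computer, but reconstructible by hand from the explicit truncated tetrahedra and bridges of the previous sections). A quicker alternative, if one admits the earlier enumeration, is to observe that $J_{15,0}$ is one of the $24$ rank-$5$ Jordan mergings of $\tilde Y_{15}$, which fall into just two isomorphism classes with non-proper representative $NJ_{15}$; since $\aut(J_{15,0})\cong A_4\ne A_5\cong\aut(NJ_{15})$, the scheme $J_{15,0}$ lies in the other, proper, class.
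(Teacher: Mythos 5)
Your first half --- that $J_{15,0}$ is a Jordan scheme --- is correct and is in substance the paper's own argument: your three cases (two equal matrices, one matrix the spread, two distinct thick matrices) reproduce the proof of Lemma \ref{orgb745be0} via Proposition \ref{org163c63e}, with the rank-4 metric schemes of that lemma unpacked into the statement that $S+\Delta_i$ and its complement are strongly regular; your preliminary point that each switched copy $\Delta'_1,\Delta'_2$ has distance-3 relation exactly $S$ is the same structural input the paper invokes when it says that each copy of $\Delta$ generates a rank 4 AS with basic graphs $\id_{15}$, $S$, the DTG and its ``anti-spread'' complement.

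The genuine gap is in the properness half, at the very step you yourself label decisive. Your reduction is right and coincides with the paper's logic: a non-proper scheme has homogeneous coherent closure, and since $\operatorname{span}(J_{15,0})\subseteq\operatorname{span}(Pre_{15})$ forces $\wl(J_{15,0})$ to be coarser than $Y_{15}$, the only question is whether WL-stabilization separates the island from the continent. But there you offer only a promissory note --- ``a WL-stable local invariant \dots built from mixed $S/\Delta$-walks'', to be found by a ``delicate finite computation'' --- and the observation that the bridge swap is not induced by any point permutation is no substitute: that observation by itself decides nothing about homogeneity (note that $NJ_{15}$ is also a merging of $Pre_{15}$, yet its closure \emph{is} homogeneous, so the argument must actually use the switched bridges). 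The missing step is exactly what the paper supplies in Proposition \ref{010419c} and Theorem \ref{org54afef5}: writing all matrices in block form over island/continent and using the multiplication table of the underlying rank 6 scheme (whose thin relations $R_1,R_2$ are the two orientations of the spread), one computes, with $m=4$ and $E=\identity+S$,
\[
(\Delta_0\cdot\Delta'_{1})\circ E \;=\; \mtrx{4\,(R_1)_{11}}{O}{O}{(R_1)_{22}+3\,(R_2)_{22}},
\]
i.e.\ an island vertex has $4$ and $0$ mixed $(\Delta_0,\Delta'_1)$-paths of length two to its two spread-mates, while a continent vertex has $1$ and $3$. This matrix lies in the coherent closure and takes distinct entry values on the two fibers, so the Schur--Wielandt principle yields matrices supported on a single fiber, and the closure is inhomogeneous. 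That computation (or the equivalent path count) is precisely what your proof needs and omits. Your fallback --- citing the enumeration of the 24 rank-5 mergings and $\aut(J_{15,0})\cong A_4$ --- is not wrong, but it merely re-cites computer-established facts (as does the paper's own one-line proof at this exact spot), so it does not close the hole in your intended theoretical argument.
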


\begin{proof}
At this point the proof can be obtained with the aid of a computer
or via routine hand computations.
\end{proof}
We refer to the next section for a more interesting proof.

\begin{proposition}
All color graphs \(J_{15,0},J_{15,1},J_{15,2}\) provide isomorphic
copies of proper Jordan scheme of order 15 and rank 5.
\end{proposition}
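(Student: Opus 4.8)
The plan is to obtain this statement as a short corollary of the preceding proposition, which already certifies that $J_{15,0}$ is a proper Jordan scheme of order $15$ and rank $5$, together with the observation that $J_{15,0}$, $J_{15,1}$ and $J_{15,2}$ form a single orbit under a group of \emph{genuine} permutations of the vertex set $\Omega_{15}$. Since the property of being a (proper) Jordan scheme is defined purely in terms of the basic relations and their Jordan structure constants, it is invariant under combinatorial isomorphism of color graphs; hence it will suffice to exhibit honest vertex permutations carrying $J_{15,0}$ onto $J_{15,1}$ and onto $J_{15,2}$.

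The decisive point -- and the one requiring care -- is that these isomorphisms must \emph{not} be sought among the abstract bridge switchings $Sw_i$, which, as emphasised above, need not be induced by any permutation on the $15$ points. Instead I would invoke the cyclic island rotation $c=(12,14,13)$, which by part \ref{org664be18} is a genuine element of $\caut(Pre_{15})$ acting on $\Omega_{15}$. Because $c$ fixes every vertex of the continent $\Omega_1=[0,11]$, it fixes each truncated tetrahedron $TT_0,TT_1,TT_2$ setwise, while on the bridges it acts as a $3$-cycle $Br_0\mapsto Br_1\mapsto Br_2\mapsto Br_0$ (one checks, e.g., that the edge $\{0,12\}$ of $\tilde R_{12}=Br_1$ is sent to $\{0,14\}$, an edge of $\tilde R_{14}=Br_2$, so that the color $Br_1$ is carried onto $Br_2$, and similarly around).

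First I would record that each $J_{15,i}$ is the merging of $Pre_{15}$ whose three valency-$4$ colors are the unions $TT_j\cup Br_{\tau_i(j)}$ for a transposition $\tau_i$ of the bridge--tetrahedron matching, with $NJ_{15}$ itself corresponding to the identity matching. Since $c$ fixes the $TT_j$ and post-composes the matching with a $3$-cycle, it sends the matching $\tau_i$ to $c\tau_i$; as a $3$-cycle times a transposition is again a transposition, $c$ permutes the three transposition-type matchings cyclically, i.e.\ it maps $J_{15,0}\mapsto J_{15,1}\mapsto J_{15,2}\mapsto J_{15,0}$. Being a color automorphism of $Pre_{15}$, $c$ sends basic relations to basic relations, so these maps are combinatorial isomorphisms $J_{15,0}\cong J_{15,1}\cong J_{15,2}$ realized on $\Omega_{15}$.

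Finally I would transfer the conclusion: a combinatorial isomorphism carries the span of the basic matrices together with its closure under the Jordan product onto the corresponding span for the image, so the image of a Jordan scheme is a Jordan scheme; and it carries any association-scheme preimage of one graph to an association-scheme preimage of the other, so properness is preserved as well. Hence $J_{15,1}$ and $J_{15,2}$ inherit from $J_{15,0}$ the property of being proper Jordan schemes, and the three are pairwise isomorphic. The only genuine obstacle is the one already flagged: one must ensure that the required isomorphisms come from the honest subgroup $\langle c\rangle\le\caut(Pre_{15})$ acting on the $15$ points, and not merely from the formal action of $\tilde S_3$ on the colors.
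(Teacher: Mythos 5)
Your proof is correct and takes essentially the same route as the paper: the paper's own one-line proof likewise combines the preceding proposition (properness of $J_{15,0}$) with the argument used to justify the earlier switching proposition, namely that the cyclic island rotation is a genuine element of $\caut(Pre_{15})$ acting on $\Omega_{15}$ which cyclically permutes $J_{15,0},J_{15,1},J_{15,2}$, so the three are combinatorially isomorphic and properness transfers. Your explicit check that $c=(12,14,13)$ fixes the $TT_j$ and cycles the bridges merely spells out what the paper leaves implicit.
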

\begin{proof}
This follows from Proposition 11.4 together with the same argument
which were used to justify Proposition 11.2.
\end{proof}

Finally, following the style accepted in this text, we present the
diagram of the graph \(\Delta'_1\), see Figure
\ref{figure:delta.15.1.1}. 
The graph \(\Delta_1'\) together with its mate \(\Delta'_2\) and the
initial graph \(\Delta_0\) provide the trio or requested DTG's,
forming a copy \(J_{15,0}\) of the proper
Jordan scheme of order 15.

In principle, the diagram of \(\Delta'_2\) may be also provided at
this place. It is however omitted, stressing the fact that at the
current stage functions of the redundant set of the depicted
diagrams are practically exhausted.

A new, more formal style of the presentation is approached, starting
from the next section.

Recall that on the current canvas the graph \(\Delta_0\) coincides
with the canonical copy of \(\Delta\) depicted before.

\section{\(J_{15}\) is a proper Jordan scheme: Formal proofs}
\label{sec:orgbd18582}
\label{org4553fba} \label{org5fceb33}
In the previous section we introduced the color graph
\(J_{15}=J_{15,0}\), which was obtained from the copy \(NJ_{15}\) of
non-proper Jordan scheme of order 15 with the aid of the switching
\(Sw_0\). Recall that this operation of switching leads to a new color
graph \(J_{15,0}\). It has three basic graphs isomorphic to the DTG
\(\Delta\), namely \(\Delta_0, \Delta'_1, \Delta'_2\), where
\(\Delta'_1=Sw_0(\Delta_1)\), 
\(\Delta'_2=Sw_0(\Delta_2)\).
In addition, the graph \(J_{15,0}\) contains the spread \(S\), which
coincides with the spread of \(NJ_{15}\) and the relation
\(\id_{15}\). As was discussed earlier, we are already aware that
\(J_{15,0}\) is also a Jordan scheme. Here we aim to justify this fact
on a purely theoretical level. The advantage of the proof below is
that the idea will also work for more general situation.

Let us call the DTG \(\Delta_0\) \emph{direct}, and call \(\Delta'_1,
  \Delta'_2\) \emph{skew} unicolor graphs in \(J_{15,0}\). Note that this
definition is conditional, reflecting the history of the appearance
of \(J_{15,0}\) (this time from \(NJ_{15}\) with the aid of the operation
\(Sw_0\)). In principle, the same color graph \(J_{15,0}\) my be
obtained from another initial copy of \(NJ_{15}\), using another
operation of switching.

\begin{lemma}
\label{orgb745be0}
Let \({\frak S}=(\Omega, \{\id_n, S, R_0, R_1, R_2\})\) be a symmetric
regular coloring of the complete graph with the vertex set \(\Omega\),
\(|\Omega|=n\). Denote \(R=R_0\cup R_1\cup R_2\). Assume that for any
\(i\in\{0,1,2\}\) the merging \({\frak S}_i=(\Omega, \{\id_n,S,R_i,
  R\setminus R_i\})\) is an association scheme. Then \({\frak S}\) is a
Jordan scheme.
\end{lemma}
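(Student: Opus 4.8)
The plan is to verify the Jordan--scheme criterion of Proposition~\ref{org163c63e}. Write $I,S,A_0,A_1,A_2$ for the adjacency matrices of the five basic relations of ${\frak S}$ and $\mathcal{A}=\langle I,S,A_0,A_1,A_2\rangle$ for their linear span; it then suffices to show that the Jordan product $X*Y=\tfrac12(XY+YX)$ of any two basic matrices lands again in $\mathcal{A}$ (equivalently that each $(X+Y)^2\in\mathcal A$). The structure to exploit is that, since every color of ${\frak S}$ is symmetric, each hypothesized merging ${\frak S}_i$ is a \emph{symmetric} association scheme; hence its Bose--Mesner algebra $\mathcal{A}_i=\langle I,S,A_i,B_i\rangle$, where $B_i$ is the adjacency matrix of $R\setminus R_i$ (so $B_i=A_j+A_k$ for $\{i,j,k\}=\{0,1,2\}$), is commutative and closed under ordinary matrix multiplication. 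Moreover $B_i\in\mathcal A$, so $\mathcal A_i\subseteq\mathcal A$ for each $i$.

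The pairs of basic matrices that happen to lie inside a common $\mathcal A_i$ are then disposed of immediately. Each self-product $A_i^2$ and the product $S^2$ lies in $\mathcal A_i\subseteq\mathcal A$; and since $S$ and $A_i$ both belong to the commutative algebra $\mathcal A_i$ they commute, so $S*A_i=SA_i\in\mathcal A_i\subseteq\mathcal A$. This settles every pair except the three ``mixed'' products $A_i*A_j$ with $i\neq j$, which is where the genuine difficulty sits: no single $\mathcal A_k$ contains both $A_i$ and $A_j$ as individual matrices, so one cannot simply read the product off from a single scheme.

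The main step is therefore to recover the individual mixed products from the limited data each ${\frak S}_i$ supplies. From ${\frak S}_i$ one only sees the combined product $A_iB_i=A_i(A_j+A_k)\in\mathcal A$; using commutativity in $\mathcal A_i$ (so that $A_iB_i=B_iA_i$) this symmetrizes to
\[
A_iB_i=(A_i*A_j)+(A_i*A_k).
\]
Writing the three such identities for $i=0,1,2$ produces a linear system whose unknowns are the mixed products $A_0*A_1,\ A_0*A_2,\ A_1*A_2$ and whose right-hand sides $A_iB_i$ all lie in $\mathcal A$. Its coefficient matrix is $\left(\begin{smallmatrix}1&1&0\\1&0&1\\0&1&1\end{smallmatrix}\right)$, with determinant $-2$, hence invertible over the characteristic-zero base field; solving expresses each $A_i*A_j$ as an $\mathcal A$-linear combination of the $A_iB_i$, so each lies in $\mathcal A$. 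Having shown that every pairwise Jordan product belongs to $\mathcal A$, Proposition~\ref{org163c63e} then gives that ${\frak S}$ is a Jordan scheme. I expect the disentangling of the three mixed products via this small invertible system to be the only real obstacle; everything else is a direct consequence of the commutativity and multiplicative closure handed to us by the assumption that each ${\frak S}_i$ is an association scheme.
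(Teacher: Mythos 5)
Your proof is correct, but it handles the crucial mixed products by a genuinely different mechanism than the paper. Where you assert that ``no single $\mathcal{A}_k$ contains both $A_i$ and $A_j$ as individual matrices, so one cannot simply read the product off from a single scheme,'' the paper's proof does exactly that: for $i\neq j$ the sum $A_i+A_j$ is itself a basic matrix of the third scheme, namely $A_i+A_j=A-A_k=B_k$ where $\{i,j,k\}=\{0,1,2\}$, so $(A_i+A_j)^2\in\mathcal{A}_k\subseteq\mathcal{A}$ in one step and Proposition \ref{org163c63e} applies immediately. Your alternative --- writing the three identities $A_iB_i=A_i*A_j+A_i*A_k$ (valid by commutativity of $\mathcal{A}_i$) and inverting the $3\times 3$ system with determinant $-2$ --- is sound, and it buys something the paper's argument leaves implicit: explicit formulas for each individual mixed Jordan product, e.g.\ $A_0*A_1=\tfrac12\bigl(A_0B_0+A_1B_1-A_2B_2\bigr)$, together with a precise localization of where characteristic $\neq 2$ is used. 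What the paper's route buys is brevity and the conceptual point of the whole lemma: the hypothesis of three rank-4 mergings is powerful precisely because every pairwise union $R_i\cup R_j$ reappears as the class $R\setminus R_k$ of one of them. Otherwise the two proofs share the same skeleton --- each $\mathcal{A}_i$ is a commutative, multiplicatively closed Bose--Mesner algebra contained in $\mathcal{A}$, the pairs involving $S$ or equal indices are absorbed there, and Proposition \ref{org163c63e} finishes.
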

\begin{proof}
Denote the adjacency matrices of the unicolor graphs as follows:
\(I_n=A(\id_n)\), \(B=A(S)\), \(A_i=A(R_i)\), \(i=0,1,2\), \(A=A(R)\). Denote
by \({\cal A}\) the linear span of the introduced matrices. By
assumption, \({\cal A}_i = \left<I_n, B, A_i, A-A_i\right>\) is the
BM-algebra of an AS \({\frak S}_i\). By Proposition
\ref{org163c63e} it
is sufficient to show that for any two basic matrices  \correction{\(A_i,A_j\in {\cal A}\) the square $(A_i+A_j)^2$ belongs to \({\cal A}\).}

\correction{If $i=j$, then $(A_i+A_j)^2\in {\cal A}_i\subseteq {\cal A}$. In the case of $i\neq j$ we distinguish two cases.}

\begin{itemize}
\item \correction{One of the two matrices, say $A_j$, is \(B\).  Then \((B+A_i)^2\in{\cal A}_i\le {\cal A}\).}
\item \correction{Both \(A_i,A_j\) are distinct from $B$. In this case \((A_i+A_j)\in {\cal A}_k\),  where \(k\ne\{i,j\}\). Therefore  \((A_i+A_j)^2\in {\cal     A}_k \subseteq {\cal A}\).}
\end{itemize}

\end{proof}

\begin{proposition}
The introduced color graph \(J_{15} = J_{15,0}\) is a Jordan scheme.
\end{proposition}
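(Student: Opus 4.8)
The plan is to apply Lemma~\ref{orgb745be0} directly to the color graph $J_{15}=J_{15,0}=(\Omega,\{\id_{15},S,\Delta_0,\Delta'_1,\Delta'_2\})$. This is a symmetric regular coloring of the complete graph on $15$ vertices: $\id_{15}$ is the reflexive relation, $S$ is the spread $5\circ K_3$ of valency $2$, and $\Delta_0,\Delta'_1,\Delta'_2$ are three copies of the DTG $\Delta$, each of valency $4$. Setting $R_0=\Delta_0$, $R_1=\Delta'_1$, $R_2=\Delta'_2$ and $R=\Delta_0\cup\Delta'_1\cup\Delta'_2$, the lemma reduces the entire claim to one combinatorial fact: for each $i\in\{0,1,2\}$, the rank-$4$ merging ${\frak S}_i=(\Omega,\{\id_{15},S,\Delta_i,R\setminus\Delta_i\})$ must be an association scheme.

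Thus the real work is to verify, for each of the three choices of the distinguished skew/direct graph, that $\{\id_{15},S,\Delta_i,R\setminus\Delta_i\}$ is a rank-$4$ AS. First I would observe that $R\setminus\Delta_i$ is the union of the two remaining valency-$4$ graphs, hence has valency $8$, and that $\Delta_i\cup S$ is the SRG $\overline{T_6}$ with parameters $(15,6,1,3)$ by the proposition proved in Section~\ref{orgaf4339e}. The cleanest route is to exhibit each ${\frak S}_i$ as a known rank-$4$ AS, for instance by showing it coincides with the AS obtained from $NJ_{15}$ (which is itself a rank-$5$ merging of the Schurian scheme ${\cal M}_{15}$) under the relevant identification of colors. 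Since $\Delta_0$ is the direct graph inherited unchanged from $NJ_{15}$, the merging ${\frak S}_0$ is literally a merging of the non-proper scheme $NJ_{15}$ and hence an AS; for ${\frak S}_1$ and ${\frak S}_2$, involving the switched graphs $\Delta'_1,\Delta'_2$, I would invoke the transitivity of $\caut(Pre_{15})$ on the nine copies of $\Delta$ established in Proposition~\ref{orgc4c1f8c}--\ref{orgabbd6c9}, which maps the three required mergings to mergings of a copy of $NJ_{15}$ and therefore transports the AS property across.

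The main obstacle I anticipate is the asymmetry in the roles of the three graphs: $\Delta_0$ is ``direct'' while $\Delta'_1,\Delta'_2$ are ``skew,'' so it is not immediately transparent that all three mergings ${\frak S}_i$ are simultaneously association schemes. For ${\frak S}_0$ this is essentially automatic, but for ${\frak S}_1$ and ${\frak S}_2$ one must genuinely argue that pairing the spread $S$ and the identity with a single switched graph $\Delta'_i$ against the union of the other two still yields closed intersection numbers. The decisive idea is that each individual rank-$4$ merging only ``sees'' one valency-$4$ graph as a distinguished color, and $\Delta_i\cup S$ is in every case the strongly regular graph $\overline{T_6}$; by the rank-$4$ analysis (each basic graph being strongly regular, an amorphic or otherwise forced AS structure arises, cf.\ the proposition on rank-$4$ JSs in Section~\ref{sec:orgedec6a7}), the merging is forced to be an AS regardless of which copy plays the distinguished role.

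Once all three mergings ${\frak S}_0,{\frak S}_1,{\frak S}_2$ are confirmed to be association schemes, Lemma~\ref{orgb745be0} applies verbatim and yields that $J_{15}$ is a Jordan scheme, completing the proof. I would emphasize that this argument deliberately avoids the brute-force inspection of the adjacency matrix displayed in Section~\ref{orgeb5dfbd}: the lemma isolates exactly the three association-scheme verifications as the only nontrivial content, and each of those is either inherited directly from $NJ_{15}$ or obtained from it by the color-group symmetry of $Pre_{15}$. The same skeleton of reasoning is precisely what will generalize to the higher-order families in the later sections, which is why establishing it cleanly here is worthwhile.
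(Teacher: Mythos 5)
Your reduction is the same as the paper's: you invoke Lemma~\ref{orgb745be0} and correctly identify that the whole proof collapses to checking that each rank-4 merging \({\frak S}_i=(\Omega,\{\id_{15},S,\Delta_i,R\setminus\Delta_i\})\) is an association scheme. The gap is in how you verify that hypothesis. Your main tool is the inference ``\({\frak S}_0\) is literally a merging of the non-proper scheme \(NJ_{15}\), and hence an AS'' (and the color-group transport for \({\frak S}_1,{\frak S}_2\) reduces to the same claim). But a fusion of classes of an association scheme is \emph{not} automatically an association scheme --- deciding which mergings are coherent is precisely the nontrivial content here, and is exactly what COCO is computing throughout the paper when it reports that only \emph{some} mergings are ASs. So the step that carries all the weight in your argument is asserted, not proved. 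Your fallback argument is also misapplied: the rank-4 proposition of Section~\ref{sec:orgedec6a7} needs either a basic matrix whose minimal polynomial has degree 4 generating the span, or \emph{all three} antireflexive basic graphs strongly regular (the amorphic case). Here \(\Delta_i\) has diameter 3, so it is not strongly regular, and knowing that \(\Delta_i\cup S\cong\overline{T_6}\) is an SRG only yields a rank-3 scheme after merging \(\Delta_i\) with \(S\) --- not the rank-4 refinement \(\{\id_{15},S,\Delta_i,R\setminus\Delta_i\}\) that Lemma~\ref{orgb745be0} requires.

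The missing idea, which is the paper's entire proof, is a one-liner: each of the three graphs --- direct and skew alike --- is an isomorphic copy of the imprimitive \emph{antipodal distance-regular} graph \(\Delta\) of diameter 3, and its antipodal (distance-3) relation is exactly the common spread \(S\). Hence the distance partition of \(\Delta_i\) is \(\{\id_{15},\,\Delta_i,\,R\setminus\Delta_i,\,S\}\), which is the metric scheme of a DRG and therefore automatically a rank-4 association scheme. This applies uniformly to all three copies, so no case distinction between \(\Delta_0\) and \(\Delta'_1,\Delta'_2\), and no transport through \(\caut(Pre_{15})\), is needed. If you want to keep your route through \(NJ_{15}\), you must separately prove that the specific merging \(\{\id_{15},S,\Delta_0,\Delta_1\cup\Delta_2\}\) of \({\cal M}_{15}\) is coherent (e.g., directly from the multiplication rules of type \eqref{eq:multiplication}), which is more work than the DRG observation it would replace.
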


\begin{proof}
According to its construction, the unicolor graphs of \(J_{15}\) are
\(\id_{15}\), a spread, and three copies of the imprimitive antipodal
DTG \(\Delta\). Each of these copies generates an AS of order 15 and
rank 4 with basic graphs \(\id_{15}\), \(S\), a suitable DTG and its
"anti-spread" 
complement. Thus we can apply Lemma \ref{orgb745be0} and get the requested
result. 
\end{proof}

\begin{proposition}
The Jordan scheme \(J_{15}=J_{15,0}\) is proper.
\end{proposition}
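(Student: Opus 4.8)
The plan is to argue by contradiction: suppose $J_{15}=\tilde{\mathcal W}$ for some association scheme $\mathcal W$ on $\Omega$, and derive an impossibility. Two preliminary reductions set the stage. First, since every automorphism of $\mathcal W$ preserves each symmetrized relation $R_i\cup R_i^T$, we have $\aut(\mathcal W)\le\aut(\tilde{\mathcal W})=\aut(J_{15})\cong A_4$. Second, I would show that $J_{15}$ is genuine, i.e.\ strictly coarser than $\wl(J_{15})$, hence not itself an AS: repeating the path count of Proposition~\ref{orgc664f85} for the relational products $\Delta_0\cdot\Delta_1'$ and $\Delta_1'\cdot\Delta_0$ one finds them unequal, so the span $\langle A_0,\dots,A_4\rangle$ is not closed under ordinary multiplication. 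By the Proposition on symmetrizations of genuine Jordan schemes, $\mathcal W$ must then be a \emph{non-commutative} AS, obtained from $J_{15}$ by genuinely splitting at least one symmetric basic relation into an antisymmetric pair.

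Next I would enumerate the admissible splittings. The relation $\id_{15}$ is forced to remain. The spread $S\cong 5\circ K_3$ has valency $2$, so it either stays symmetric or splits into $5\circ\vec C_3$ and its transpose; by the cyclic-orientation Proposition of Section~\ref{org105b323}, once one triangle is oriented the orientation of the remaining four is determined, so there is essentially one oriented spread. Each thick relation $\Delta_0,\Delta_1',\Delta_2'$ (valency $4$, an antipodal $3$-cover of $K_5$) either stays symmetric or splits into an antisymmetric relation of valency $2$; using homogeneity of $\mathcal W$ together with the imprimitivity by $S$, any such orientation must send, between two fibres, all three matching edges the same way, so it is governed by a tournament pattern on the five fibres. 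Thus the entire orientation data reduce to a global cyclic orientation of the triangles plus, for each split thick relation, one compatible out-/in-assignment on the five fibres — a finite list of possibilities, further cut down by $\aut(\mathcal W)\le A_4$ fixing the island fibre.

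The decisive step, and the main obstacle, is to show that \emph{no} item on this finite list yields constant intersection numbers, i.e.\ that CC3 fails for every candidate $\mathcal W$. Here I would exploit exactly the feature that produced $J_{15}$: by construction $J_{15,0}$ arises from $NJ_{15}=\tilde{\mathcal M}_{15}$ by the switching $Sw_0$, which fixes $\Delta_0$, $S$ and the bridge $Br_0$ but interchanges $Br_1\leftrightarrow Br_2$, so that $\Delta_1'=TT_1\cup Br_2$ and $\Delta_2'=TT_2\cup Br_1$. The scheme $\mathcal M_{15}$ is coherent precisely because its thick relations compose consistently through \emph{every} fibre, reflecting the $2$-transitive action of $A_5$ on the five fibres; after the switch the island fibre is distinguished and the symmetry collapses to $A_4$. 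Concretely, for any putative oriented thick relations of $\mathcal W$ I would count directed $2$-paths (as in Proposition~\ref{orgc664f85}) first through the island fibre and then through a continent fibre, and verify that the number required by CC3 takes two distinct values. Since this holds for each of the finitely many admissible orientations, $\mathcal W$ cannot be an AS, a contradiction, and $J_{15}$ is proper.

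Finally, I would note an alternative, more uniform route: the properness is also a special case ($q=4$, $n=15$) of the general Theorem of Section~\ref{orgd4c6594}, whose proof isolates the same bridge-switching obstruction; and the non-existence of a symmetrizing AS was independently confirmed by computer. The hands-on argument above has the advantage of making transparent \emph{why} $J_{15}$ resists orientation: the switch breaks the fibre-transitive coherence that any AS projecting onto $K_5$ through the spread would require.
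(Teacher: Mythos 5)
Your overall strategy---assume $J_{15}=\tilde{\mathcal W}$ for an association scheme $\mathcal W$ and refute every candidate---is sound in principle and genuinely different from the paper's argument, but as written it has two real gaps. First, the reductions that make your case list small are not established. Your claim that an orientation of a thick relation must be constant on the matching between two fibres (``governed by a tournament pattern'') is exactly the kind of statement that needs a derivation from the CC axioms; homogeneity plus imprimitivity does not give it for free. Worse, your appeal to the cyclic-orientation proposition of Section \ref{org105b323} is a misapplication: that proposition concerns the specific Schurian scheme ${\cal M}_{15}$, and its proof uses $\aut({\cal M}_{15})\cong A_5$ acting transitively on the five fibres. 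A putative unknown $\mathcal W$ refining $J_{15}$ carries no such group (you yourself bound $\aut(\mathcal W)$ by $A_4$), so a priori all $2^5$ orientations of the spread are in play, and any linkage between the components must be extracted from intersection numbers inside $\mathcal W$, which you do not do. Second---and decisively---the heart of your proof, the verification that CC3 fails for every admissible candidate, is only announced (``I would count directed 2-paths\dots and verify''), never performed. The heuristic that the switch ``breaks the fibre-transitive coherence'' is a plausibility argument, not a proof; without the actual counts, or at least one worked case together with an orbit argument under $\aut(J_{15})$ covering the rest, the proposition is not established.

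Note that the paper avoids this enumeration entirely: its proof (outlined in this proposition and carried out in full generality as Theorem \ref{org54afef5}) writes the basic matrices of $J_{15}$ in block form relative to island and continent, computes the single product $\Delta_0\cdot\Delta_1'$, and applies Schur--Hadamard multiplication with the spread closure and the Schur--Wielandt principle to conclude that $\wl(J_{15})$ contains matrices supported on single fibres; hence the coherent closure is an inhomogeneous CC with fibres of sizes $3$ and $12$. Since the adjacency algebra of any AS whose symmetrization is $J_{15}$ would contain $\wl(J_{15})$, and a coherent subalgebra of a homogeneous coherent algebra is itself homogeneous, properness follows from that one computation with no case analysis at all. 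If you want to salvage your approach, the cleanest repair is to replace the entire enumeration by this closure computation; otherwise you must both prove your fibre-consistency lemma and actually execute the finitely many CC3 refutations.
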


\begin{proof}
At this moment we provide just a brief outline. Remembering the
obtained structure of \(J_{15}\) via the model "island and continent",
we may present all basic matrices of \(J_{15}\) in block form with
submatrices of size \(x\times y\), \(x,y\in\{3,12\}\). Manipulation with
these matrices  allows to prove that \(\wl({\cal A})\) is a CC
with two fibers of sizes 3 and 12.
\end{proof}

\begin{remark}
An interested reader may fulfill the suggested calculations on first
reading. We warn however that the requested result will be proved in
a more general form later on.
\end{remark}

Going toward this announced general situation, let us first consider one
more proper Jordan scheme \(J_{24}\) of rank 5 and order 24.

\section{The Klein graph of order 24 and valency 7}
\label{sec:org61fc84e}
\label{org5e0938c}
The example of the proper Jordan scheme \(J_{24}\), which is now
approached, played quite a significant role in order to understand
how the object \(J_{15}\) appears as the initial member of the first
infinite class of proper Jordan schemes of rank 5.

Here again we face as basic graphs three isomorphic copies of an
antipodal distance regular cover of a complete graph. In addition,
an extra advantage of the appearing DRG is that it is also
well-known in the framework of classical map theory.

Thus at the beginning we refer to the genus 3 orientable surface
which is usually represented as the Klein quartic. On this surface
there appears the Klein map with 24 heptagonal faces and 56
vertices. Its Schläfli symbol is \(\{7,3\}_8\). The point graph of
this map, the \emph{cubic Klein graph} with 56 vertices, has number
\(Fo_{56}B\) according to the famous Foster census \cite{Bow88}. The
description of this graph goes back to the classical paper
\cite{Kle879}. 

The \emph{dual Klein graph} \(\operatorname{Kle}_{24}\) has the above 24
heptagons as vertices, two heptagons are adjacent if they have a
common edge. Both Klein graphs have isomorphic automorphism groups
of order 336, while the symmetry group of both maps is twice smaller
and is isomorphic to the second smallest non-abelian simple group (of order
168), having incarnations \(\operatorname{PSL}(3,2) \cong
  \operatorname{PSL}(2,7)\).

A number of beautiful properties of the Klein maps and related
mathematical structures are considered in a remarkable collection of
papers \cite{Lev99}. It contains essays written by leading experts
in Geometry, Number Theory, Geometric Group Theory, Invariant
Theory, etc. This masterpiece appears on the edge between science
and art, see e.g., the paper by Fields medalist W.P. Thurston,
considering properties of the sculpture at MSRI, created by Helmut
Ferguson. The collection also contains translations into English of
the original paper \cite{Kle879} by F. Klein.

For us it was quite convenient to construct the graph
\(\operatorname{Kle}_{24}\), using the standard COCO
methodology. Initial objects on this way were the Fano plane
\(\operatorname{PG}(2,2)\), its incidence graph, well-known under the
name \emph{Heawood graph} denoted by \({\cal H}_{14}\), as well as
\(\aut(\operatorname{PG}(2,2)) = \operatorname{PSL}(3,2)\), regarded
as a permutation group of degree 7 and order 168. We rely on the
classical representation of the Fano plane with the aid of the
Singer cycle \((0,1,2,3,4,5,6)\) and difference set \(\{1,2,4\}\),
leading to the famous diagram below. Knowledge of  group-theoretical
information, related to
\(\operatorname{PSL}(3,2)=\aut(\operatorname{PG}(2,2))\), such as in
\cite{Pfe97} and \cite{Vis07}, and especially in \cite{JamL93}, is
also helpful.

\begin{figure}
\begin{center}
\input{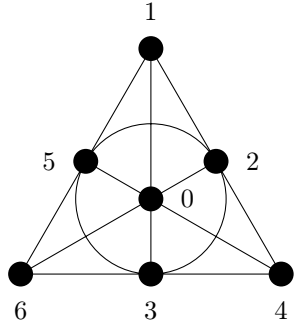}
\end{center}
\caption{Diagram of the Fano plane \(PG(2,2)\) \label{figure:13.1}}
\end{figure}

\begin{proposition}
Let \((G,[0,6])\) be the automorphsim group of the classical model of
the Fano plane, as above. Then
\begin{enumerate}
\item \(Z(G,[0,6]) = \frac{1}{168}(x_1^7 + 21 x_1^3x_2^2 + 56 x_1x_3^2 +
     42 x_1x_2x_4 + 48 x_7)\);
\item \(G = \left< (0,1,2,3,4,5,6),(1,2)(3,6)\right>\);
\item For elements of order 1,2,3 and 4 the relation of similarity in
\(S_7\) coincides with the relation of conjugacy in \(G\);
\item Permutations of order 7 split into two conjugacy classes of size
24 each;
\item An element of order 7 and its inverse lie in different conjugacy
classes.
\end{enumerate}
\end{proposition}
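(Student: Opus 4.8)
The plan is to ground everything in the identification $G=\aut(\operatorname{PG}(2,2))=\operatorname{GL}(3,2)$, a simple group of order $168$ acting $2$-transitively on the seven one-dimensional subspaces of $\mathbb{F}_2^3$ (as already recorded in the text). All arithmetic is over $\mathbb{F}_2$, which is exactly what pins down the cycle types: a $3\times 3$ matrix satisfies a degree-$3$ characteristic polynomial, so its minimal polynomial is a product of the invertible irreducible factors available in degree $\le 3$, namely $x+1$, $x^2+x+1$, $x^3+x+1$, $x^3+x^2+1$, and a short analysis of rational canonical forms shows the only element orders are $1,2,3,4,7$.

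For part (a) I would, for each order, read off the unique cycle type on the seven points. The identity gives $x_1^7$; an involution has $(g+I)$ of rank $1$, hence fixes the three points of a line and transposes the other four in two $2$-cycles, giving $x_1^3x_2^2$; an order-$3$ element splits $\mathbb{F}_2^3$ as a fixed line plus an $\mathbb{F}_4$-plane, fixing one point and rotating the other six in two $3$-cycles, giving $x_1x_3^2$; a regular unipotent element (a single Jordan block, the only order-$4$ type) fixes one point and yields $x_1x_2x_4$ (checked by squaring it to an involution); and a Singer element is a $7$-cycle, giving $x_7$. Class sizes then come from centralizer orders or Sylow counting: $n_7=8$ forces $48$ elements of order $7$, the order-$3$ and order-$4$ elements have centralizers of orders $3$ and $4$ (classes of size $56$ and $42$), and the involutions form a class of size $21$. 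Since $1+21+56+42+48=168$ these exhaust $G$, and assembling $\frac{1}{168}\sum_{g}\prod_i x_i^{c_i(g)}$ gives the stated cycle index.

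For part (b) I would first verify that $\sigma=(0,1,2,3,4,5,6)$ and $\tau=(1,2)(3,6)$ are collineations, by checking that each carries the seven lines (the cyclic translates of the difference set $\{1,2,4\}$) to lines; this places $\langle\sigma,\tau\rangle\le G$. For equality I would invoke the known subgroup structure of the simple group of order $168$: its maximal subgroups have orders $24$ and $21$, none of order $24$ contains an element of order $7$, so the only subgroups containing the Sylow $7$-subgroup $P=\langle\sigma\rangle$ are $P$, $N_G(P)$ (order $21$) and $G$; since $\tau$ has order $2$ while $|N_G(P)|=21$ is odd, $\langle\sigma,\tau\rangle$ is neither $P$ nor $N_G(P)$, forcing it to be $G$. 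Part (c) is then immediate from the bookkeeping in (a): each of the orders $1,2,3,4$ constitutes a single $G$-class carrying a distinct cycle type, so on these elements similarity in $S_7$ (same cycle type) coincides with $G$-conjugacy.

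The genuinely delicate point, which I expect to carry the most weight, is the order-$7$ fusion underlying (d) and (e). Here I would compute $C_G(\sigma)=\langle\sigma\rangle$: it lies in $N_G(P)$ of order $21$, and the alternative $C_G(\sigma)=N_G(P)$ would make $N_G(P)$ abelian, hence cyclic of order $21$, producing an element of order $21$ impossible in $G$; thus $|C_G(\sigma)|=7$, every order-$7$ class has size $168/7=24$, and with $48$ such elements there are exactly two classes, proving (d). For (e) I would apply Burnside's fusion theorem: since $P$ is abelian, $G$-fusion within $P$ is controlled by $N_G(P)$, and $N_G(P)/P\cong\mathbb{Z}_3$ acts on the exponents of $\sigma$ as the unique order-$3$ subgroup $\{1,2,4\}$ of $\mathbb{Z}_7^{\ast}$, i.e.\ as the quadratic residues mod $7$. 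Hence the two classes are $\{\sigma,\sigma^2,\sigma^4\}^G$ and $\{\sigma^3,\sigma^5,\sigma^6\}^G$, and because $-1\equiv 6$ is a non-residue mod $7$, inversion $\sigma\mapsto\sigma^{-1}=\sigma^6$ interchanges them, so an element of order $7$ and its inverse are never conjugate. The main obstacle throughout is simply keeping the canonical-form/cycle-type dictionary and the residue bookkeeping straight; once the two order-$7$ classes are matched with residue classes, (e) reduces to the fact that $7\equiv 3\pmod 4$.
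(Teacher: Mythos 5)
Your proof is correct, but it takes a genuinely different route from the paper. The paper's own ``proof'' of this proposition is essentially a proof by reference: it declares the statement a straightforward exercise, points to the cited sources (Pfeiffer, Visscher, James--Liebeck) for ``all necessary information,'' and merely records explicit representatives $g_1=(2,4)(5,6)$, $g_2=(1,2,4)(3,6,5)$, $g_3=(1,2,3,6)(4,5)$, $g_4=(0,1,2,3,4,5,6)$ of the classes of orders $2,3,4,7$ --- with no derivation of the cycle index, of generation, or of the order-$7$ fusion (indeed it lists only one order-$7$ representative, although by parts (d),(e) there are two such classes). You instead give a self-contained structural argument through $G\cong \operatorname{GL}(3,2)$: since conjugacy in $G$ is similarity over $\mathbb{F}_2$, the rational canonical forms available in degree $3$ simultaneously pin down the possible orders $1,2,3,4,7$, the cycle types on the seven points, and the fact that each of the orders $2,3,4$ forms a single class --- which is exactly what (c) requires; centralizer orders then give the class sizes and hence (a); Sylow counting plus the maximal-subgroup structure gives (b); and Burnside's fusion theorem identifies the two order-$7$ classes inside the abelian Sylow $7$-subgroup with the residue classes $\{1,2,4\}$ and $\{3,5,6\}$ modulo $7$, so that (e) reduces to $-1$ being a non-residue, i.e.\ to $7\equiv 3 \pmod 4$. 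What the paper's route buys is brevity (the character table of $PSL(2,7)\cong PSL(3,2)$ is classical); what yours buys is an actual explanation, in particular of the only genuinely delicate item (e), which the paper asserts without argument. Your write-up does still lean on standard citable inputs --- simplicity of $G$ (or at least $n_7=8$), the maximal subgroups of orders $24$ and $21$, and Burnside's fusion theorem --- but all are textbook facts, and the first two could be replaced by elementary Sylow/index counting if full self-containment were wanted.
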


\begin{proof}
In principle, the proof might be a helpful straightforward exercise
on the intersection between permutation group theory, character
theory and finite geometries. Nevertheless, the reader is welcome to
consult the sources mentioned above, which provide all necessary
information. Here we refer to permutations  \(g_1,g_2,g_3,g_4\) as
representatives of the conjugacy classes of elements of orders
2, 3, 4
and 7, respectively.
\begin{align*}
g_1 &= (2,4)(5,6),\\
g_2 &= (1,2,4)(3,6,5),\\
g_3 &= (1,2,3,6)(4,5),\\
g_4 &= (0,1,2,3,4,5,6).
\end{align*}
\end{proof}

\begin{figure}
\begin{center}
\input{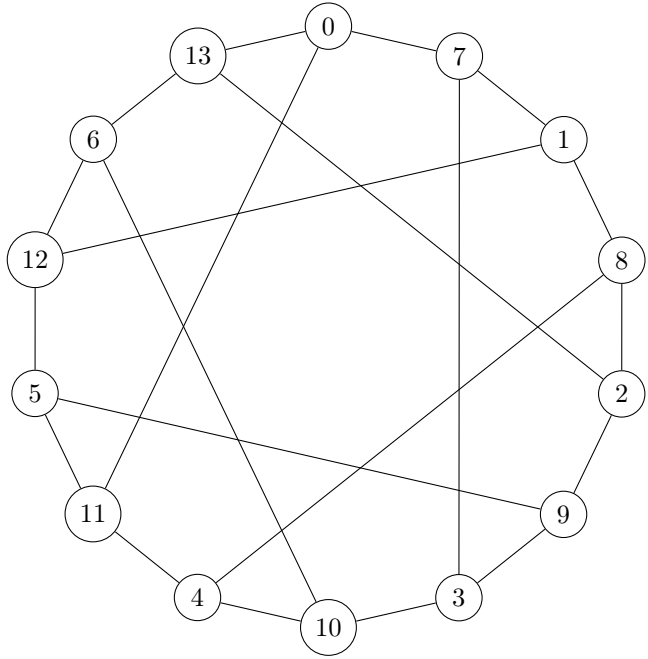}
\end{center}
\caption{The Heawod graph \({\cal H}_{14}\), the incidence graph of \(PG(2,2)\) \label{figure:13.2}}
\end{figure}

\begin{figure}
\begin{center}
\input{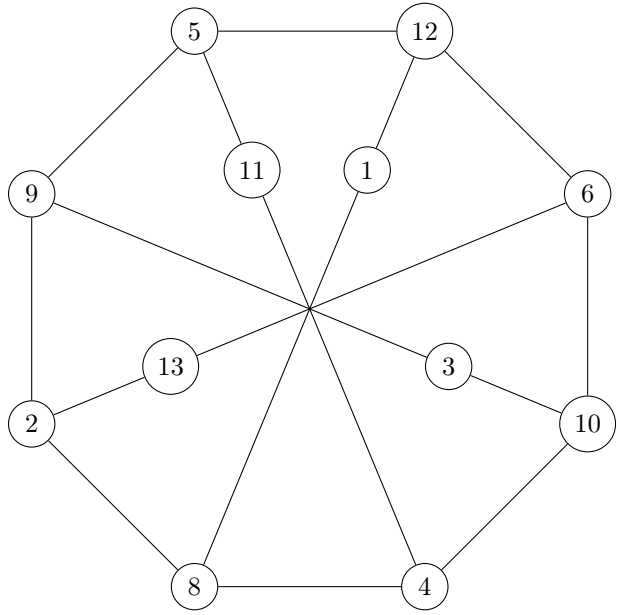}
\end{center}
\caption{Induced subgraph by \(V({\cal H}_{14})\setminus\{0,7\}\) \label{figure:13.3}}
\end{figure}

\begin{figure}
\begin{center}
\input{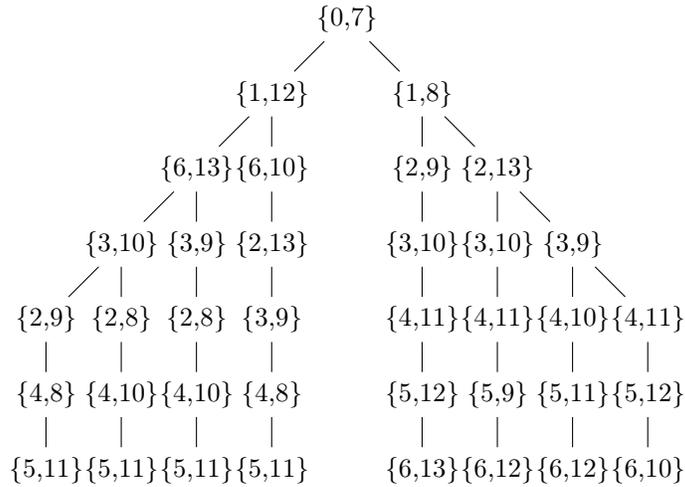}
\end{center}
\caption{Backtracking search of matchings containing the edge \(\{0,7\}\) \label{figure:13.4}}
\end{figure}

Now we turn to the consideration of the Heawood graph \({\cal
  H}_{14}\), which is depicted in Figure \ref{figure:13.2}. Note that
the line \(\{0,1,3\}\) is denoted by 7; images of this line with
respect to the permutation \(g_4\), acting on points, are
denoted by elements from \([7,13]\) consecutively, such that the
permutation \(\tilde g_4 = (0,1,2,3,4,5,6)(7,8,9,10,11,12,13)\) is an
automorphism of \({\cal H}_{14}\) of order 7.

Note also that an involution
\(t=(0,7)(1,13)(2,12)(3,11)(4,10)(5,9)(6,8)\) clearly belongs to
\(K=\aut({\cal H}_{14})\). Moreover the group \(D_7=\left<\tilde g_4,
  t\right>\) is the stabilizer of the Hamilton cycle in \({\cal
  H}_{14}\), visible on its diagram, presented in Figure
\ref{figure:13.2}. This observation allows to consider one of the
remarkable properties of the Heawood graph, see, e.g., the home page
of Andries Brouwer \cite{Bro}.

\begin{proposition}
\label{orgaf3386b}
\begin{enumerate}
\item \label{org110a9d2} \(K=\aut({\cal H}_{14}) \cong \operatorname{PGL}(2,7)\) is a group
of order 336.
\item \label{org277b063} The Heawood graph is a bipartite DTG with the parameters
(3,2,2;1,1,3).
\item \label{orga8e1c15} Each Hamilton cycle in \({\cal H}_{14}\) uniquely splits into
two perfect matchings.
\item \label{orgd85551c} The graph \({\cal H}_{14}\) contains exactly 24 Hamilton cycles, on
which the group \(K\) acts transitively. The stabilizer of one such
cycle is a regular action of the dihedral group \(D_7\) of order 14.
\item \label{orgf8b5442} \({\cal H}_{14}\) contains exactly 24 perfect matchings. Each
matching is the complement of a corresponding Hamilton cycle.
\item \label{org8a0c911} The 24 matchings of \({\cal H}_{14}\) split into 8 sets of three
pairwise edge-disjoint
matchings. The union of any two  matchings in each  set
forms Hamilton cycle.
\item \label{orgeee9bbd} The action of \(K\) on the set of eight sets of matchings is similar
to the action of \(\operatorname{PGL}(2,7)\) on the projective line
\(\operatorname{PG}(1,7)\).
\end{enumerate}
\end{proposition}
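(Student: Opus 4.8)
The plan is to treat the seven assertions as one chain, deriving the combinatorial and group-theoretic parts (d)--(g) from the identification of $K$ in (a) together with the incidence description of $\mathcal{H}_{14}$. For (a) I would first observe that in a connected bipartite graph the bipartition is unique, so every automorphism of $\mathcal{H}_{14}$ either preserves or interchanges the two colour classes (points and lines of $\operatorname{PG}(2,2)$). The class-preserving subgroup acts as collineations, hence by the fundamental theorem of projective geometry equals $\operatorname{PGL}(3,2)=\operatorname{PSL}(3,2)$ of order $168$ (the field $\mathbb{F}_2$ admits no nontrivial automorphism). The polarity realised by the exhibited involution $t$ swaps the two classes, so the class-swapping coset is nonempty and $|K|=2\cdot 168=336$. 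Abstractly $K$ is the extension of $\operatorname{PSL}(3,2)\cong\operatorname{PSL}(2,7)$ by the duality, which induces the nontrivial outer automorphism; as $\operatorname{Out}(\operatorname{PSL}(2,7))=\mathbb{Z}_2$ and the extension splits (it contains the involution $t$ outside $\operatorname{PSL}(3,2)$), it is $\operatorname{PGL}(2,7)$.

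For (b) I would read the distance distribution off the plane axioms: from a point-vertex $p$ there are $3$ lines at distance $1$, the $6$ points $\neq p$ at distance $2$ (two on each line, distinct since two points determine a unique line), and the $4$ lines missing $p$ at distance $3$, so $1+3+6+4=14$. Reading the same counts at a distance-$2$ and a distance-$3$ vertex yields the intersection array $\{3,2,2;1,1,3\}$ (all $a_i=0$ by bipartiteness), and distance-transitivity follows from $K$, which is vertex-transitive and whose point-stabiliser of order $24$ is transitive on each sphere by flag-transitivity and $2$-transitivity on points. Part (c) is elementary: a Hamilton cycle of a bipartite graph is an even cycle alternating between the classes, so its edges $2$-colour uniquely by alternation into two perfect matchings.

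The combinatorial core is (d)--(f). I would count the perfect matchings of $\mathcal{H}_{14}$ as the permanent of the $7\times7$ Fano incidence matrix, namely $24$ (equivalently, the backtracking enumeration of Figure \ref{figure:13.4}). Then I would show the complement of any matching is a single Hamilton cycle: it is a $2$-factor, hence a union of even cycles of length $\geq 6$ (girth $6$), and the only partitions of $14$ into such parts are $14$ and $6+8$, so it suffices to exclude a $6$-cycle component. A $6$-cycle is a triangle $\{a,b,c\}$ of $\operatorname{PG}(2,2)$ with its three sides; all triangles being $\operatorname{PSL}(3,2)$-equivalent, I would check on the representative $a=100,b=010,c=001$ that the four leftover points and four leftover lines induce a graph whose only branch vertices are the point $a+b+c$ and the line $\{a+b,b+c,c+a\}$, between which every path has length $3$; two internally disjoint such paths cover only $6$ of the $8$ vertices, so no spanning $8$-cycle exists. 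Thus every $2$-factor is Hamiltonian, complementation is a bijection between the $24$ matchings and $24$ Hamilton cycles, giving the counts in (d) and (e). For the stabiliser in (d), the displayed cycle is fixed by $D_7=\langle\tilde g_4,t\rangle$; its setwise stabiliser embeds in $\operatorname{Aut}(C_{14})=D_{14}$, whose only overgroup of $D_7$ is $D_{14}$ itself, excluded because the one-step rotation carries the chord $\{0,11\}$ of the complementary matching to $\{13,4\}$, not a chord. Hence the stabiliser is $D_7$, the orbit has size $336/14=24$, which is everything, so $K$ is transitive. Part (f) then follows formally: three pairwise edge-disjoint matchings use all $3\cdot 7=21$ edges, i.e. form a $1$-factorization, and since every $2$-factor is Hamiltonian the union of any two factors is automatically a Hamilton cycle; conversely each matching $M$ lies in a unique factorization, since $E\setminus M$ is a Hamilton cycle that splits uniquely by (c), so the $24$ matchings partition into exactly $24/3=8$ such factorizations.

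Finally, for (g), $K$ permutes the $8$ factorizations, transitively because it is transitive on matchings and the factorizations form the resulting block system. The kernel is normal in $K\cong\operatorname{PGL}(2,7)$, hence one of $1,\operatorname{PSL}(2,7),K$; the last two are impossible for a transitive action on $8$ points, so the action is faithful with point-stabiliser of order $42$. Since every subgroup of order $42$ has a normal Sylow $7$-subgroup and therefore equals the normaliser of that subgroup, and these form a single conjugacy class, $\operatorname{PGL}(2,7)$ has one class of index-$8$ subgroups, so the action is similar to that on $\operatorname{PG}(1,7)$. I expect the main obstacle to be the exclusion of the $6+8$ two-factor in (e), the one genuinely combinatorial step on which the bijections and the counts $24$ and $8$ all rest; the secondary difficulty is the group-theoretic bookkeeping that upgrades transitivity on matchings to the faithful action of (g), but once the two-factor lemma is in place the remaining parts follow from the orbit--stabiliser and normal-subgroup arguments sketched above.
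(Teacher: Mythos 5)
Your proposal is correct, but it is built around a different key lemma than the paper's proof. The paper disposes of (a), (b) by citation to \cite{BroCN89} and \cite{GodR01}, and its combinatorial core for (d), (e) is an orbit count: the Hamilton cycle visible in Figure \ref{figure:13.2} is stabilized by $D_7$, giving an orbit ${\cal O}$ of $24$ cycles and hence $24$ matchings; a double count shows each edge lies in exactly $8$ matchings from ${\cal O}$, and the backtracking of Figure \ref{figure:13.4} shows $8$ is the \emph{total} number of matchings through a fixed edge, so ${\cal O}$ exhausts all matchings --- counts and transitivity come out simultaneously. The paper then proves (f) by explicitly checking that $M_0\cup M_1$ and $M_0\cup M_2$ are Hamilton cycles, and treats (g) tersely via the classical isomorphism $PSL(2,7)\cong PSL(3,2)$. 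You instead isolate the structural lemma that every $2$-factor of ${\cal H}_{14}$ is a Hamilton cycle (girth $6$ forces cycle lengths $14$ or $6+8$, and the $6+8$ case dies on the theta graph with three length-$3$ paths left after deleting a triangle of $PG(2,2)$), which makes complementation an equivariant bijection between matchings and Hamilton cycles, renders (f) automatic rather than a verification, and reduces (d), (e) to the matching count $24$ plus an orbit--stabilizer argument in which the chord $\{0,11\}\mapsto\{13,4\}$ computation excludes the full dihedral group of order $28$. You also supply genuine proofs where the paper cites or gestures: the fundamental theorem of projective geometry together with the outerness of the duality for (a), the intersection-array computation for (b), and the Sylow-theoretic identification of the faithful degree-$8$ action for (g). The paper's route buys brevity and an earlier use of the group action, with the backtracking figure as its only finite check; yours buys a cleaner logical skeleton in which (e), (f) and the bijection all follow from one lemma, plus a self-contained treatment of (a), (b), (g), at the cost of two finite verifications (the permanent value $24$ and the theta-graph analysis). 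One small point of bookkeeping: Figure \ref{figure:13.4} by itself only counts the $8$ matchings through one edge, so your parenthetical ``equivalently'' needs edge-transitivity (which you do have from (a)) to convert $8$ into the global count $3\cdot 8=24$.
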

\begin{proof}
For \ref{org110a9d2}, \ref{org277b063}, see \cite{BroCN89} as well as \cite{GodR01}, p.97. To
prove \ref{orga8e1c15} note that \({\cal H}_{14}\) is a bipartite graph, which is
the incidence graph of a symmetric design.

 It is clear from the visual arguments mentioned above that \({\cal
  H}_{14}\) conntains at least one orbit of Hamilton cycles of length
24, with a representative given in Figure \ref{figure:13.2}. Thus
there is also at least one orbit \({\cal O}\) of 24
matchings. Clearly, each edge of \({\cal H}_{14}\) appears in 8
matchings exactly from \({\cal O}\). Let us fix an edge of \({\cal
  H}_{14}\), say \(\{0,7\}\) and construct the subgraph of \({\cal
  H}_{14}\) induced by \(V({\cal H}_{14})\setminus\{0,7\}\), see Figure
\ref{figure:13.3}. A bit annoying, though simple brute force
inspection, see Figure \ref{figure:13.4}, shows that there are
exactly 8 extensions of \(\{0,7\}\) to a perfect matching. This
proves \ref{orgd85551c} and \ref{orgf8b5442}.

For the proof of \ref{org8a0c911} consider the Hamilton cycle visible in Figure
\ref{figure:13.2}. Split it into two matchings \(M_1\) and \(M_2\) and
join to them the matching \(M_0\), also visible in the picture. Check
that \(M_0\cup M_1\) and \(M_0\cup M_2\) are also Hamilton cycles.

Finally, we get eight sets of disjoint matchings, three matchings in each
set. The group \(K\) acts on the set of these sets faithfully, as well
as the group \(PSL(2,7)\) of order 168. Thus we get one more
incarnation of the classical isomorphism \(PSL(2,7)\cong
  PGL(3,2)\cong PSL(3,2)\), cf., e.g., \cite{Hir98}, p.144.
\end{proof}

\begin{corollary}
The group \(K\) is isomorphic to \(PGL(2,7)\). 
\end{corollary}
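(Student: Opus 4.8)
The plan is to obtain the corollary as a direct consequence of Proposition~\ref{orgaf3386b}. The part of that proposition asserting $K\cong \operatorname{PGL}(2,7)$ is recorded there by citation, but the self-contained route runs through the combinatorial model built in the same proposition: the eight sets of three pairwise edge-disjoint matchings and the similarity of the induced action with the action of $\operatorname{PGL}(2,7)$ on the projective line $\operatorname{PG}(1,7)$. I would therefore deduce the abstract isomorphism from that permutation-theoretic similarity together with an order count, since all parts of Proposition~\ref{orgaf3386b} may be assumed.

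First I would record the permutation representation $\varphi\colon K\to\operatorname{Sym}(\mathcal S)$ afforded by the action of $K$ on the set $\mathcal S$ of the eight matching-triples. By the part of Proposition~\ref{orgaf3386b} identifying this action with the natural action of $\operatorname{PGL}(2,7)$ on the projective line, the image $\varphi(K)$ is permutation-isomorphic to $(\operatorname{PGL}(2,7),\operatorname{PG}(1,7))$; since the latter is faithful of order $7\cdot 6\cdot 8=336$, we get $|\varphi(K)|=336$. Next I would check that $\varphi$ is injective: an automorphism of ${\cal H}_{14}$ fixing each of the eight sets fixes each Hamilton cycle assembled from two matchings of such a set, and these cycles determine every vertex, so the kernel is trivial. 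Combined with $|K|=336$ this makes $\varphi$ an isomorphism onto its image, whence $K\cong \operatorname{PGL}(2,7)$.

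The genuine difficulty is \emph{upstream}, in the step of Proposition~\ref{orgaf3386b} that equips the eight sets with a $\operatorname{PG}(1,7)$-structure on which $K$ acts as the full $\operatorname{PGL}(2,7)$ and not merely as $\operatorname{PSL}(2,7)$. The even part of the action is supplied by $\aut(\operatorname{PG}(2,2))\cong \operatorname{PSL}(3,2)\cong \operatorname{PSL}(2,7)$ of order $168$, preserving the point/line classes of the Fano plane; the extra factor of two, i.e.\ the outer symmetries distinguishing $\operatorname{PGL}$ from $\operatorname{PSL}$, comes from the polarity $t=(0,7)(1,13)(2,12)(3,11)(4,10)(5,9)(6,8)$ interchanging points and lines, which lies in $K$. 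Verifying that the resulting action on the eight sets is sharply $3$-transitive, that is, genuinely the projective-line action rather than the index-two subaction, is the step that must be carried out with care; once it is in place the corollary is purely formal.
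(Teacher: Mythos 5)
Your overall route --- deduce the corollary formally from Proposition~\ref{orgaf3386b}, part on the eight matching-triples, plus a faithfulness check --- is a legitimate strategy, and it genuinely differs from the paper's own proof, which is a one-line appeal to the catalogue of primitive permutation groups of small degree \cite{DixM96}. However, the one step your proposal actually carries out in detail, injectivity of $\varphi$, is exactly where it breaks. You claim that an element of $\ker\varphi$ ``fixes each Hamilton cycle assembled from two matchings of such a set''. This does not follow: an element of the kernel fixes each triple $\{M_a,M_b,M_c\}$ only \emph{setwise}, so it may permute the three matchings inside the triple, and then it permutes the three Hamilton cycles $M_a\cup M_b$, $M_a\cup M_c$, $M_b\cup M_c$ rather than fixing them. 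This is not a vacuous worry: since $K$ is transitive on the $24$ matchings with matching-stabilizer of order $14$ (Proposition~\ref{orgaf3386b}), the setwise stabilizer of a single triple has order $42$ and acts \emph{transitively} on the three matchings of that triple, so ``fixes the triple'' emphatically does not imply ``fixes its matchings or its cycles''. Moreover, even if all $24$ Hamilton cycles were fixed setwise, the phrase ``these cycles determine every vertex'' is not an argument: every Hamilton cycle passes through every vertex, so setwise invariance of cycles does not visibly pin down any vertex.

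The gap is repairable. Simplest: faithfulness of the action of $K$ on the eight triples is exactly what is asserted (and used) in the paper's proof of Proposition~\ref{orgaf3386b}, so you may cite it instead of re-deriving it. A self-contained patch runs as follows: $N=\ker\varphi$ is normal in $K$; the simple subgroup $\operatorname{PSL}(3,2)\le K$ of bipartition-preserving automorphisms acts transitively, hence non-trivially, on the eight triples, so $N\cap \operatorname{PSL}(3,2)=1$ and therefore $|N|\le 2$; a non-trivial $N$ would be generated by a central involution of $K$ interchanging the two sides of the bipartition, i.e.\ a $\operatorname{PSL}(3,2)$-equivariant bijection between points and lines of the Fano plane, which cannot exist because the point- and line-actions of $\operatorname{PSL}(3,2)$ are inequivalent permutation representations (their stabilizers are non-conjugate index-$7$ maximal subgroups). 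With faithfulness in hand, your deduction does give the corollary --- indeed injectivity plus the permutation-isomorphism of the image already suffices, so the order count $|K|=336$ is superfluous; that is just as well, since you import it from part (a) of the proposition, which is verbatim the statement being proved and is better not invoked at all.
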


\begin{proof}
This fact was already discussed implicitly. Now it follows from the
well-known catalogue of primitive permutation groups of small
degree, e.g., \cite{DixM96}.
\end{proof}

In the next section we will investigate the imprimitive action
\((PSL(3,2),\Omega)\), where \(\Omega\) is the unique orbit of perfect
matchings of \({\cal H}_{14}\), which, as we now know, has
length 24. Clearly, the stabilizer of an element in this action is
isomorphic to \({\mathbb Z}_7\) and acts semiregularly on the set
\([0,13]\), regarded as the vertex set of \({\cal H}_{14}\). Recall that \(M_0\) is the matching
visible in Figure \ref{figure:13.2}. 

\begin{proposition}
Let \(\Omega=\Omega_{24}\) be the set of perfect matchings of the
Heawood graph \({\cal H}_{14}\). Let \({\mathbb Z}_7 = \left<\tilde
  g_4\right>\), where \(\tilde g_4=(0,1,2,3,4,5,6)(7,8,9,10,11,12,13)\). Then
\begin{itemize}
\item \({\mathbb Z}_7\) has on \(\Omega\) six orbits of lengths
1,1,1,7,7,7. The representatives of these orbits are matchings
\(M_0,M_1,M_2,M_3,M_4,M_5\), respectively. Here \(M_1\) is the
matching containing the edge \(\{0,7\}\),  \(M_2\) contains
\(\{0,13\}\), while their union is the Hamiltonian cycle visible in
Figure \ref{figure:13.2}
\item The matchings \(M_3,M_4,M_5\) are as follows:
\begin{align*}
M_3&=\{ \{ 0, 7 \}, \{ 1, 8 \}, \{ 2, 13 \}, \{ 3, 9 \}, \{ 4, 10 \}, \{ 5, 11 \}, \{ 6, 12 \} \}\\
M_4&=\{ \{ 0, 7 \}, \{ 1, 12 \}, \{ 2, 8 \}, \{ 3, 10 \}, \{ 4, 11 \}, \{ 5, 9 \}, \{ 6, 13 \} \}\\
M_5&=\{ \{ 0, 7 \}, \{ 1, 12 \}, \{ 2, 13 \}, \{ 3, 9 \}, \{ 4, 8 \}, \{ 5, 11 \}, \{ 6, 10 \} \}
\end{align*}
\item The intersection of the edge set of the considered matchings with 
\[
    M_0=\{\{0,11\},\{1,12\},\{2,13\},\{3,7\},\{4,8\},\{5,9\},\{6,10\}\}
    \]
has sizes 7,0,0,1,2,4, respectively.
\end{itemize}
\end{proposition}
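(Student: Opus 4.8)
The plan is to organize everything around one numerical invariant attached to the edges of \({\cal H}_{14}\). With the labelling fixed above, the line \(7+k\) equals \(\{k,k+1,k+3\}\pmod 7\) — one checks this against Figure \ref{figure:13.2}, where point \(0\) lies on the lines \(7,11,13\) — so every edge of \({\cal H}_{14}\) has the form \(\{i,7+k\}\) with \(i\in\{k,k+1,k+3\}\), i.e. with \emph{difference} \(d:=i-k\in\{0,1,3\}\pmod 7\). Since \(\tilde g_4=(0,1,2,3,4,5,6)(7,8,\dots,13)\) raises both \(i\) and \(k\) by \(1\), it preserves \(d\). First I would note that a matching \(M\) is \({\mathbb Z}_7\)-invariant exactly when its seven edges form a single \(\tilde g_4\)-orbit: as \(\tilde g_4\) is fixed-point-free on edges and \(7\) is prime, it acts on the edge set of an invariant \(M\) as one \(7\)-cycle. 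Each edge orbit is itself a perfect matching, and these orbits are indexed precisely by \(d\in\{0,1,3\}\); hence there are exactly three fixed matchings. The remaining \(24-3=21\) matchings split into orbits whose lengths divide the prime \(7\) and exceed \(1\), so into three orbits of length \(7\), giving the profile \(1,1,1,7,7,7\). Reading off differences identifies the fixed matchings: \(M_1\ni\{0,7\}\) has \(d=0\), \(M_2\ni\{0,13\}\) has \(d=1\), and \(M_0\) (listed below) has \(d=3\).

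Next I would dispose of the Hamiltonian-cycle assertion by inspection. The union \(M_1\cup M_2\) has \(14\) edges; writing \(M_1=\{\{i,7+i\}:i\in[0,6]\}\) and \(M_2\) as the edges \(\{i,7+k\}\) with \(k\equiv i-1\pmod 7\), one traces \(0,7,1,8,2,9,3,10,4,11,5,12,6,13,0\), which is exactly the displayed cycle of Figure \ref{figure:13.2} (read in reverse).

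For the three length-\(7\) orbits I would supply representatives from the eight perfect matchings extending \(\{0,7\}\) exhibited in Figure \ref{figure:13.4}. One of these eight is the invariant matching \(M_1\); the stated \(M_3,M_4,M_5\) are three of the remaining seven, and each is checked to be a genuine matching by confirming that every listed pair \(\{i,7+k\}\) has \(i-k\in\{0,1,3\}\). To see that they lie in three \emph{distinct} orbits I would pass to the \(\tilde g_4\)-invariant \emph{multiset of differences}: a short computation gives \(\{0^2,1^4,3^1\}\) for \(M_3\), \(\{0^4,1^1,3^2\}\) for \(M_4\), and \(\{0^1,1^2,3^4\}\) for \(M_5\). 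These are pairwise distinct and none is constant, so the three matchings are non-invariant and pairwise inequivalent under \({\mathbb Z}_7\); since there are exactly three length-\(7\) orbits, \(M_3,M_4,M_5\) represent all of them, and together with \(M_0,M_1,M_2\) (difference multisets \(\{3^7\},\{0^7\},\{1^7\}\)) we obtain representatives of all six orbits.

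Finally the intersection sizes follow by a direct edge comparison against \(M_0=\{\{0,11\},\{1,12\},\{2,13\},\{3,7\},\{4,8\},\{5,9\},\{6,10\}\}\): trivially \(|M_0\cap M_0|=7\); from the explicit forms of \(M_1,M_2\) neither shares an edge with \(M_0\), giving \(0,0\); and matching the listed edges of \(M_3,M_4,M_5\) against \(M_0\) leaves the common edges \(\{2,13\}\) (so \(1\)), then \(\{1,12\},\{5,9\}\) (so \(2\)), then \(\{1,12\},\{2,13\},\{4,8\},\{6,10\}\) (so \(4\)). The only genuinely conceptual step is the first — recognising \(d=i-k\) as the organising invariant; everything afterwards is bookkeeping, and the single point needing care is that the difference multiset must separate only the six chosen representatives (which it does), not all \(24\) matchings.
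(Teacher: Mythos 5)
Your proof is correct, and it is organized around a tool the paper never makes explicit. The paper's own argument is much terser: it takes the six listed matchings as given, checks that $M_3,M_4,M_5$ are matchings, asserts (``clearly'') that none of them is $\mathbb{Z}_7$-invariant, observes that $|M\cap M_0|$ is constant on $\mathbb{Z}_7$-orbits because $M_0$ is itself invariant, so the distinct values $1,2,4$ force distinct orbits, and closes with the same count $3\cdot 1+3\cdot 7=24$. Your difference labelling $d=i-k\in\{0,1,3\}$ on edges $\{i,7+k\}$ does three extra things: it gives a structural proof that the invariant matchings are \emph{exactly} the three edge orbits (so the profile $1,1,1,7,7,7$ is derived rather than assembled from the exhibited examples), it supplies an actual argument for the non-invariance of $M_3,M_4,M_5$ (non-constant difference multiset) where the paper only asserts it, and it exposes the paper's invariant as a shadow of yours: since $M_0$ is precisely the set of edges with $d=3$, the quantity $|M\cap M_0|$ is the multiplicity of $3$ in your multiset, which is why the two separation arguments give the same numbers $7,0,0,1,2,4$. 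What the paper's route buys is brevity, leaning on the previously established facts about Hamilton cycles and the count of $24$ matchings; what yours buys is self-containment and a verification of the ``clearly'' step. One small point of hygiene: confirming that each pair $\{i,7+k\}$ in $M_3,M_4,M_5$ has $d\in\{0,1,3\}$ only shows the pairs are edges; you should also note (as is immediate from the explicit lists, where each point and each line occurs exactly once) that the seven edges are pairwise disjoint, which is what makes them perfect matchings.
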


\begin{proof}
Check that three more matchings, besides those from the set
\(\{M_0,M_1,M_2\}\), indeed are matchings of \({\cal H}_{14}\). Clearly,
none of them is preserved by \({\mathbb Z}_7\). Because their
intersections with \(M_0\) have different size, they belong to
distinct orbits of \(({\mathbb Z}_7,\Omega)\). As \(3\cdot 1+3\cdot
  7=24\), we covered all orbits of \(({\mathbb Z}_7,\Omega_{24})\).
\end{proof}

We still did not provide our combinatorial definition of the Klein
graph \(\operatorname{Kle}_{24}\).

\begin{proposition}
Let \(\Omega=\Omega_{24}\) be the set of 24 matchings of \({\cal
  H}_{14}\). Let \((PSL(3,2),\Omega)\) be the transitive action of
degree 24. Then:
\begin{enumerate}
\item The group \((PSL(3,2),\Omega)\) has rank 6 with subdegrees \(1^3,7^3\).
\item This action is imprimitive with the imprimitivity system
consisting of eight blocks of size 3.
\item The 2-closure is twice larger, has order 336 and is isomorphic to
the group \(PGL(2,7)\).
\item The color group \(CAut({\cal M}_{24})=V(PSL(3,2),\Omega_{24})\) is
isomorphic to the group \(PGL(2,7)\times{\mathbb Z}_3\) of order 1008.
\item The algebraic group \(AAut({\cal M}_{24})\cong S_3\) and has order 6.
\end{enumerate}
\end{proposition}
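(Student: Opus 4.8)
The plan is to treat the five assertions in increasing order of difficulty, reducing each to facts already established for the Heawood graph and its matchings.

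\textbf{Parts (a) and (b).} Since $(\operatorname{PSL}(3,2),\Omega)$ is transitive of degree $24$ and $|\operatorname{PSL}(3,2)|=168$, a point stabiliser has order $7$; by the discussion preceding the statement it is the cyclic group $\mathbb{Z}_7=\langle\tilde g_4\rangle$ acting semiregularly on the vertices of ${\cal H}_{14}$. The rank equals the number of orbits of this point stabiliser on $\Omega$, and the previous proposition records exactly six of them, of lengths $1,1,1,7,7,7$; this is (a). For (b) I would note that the three valency-$1$ basic relations $\id$, $S_1$, $S_2$ (with $S_2=S_1^{T}$) have union an equivalence relation whose classes are the directed triangles of type $8\circ\vec C_3$, giving a $G$-invariant partition of $\Omega$ into $8$ blocks of size $3$; equivalently these are the eight triples of pairwise edge-disjoint matchings of Proposition~\ref{orgaf3386b}, on which $K=\aut({\cal H}_{14})$ and hence $G$ acts.

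\textbf{Part (c).} I would bound the $2$-closure $G^{(2)}$ from both sides. Each thick basic relation is a copy of the Klein graph $\operatorname{Kle}_{24}$, so $G^{(2)}\le\aut(\operatorname{Kle}_{24})$; by the uniqueness of the Klein graph this automorphism group has order $336$, and since it contains $K\cong\operatorname{PGL}(2,7)$ of the same order it equals $K$. Thus $G^{(2)}\le K$. For the reverse inclusion it suffices to exhibit a single element of $K\setminus\operatorname{PSL}(3,2)$ preserving every $2$-orbit of $G$; equivalently, to verify that $K$ itself has rank $6$, i.e. that the involution completing $\mathbb{Z}_7$ to the point stabiliser $D_7$ fixes each of the six $\mathbb{Z}_7$-suborbits setwise and, in particular, does not reverse the directed spread. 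Granting this, $K\le G^{(2)}\le K$, whence $G^{(2)}=K\cong\operatorname{PGL}(2,7)$ of order $336$.

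\textbf{Parts (d) and (e).} For the color group I would first produce the extra symmetry. The centraliser of the transitive group $K$ in $\operatorname{Sym}(\Omega)$ is $N_K(D_7)/D_7$; since $\mathbb{Z}_7$ is characteristic in $D_7$ and $N_K(\mathbb{Z}_7)$ is the Frobenius group of order $42$, one gets $N_K(D_7)/D_7\cong\mathbb{Z}_3$. This $\mathbb{Z}_3$ commutes with $K$, hence permutes the colors, rotating the three Klein relations cyclically while fixing $\id,S_1,S_2$, and lies in $\caut({\cal M}_{24})$; therefore $\caut({\cal M}_{24})\supseteq K\times\mathbb{Z}_3$ of order $1008$. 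For the upper bound I would use $\caut/\aut\hookrightarrow\aaut$ together with a computation of the structure constants $p_{ij}^{k}$: the three symmetric Klein relations are interchangeable, so $S_3$ acts on $\{R_0,R_1,R_2\}$ fixing $\id,S_1,S_2$, whereas non-commutativity of ${\cal M}_{24}$ forbids any algebraic automorphism from swapping $S_1$ and $S_2$ (relation transposition is here only an \emph{anti}-automorphism). This gives $\aaut\cong S_3$ of order $6$, which is (e); as only the cyclic subgroup of order $3$ is realised by genuine permutations, $\caut/\aut\cong\mathbb{Z}_3$ and $\caut({\cal M}_{24})\cong\operatorname{PGL}(2,7)\times\mathbb{Z}_3$ has order $1008$, which is (d).

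\textbf{Main obstacle.} The routine inputs are the orbit counts in (a)--(b) and the inclusion $G^{(2)}\le\aut(\operatorname{Kle}_{24})$. The genuine work is the exact determination of $\aaut$ and of which algebraic automorphisms are geometrically realised: proving that $\aaut$ has order precisely $6$ (no automorphism reverses the orientation of the spread, even after permuting the thick relations) and, dually, that passing from $\operatorname{PSL}(3,2)$ to $\operatorname{PGL}(2,7)$ does not enlarge the $2$-closure beyond order $336$. Following the style of the present text, these asymmetry checks are most safely carried out on the explicit tensor of structure constants, i.e. with the computer, after which the group-theoretic packaging above is immediate.
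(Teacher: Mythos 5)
Your route is genuinely different from the paper's: the paper's entire proof of this proposition is the remark that the results ``were obtained with the aid of a computer,'' so a human-readable derivation is new content, and most of yours is sound. Parts (a)--(b) are correct reductions to the preceding proposition on the \(\mathbb{Z}_7\)-orbits and to Proposition \ref{orgaf3386b}. In (c) the squeeze \(K\le G^{(2)}\le \aut(\Delta_0)\cong\aut(\operatorname{Kle}_{24})\) is valid, and the verification you defer to the computer can in fact be done by hand: the involution \(t=(0,7)(1,13)(2,12)(3,11)(4,10)(5,9)(6,8)\) lies in the stabilizer \(D_7\) of \(M_0\), it fixes \(M_1\) and \(M_2\) individually (direct inspection of the two halves of the Hamilton cycle), and since it fixes \(M_0\) it preserves the intersection sizes \(|M\cap M_0|\in\{1,2,4\}\), which separate the three \(\mathbb{Z}_7\)-orbits of length \(7\); hence \(D_7\) has six orbits, \(K\) has rank \(6\), and \(K\le G^{(2)}\).

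The genuine gap is in (e). Write \(R_0=\id\), \(R_1=S_1\), \(R_2=S_2\) for the thin and \(\Delta_0,\Delta_1,\Delta_2\) for the thick relations, so that \({\cal M}_{24}\) has the multiplication table \eqref{eq:multiplication} with \(m=7\) (this is verified later in the paper via Proposition \ref{org403533b}). An algebraic automorphism sends \(R_i\mapsto R_{\sigma(i)}\), \(\Delta_j\mapsto\Delta_{\tau(j)}\), and \(R_i\Delta_j=\Delta_{i+j}\) forces \(\sigma(i)+\tau(j)=\tau(i+j)\), whence \(\sigma(i)=\varepsilon i\) and \(\tau(j)=\varepsilon j+c\) with \(\varepsilon=\pm1\), \(c\in\mathbb{Z}_3\); the remaining relations impose nothing further. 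So \(\aaut({\cal M}_{24})\) is the affine group of \(\mathbb{Z}_3\) --- indeed \(\cong S_3\) of order \(6\) --- but its three involutions (\(\varepsilon=-1\)) all \emph{do} swap \(S_1\leftrightarrow S_2\) while transposing two of the \(\Delta\)'s, and conversely a transposition of two \(\Delta\)'s fixing \(S_1,S_2\) is \emph{not} an algebraic automorphism: if \(f\) fixes \(R_1,\Delta_0\) and swaps \(\Delta_1,\Delta_2\), then \(\Delta_2=f(\Delta_1)=f(R_1\Delta_0)=R_1\Delta_0=\Delta_1\), a contradiction. Thus both of your structural claims --- that \(\aaut\) is ``\(S_3\) on the thick colors fixing \(\id,S_1,S_2\),'' and that non-commutativity forbids algebraic automorphisms exchanging \(S_1,S_2\) --- are false; you land on the correct order and isomorphism type by accident.

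This error leaves (d) unsupported, since what must be excluded to get \(|\caut|=1008\) rather than \(336\cdot 6=2016\) is precisely that some permutation realizes one of these \(S_1\leftrightarrow S_2\) involutions, and your text offers no reason. Your own part (c) supplies one: every involution of \(\aaut\) fixes exactly one thick color \(\Delta_j\), so a permutation \(\pi\) realizing it would lie in \(\aut(\Delta_j)\), which has order \(336\) and contains \(K=\aut({\cal M}_{24})\), hence equals it; then \(\pi\) fixes every color, contradicting that it acts nontrivially on the colors. Hence \(\caut/\aut\cong\mathbb{Z}_3\), which together with your correct centralizer computation \(N_K(D_7)/D_7\cong\mathbb{Z}_3\) gives \(\caut({\cal M}_{24})=K\times\mathbb{Z}_3\cong \operatorname{PGL}(2,7)\times\mathbb{Z}_3\) of order \(1008\). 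With these repairs the whole proposal becomes a complete computer-free proof, which is more than the paper itself provides.
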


\begin{proof}
The results were obtained with the aid of a computer, though there
is a good potential to get them independently by nice reasonings.
\end{proof}

\begin{proposition}
\begin{enumerate}
\item \label{orge5874c7} The three basic graphs of valency 7 in the above rank
6 AS \({\cal M}_{24}\) are isomorphic.
\item \label{orgf7bbad0} All these graphs, denoted by \(Kle_{24}\), are distance regular
covers of \(K_8\) with the intersection array (7,4,1;1,2,7).
\item \label{org359f1bf} The graph \(Kle_{24}\) is not distance-transitive.
\end{enumerate}
\end{proposition}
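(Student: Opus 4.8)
The plan is to take the three parts in order, leaning throughout on the invariants of \({\cal M}_{24}=V(PSL(3,2),\Omega_{24})\) already recorded: its color group \(\caut({\cal M}_{24})\cong PGL(2,7)\times{\mathbb Z}_3\) of order \(1008\), its algebraic group \(\aaut({\cal M}_{24})\cong S_3\), and the fact that the \(2\)-closure of \((PSL(3,2),\Omega_{24})\) is \(PGL(2,7)\) of order \(336\). Write the six basic relations as \(\id_{24}\), the two mutually transpose thin relations \(S_1,S_2\) (with \(S_2=S_1^T\), the oriented triangles \(8\circ\vec C_3\)), and the three symmetric thick relations \(R_0,R_1,R_2\) of valency \(7\). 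For part (a) I would argue combinatorially, not merely algebraically. Since \(|\caut({\cal M}_{24})|=1008\) while \(|\aut({\cal M}_{24})|=|PGL(2,7)|=336\), the quotient \(\caut({\cal M}_{24})/\aut({\cal M}_{24})\cong{\mathbb Z}_3\) is realized by genuine permutations of \(\Omega_{24}\) and induces an order-\(3\) permutation of the six colors. A color permutation preserves valencies, so it fixes \(\id_{24}\) and stabilizes \(\{S_1,S_2\}\) and \(\{R_0,R_1,R_2\}\) setwise; as \({\mathbb Z}_3\) admits no nontrivial action on a two-element set, it fixes both thin relations and must cyclically permute \(R_0,R_1,R_2\). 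Being induced by actual elements of \(\caut({\cal M}_{24})\), this cycle exhibits explicit graph isomorphisms \(R_0\cong R_1\cong R_2\), proving (a); denote the common graph by \(Kle_{24}\). Note that the full \(\aaut({\cal M}_{24})\cong S_3\) is strictly larger than this realized \({\mathbb Z}_3\): the transpositions reverse the orientation of the triangles and are \emph{proper} algebraic automorphisms, not induced by any permutation, which is precisely the phenomenon underlying (c).

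For part (b) I fix \(\Gamma=Kle_{24}=(\Omega,R_0)\) and study the powers of \(A(\Gamma)\) inside the adjacency algebra of \({\cal M}_{24}\). First I would pin down the distance distribution from a vertex \(M_0\): valency \(k=7\), then show that the distance-\(2\) relation equals \(R_1\cup R_2\) (valency \(14\)) and the distance-\(3\) relation equals the spread \(S=S_1\cup S_2\) (valency \(2\)), so that \(1+7+14+2=24\) and the diameter is \(3\). This identification follows by expressing \(A(\Gamma)^2\) and \(A(\Gamma)^3\) through the structure constants \(p_{ij}^k\) of \({\cal M}_{24}\); it shows that \(\wl(\Gamma)\) is the rank-\(4\) symmetric fusion \(\{\id_{24},\Gamma,R_1\cup R_2,S\}\), whence \(\Gamma\) is a DRG of diameter \(3\) by the criterion \(\operatorname{rank}\wl(\Gamma)=d+1\). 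Reading the constants off then gives \(c_1=1,\ a_1=2,\ c_2=2,\ b_1=4,\ c_3=7\), i.e. the array \((7,4,1;1,2,7)\); since \(c_3=k\), the graph is antipodal with antipodal classes the size-\(3\) fibers of \(S\), and the quotient on the \(8\) fibers is \(K_8\), exactly parallel to the cover \(\Delta\) of \(K_5\) in Section~\ref{orgaf4339e}.

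For part (c) I would use that \(\aut(Kle_{24})\) coincides with \(\aut({\cal M}_{24})\cong PGL(2,7)\): indeed \(\aut({\cal M}_{24})\le\aut(\Gamma)\) trivially, and both groups have order \(336\), forcing equality. Since \(\aut({\cal M}_{24})\) preserves each of the six \(2\)-orbits, in particular the oriented triangles \(S_1,S_2\), the stabilizer of a vertex \(M_0\) preserves the orientation of the fiber through \(M_0\) and hence fixes both antipodes of \(M_0\) pointwise. Consequently this stabilizer is not transitive on the distance-\(3\) set (it fixes its two points), so \(Kle_{24}\) is not distance-transitive. Equivalently, \(\aut(Kle_{24})=PGL(2,7)\) has rank \(6\) on \(\Omega_{24}\), whereas a distance-transitive graph of diameter \(3\) would force rank \(4\).

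The main obstacle is the bookkeeping in part (b): proving that the distance-\(2\) relation is exactly \(R_1\cup R_2\) and the distance-\(3\) relation exactly \(S\), and then extracting the six intersection numbers. I expect to discharge this either from the COCO-computed tensor \((p_{ij}^k)\) of \({\cal M}_{24}\) or, preferably for a human-readable argument, from the shared-edge counts between perfect matchings of \({\cal H}_{14}\) (the relations \(R_0,R_1,R_2\) being ``share exactly one, two, four edges'' respectively), drawing the distance diagram from \(M_0\) as was done for \(\Delta=L(\Pi)\).
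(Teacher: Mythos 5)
Your parts (a) and (b) are correct. Part (a) is exactly the paper's argument---transitivity of \(\caut({\cal M}_{24})\) on the valency-7 basic graphs---fleshed out nicely via the index count \(|\caut({\cal M}_{24})|/|\aut({\cal M}_{24})|=1008/336=3\) and the observation that the induced \({\mathbb Z}_3\) of color permutations must fix the two thin relations and therefore cycle the three thick ones. Part (b) takes a genuinely different route: where the paper simply cites Juri\v{s}i\'c \cite{Jur95} for the DRG property and the intersection array, you extract the distance partition (distance-2 set \(=R_1\cup R_2\), distance-3 set \(=S\)) and the array \((7,4,1;1,2,7)\) from the structure constants of \({\cal M}_{24}\); this is more self-contained (and consistent with the multiplication rules \eqref{eq:multiplication} specialized to \(m=7\)), at the cost of the bookkeeping you yourself flag.

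Part (c), however, has a genuine gap. Your argument runs: \(\aut({\cal M}_{24})\le\aut(Kle_{24})\), ``both groups have order 336,'' hence they are equal, and then the vertex stabilizer fixes the two antipodes, so the action is not distance-transitive. The lower bound \(\aut({\cal M}_{24})\le\aut(Kle_{24})\) is trivial, but the upper bound \(|\aut(Kle_{24})|\le 336\) is precisely the nontrivial content, and nothing in your setup delivers it: the automorphism group of a single basic graph---equivalently of the rank-4 fusion \(\wl(Kle_{24})\)---can in general be strictly larger than that of the ambient scheme, and your antipode-fixing argument only shows that \(\aut({\cal M}_{24})\) fails to act distance-transitively; it says nothing about a putative larger \(\aut(Kle_{24})\). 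To close the gap you must either identify \(Kle_{24}\) with the classical Klein graph---via your part (b) together with Juri\v{s}i\'c's uniqueness theorem for distance regular antipodal covers of \(K_8\) (the paper's Remark immediately after this proposition)---and then invoke the classical fact that the Klein map's graph has full automorphism group of order 336, or else compute \(\aut(Kle_{24})\) directly, which is what the paper in effect does: its terse appeal to ``the structure of \(\caut(Kle_{24})\)'' rests on exactly this computed (COCO) and classically known fact, as the paper concedes in its Miscellanea section.
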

\begin{proof}
Part \ref{orge5874c7} follows from the transitivity of \(CAut({\cal M}_{24})\) on the basic
graphs of valency 7. A nice consideration of \(Kle_{24}\) as a DRG
appears in \cite{Jur95}. Part \ref{org359f1bf} also follows from the structure of
\(CAut(\operatorname{Kle}_{24})\). 
\end{proof}

We postpone further discussion of all structures presented here to
the concluding sections.

\begin{remark}
A. Jurišić proved in \cite{Jur95} the uniqueness of a
distance regular antipodal cover of \(K_8\). This graph is locally a
heptagon. Thus this result provides a purely combinatorial definition
of the DRG \(\operatorname{Kle}_{24}\) which does not depend on any activities with
the Klein quartic or, alternatively, with the Heawood
graph. Nevertheless we included the current section into the text,
considering it as a kind of a reasonably interesting deviation. 
\end{remark}

\section{The proper Jordan scheme \(J_{24}\) of order 24}
\label{sec:orgbef731b}
\label{org508eb9e}
The Klein graph \(\operatorname{Kle}_{24}\) is a crucial ingredient in the
description of the structures to be presented in the current
section. Before it was described implicitely. For this purpose the
graph \({\cal H}_{14}\) was considered. Although the Heawood graph is
a remarkable object of an independent interest it was playing just
an auxiliary role toward \(\operatorname{Kle}_{24}\).

Now the Klein graph will be constructed directly and, moreover, it
will appear as a basic graph of two Jordan schemes \(NJ_{24}\) and
\(J_{24}\) of order 24. Three parallel models will be presented. Each
time the obtained computer aided data will be supplied by a sketch
of reasonings supporting it. Full details of the related proofs will
be left to the reader.

One of the models will be called the local model. It will appear in
the same methodology "island and continent". The continental part of
\(\operatorname{Kle}_{24}\) turns out to be a vertex transitive graph \(Jur_{21}\) of
order 21 and valency 6. The diagram of \(Jur_{21}\), to be presented,
is very similar in spirit to the one given by A. Jurišić in
\cite{Jur95}, p. 32. This justifies the selected notation. The graph
\(Jur_{21}\) will be an analogue of the truncated tetrahedron
\(TT\). The two graphs together will serve to prepare the reader to
the acquaintance with a more general paradigm.

Let us start with the group \(G=PGL(2,7)\), in order to describe Model
A. \(G\) is a 3-transitive permutation group of order 336, acting on
the set \([0,7]\):
\begin{align*}
G &= \left<a_7, b_7, d_8\right>\\
a_7 &= (0,1,2,3,4,5,6)\\
b_7 &= (1,3,2,6,4,5)\\
d_8 &= (0,7)(1,6)(2,3)(4,5).
\end{align*}
(Notation here is shifted to \([0,7]\) according to COCO style, in
comparison with the action on \([1,8]\) used by Ch. Sims.)

Using COCO, we construct the induced action \((\tilde G,\Omega_A)\) on
the set \(\Omega_A=\Omega_{24,A}\) of the images of the canonical
cycle \(C_0\), depicted in Figure \ref{figure:14.1}. 

\begin{proposition}
\begin{enumerate}
\item \label{orgb29b5fc} The AS \({\cal M}_{24} = V(\tilde G, \Omega_A)\) is a rank 6
non-commutative scheme with valencies \(1^3,7^3\)
\item \label{org771a7b3} The representatives of the orbits of the stabilizer of the cycle
\(C_0\) are depicted in Figure \ref{figure:14.1}.
\item \label{org5d6ef7f} The symmetrization \(NJ_{24}\) of \(V(\tilde G,\Omega_A)\) is a
non-proper rank 5 Jordan scheme \(NJ_{24}\).
\item \label{org9e9b89e} The group \((\tilde G,\Omega_A)\) is 2-closed.
\end{enumerate}
\end{proposition}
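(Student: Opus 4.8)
The plan is to treat $\mathcal{M}_{24}=V(\tilde G,\Omega_A)$ in close parallel with the Heawood-graph scheme of Section~\ref{org5e0938c}, transporting the structural data through the bijection between the $24$ canonical cycles and the $24$ matchings established there. First I would fix the two group orders. The only normal subgroups of $\mathrm{PGL}(2,7)$ are $1$, $\mathrm{PSL}(2,7)$ and $\mathrm{PGL}(2,7)$, and neither proper one can fix $C_0$, since an order-$7$ element already moves $C_0$ into an orbit of length $168/7=24$; hence $(\tilde G,\Omega_A)$ is faithful and $|\tilde G|=336$. A visible rotation of order $7$ together with a reflection stabilises $C_0$, so $D_7\le\tilde G_{C_0}$, and since the orbit $\Omega_A$ has $24$ members, orbit--stabiliser forces $|\tilde G_{C_0}|=14$, i.e. $\tilde G_{C_0}\cong D_7$. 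The rank of $(\tilde G,\Omega_A)$ is the number of $\tilde G_{C_0}$-orbits; restricting to the cyclic subgroup $\mathbb{Z}_7\le D_7$ gives orbits of length dividing $7$, and the fixed-cycle count carried over from the matching model yields lengths $1,1,1,7,7,7$. To finish part~(a) I must check that the reflecting involution fixes each of the three $\mathbb{Z}_7$-fixed cycles individually (rather than permuting them) and preserves each length-$7$ orbit, so that the three singletons survive as $D_7$-orbits; this gives subdegrees $1^3,7^3$ and rank $6$. Reading off the six orbit representatives is exactly part~(b), displayed in Figure~\ref{figure:14.1}.

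For non-commutativity in part~(a) I would argue as in Proposition~\ref{orgc664f85} for $\mathcal{M}_{15}$. The two non-identity thin relations $S_1,S_2=S_1^{T}$ form a mutually transpose pair of directed triangle systems $8\circ\vec C_3$ whose symmetrisation is the spread $8\circ K_3$. Since by Higman's theorem any scheme of rank at most $5$ is commutative, genuine non-commutativity can occur only because the rank is $6$; I would witness it by counting length-$2$ walks. Fixing a vertex and one valency-$7$ Klein relation $\Gamma_i$, I would compare the endpoints of the walks $\Gamma_i\cdot S_1$ with those of $S_1\cdot\Gamma_i$ and show they land on different fibres of the spread, so the corresponding intersection numbers differ.

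Part~(c) is then immediate. By the general proposition that the symmetrisation of any coherent configuration is a Jordan configuration, $NJ_{24}$, the symmetrisation of $\mathcal{M}_{24}$, is a Jordan configuration; since $\mathcal{M}_{24}$ has a single fibre it is homogeneous, hence a Jordan scheme. Symmetrisation fuses $S_1$ with $S_2$ into the valency-$2$ spread $S$, fixes $\id$, and leaves the three already-symmetric Klein relations of valency $7$ unchanged, so the rank drops from $6$ to $5$ with valencies $1,2,7^3$; being a symmetrisation, $NJ_{24}$ is non-proper by definition.

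The cleanest step is part~(d). I would note that $\aut(\mathcal{M}_{24})=\bigcap_i\aut(\Gamma_i)$ is contained in the automorphism group of any single basic graph, in particular in $\aut(Kle_{24})$, which has order $336$ as recorded in Section~\ref{org5e0938c}. On the other hand $\tilde G\le\aut(\mathcal{M}_{24})$ holds automatically, and $|\tilde G|=336$, so the sandwich $\tilde G\le\aut(\mathcal{M}_{24})\le\aut(Kle_{24})$ collapses to $\aut(\mathcal{M}_{24})=\tilde G$. Since $\tilde G^{(2)}=\aut(V(\tilde G,\Omega_A))=\aut(\mathcal{M}_{24})$, this is precisely the statement that $(\tilde G,\Omega_A)$ is $2$-closed. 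The main obstacle is the concrete orbit bookkeeping behind parts~(a) and~(b)—verifying that the $D_7$-reflection does not merge the three fixed cycles, and producing the explicit non-commutativity witness—whereas the $2$-closure in~(d) falls out elegantly once $|\aut(Kle_{24})|=336$ is in hand.
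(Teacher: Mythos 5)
Your plan takes a genuinely different route from the paper, which proves all four parts with COCO (orbit representatives and counting done by machine, non-commutativity read off the protocols, and an explicitly computer-aided proof of part (d)); you instead argue by orbit--stabiliser, $\mathbb{Z}_7$-orbit counting, a walk-counting witness in the spirit of Proposition~\ref{orgc664f85}, and a sandwich argument for the 2-closure. The self-contained portions are fine: faithfulness (though your justification is garbled --- the order-7 rotation \emph{fixes} $C_0$; what you actually need is that the kernel is a normal subgroup contained in the order-14 stabiliser, hence trivial), $\tilde G_{C_0}\cong D_7$, and the reduction of the rank computation to checking that the reflection preserves each $\mathbb{Z}_7$-orbit. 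But your central device --- ``transporting the structural data through the bijection between the $24$ canonical cycles and the $24$ matchings established there'' --- rests on a bijection that is \emph{not} established anywhere before this point in the paper: the equivalence of the cycle model (Model A) with the matching picture is exactly the content of the later proposition that Models A and B are isomorphic, which the paper itself only justifies by computer. For parts (a)/(b) this is easy to repair with no transport at all: the only $\mathbb{Z}_7$-invariant $7$-subset of $[0,7]$ is $[0,6]$, and the invariant $7$-cycles on it are precisely the three difference cycles, so the orbit lengths $1,1,1,7,7,7$ come out directly.

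For part (d) the same missing identification is a genuine gap, not a cosmetic one. Your sandwich $\tilde G\le \aut({\cal M}_{24})\le \aut(\Gamma_1)$ needs the bound $|\aut(\Gamma_1)|\le 336$ for a valency-7 basic graph $\Gamma_1$ of $V(\tilde G,\Omega_A)$, and you obtain it by declaring $\Gamma_1\cong \operatorname{Kle}_{24}$ ``as recorded in Section~\ref{org5e0938c}''. What that section records is the order of the automorphism group of the dual graph of the Klein map, together with a (computer-aided) identification of the \emph{matching} scheme's valency-7 basic graphs with that graph; it says nothing about the basic graphs of the \emph{cycle} scheme. So, as written, the cleanest step of your proof assumes precisely the identification that the paper defers to the machine, and at this point in the text the argument is circular. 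To close the gap along your lines you must either (i) prove that the two degree-24 actions are equivalent --- for instance, all subgroups of order 14 in $PGL(2,7)$ are conjugate, since each is the unique order-14 overgroup of its Sylow 7-subgroup --- and then import the identification from Section~\ref{org5e0938c}, or (ii) show by hand that $\Gamma_1$ is an antipodal distance-regular cover of $K_8$ with intersection array $(7,4,1;1,2,7)$ and invoke Jurišić's uniqueness theorem, cited in the paper, to conclude $\Gamma_1\cong \operatorname{Kle}_{24}$. Either of these is a substantial piece of work that your sketch treats as already available; it is the real content hiding behind part (d).
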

\begin{proof}
The representatives were obtained with the aid of COCO. Clearly,
they belong to different orbits. Simple counting confirms that all
2-orbits are obtained. This proves \ref{orgb29b5fc}, \ref{org771a7b3}. Non-commutativity, which
follows from COCO protocols, may be confirmed by hand. The proof
of \ref{org5d6ef7f} is evident.

The computer-aided proof of \ref{org9e9b89e} will be accessed again below.
\end{proof}

\begin{figure}
\begin{center}
\input{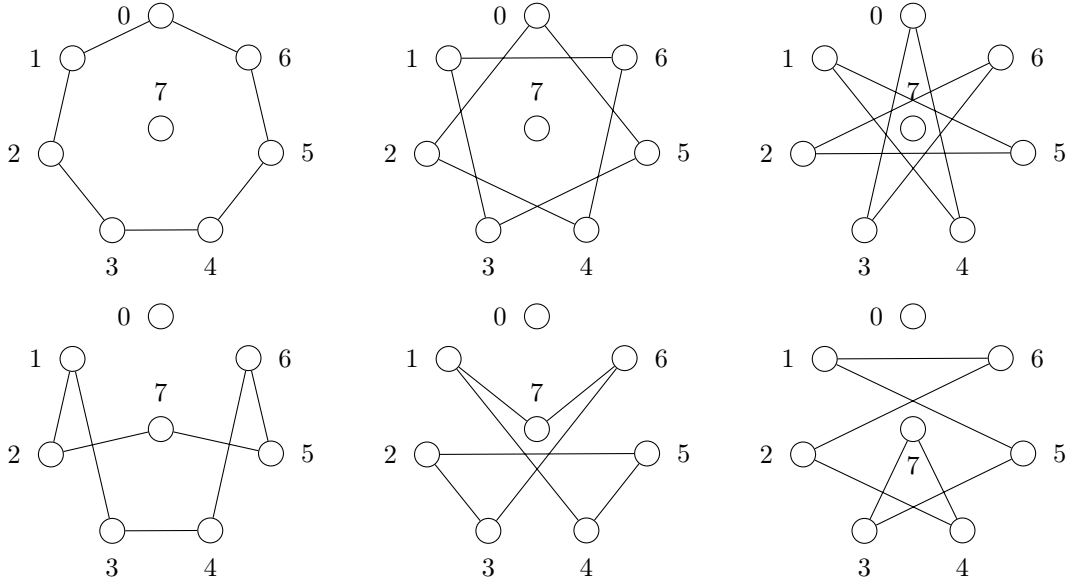}
\end{center}
\caption{Representatives of orbits of the stabilizer of cycle \(C_0\) in \(PGL(2,7)\) \label{figure:14.1}}
\end{figure}

Let us now approach the stabilizer \(G_7\) of the group \(G\). Clearly,
\(G=\left<a_7, b_7\right> = AGL(1,7)\) acts sharply 2-transitively on
the set \([0,6]\). Now consider
\(\Omega_B=\Omega_{B,21}\cup\Omega_{B,3}\). Here, \(\Omega_{B,21}\) is
the edge set of the complete graph \(K_7\) with the vertices \([0,6]\),
\(\Omega_{B,3}=\{C_0, C_1, C_3\}\) as in the first row of Figure
\ref{figure:14.1}. The next result was again obtained with the aid
of COCO.

\begin{proposition}
\begin{enumerate}
\item \label{orgdcab36c} The Schurian CC \(Y_{24}=Y_{B,24}=V(G_7,\Omega_B)\) has two fibers
of length 21 and 3 and rank 21.
\item \label{orgdeebd4b} \(\aut(Y_{24})=G_7\) is the same group of order 42.
\item \label{org1163d52} The CC \(Y_{24}\) has 27 AS mergings, among them three
non-commutative rank 6 mergings with the valencies \(1^3,7^3\) and
the group of order 336, which are all isomorphic.
\item \label{org11d1074} There are nine rank 4 AS mergings which provide non-Schurian
metrical schemes with valencies \(1,7,14,2\).
\item \label{org20d50d6} All mergings in \ref{org11d1074} are combinatorially isomorphic.
\item \label{org487f12d} The symmetrization \(\tilde Y_{24}\) is a rank 13 color graph.
\end{enumerate}
\end{proposition}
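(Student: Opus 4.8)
The plan is to handle all six claims through the single permutation group $(G_7,\Omega_B)$, where $G_7=\langle a_7,b_7\rangle\cong AGL(1,7)$ is the stabiliser of the point $7$ in $PGL(2,7)$ and $\Omega_B=\Omega_{B,21}\cup\Omega_{B,3}$, with $\Omega_{B,21}=\binom{[0,6]}{2}$ the $21$ edges of $K_7$ and $\Omega_{B,3}=\{C_0,C_1,C_3\}$ the three Hamiltonian $7$-cycles attached to the three difference classes $\{\pm1\},\{\pm2\},\{\pm3\}$ of $\mathbb Z_7$. For part \ref{orgdcab36c} I would first note that both pieces are $G_7$-invariant and that $G_7$ is transitive on each: it is $2$-transitive on $[0,6]$, hence transitive on the $21$ edges, while the multiplier $x\mapsto 2x$ cyclically permutes the three difference classes, so $\Omega_{B,3}$ is one orbit. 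These are therefore exactly the two fibres, of sizes $21$ and $3$. The rank then decomposes as $r_{11}+r_{22}+2r_{12}$, where $r_{11},r_{22}$ count orbitals inside the two fibres and $r_{12}$ counts $G_7$-orbits on $\Omega_{B,21}\times\Omega_{B,3}$. A short fixed-point count gives $r_{11}=12$ (the edge-stabiliser is the involution $x\mapsto 1-x$, fixing exactly the three edges $\{0,1\},\{2,6\},\{3,5\}$, so $r_{11}=(21+3)/2=12$), $r_{22}=3$ (the regular $\mathbb Z_3$ on the cycles), and $r_{12}=3$ (the cycle-stabiliser $\mathbb Z_7\rtimes\langle-1\rangle$ of order $14$ has one orbit on each difference class). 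Hence the rank is $12+3+2\cdot3=21$.

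For part \ref{orgdeebd4b} I would prove that $G_7$ is $2$-closed on $\Omega_B$. Any $\sigma\in\aut(Y_{24})$ fixes both fibres (they have distinct sizes); since the ``common-vertex'' relation on $\Omega_{B,21}$ is $G_7$-invariant it is a union of basic relations, so $\sigma$ induces an automorphism of the triangular graph $T_7=L(K_7)$, and by the Whitney--Young theorem $\aut(T_7)\cong S_7$, so $\sigma$ arises from a permutation of the seven points $[0,6]$. The incidence ``edge $e$ lies on cycle $C$'' is exactly ``$e$ and $C$ share a difference class'' and is likewise preserved, so $\sigma$ permutes the three difference-class heptagons consistently with its action on the size-$3$ fibre. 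An element fixing $C_0$ preserves the edge set of that heptagon, hence lies in its dihedral group $D_7$ of order $14$, which fixes all three classes; so the stabiliser of $C_0$ is $D_7\le G_7$ and the image on the three cycles is the $\mathbb Z_3$ already realised by $G_7$. Therefore $|\aut(Y_{24})|=14\cdot3=42$ and $\aut(Y_{24})=G_7$.

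Parts \ref{org1163d52}, \ref{org11d1074} and \ref{org20d50d6} I would reduce to the two structures already established over this set. The overgroup $PGL(2,7)\ge G_7$ is transitive of rank $6$ on $\Omega_B$ with subdegrees $1^3,7^3$, and its orbital scheme is precisely the non-commutative ${\cal M}_{24}$; merging the $G_7$-orbitals along the $PGL(2,7)$-orbitals yields one homogeneous rank-$6$ AS of valencies $1^3,7^3$, and the algebraic and colour symmetries ($\aaut({\cal M}_{24})\cong S_3$, $\caut\cong PGL(2,7)\times\mathbb Z_3$) carry it to an orbit of three mutually isomorphic copies. For \ref{org11d1074} I would use that each valency-$7$ basic graph is the Klein graph $Kle_{24}$, an antipodal distance-regular cover of $K_8$ with array $(7,4,1;1,2,7)$ (Proposition \ref{orgf7bbad0}); its distance partition has valencies $1,7,14,2$, so $\wl(Kle_{24})$ is a rank-$4$ metric scheme, and since $Kle_{24}$ is not distance-transitive (Proposition \ref{org359f1bf}) this scheme is non-Schurian. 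The count of nine such copies, and the full enumeration of all $27$ homogeneous mergings, is the genuinely computational ingredient that reproduces the COCO output; combinatorial isomorphism of the nine (claim \ref{org20d50d6}) then follows because $\caut(Y_{24})$ acts transitively on them, exactly as the colour group glued the nine copies of $\Delta$ in the $J_{15}$ analysis.

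Finally, part \ref{org487f12d} follows from \ref{orgdcab36c} by sorting the $21$ orbitals into symmetric and antisymmetric ones. The three mixed orbit-pairs between the two fibres and the single non-diagonal pair on the $3$-fibre give four antisymmetric transpose-pairs; inside the $21$-fibre there are a further four antisymmetric pairs, while the self-paired orbitals number five in all (the two diagonals together with three symmetric edge-relations). Thus there are $8$ transpose-pairs, and the symmetrisation has rank $21-8=13$. The main obstacle I anticipate is precisely the bookkeeping inside the $21$-fibre in \ref{orgdeebd4b} and \ref{org487f12d}: confirming that the edge-CC of $AGL(1,7)$ recovers the triangular-graph and difference-class structure with no spurious extra symmetry, and pinning down exactly which of its twelve orbitals are self-paired. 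Everything else is either a direct orbit count or a transcription of the previously proved properties of ${\cal M}_{24}$ and $Kle_{24}$.
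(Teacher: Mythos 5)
Your overall strategy is sound, and in fact you supply genuine hand arguments where the paper's own proof is almost entirely a deferral: the paper justifies this proposition by saying the results come from computer work (COCO/\textsf{GAP}) imitating the earlier treatment of $Y_{15}$, plus a citation of Juri\v{s}i\'c \cite{Jur95} for part (e). Your Burnside count for part (a) ($r_{11}=12$, $r_{22}=3$, $r_{12}=3$, rank $12+3+2\cdot 3=21$), your $2$-closure argument for part (b) via Whitney--Young and the orbit--stabilizer count $3\cdot 14=42$, and your transpose-pair bookkeeping for part (f) (five self-paired relations, eight transpose pairs, rank $21-8=13$) are all correct; I checked the delicate claim that exactly three of the nine valency-$2$ orbitals inside the $21$-fiber are symmetric, and it holds. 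For parts (c) and (d) you honestly relegate the enumeration ($27$ mergings, exactly three of rank $6$, nine of rank $4$) to computation, which is exactly the paper's own status, and your observation that non-distance-transitivity of $\operatorname{Kle}_{24}$ forces the rank-$4$ metric schemes to be non-Schurian is a correct and nicely explicit argument.

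Two points need repair. First, for part (e) you assert that $\caut(Y_{24})$ acts transitively on the nine rank-$4$ mergings ``exactly as'' in the $J_{15}$ analysis, but you never establish this; it would require computing (or at least exhibiting generators of) $\caut(Y_{24})$, which you do not do. The paper takes a different and cleaner route: each rank-$4$ merging is the metric scheme of a DRG with intersection array $(7,4,1;1,2,7)$, and Juri\v{s}i\'c's uniqueness theorem for this array immediately makes all nine schemes combinatorially isomorphic --- no colour-group information is needed. Since you already invoke that array and \cite{Jur95}-adjacent facts in part (d), you could close this gap with the same citation. Second, in part (c) your phrase ``the overgroup $PGL(2,7)\ge G_7$ is transitive of rank $6$ on $\Omega_B$'' is not literally meaningful: $PGL(2,7)$ acts on $\Omega_A$ (the $24$ heptagons), not on $\Omega_B$ (the $21$ edges of $K_7$ plus three cycles). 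Transporting the rank-$6$ scheme ${\cal M}_{24}$ onto $\Omega_B$ presupposes a $G_7$-equivariant identification of the two sets, which in the paper is a separate, later (and itself computer-verified) proposition that Models A and B are isomorphic; at minimum you should note that the two point sets are isomorphic as $G_7$-sets because both $21$-point constituents are transitive with involution point-stabilizers, all involutions in $AGL(1,7)$ being conjugate.
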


\begin{proof}
As was mentioned, the results are again a combination of diverse
kinds of activities. Some of them imitate the previous job for
\(Y_{15}\). The proof for \ref{org20d50d6} follows from \cite{Jur95}. 
\end{proof}

Let us now focus our attention on the continent \(\Omega_{B,21}\)
consisting of edges of \(K_7\). Starting from a suitable copy of
\(\operatorname{Kle}_{24}\), as above, we may restrict it to \(\Omega_{B,21}\) and
consider the induced subgraph \(\operatorname{Kle}_{21}\) of \(\operatorname{Kle}_{24}\).

\begin{proposition}
\begin{enumerate}
\item \label{org987ef0f}The graph \(\operatorname{Kle}_{21}\) is a vertex-transitive graph of valency 6
with the automorphism group of order 42.
\item \label{orgd2802e2}The \(WL(\operatorname{Kle}_{21})\) is a Schurian rank 12 AS with valencies
\(1^3,2^9\); it coincides with the centralizer algebra of
\((AGL(1,7),\Omega_{B,21})\).
\item \label{org4634071}The graph \(\operatorname{Kle}_{21}\) appears as a union of three 2-orbits of the
group \(AGL(1,7)\), one of them symmetric disconnected, which
defines the graph \(3\circ K_7\).
\item \label{org6bfa581}Two other 2-orbits, forming \(\operatorname{Kle}_{21}\), are antisymmetric and
connected, thus their symmetrization provides a graph of valency 4.
\item \label{org363680e}The graph \(\operatorname{Kle}_{21}\) is isomorphic to the graph \(Jur_{21}\), as it
appears in \cite{Jur95}.
\end{enumerate}
\end{proposition}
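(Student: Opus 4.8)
The plan is to reduce every assertion to the single concrete object $V(AGL(1,7),\Omega_{B,21})$, the centralizer algebra of the sharply $2$-transitive group $AGL(1,7)=G_7=\langle a_7,b_7\rangle$ acting on the $21$ edges of $K_7$. The crucial structural input is that $\operatorname{Kle}_{21}$ is, by construction, invariant under this action: it is the induced subgraph of the $\tilde G$-invariant graph $\operatorname{Kle}_{24}$ on the continent $\Omega_{B,21}$, and $G_7$ is precisely the setwise stabilizer of the deleted fiber $\Omega_{B,3}$. Thus $A(\operatorname{Kle}_{21})$ lies in the coherent algebra $V(AGL(1,7),\Omega_{B,21})$, and most of the work is to pin down that algebra and to show it is generated by $\operatorname{Kle}_{21}$ alone.

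For (a) I would first note that sharp $2$-transitivity of $AGL(1,7)$ on $[0,6]$ forces transitivity on the $2$-subsets of $[0,6]$; hence $(AGL(1,7),\Omega_{B,21})$ is transitive and $\operatorname{Kle}_{21}$ is vertex-transitive with $AGL(1,7)\le\aut(\operatorname{Kle}_{21})$ and $|AGL(1,7)|=42$. The valency $6$ I would read off from the antipodal-cover structure of Proposition \ref{orgf7bbad0}: in a cover of $K_8$ each vertex has exactly one neighbour in every other fiber, so deleting the fiber $\Omega_{B,3}$ lowers the valency from $7$ to $6$. For (b) the key computation is the rank and valency list of $V(AGL(1,7),\Omega_{B,21})$. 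The stabilizer of the base edge $\{0,1\}$ is the order-$2$ group generated by the involution $x\mapsto 6x+1=(0,1)(2,6)(3,5)$; this involution fixes exactly the three edges $\{0,1\},\{2,6\},\{3,5\}$, so by orbit counting the number of edge-orbits is $(21+3)/2=12$, split into three thin relations and nine relations of valency $2$, i.e. valencies $1^3,2^9$. Since $V(AGL(1,7),\Omega_{B,21})$ is coherent and contains $A(\operatorname{Kle}_{21})$, we have $WL(\operatorname{Kle}_{21})\subseteq V(AGL(1,7),\Omega_{B,21})$; equality follows once the Weisfeiler--Leman refinement of the valency-$6$ graph is seen to separate all twelve classes, which is the step I would verify by the stabilization algorithm, in agreement with the computer data. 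Schurianness is then automatic.

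Parts (c) and (d) I would settle by listing the constituent $2$-orbits of $\operatorname{Kle}_{21}$ explicitly, tracking adjacencies through the generators $a_7,b_7$. One obtains $\operatorname{Kle}_{21}$ as the union of one symmetric $2$-orbit of valency $2$, which is disconnected (its connected components being as recorded in the statement), together with a mutually transposed antisymmetric pair, each of valency $2$ and connected; their symmetrization is the valency-$4$ graph of (d), and $2+4=6$ recovers the valency. Finally, for (e) I would invoke the uniqueness theorem of Juri\v{s}i\'c \cite{Jur95}: both $\operatorname{Kle}_{21}$ and $Jur_{21}$ arise as the continental part of the (unique up to isomorphism) antipodal distance-regular cover of $K_8$, so they coincide; matching the explicit diagram of \cite{Jur95} with the $AGL(1,7)$-model makes the isomorphism concrete.

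The main obstacle I anticipate is the exact determination in (a) that $\aut(\operatorname{Kle}_{21})=42$, rather than something strictly larger, since a priori the continental graph could acquire symmetries not extending to $\operatorname{Kle}_{24}$. The cleanest route is to prove that $AGL(1,7)$ is $2$-closed in its action on the $21$ edges, i.e. that $\aut\bigl(V(AGL(1,7),\Omega_{B,21})\bigr)=AGL(1,7)$; combined with the equality $WL(\operatorname{Kle}_{21})=V(AGL(1,7),\Omega_{B,21})$ from (b) and the identity $\aut(\operatorname{Kle}_{21})=\aut\bigl(WL(\operatorname{Kle}_{21})\bigr)$, this yields $\aut(\operatorname{Kle}_{21})=42$. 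Establishing this $2$-closedness — equivalently, excluding any extra permutation that preserves all twelve basic relations, in particular the thin imprimitivity system formed by the three thin relations — is exactly the point where either the computer verification or a careful hand analysis of the thin subscheme carries the real weight of the argument.
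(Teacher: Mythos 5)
Your proposal is correct and in substance reconstructs the paper's argument, but it distributes the work differently and in two places is genuinely more explicit than the published proof. For parts (a) and (b) the paper merely says the results ``were obtained with the aid of COCO'' and leaves a computer-free proof as an exercise; your orbit count is precisely that exercise carried out: the stabilizer of the edge $\{0,1\}$ in $AGL(1,7)$ is generated by $x\mapsto 1-x=(0,1)(2,6)(3,5)$, which fixes exactly the three edges $\{0,1\},\{2,6\},\{3,5\}$, whence rank $(21+3)/2=12$ and valencies $1^3,2^9$; likewise your derivation of valency $6$ from the one-neighbour-per-fibre property of the antipodal cover replaces the paper's appeal to the computer. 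Note, though, that you and the paper ultimately defer the same two crux facts to computation: that $WL(\operatorname{Kle}_{21})$ does not stop at a proper fusion of the rank-$12$ centralizer algebra, and that $AGL(1,7)$ is $2$-closed on the $21$ edges (equivalently $|\aut(\operatorname{Kle}_{21})|=42$); you are right to isolate the latter as the weight-bearing step of (a). For (c) and (d) the paper argues via two explicit diagrams together with an algebraic description of the relations using the ``absolute value'' $|x|=\{x,-x\}$ on ${\mathbb Z}_7$, which makes $AGL(1,7)$-invariance transparent; your plan of listing the $2$-orbits through the generators is the same computation in different clothing. One caution: the paper's proof identifies the symmetric disconnected $2$-orbit as $3\circ C_7$ (three heptagons, valency $2$), which is also what your valency count forces; the ``$3\circ K_7$'' in the statement of (c) is evidently a slip, so do not attempt to produce a valency-$6$ disconnected $2$-orbit. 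Finally, for (e) you invoke Juri\v{s}i\'c's uniqueness theorem for the antipodal cover of $K_8$ together with transitivity on fibres, whereas the paper overlays its two figures and matches them against Figure 3.4 b) of \cite{Jur95}; your route is cleaner and avoids a purely visual verification, at the cost of needing the (true, but unstated) fact that the graph in \cite{Jur95} itself arises by deleting an antipodal fibre from the unique cover.
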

\begin{proof}
Parts \ref{org987ef0f}, \ref{orgd2802e2} were obtained with the aid of COCO. A computer-free
proof is left as an exercise to the reader.

For the proof of the remaining parts consider the diagram
below. Figure \ref{figure:14.2.a} presents the selected 2-orbit \(R_1\),
which defines the graph \(3\circ C_7\). Vertices \(\{a,b\}\) are denoted
\(ab\). To explain it, let us start
from the canonical cycle \(C_0\) on \([0,6]\). The vertices in the
middle cycle are edges in \(C_0\), two vertices are adjacent if they
form a consecutive path of length 2 in \(C_0\). In a similar manner
the remaining cycles in this figure can be explained.

\begin{figure}
\begin{center}
\input{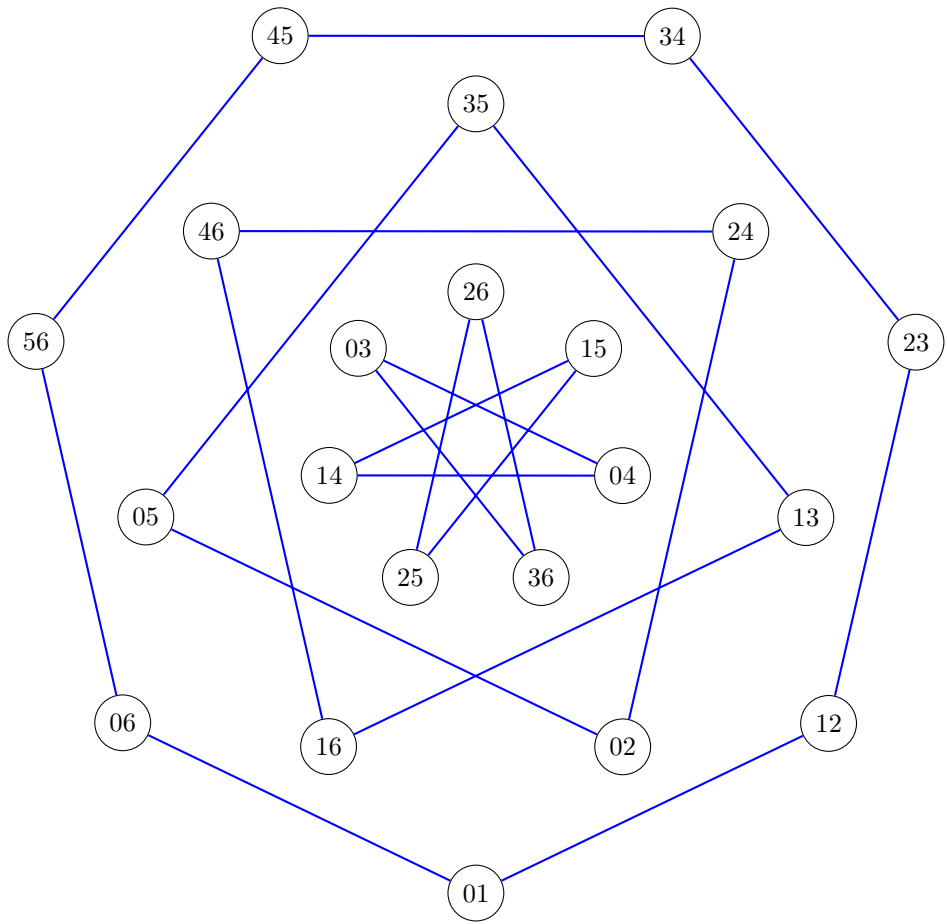}
\end{center}
\caption{2-orbit \(R_1\) of \((G_7,\Omega_{B,21})\) \label{figure:14.2.a}}
\end{figure}

\begin{figure}
\begin{center}
\input{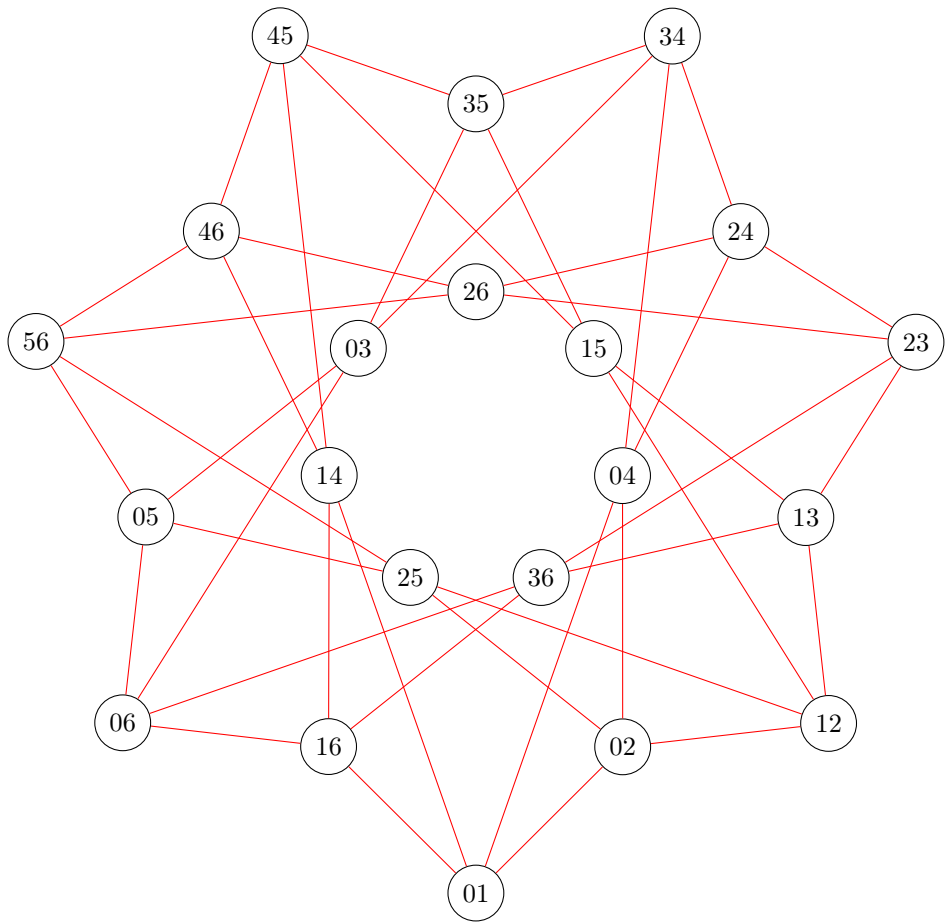}
\end{center}
\caption{Symmetrized 2-orbit \(\tilde R_2\) of \((G_7,\Omega_{B,21})\) \label{figure:14.2.b}}
\end{figure}

\begin{figure}
\begin{center}
\input{klein3.tex}
\end{center}
\caption{The graph \(\operatorname{Kle}_{21}\)}
\end{figure}

Figure \ref{figure:14.2.b} keeps the same notation as in the
previous diagram. The more sophisticated relation \(\tilde R_2\) that is depicted here
may be described in a reasonably human-friendly wording as well as
more formally.

In order to describe the relations \(R_1\) and \(\tilde R_2\)
theoretically, we define an absolute value on \(F={\mathbb Z}_7\): For
\(x\in F\) define
\[
  \left|x\right| = \{x,-x\}.
  \]
We can extend multiplication in \(F\) to absolute values, by
\(\alpha\cdot |x| = |\alpha x|\).

We take as vertices unordered pairs in \(F\). Two pairs
\(\{a,b\},\{c,d\}\) are in relation \(R_1\) if they have one element in
common, and they span the same distance:
\[
  (\{a,b\},\{c,d\})\in R_1 \iff |\{a,b\}\cap\{c,d\}|=1 \wedge |a-b|=|c-d|.
  \]

If two pairs are in relation \(\tilde R_2\) they also need to have one
element in common. Here, \(\{a,b\}\) is in relation to \(\{b,c\}\) if
either \(|a-c|=|a-b|\), or \(|a-c|=|b-c|\). We get four neighbors for
each vertex.

Note that both relations are invariant under addition and
multiplication of constants, i.e., under \(AGL(1,7)\).

Observing the two figures implies a proof of \ref{org4634071} and \ref{org6bfa581}. To prove \ref{org363680e},
the reader is welcome to overlay both figures and to realize that
the result literally coincides with Figure 3.4 b) in \cite{Jur95}.
\end{proof}

\begin{proposition}
The imprimitive system of the AS \({\cal M}_{21}=V(AGL(1,7),\Omega_{B,21})\)
is presented in Table \ref{table:14.3}. We use the same notation as
in the diagrams above.
\end{proposition}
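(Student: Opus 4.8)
The plan is to exhibit the partition in Table~\ref{table:14.3} explicitly as the system of blocks cut out by the three connected components of the disconnected basic graph $(\Omega_{B,21},R_1)\cong 3\circ C_7$, and then to verify its invariance directly from the affine action. Recall that $\Omega_{B,21}$ is the edge set of $K_7$ on the vertex set $F={\mathbb Z}_7$, and that the absolute value $|x|=\{x,-x\}$ introduced above assigns to each edge $\{a,b\}$ a well-defined \emph{length} $|a-b|\in\{\,\{1,6\},\{2,5\},\{3,4\}\,\}$. First I would observe that length takes exactly three values, each realised by precisely $7$ edges, so that the three length classes $B_1,B_2,B_3$ partition $\Omega_{B,21}$ into three blocks of size $7$.

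The heart of the argument is to check that $AGL(1,7)$ permutes $\{B_1,B_2,B_3\}$. A translation $x\mapsto x+b$ fixes every length, since $(a+b)-(c+b)=a-c$, so it preserves each block; a homothety $x\mapsto\alpha x$ sends the length $|d|$ to $|\alpha d|=|\alpha|\cdot|d|$, and hence acts on the three classes through the quotient ${\mathbb Z}_7^*/\{\pm1\}\cong{\mathbb Z}_3$, permuting $B_1,B_2,B_3$ cyclically. Consequently the partition is $AGL(1,7)$-invariant, i.e.\ it is a genuine system of imprimitivity; and because the length of an edge is exactly the datum distinguishing the three $7$-cycles drawn in Figure~\ref{figure:14.2.a}, these three blocks are precisely the connected components of $(\Omega_{B,21},R_1)$. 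Thus the reachability relation of the graph $(\Omega_{B,21},R_1)$ — equivalently the union of those basic relations of ${\cal M}_{21}=V(AGL(1,7),\Omega_{B,21})$ that join edges of equal length — is an equivalence relation and a union of basic relations, which confirms that this partition is an imprimitivity system of the \emph{scheme} and not merely of the underlying permutation group. It then remains only to read off the three blocks in the notation of the diagrams and to match them with the entries of Table~\ref{table:14.3}.

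I expect the main obstacle to be disambiguation rather than depth: the stabiliser of an edge in $AGL(1,7)$ has order $2$, so the subgroup lattice above it yields more than one nontrivial block system — in particular an order-$6$ point stabiliser produces a competing system of seven blocks of size $3$, each meeting all three length classes. One must therefore make clear that the system recorded in Table~\ref{table:14.3} is the one attached to the scheme, namely the \emph{antipodal} system given by the fibres of the cover, equivalently the components of the disconnected relation $R_1$; identifying it this way (through $3\circ C_7$) is what singles it out and, at the same time, supplies the cleanest route to the invariance check above. The residual verification — that each listed block is stabilised setwise by $D_7=\{x\mapsto\pm x+b\}$, the index-$3$ subgroup preserving a length class — is then routine.
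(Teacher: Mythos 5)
Your proof establishes the invariance of the wrong partition. Table \ref{table:14.3} has seven rows, each row a triple of edges: it records \emph{seven blocks of size $3$}, not three blocks of size $7$. The system the proposition refers to is the thin-radical (antipodal) system of \({\cal M}_{21}\): the block through an edge $\{a,b\}$ consists of the three edges $\{c,d\}$ with $c+d\equiv a+b \pmod 7$, equivalently the edges fixed by the involution $x\mapsto (a+b)-x$, and these seven triples are the traces on the continent of the antipodal fibres of $\operatorname{Kle}_{24}$ (i.e.\ of the spread $8\circ K_3$ along which the island/continent construction is performed). Your identification ``antipodal system of the cover $=$ components of $R_1$'' fails on cardinality grounds alone: a $3$-fold antipodal cover of $K_8$ has fibres of size $3$ (eight of them, seven on the continent), whereas the components of $R_1\cong 3\circ C_7$ have size $7$. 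In fact your own disambiguation paragraph sees both systems and then picks the wrong branch: the ``competing system of seven blocks of size $3$'' attached to the order-$6$ point stabilizers is exactly the system presented in the table, while the length-class system you prove invariant is the one that is \emph{not} being asserted.

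What a correct argument looks like: replace your length invariant $|a-b|$ by the sum invariant $a+b\bmod 7$. The map $x\mapsto \alpha x+\beta$ sends the class of sum $s$ to the class of sum $\alpha s+2\beta$, so the seven sum classes are permuted by $AGL(1,7)$ (in the coordinate $t=4s$ this is just the natural action on ${\mathbb Z}_7$), and each class is a union-of-thin-relations fibre of the rank-$12$ scheme, hence a genuine parabolic. (Checking this against the printed table, note that rows 4 and 5 contain typographical errors but follow the same constant-sum pattern.) The paper's own proof goes through group theory instead: $AGL(1,7)$ has a single conjugacy class of seven subgroups of order $3$, and the subgroup fixing the point $p$ cyclically permutes the three edges of sum $2p$; this bijection between Sylow $3$-subgroups and blocks is why the remark after the table says the rows are displayed as permutations of order $3$ on $E(K_7)$. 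Your translation/homothety computation for the length classes is correct as far as it goes and does show that $\{B_1,B_2,B_3\}$ is another block system (hence another parabolic of \({\cal M}_{21}\)), but that is a different, coarser-by-transposition structure and not the statement to be proved.
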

\begin{proof}
The group \(AGL(1,7)\) has one conjugacy class of subgroups of
order 3. It consists of seven groups. The members of this conjugacy
class bijectively correspond to the blocks of the system presented
in the list.
\end{proof}
\begin{table}
 \begin{align*}
&  (\{0,1\},  \{2,6\},  \{3,5\})\\
&  (\{0,2\},  \{4,5\},  \{3,6\})\\
&  (\{0,3\},  \{4,6\},  \{1,2\})\\
&  (\{0,4\},  \{1,4\},  \{5,6\})\\
&  (\{0,5\},  \{2,4\},  \{1,4\})\\
&  (\{0,6\},  \{1,5\},  \{2,4\})\\
&  (\{1,6\},  \{3,4\},  \{2,5\})\\
 \end{align*}
\caption{Permutations of order 3 on \(E(K_7)\) defining the imprimitivity system of \({\cal M}_{21}\) \label{table:14.3}}
\end{table}
\begin{remark}
In Table \ref{table:14.3} on purpose we present permutations of order 3 on the set \(E_{K_7}\).
\end{remark}
In a manner similar to the description of the graph \(\operatorname{Kle}_{21}\),
eight more isomorphic copies invariant under the group
\((AGL(1,7),[0,6])\) may be constructed. Taking two suitable copies
(details omitted on purpose) we can consider the induced action of
this group on the set \(\Omega_{B,3}=\{C_0,C_1,C_3\}\) of the three
7-cycles defined above. This allows to describe three bridges
between the island and continent very naturally: a bridge exists
between a cycle and an edge if the cycle contains
the edge. Clearly we get three stars \(St_{1,7}=K_{1,7}\). Again the natural
union of bridges with three selected copies of \(\operatorname{Kle}_{
  21}\) provides a
non-commutative AS of rank 6. Switching with respect to one bridge
provides a proper Jordan scheme \(J_{24}\).

\begin{proposition}
\begin{enumerate}
\item \label{org77a5f9c}The Models A and B of the Klein graph \(\operatorname{Kle}_{24}\) are isomorphic.
\item \label{org3d6d610}Switching with respect to a bridge in the local model B provides
a proper Jordan scheme \(J_{24}\).
\item \label{org2ab7826}All Jordan schemes of rank 5 invariant under
\((AGL(1,7),\Omega_{B,24})\) are isomorphic to \(NJ_{24}\) or \(J_{24}\).
\item \label{orgd3f8160}\(\aut(J_{24}) = AGL(1,7)\).
\end{enumerate}
\end{proposition}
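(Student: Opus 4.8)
The plan is to treat the four parts largely in parallel with the already-settled case of $J_{15}$, replacing the truncated tetrahedron by the continental graph $\operatorname{Kle}_{21}=Jur_{21}$ and the DTG $\Delta$ by $\operatorname{Kle}_{24}$. For \ref{org77a5f9c} I would argue by uniqueness. Both Models A and B produce, as their three thick basic graphs, an antipodal distance-regular $3$-cover of $K_8$ with intersection array $(7,4,1;1,2,7)$: in Model A this is the content of the preceding proposition on $\operatorname{Kle}_{24}$, while in Model B it follows from the identification $\operatorname{Kle}_{21}\cong Jur_{21}$ (Part \ref{org363680e}) together with the fact that gluing the three stars $St_{1,7}$ (the bridges) onto a continental $\operatorname{Kle}_{21}$ restores the full cover. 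Since A. Juri\v{s}i\'c proved in \cite{Jur95} that a distance-regular antipodal cover of $K_8$ is unique up to isomorphism, the two realizations of $\operatorname{Kle}_{24}$ must coincide, which gives \ref{org77a5f9c}.

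For \ref{org3d6d610} the first task is to check that bridge switching again yields a Jordan scheme. After applying the switching $Sw_i$ the resulting rank $5$ color graph has the shape $\{\id_{24},S,\Delta_0',\Delta_1',\Delta_2'\}$, with $S=8\circ K_3$ the common spread and each $\Delta_t'$ a copy of $\operatorname{Kle}_{24}$ (the analogue of the ``a union of a continental piece with a bridge is again a cover'' lemma used for $J_{15}$). Because $\operatorname{Kle}_{24}$ is a DRG of diameter $3$ whose distance-$3$ graph is exactly $S$, each triple $\{\id_{24},S,\Delta_t',\Delta_{t'}'\cup\Delta_{t''}'\}$ is the rank $4$ association scheme of that cover, its distance-$2$ graph of valency $14$ being the union of the two remaining thick graphs. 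Lemma \ref{orgb745be0} then applies verbatim and shows the switched object is a Jordan scheme. To see it is \emph{proper} I would follow the $J_{15}$ template: write all five basic matrices in block form adapted to the partition $\Omega_{B,24}=\Omega_{B,21}\cup\Omega_{B,3}$ and run (or inspect) the Weisfeiler--Leman stabilization, establishing that $\wl(J_{24})$ is an inhomogeneous coherent configuration with two fibers, of sizes $21$ and $3$. This forces properness: if $J_{24}$ were the symmetrization of an association scheme $\mathfrak{X}$, then $\mathfrak{X}$ would be a homogeneous coherent configuration refining $J_{24}$, whence $\wl(J_{24})$ would be a coarsening of $\mathfrak{X}$ and in particular homogeneous, contradicting the two-fiber conclusion.

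For \ref{org2ab7826} I would argue that any rank $5$ Jordan scheme invariant under $(AGL(1,7),\Omega_{B,24})$ is assembled from the same ``constructor'': its reflexive relation is $\id_{24}$, the valency-$2$ relation is forced to be $S$, and the three thick relations can only arise by matching the three bridges with the three continental copies of $\operatorname{Kle}_{21}$. There are exactly $3!=6$ such matchings, and under the group generated by the bridge switchings (acting as a copy of $S_3$ on the colors) they split into two orbits of length $3$, one consisting of copies of $NJ_{24}$ and one of copies of $J_{24}$; combinatorial isomorphism inside each orbit is inherited from the color group, exactly as for $J_{15}$. Finally \ref{orgd3f8160} is obtained by squeezing $\aut(J_{24})$ between two bounds: the inclusion $AGL(1,7)\le\aut(J_{24})$ is clear from the construction, while for the reverse inclusion I would use that $\aut(J_{24})$ preserves the coherent closure $\wl(J_{24})$ and hence its two fibers of distinct sizes, so it fixes the island and acts on the $21$-point continent; pinning that induced action down to order $42$ (via the orders of $\caut$ and $\aaut$, or by direct computation) yields equality.

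The main obstacle is the properness half of \ref{org3d6d610}: showing cleanly that $\wl(J_{24})$ genuinely separates island from continent. The block-matrix stabilization is routine but bulky, and the conceptual point, that switching a single bridge destroys the homogeneity enjoyed by the symmetrization $NJ_{24}$, is precisely what distinguishes $J_{24}$ from a non-proper scheme. The general version of this argument, valid for all $n=3(q+1)$ simultaneously, is what is deferred to \cite{KliMR}; here I would be content with the order-$24$ specialization supported by the WL computation.
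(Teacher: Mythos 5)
Your proposal is correct in its main thrust, but it is not the paper's proof of this proposition: the paper disposes of all four parts by a computer check ("the justification was obtained with the aid of a computer") and explicitly defers the theory, whereas you reconstruct that deferred theory in the special case $n=24$. Concretely, your use of Lemma \ref{orgb745be0} to see that the switched color graph is again a Jordan scheme, and your block-matrix Weisfeiler--Leman argument showing that the coherent closure splits into fibers of sizes $3$ and $21$, are exactly the paper's later general machinery (Proposition \ref{010419c}, Theorem \ref{org54afef5} and Corollary \ref{org2853378}, built on the multiplication table \eqref{eq:multiplication}), which is then applied to the $PSL(2,q)$ series in Theorem \ref{orga416f9e}; the case $q=7$ is $J_{24}$. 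Your appeal to Juri\v{s}i\'c's uniqueness theorem in part (a) likewise matches the paper's own remark that uniqueness of the distance-regular antipodal cover of $K_8$ yields a model-independent definition of $\operatorname{Kle}_{24}$. Your closing inference -- an inhomogeneous coherent closure forces properness, because the closure of the symmetrization of a homogeneous CC is itself homogeneous -- is sound and is precisely the unstated last step of Theorem \ref{org54afef5}. What each route buys: yours is computer-free and self-contained at order $24$; the paper's costs nothing at order $24$ and spends the theoretical effort once, covering all orders $n=3(q+1)$ simultaneously.

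Two steps in your sketch are genuinely underproved. In part (c) you assert that any rank-$5$ Jordan scheme invariant under $(AGL(1,7),\Omega_{B,24})$ must consist of $\id_{24}$, the spread $S$, and three valency-$7$ graphs obtained by matching the three bridges with the three continental copies of $\operatorname{Kle}_{21}$. Invariance only tells you that every class is a union of the $21$ two-orbits of the rank-$21$ configuration $Y_{24}$; you have not excluded rank-$5$ Jordan mergings with other valency patterns (for instance, ones regrouping the nine continental valency-$2$ orbits differently), and the paper settles the analogous classification -- already for $J_{15}$ -- only by a {\sf GAP} enumeration of mergings. In part (d), "pinning down to order 42" hides the actual difficulty: knowing that $\aut(J_{24})$ preserves the two fibers of the closure only places it inside $S_3\times S_{21}$. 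Since passing to a coarser coherent configuration can only enlarge the automorphism group, and $\wl(J_{24})$ is a priori only a coarsening of $Y_{24}$, the clean argument is to show $\wl(J_{24})=Y_{24}$ exactly (not merely that it has two fibers), after which $\aut(J_{24})=\aut(\wl(J_{24}))=\aut(Y_{24})=AGL(1,7)$; establishing that equality is exactly where the paper's computer, or a longer hand computation in the style of Theorem \ref{org54afef5}, has to enter.
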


\begin{proof}
The justification was obtained with the aid of a computer. In
principle there is a possibility for a computer-free proof already
on this stage. Nevertheless we warn the reader again that a more
general proof will be given later on.
\end{proof}

Let us now, following P. Gunnells \cite{Gun05}, approach one more way
toward \(J_{24}\), with the aid of another global Model C.

So let \(F_7={\mathbb Z}_7\) be the finite field of integers
modulo 7. Consider the set \(F_{7}^2\setminus\{\vec 0\}\) of non-zero
vectors \((a,b)\), \(a,b\in F_7\). Two vectors \(\vec x\), \(\vec y\) are
called equivalent if \(\vec y=\alpha\cdot\vec x\), \(\alpha=\pm 1\) in
\(F_7\). Let \(\Omega_{C,24}\) be the set of equivalence classes
with respect to this equivalence. Under the action of the
multiplicative group \(F_7^*\) the set \(\Omega_{C,24}\) splits into eight
equivalence classes.  Elements of each such class are called
\emph{associates}. Let \(\vec x=(x_1,x_2)\), \(\vec y=(y_1,y_2)\). Let us
claim that the vertices \(\vec x\) and \(\vec y\) are adjacent in the
graph \(Gun(7,\alpha)\) if
\(\det\begin{pmatrix}x_1&x_2\\y_1&y_2\end{pmatrix} = \pm \alpha\), for
\(\alpha\in\{1,2,4\}\). Following Gunnells, the reader is welcome to
fix a vertex \(v\in\Omega_{C,24}\) and to observe that the set \(N(v)\)
of its neighbors in \(Gun(7,\alpha)\) induces a heptagon. The \(v\cup N(v)\)
is called a \emph{cap} in \cite{Gun05}; each cap is nothing but a wheel
with center \(v\) and seven spokes to \(N(v)\). For a fixed equivalence
class of three associates the graph \(Gun(7,\alpha)\) can be
decomposed to three induced wheels. (Note that, clearly, the
consideration in \cite{Gun05} is given for an arbitrary prime \(p\),
here we just used \(p=7\).)

\begin{proposition}
\begin{enumerate}
\item \label{org4963464}The graphs \(Gun(7,\alpha)\) for \(\alpha\in\{1,2,4\}\) are isomorphic
to the Klein graph \(\operatorname{Kle}_{24}\).
\item \label{orgf0430a7}The color graph \(Gun(7)\), which is formed by \(Gun(7,\alpha)\),
\(\alpha\in\{1,2,4\}\) and the graph \(8\circ K_3\) defined by
associates, forms a Jordan scheme of rank 5 which is isomorphic
to \(NJ_{24}\).
\item \label{org2a1d21c}Selecting one equivalence class of associates and its
corresponding system of three caps we define the island
\(Isl_{24,3}\) and the continent \(Con_{24,21}\).
\item \label{org3655180}Manipulating with \(NJ_{24}\) in \ref{orgf0430a7} with respect to the island and
continent in \ref{org2a1d21c} we obtain the proper Jordan scheme \(J_{24}\).
\end{enumerate}
\end{proposition}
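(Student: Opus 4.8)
The plan is to realise Model C as the $2$-orbit scheme of a copy of $PSL(2,7)$ and to match it with the scheme $\mathcal{M}_{24}$ already studied in Models A and B. First I would observe that $SL(2,7)$ acts on $F_7^2\setminus\{\vec 0\}$ preserving the bilinear form $\det(\vec x,\vec y)=x_1y_2-x_2y_1$, and that $-I$ acts trivially on $\Omega_{C,24}$; hence $PSL(2,7)$ acts on $\Omega_{C,24}$, fixing each relation ``$\det=\pm\alpha$'' and the associate relation ``$\det=0$''. Since $\{1,2,4\}$ are exactly the nonzero squares modulo $7$ and $\{\pm 1,\pm 2,\pm 4\}=F_7^{*}$, the three graphs $Gun(7,\alpha)$ partition the non-associate pairs; fixing $\vec x$, the functional $\vec y\mapsto\det(\vec x,\vec y)$ takes each value in $F_7^{*}$ on exactly seven vectors, and $\vec y\mapsto-\vec y$ identifies the fibres over $\alpha$ and $-\alpha$, so each $Gun(7,\alpha)$ is symmetric of valency $7$. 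A count then gives that $(PSL(2,7),\Omega_{C,24})$ is transitive with point stabiliser $\mathbb{Z}_7$, rank $6$ and subdegrees $1^3,7^3$, the two thin relations coming from the cyclic orientation of $F_7^{*}/\{\pm1\}\cong\mathbb{Z}_3$ on each associate triple.

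For parts (\ref{org4963464}) and (\ref{orgf0430a7}) I would identify Model C with Models A and B at the level of the abstract group. Because all Sylow $7$-subgroups of $PSL(2,7)$ are conjugate, there is a unique transitive action of degree $24=168/7$ up to permutation equivalence, namely the coset action on a Sylow $7$-subgroup; both $(PSL(2,7),\Omega_{C,24})$ and the action $(PSL(3,2),\Omega)$ underlying $\mathcal{M}_{24}$ are of this form, and $PSL(2,7)\cong PSL(3,2)$, so the two are permutation isomorphic and $V(PSL(2,7),\Omega_{C,24})\cong\mathcal{M}_{24}$. Under this isomorphism the three symmetric valency-$7$ orbitals correspond to $Gun(7,1),Gun(7,2),Gun(7,4)$, each therefore isomorphic to $\operatorname{Kle}_{24}$ by the earlier proposition on $\mathcal{M}_{24}$, which proves (\ref{org4963464}); alternatively one verifies directly that $Gun(7,\alpha)$ is an antipodal $3$-cover of $K_8$ with intersection array $(7,4,1;1,2,7)$ and appeals to Jurišić's uniqueness theorem. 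Part (\ref{orgf0430a7}) then follows because $Gun(7)$, whose relations are $\id$, the associate graph $8\circ K_3$, and the three $Gun(7,\alpha)$, is exactly the symmetrisation of $V(PSL(2,7),\Omega_{C,24})\cong\mathcal{M}_{24}$, the two directed triangle relations fusing into $8\circ K_3$; this symmetrisation is $NJ_{24}$ by definition, and it is a Jordan scheme by Lemma \ref{orgb745be0} applied to the rank $4$ subschemes generated by the individual Klein graphs.

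For (\ref{org2a1d21c}) and (\ref{org3655180}) I would fix one associate triple $\{[\vec x],[2\vec x],[3\vec x]\}$ as the island $Isl_{24,3}$, which is a single fibre of the spread $8\circ K_3$, i.e.\ a triangle, and take the remaining $21$ classes as the continent $Con_{24,21}$, on which the three Klein graphs restrict to copies of $Jur_{21}$. The three caps are the wheels centred at the island vertices; since any continent class $[\vec w]$ has $\det(\vec x,\vec w)\in F_7^{*}$, each island vertex is joined to all $21$ continent vertices, split $7$-$7$-$7$ among the three colours, and for a fixed colour $\alpha$ the requirement $2\det(\vec x,\vec w)=\pm\alpha$ is incompatible with $\det(\vec x,\vec w)=\pm\alpha$ (else $2\equiv\pm1\bmod 7$), so the three island vertices have disjoint colour-$\alpha$ neighbourhoods. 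Hence each colour contributes a bridge $3\circ St_{1,7}$ and the three bridges assemble into $K_{3,21}$, reproducing the ``island and continent'' pregraph of order $24$. Applying the bridge switching $Sw_\alpha$ (fixing $Br_\alpha$, transposing the other two bridges) to $NJ_{24}$ and invoking the general switching result then yields the proper Jordan scheme $J_{24}$, with properness certified by the fact that the Weisfeiler--Leman closure of the switched scheme is an inhomogeneous coherent configuration with fibres of sizes $3$ and $21$.

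The main obstacle is twofold: the scheme identification $V(PSL(2,7),\Omega_{C,24})\cong\mathcal{M}_{24}$, and above all the verification that switching yields a genuinely \emph{proper} scheme rather than a second non-proper one. The latter is precisely the content of the general island-and-continent machinery: the delicate point is to show that no relabelling of the antisymmetric relations of an ambient coherent configuration recovers the switched scheme as a symmetrisation, equivalently that its $\wl$-closure has two fibres. As in the order-$15$ case I expect this to reduce either to the forthcoming general theorem or to a finite, in principle hand-checkable, computation with the block structure of the basic matrices.
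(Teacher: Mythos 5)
Your proposal is correct, but for parts (a)--(b) it takes a genuinely different route from the paper. The paper gives no in-place argument at all: Model C is precisely the $q=7$ instance of the one-third quasi-projective point construction of Section \ref{org3941a9a}, and the paper's ``proof'' defers everything to that general machinery --- Mathon's theorem (Proposition \ref{org5d73a22}) together with Jurišić's uniqueness for identifying the thick graphs with $\operatorname{Kle}_{24}$, and Proposition \ref{org403533b} plus Theorem \ref{org54afef5} for properness. You instead identify Model C with the already-analysed scheme ${\cal M}_{24}$ by pure group theory: the point stabiliser of $(PSL(2,7),\Omega_{C,24})$ has order $7$, hence is a Sylow $7$-subgroup, and conjugacy of Sylow subgroups forces any two transitive degree-$24$ actions of $PSL(2,7)\cong PSL(3,2)$ to be permutation equivalent; consequently the orbital scheme is ${\cal M}_{24}$, each $Gun(7,\alpha)$ (being invariant, symmetric, of valency $7$) must be a single thick orbital, and (a), (b) drop out of the results of Sections \ref{org5e0938c}--\ref{org508eb9e}. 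This Sylow argument is short, computer-free, and bypasses the multiplication table entirely, at the price of exploiting the exceptional isomorphism $PSL(2,7)\cong PSL(3,2)$ and the prior analysis of ${\cal M}_{24}$; unlike the paper's route it therefore does not generalize to other $q$. For (c)--(d) the two approaches converge: your explicit check that the colour-$\alpha$ neighbourhoods of the three island vertices are pairwise disjoint (since $2,3\not\equiv\pm 1\pmod 7$) reconstructs the rank $10$ pregraph from the Gunnells data, and properness is then delegated, exactly as in the paper, to the switching machinery of Theorem \ref{org54afef5}. One caveat worth noting: that invocation still presupposes the multiplication table \eqref{eq:multiplication} for this scheme, i.e.\ Proposition \ref{org403533b}, which you take for granted --- though the paper itself only cites \cite{Rei19a} for it, so this matches the paper's own level of detail.
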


\begin{proof}
Clearly Model C serves as a methodological link between Models A and
B, providing a very natural way to a computer-free proof. The proof
itself is again postponed to further sections.
\end{proof}

We once more refer to the closing sections for additional discussion
of the structures considered currently.

\section{General formulation of the model "Island and Continent" for rank 6 AS}
\label{sec:org18fbb2d}
\label{org68efaa1}

Relying on the earned experience of constructing proper rank 5
Jordan schemes \(J_{15}\) and \(J_{24}\), let us now approach a more
general model.

Note that due to purely logistical reasons related to the way of the
creation of the current version of the preprint, the notation for the
relations used from now on differs from the previous one. Namely, thin
relations are now denoted by $R_i$, while thick relations by
$\Delta_i$. At the end of the text the initial notation might be
exploited as well again.

Let \(n=3(m+1)\) and \(3|(m-1)\). Assume that 
\[{\frak X}_n=(\Omega, \{R_0, R_1,
  R_2, \Delta_0, \Delta_1, \Delta_2\})\] is a non-commutative rank 6 AS
with three thin relations (that is, of valency 1) \(R_0, R_1, R_2\)
and three thick relations \(\Delta_0, \Delta_1, \Delta_2\) of valency
\(m\), here \(R_0=Id_\Omega\) is the identity relation on the vertex set
\(\Omega=\Omega_{n}\) of cardinality \(n\). Assume that the AS \({\frak X}_n\)
has the following multiplication table:
\begin{subequations}
\label{eq:multiplication}
\begin{align}
R_iR_j &= R_{i+j},\\
R_i \Delta_j &= \Delta_{i+j},\\
\qquad \qquad \Delta_j R_i &= \Delta_{j-i},\\
\Delta_i \Delta_j &= m R_{i-j} + \frac{m-1}{3} (\Delta_0 + \Delta_1 + \Delta_2);
\end{align}
\end{subequations}
here and below all arithmetic on indices is done modulo three.

\begin{proposition}
\begin{enumerate}
\item \label{org510a22a}The basic graphs \((\Gamma_i=\Delta_i,\Omega)\), \(i=0,1,2\) are
antipodal distance regular covers of the complete graph \(K_{m+1}\).
\item \label{org10afc5b}The cyclic group \({\mathbb Z}_3=\left<a\right>\), \(a=(\Delta_0,
     \Delta_1, \Delta_2)\), is a subgroup of the algebraic group \(\aaut({\frak X}_n)\).
\end{enumerate}
\end{proposition}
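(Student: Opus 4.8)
The plan is to read the two claims directly off the multiplication table \eqref{eq:multiplication}, which encodes all structure constants of \({\frak X}_n\).

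For part \ref{org510a22a}, I would first argue that each \(\Delta_i\) generates, together with the identity, a rank-\(4\) association scheme whose relations are \(R_0, \Delta_i, \{R_1, R_2\}\) (the thin off-diagonal relations forming the spread), and the union \(\Delta_{i-1}\cup\Delta_{i+1}\) (the ``far'' relation). The key computation is \((d)\): \(\Delta_i\Delta_i = m\,R_0 + \frac{m-1}{3}(\Delta_0+\Delta_1+\Delta_2)\). Reading this as intersection numbers, I would extract the parameters of an antipodal distance-regular cover. The antipodal blocks (fibers) are the connected components of the spread \(S=R_0\cup R_1\cup R_2\), which has valency \(3\) as a relation but forms cliques of size \(3\) — hence \(n/3 = m+1\) fibers, matching a cover of \(K_{m+1}\). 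From \((d)\) the diagonal coefficient \(m = k\) gives the valency, and the coefficient \(\frac{m-1}{3}\) together with \((b)\), \((c)\) yields the intersection array of an antipodal \(3\)-fold cover of \(K_{m+1}\); I would verify \(b_0 = m\), \(c_2 = \frac{m-1}{3}\cdot 3 / (\text{appropriate count})\) are integral precisely under the hypothesis \(3 \mid (m-1)\). Diameter \(3\) and antipodality follow because \(\Delta_i\Delta_i\) has no \(\Delta_i\) term, so two vertices at distance \(2\) in \(\Delta_i\) are never adjacent, and the fibers are the antipodal classes.

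For part \ref{org10afc5b}, I would exhibit the cyclic permutation \(a=(\Delta_0,\Delta_1,\Delta_2)\) (fixing \(R_0\) and cyclically permuting \(R_1,R_2\) compatibly, i.e. \(a\) acts on the full index set by the order-\(3\) rotation) and check directly that it preserves the structure constants, i.e. \(p_{i^a j^a}^{k^a} = p_{ij}^k\). Since all products in \eqref{eq:multiplication} are expressed through index arithmetic modulo \(3\) — the rules \(R_iR_j=R_{i+j}\), \(R_i\Delta_j=\Delta_{i+j}\), \(\Delta_jR_i=\Delta_{j-i}\) are manifestly invariant under simultaneously shifting all indices by \(1\) mod \(3\), and the symmetric term \(\Delta_0+\Delta_1+\Delta_2\) in \((d)\) together with \(R_{i-j}\) is also shift-invariant — the verification is a routine index check rather than a genuine obstacle. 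The map \(a\) permutes the three thick relations in a \(3\)-cycle and the two nontrivial thin relations accordingly, and I would confirm it is a genuine algebraic automorphism of order \(3\).

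The main obstacle, and the only step requiring care, is the bookkeeping in part \ref{org510a22a}: translating the single line \((d)\) into the full intersection array \((k; b_1; c_2, c_3)\) of the cover and confirming it matches the known array for an antipodal \(3\)-cover of \(K_{m+1}\), using \(3\mid(m-1)\) to guarantee integrality. I would handle this by fixing a base vertex, counting common neighbors via the \(p_{ij}^k\) directly from \eqref{eq:multiplication}, and comparing against the standard parametrization in \cite{BroCN89}; everything else is mechanical and follows the pattern already established for \(J_{15}\) and \(J_{24}\) in the earlier sections.
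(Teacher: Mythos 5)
Your overall strategy---reading both claims directly off the multiplication table \eqref{eq:multiplication}---is exactly the paper's approach (its proof is the one-line remark that the multiplication rules imply both parts), and much of your bookkeeping for part (a) is sound: the valency is \(m\), the common-neighbour counts \(\lambda=\mu=\frac{m-1}{3}\) are integral precisely by \(3\mid(m-1)\), and the distance partition is \(\Delta_i\), \(\Delta_{i+1}\cup\Delta_{i+2}\), \(R_1\cup R_2\). But the sentence you offer as justification of diameter 3 and antipodality is false: \(\Delta_i\Delta_i\) \emph{does} contain a \(\Delta_i\) term, since the sum \(\Delta_0+\Delta_1+\Delta_2\) in the last rule of \eqref{eq:multiplication} includes \(\Delta_i\) itself with coefficient \(\frac{m-1}{3}\) (this coefficient is precisely \(a_1=\lambda\), so the graph is far from triangle-free). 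What diameter 3 and antipodality actually require is that \(\Delta_i\Delta_i\) contains no \(R_1\) or \(R_2\) term (so vertices in the same fiber are not at distance \(\le 2\)), that \(\Delta_i^3\) does contain \(R_1\) and \(R_2\) with positive coefficient (so such vertices are at distance exactly 3, and the graph is connected: \(1+m+2m+2=3(m+1)=n\)), and that \(R_0\cup R_1\cup R_2\) is an equivalence relation with classes of size 3---which holds because the thin relations multiply as the group \({\mathbb Z}_3\). Those are the facts to cite; the one you cite does not exist.

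The error in part (b) is more serious. You describe the algebraic automorphism as ``the order-3 rotation on the full index set,'' i.e.\ the simultaneous shift \(R_i\mapsto R_{i+1}\), \(\Delta_j\mapsto\Delta_{j+1}\), and you claim the rules are ``manifestly invariant'' under it. They are not: the first rule gives \(R_1R_1=R_2\), so an automorphism shifting thin indices would need \(R_2R_2=R_0\), whereas in fact \(R_2R_2=R_1\); likewise the second rule fails, since \(R_{i+1}\Delta_{j+1}=\Delta_{i+j+2}\neq\Delta_{i+j+1}\). (Your parenthetical ``cyclically permuting \(R_1,R_2\)'' is also incoherent---there is no 3-cycle on a 2-element set.) The element \(a=(\Delta_0,\Delta_1,\Delta_2)\) in the statement means exactly what the cycle notation says: it fixes \(R_0,R_1,R_2\) pointwise and shifts only the thick indices \(\Delta_j\mapsto\Delta_{j+1}\). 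With that definition all four rules are preserved---in the last rule both \(R_{i-j}\) and the symmetric sum \(\Delta_0+\Delta_1+\Delta_2\) are untouched---and the verification is then the routine index check you envisage.
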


\begin{proof}
For each \(i\in\{0,1,2\}\) the multiplication rules imply \ref{org510a22a}. The
proof of \ref{org10afc5b} is also a straightforward implication.
\end{proof}

\begin{corollary}
The symmetrization \(NJ_n\) of the AS \({\frak X}_n\) is a non-proper rank 5
Jordan scheme of order \(n\).
\end{corollary}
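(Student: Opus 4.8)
The plan is to derive the statement almost entirely from the general fact, recorded earlier, that the symmetrization of any coherent configuration is a Jordan configuration, together with a short reading of the multiplication table \eqref{eq:multiplication} to fix the transpose-pairing of the six basic relations. First I would apply that general proposition to $W={\frak X}_n$ to conclude that $\tilde W = NJ_n$ is a Jordan configuration; since the reflexive relation $R_0=Id_\Omega$ is symmetric it survives symmetrization unchanged as a single basic relation, so $NJ_n$ is homogeneous and hence a Jordan scheme. Non-properness and the order are then immediate: by definition $NJ_n$ is the symmetrization of an association scheme, so it is non-proper, and it lives on the same vertex set $\Omega$ of cardinality $n=3(m+1)$.

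The only real content is the count of the rank, for which I would read off from \eqref{eq:multiplication} which basic relations are symmetric and which are transposes of one another. From $R_iR_j=R_{i+j}$ the thin relations form the regular representation of $\mathbb{Z}_3=\langle R_1\rangle$, so $R_1R_2=R_0=Id_\Omega$; since the coefficient of $R_0$ in a product $A_iA_j$ of basic matrices equals the common valency precisely when $R_j=R_i^T$, this forces $R_2=R_1^T$. Thus $R_1,R_2$ are a genuinely antisymmetric pair that merge under symmetrization into the single symmetric relation $S=\tilde R_1=R_1\cup R_2$, the spread $(m+1)\circ K_3$.

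For the thick relations I would examine the diagonal products $\Delta_i\Delta_i = m R_0 + \frac{m-1}{3}(\Delta_0+\Delta_1+\Delta_2)$: the coefficient of $R_0$ is $m$, which equals the valency of $\Delta_i$, so $\Delta_i^T=\Delta_i$ and each thick relation is symmetric. Conversely, for $i\neq j$ the product $\Delta_i\Delta_j = m R_{i-j}+\dots$ has no $R_0$-term, confirming $\Delta_j\neq\Delta_i^T$; hence the three $\Delta_i$ stay distinct and unchanged. Collecting these observations, symmetrization sends the six classes $\{R_0,R_1,R_2,\Delta_0,\Delta_1,\Delta_2\}$ to the five classes $\{Id_\Omega, S, \Delta_0,\Delta_1,\Delta_2\}$, so $NJ_n$ has rank $5$, completing the proof.

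I expect essentially no serious obstacle: once the transpose structure is read correctly from \eqref{eq:multiplication}, everything is forced. The one point that must be handled carefully is the passage from the intersection numbers to the transpose relation $R_j=R_i^T$ --- that is, justifying the folklore fact that the $R_0$-coefficient of $A_iA_j$ equals $k_i$ exactly when $R_j=R_i^T$ and is $0$ otherwise --- and then being sure that no two of the surviving symmetric classes accidentally coincide, which is ruled out precisely by the mismatch between their valencies and their $R_0$-coefficients computed above.
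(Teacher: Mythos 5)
Your proposal is correct and follows the same route as the paper: the paper's entire proof is an appeal to the general facts of Section 3 (symmetrization of a CC is a Jordan configuration, and symmetrizations are non-proper by definition), exactly as you invoke. The only difference is that you explicitly carry out the rank-5 count (reading off from \eqref{eq:multiplication} that $R_1^T=R_2$ while each $\Delta_i$ is symmetric), a routine verification the paper leaves implicit as "immediate."
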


\begin{proof}
This is an immediate consequence of the basic facts about the Jordan
schemes, which were formulated in Section \ref{org4a0fd08}.
\end{proof}

Recall that the relations \(R_0,R_1,R_2\), regarded as square matrices
of order \(n\), form a multiplicative group of order 3 with respect to
the multiplication of matrices, which is isomorphic to \({\mathbb
  Z}_3\).

\begin{lemma}
\label{org1cc5c32}
The three thick graphs \((\Delta_i, \Omega_n)\) in \({\frak X}_n\),
\(i\in{\mathbb Z_3}\), are combinatorially isomorphic.
\end{lemma}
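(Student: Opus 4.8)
The plan is to exhibit a single permutation of $\Omega$ that cyclically permutes the three thick graphs, so that $\Delta_0,\Delta_1,\Delta_2$ are not merely algebraically but combinatorially isomorphic. The natural candidate comes from the thin part of ${\frak X}_n$. Since $R_0,R_1,R_2$ are the basic relations of valency $1$ and satisfy $R_iR_j=R_{i+j}$ with $R_0=\id_\Omega$, the relation $R_1$ is the graph of a permutation $\sigma$ of $\Omega$; from $R_1R_1=R_2$ and $R_1R_2=R_0$ we get $\sigma^3=\mathrm{id}$ and $\sigma\neq\mathrm{id}$, so $\sigma$ has order $3$ and its orbits are exactly the fibres of the spread $(m+1)\circ K_3$. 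Write $P=P(\sigma)=A(R_1)$ for the corresponding permutation matrix; then $P^{-1}=A(R_1^T)=A(R_2)$, because $R_1^T=R_2$.

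First I would record how $\sigma$, acting diagonally on $\Omega^2$, transforms a thick relation. For any relation $R$, the image $R^\sigma=\{(\sigma(x),\sigma(y))\mid (x,y)\in R\}$ has adjacency matrix $A(R^\sigma)=P^{-1}A(R)P$. Applying this to $\Delta_j$ and using the two mixed rules $R_i\Delta_j=\Delta_{i+j}$ and $\Delta_jR_i=\Delta_{j-i}$ from the multiplication table gives
\[
A(\Delta_j^\sigma)=P^{-1}A(\Delta_j)P=A(R_2)\,A(\Delta_j)\,A(R_1)=A(\Delta_{j+2})\,A(R_1)=A(\Delta_{j+1}),
\]
where indices are read modulo $3$. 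Hence $\Delta_j^\sigma=\Delta_{j+1}$, so $\sigma$ is a combinatorial isomorphism of the graph $(\Omega,\Delta_j)$ onto $(\Omega,\Delta_{j+1})$; iterating, $\sigma$ and $\sigma^2$ realize all the required isomorphisms and the three thick graphs are combinatorially isomorphic.

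The one point that needs genuine care — and essentially the only place the argument can go wrong — is the bookkeeping of the permutation conventions: one must use the product $P^{-1}A(\Delta_j)P$ (and not $PA(\Delta_j)P^{-1}$) to represent the image relation $\Delta_j^\sigma$, and one must keep the left/right roles of $R_i\Delta_j$ versus $\Delta_jR_i$ straight, so that the two shifts combine to $+1$ rather than cancelling. As a safeguard I would verify the conclusion pointwise in parallel: the matrix identities $A(R_1)A(\Delta_j)=A(\Delta_{j+1})$ and $A(\Delta_j)A(R_1)=A(\Delta_{j-1})$ translate into $(x,z)\in\Delta_{j+1}\iff(\sigma(x),z)\in\Delta_j$ and $(x,z)\in\Delta_{j-1}\iff(x,\sigma^{-1}(z))\in\Delta_j$, and substituting these (together with $\sigma^3=\mathrm{id}$) into the defining condition $(\sigma^{-1}(x),\sigma^{-1}(z))\in\Delta_j$ for $(x,z)\in\Delta_j^\sigma$ again yields $(x,z)\in\Delta_{j+1}$. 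Both computations are short and purely formal once the conventions are fixed, so no estimate and no structural input about the antipodal covers is needed: the entire statement is a consequence of the cyclic group structure of the thin relations and the two mixed multiplication rules.
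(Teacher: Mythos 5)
Your proof is correct and is essentially the paper's own argument: the paper likewise conjugates the thick relations by the thin permutation matrices, computing $R_i^{-1}\Delta_j R_i = \Delta_{j-2i} = \Delta_{j+i}$ from the multiplication table, which is exactly your cyclic shift $\Delta_j \mapsto \Delta_{j+1}$ realized by $\sigma$ with matrix $A(R_1)$. Your extra care with the $P^{-1}A(\Delta_j)P$ convention and the pointwise check only makes explicit what the paper leaves implicit (it relegates the ``thin matrices are permutation matrices, hence invertible'' point to a following remark).
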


\begin{proof}
According to Equations \ref{eq:multiplication}, \(R_i^{-1}\Delta_j R_i = R_i^{-1} \Delta_{j-i} =
  R_{-i} \Delta_{j-i} = \Delta_{j-2i}\). Now fix \(j=0\) and consider
\(i\in\{1,2\}\). 
\end{proof}

\begin{remark}
Note that each adjacency matrix of a thin relation is a
permutational matrix, thus its inverse is well-defined and is again
a permutational matrix.
\end{remark}

We will now show how to change the scheme \({\frak X}_n\) by switching the
colors in order to get new color graphs.

Clearly, the relation \(R_0\cup R_1 \cup R_2\) on the set \(\Omega\) is
an equivalence relation \(E\) with \(m+1\) equivalence classes \(F_i\), \(0\le i \le
  m\) of size 3. Each class \(F_i\), regarded as a subset, is called a
\emph{fiber}. As was done in previous examples for \(n=15\) and \(n=24\), let
us distinguish the last fiber \(F_m\), to be called \emph{island}
\(Isl_{n,3}\), and the union of all previous fibers, to be called
\emph{continent} \(Con_{n,3m}\).

Such a distinguishing again allows to split the original five
relations in \(NJ_n\) into ten relations on the set \(\Omega_n\). These
ten relations are:
\begin{itemize}
\item \(Id_{3m}\) on the continent and \(Id_3\) on the island;
\item the spread \(Spr_{3m}\) on the continent, which is formed by the
restriction of \(R_1\cup R_2\) on the set \(Con_{n,3m}\);
\item the spread \(Spr_3\) on the island, which is obtained from \(R_1\cup
    R_2\) in the same manner;
\item three bridges \(Br_i\) from the elements of \(Isl_{n,3}\) to the
vertices of \(Con_{n,3m}\), each bridge is a bipartite graph with
valencies 1 and \(m\);
\item three restrictions of the relations \(\Delta_i\) on the continent,
considered as regular graphs of valency \(m-1\) and order \(3m\), and
denoted by \(T_0,T_1, T_2\). Altogether we get 10 unicolor graphs.
\end{itemize}

It is necessary to clarify that, according to the properties of an
antipodal cover of the the complete graph \(K_{m+1}\), for each graph
\(\Delta_i\), \(i\in \{0,1,2\}\), the following property is
satisfied. As soon as a fiber \(F_m=\{x_0,x_1,x_2\}\) is selected, the
sets of the neighbours of each vertex \(x\in F_m\) form on the
continent a subset of size \(m\), and these three subsets form a
partition of the vertex set of the continent. This implies that for
each graph \(\Delta_i\) the subgraph between the island and the
continent, denoted by \(Br_i\), is a bipartite graph with valencies
\(m\) on the island and valencies 1 on the continent. The union of
these three edge-disjoint bridges provides the complete bipartite
graph \(K_{3,3m}\).

Let us denote by \(Pre_n\) the rank 10 color graph formed by the
described unicolor graphs. Note that in the current fashion the
obtained \emph{pregraph} \(Pre_n\) of order \(n\) does not appear via some
group-theoretical considerations. It is defined in terms of the
initial non-commutative rank 6 AS \({\frak X}_n\), no matter if \({\frak X}_n\) is
schurian or not.

Let us introduce the operations of switching \(Sw_i\), where each switching is
applied to the basic graphs \(\Delta_i\) in a given symmetrization
\(\tilde {\frak X}_n\): we remember that \(\tilde {\frak X}_n\) appears as a merging of
colors in \(Pre_n\). Indeed, \(\Delta_i=T_i\cup Br_i\) for \(i=0,1,2\). 

So the result \(Sw_0(\Delta_0)\) is a triple \(\{\Delta_0,
  \Delta'_{0,1}, \Delta'_{0,2}\}\), where the edge set of
\(\Delta'_{0,1}\) is \(T_1\cup Br_2\), while the edge set of
\(\Delta'_{0,2}\) is \(T_2\cup Br_1\), In other words, the result of
\(Sw_0\) in its action on \({\frak X}_n\) is a rank 5 color graph \(J_{n,0}\) with
the colors \(Id_n\), \(Spr_n=Spr_{n,3}\cup Spr_{n,3m}\), \(\Delta_0\),
\(\Delta'_{0,1}\), \(\Delta'_{0,2}\), which appears as a merging of
\(Pre_n\). Similarly we define two other switchings \(Sw_1\) and \(Sw_2\)
with respect to graphs \(\Delta_1\) and \(\Delta_2\), respectively. Each
such operation will be called \emph{switching of bridges}. It is worthy
to note that although the initial object is a copy of \({\frak X}_n\), in
fact, the switching is obtained in the framework of \(Pre_n\), while
the definition of the pregraph depends on the selection of a fiber,
which plays the role of the island.

Assume that the AS \({\frak X}_n\) is a rank 6 non-commutative AS with
multiplication table  \ref{eq:multiplication}. Let \(Pre_n\) be a rank 10 color graph
obtained from \({\frak X}_n\) by selection of an island. Then, using
switching, we get nine graphs
\(\Delta_0\),
\(\Delta_1\),
\(\Delta_2\),
\(\Delta'_{0,1}\),
\(\Delta'_{0,2}\),
\(\Delta'_{1,0}\),
\(\Delta'_{1,2}\),
\(\Delta'_{2,0}\),
\(\Delta'_{2,1}\).
(Here in the notation \(\Delta'_{i,j}\) the index \(i\) shows which
thick graph \(\Delta_i\) remains unchanged by the switching.)

\begin{proposition}
The graphs \(T_0, T_1, T_2\) in \(Pre_n\) are combinatorially isomorphic.
\end{proposition}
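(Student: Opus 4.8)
The plan is to upgrade the proof of Lemma~\ref{org1cc5c32} so that the combinatorial isomorphism between the thick graphs descends to their restrictions on the continent. Recall that in that lemma the isomorphism $\Delta_j \to \Delta_{j+1}$ was realized by conjugation with the thin relation $R_1$, viewed as a permutation matrix; by the computation $R_i^{-1}\Delta_j R_i = \Delta_{j-2i}$ coming from Equations~\eqref{eq:multiplication}, the permutation $\pi$ of $\Omega$ underlying $R_1$ is a colour automorphism of ${\frak X}_n$ satisfying $\Delta_j^{\pi} = \Delta_{j+1}$ for all $j \in {\mathbb Z}_3$. So a suitable isomorphism among the ambient thick graphs is already in hand; the work is to arrange that it also respects the island/continent decomposition.

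First I would record the decisive extra feature of $\pi$ that was not needed in Lemma~\ref{org1cc5c32}: it preserves every fiber. Indeed, $R_1 \subseteq E = R_0 \cup R_1 \cup R_2$, so whenever $(x, x^{\pi}) \in R_1$ the points $x$ and $x^{\pi}$ lie in the same equivalence class of $E$, i.e. in the same fiber. Consequently $\pi$ fixes setwise each fiber $F_k$, in particular the island $F_m$, and therefore preserves the continent $Con_{n,3m} = \Omega \setminus F_m$.

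Next I would restrict $\pi$ to the continent. Since $\pi$ maps $Con_{n,3m}$ bijectively onto itself, and $T_j$ is by definition $\Delta_j$ restricted to $Con_{n,3m} \times Con_{n,3m}$, the image $(T_j)^{\pi}$ equals $\Delta_j^{\pi}$ intersected with $Con_{n,3m}\times Con_{n,3m}$, which is exactly $T_{j+1}$. Hence $\pi|_{Con_{n,3m}}$ is a graph isomorphism $T_j \to T_{j+1}$ for every $j$, and iterating the cyclic shift gives $T_0 \cong T_1 \cong T_2$.

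I expect the only delicate point to be the bookkeeping of the second step, namely confirming that the particular isomorphism produced by conjugation respects the split, rather than merely that some isomorphism of the ambient graphs $\Delta_i$ exists. Lemma~\ref{org1cc5c32} alone is insufficient here, since an arbitrary isomorphism of the $\Delta_i$ need not carry the chosen fiber to itself; the whole force of the argument lies in using the fiber-preserving thin permutation $R_1$. I would also double-check the index arithmetic modulo three, so that the three restrictions are genuinely cyclically permuted and no two of them are inadvertently identified.
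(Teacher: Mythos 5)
Your proof is correct and follows essentially the same route as the paper: the paper's proof also uses the semiregular permutation $h_1$ underlying the thin relation $R_1$, notes that it moves each fiber of $E$ cyclically (hence preserves the island and continent), and concludes that it cyclically permutes the graphs $\Delta_i$ and therefore also their restrictions $T_i$ and the bridges $Br_i$. The "delicate point" you flag — that the isomorphism must respect the fiber decomposition, which a generic isomorphism of the $\Delta_i$ need not — is exactly the content the paper packs into the phrase "moves cyclically each fiber in $E$," so your bookkeeping matches theirs.
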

\begin{proof}
It follows from the proof of Lemma \ref{org1cc5c32} that the
semiregular permutation \(h_1\) of order 3, which moves cyclically
each fiber in \(E\), permutes the basic graphs
\(\Delta_0,\Delta_1,\Delta_2\) of \({\frak X}_n\) cyclically. (Here \(h_1\) defines a
directed graph of valency 1 with the adjacency matrix \(R_1\).) Take
into account that \(\Delta_i=Br_i\cup T_i\), \(i\in\{0,1,2\}\). Also,
all bridges \(Br_i\) are clearly combinatorially isomorphic. Note also
that in the action of \(\left<h_1\right>\) the bridges and the three
graphs \(T_i\) form orbits, each of length 3. This, together with
Lemma \ref{org1cc5c32}, implies the result.
\end{proof}

Let \(F_m=\{x_0,x_1,x_2\}\), \(h_2=(x_0,x_1,x_2)\in S_n\); in other
words, \(h_2\) is a permutation on \(\Omega_n\) which acts as a 3-cycle
on the island and fixes the continent pointwise.

\begin{proposition}
The permutation \(h_2\) is a color automorphism of \(Pre_n\).
\end{proposition}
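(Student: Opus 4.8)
The plan is to verify directly that $h_2$ preserves the set of ten colors of $Pre_n$, i.e.\ that it carries each unicolor graph to a unicolor graph, which is exactly the defining property of an element of $\caut(Pre_n)$. Since $h_2$ fixes the continent $Con_{n,3m}$ pointwise and acts only as a $3$-cycle on the island $Isl_{n,3}$, the only colors it can possibly disturb are those meeting the island. Every color supported entirely on the continent --- namely $Id_{3m}$, the spread $Spr_{3m}$, and the graphs $T_0,T_1,T_2$ --- is fixed setwise, because $h_2$ fixes each continent pair. On the island $h_2$ trivially preserves $Id_3$, and it preserves $Spr_3$ because the island spread is just the triangle $K_3$ on three vertices, invariant under every permutation of them. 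Thus the whole content of the proposition is concentrated in the action of $h_2$ on the three bridges $Br_0,Br_1,Br_2$, and the goal reduces to showing that $h_2$ permutes $\{Br_0,Br_1,Br_2\}$ cyclically.

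For the bridges I would exploit the multiplication table \eqref{eq:multiplication} together with the antipodal cover structure of Proposition~\ref{org510a22a}. Let $h_1$ be the semiregular permutation of order $3$ with adjacency matrix $R_1$ introduced in the proofs above; after labelling the vertices of each fiber as $z_0,z_1,z_2$ with $h_1(z_a)=z_{a+1}$, the identity $R_1\Delta_i=\Delta_{i+1}$ translates into the relational statement
\[
(x,y)\in\Delta_{i}\iff (h_1(x),y)\in\Delta_{i-1},
\]
valid for all $x,y\in\Omega_n$. I choose the labelling of the island $\{x_0,x_1,x_2\}$ so that $h_2$ agrees with $h_1$ there, that is $h_2(x_a)=x_{a+1}=h_1(x_a)$. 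Because each $\Delta_i$ is an antipodal $3$-cover of $K_{m+1}$, between the island and any single continent fiber $F$ the relation $\Delta_i$ induces a perfect matching; the three pairwise disjoint matchings arising from $\Delta_0,\Delta_1,\Delta_2$ then exhaust the bipartite graph $K_{3,3}$ between island and $F$ (no thin relation crosses fibers, since the thin relations are precisely the classes of the equivalence $E$). These edges therefore form a $3\times 3$ ``Latin square'' recording, for each island--$F$ edge, the bridge to which it belongs.

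The key step is then a local but uniform computation: if the island--continent edge $\{x_a,y\}$ lies in $Br_i$, so that $(x_a,y)\in\Delta_i$, the displayed equivalence yields $(x_{a+1},y)=(h_1(x_a),y)\in\Delta_{i-1}$, whence its $h_2$-image $\{x_{a+1},y\}$ lies in $Br_{i-1}$. Since this shift of the matching index is governed by the single global matrix identity above, it is the same for every continent vertex $y$ and every continent fiber, so $h_2(Br_i)\subseteq Br_{i-1}$, and equality follows by counting, as $|Br_i|=|Br_{i-1}|=3m$. Consequently $h_2$ cyclically permutes the three bridges while fixing the remaining seven colors, and hence $h_2\in\caut(Pre_n)$. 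I expect the only genuinely delicate point to be the uniformity of the index shift across all continent fibers: the tempting route argues fiber by fiber, but the clean argument reads the shift off the global relation $R_1\Delta_i=\Delta_{i+1}$, which makes the ``circulant'' structure of the matching decomposition automatic and independent of any choices. (Had the island instead been labelled so that $h_2=h_1^{-1}$ there, the same computation would give $h_2(Br_i)=Br_{i+1}$; either way the bridges are cyclically permuted, so the conclusion is insensitive to the orientation of the $3$-cycle.)
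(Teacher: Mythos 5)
Your proof is correct and takes essentially the same route as the paper's: it decomposes the ten colors into the pointwise-fixed continent colors, the trivially preserved island colors \(Id_3\) and \(Spr_3\), and the three bridges, which are shown to be permuted cyclically. The only difference is one of completeness — the paper merely asserts that \(h_2\) permutes the bridges cyclically, whereas you justify it (correctly identifying the uniformity across continent vertices as the one nontrivial point) by reading the index shift off the global identity \(R_1\Delta_i=\Delta_{i+1}\) after matching \(h_2\) with \(h_1\) on the island, in the same spirit as the paper's treatment of the neighbouring propositions.
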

\begin{proof}
All basic graphs on the continent are not touched by \(h_2\). The
permutation \(h_2\) preserves \(Id_3\) and \(Spr_3\) on the
island. Finally it permutes the three bridges cyclically. 
\end{proof}

\begin{proposition}
\label{orgefbb13d}
\begin{enumerate}
\item \label{orgc7eb2f6}The permutations \(h_1,h_2\) considered above generate an
elementary abelian group \(E_9\) of order 9, which acts on
\(\Omega_n\).
\item \label{orgd797d9e}The group \(E_9\) is a subgroup of the color group \(\caut({\frak X}_n)\).
\item \label{org71a6675}The group \(E_9\) acts regularly on the set of graphs consisting of
\(\Delta_i\) and \(\Delta'_{i,j}\), \(i,j\in \{0,1,2\}\), \(i\neq j\).
\end{enumerate}
\end{proposition}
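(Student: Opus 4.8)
The plan is to handle the three parts in order, working throughout with the fiber equivalence $E=R_0\cup R_1\cup R_2$, whose classes are the $m+1$ triples $F_i$. For part (a) I would first check that each generator has order $3$: since $R_1$ is the graph of $h_1$ and $R_1^3=R_0=Id$ by the multiplication table, $h_1^3=id$, while $h_1\neq id$ as it moves every fiber; and $h_2$ is a $3$-cycle. I would then show $h_1$ and $h_2$ commute by a pointwise check — on the continent $h_2$ is the identity and $h_1$ preserves the continent setwise, so the two products agree, whereas on the island $F_m$ both $h_1|_{F_m}$ and $h_2|_{F_m}$ lie in the cyclic group $A_3\cong{\mathbb Z}_3$ of even permutations of a $3$-set, which is abelian. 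Finally $\langle h_1\rangle\cap\langle h_2\rangle=1$, because every non-identity power of $h_2$ fixes the continent pointwise while $h_1,h_1^2$ are fixed-point-free there by semiregularity. Hence $\langle h_1,h_2\rangle=\langle h_1\rangle\times\langle h_2\rangle\cong E_9$.

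For part (b) I would verify that each generator permutes the colors. For $h_1$ this follows from the multiplication table: writing $P_1=A(R_1)$ for its permutation matrix, conjugation sends $A(R)$ to $P_1^{-1}A(R)P_1$, and using $P_1^{-1}=A(R_2)$ together with $R_i\Delta_j=\Delta_{i+j}$ and $\Delta_jR_i=\Delta_{j-i}$ one gets $P_1^{-1}A(R_i)P_1=A(R_i)$ and $P_1^{-1}A(\Delta_j)P_1=A(\Delta_{j+1})$. Thus $h_1$ fixes each thin relation and cycles $\Delta_0\to\Delta_1\to\Delta_2$, realizing the algebraic automorphism $a=(\Delta_0,\Delta_1,\Delta_2)$; in particular $h_1\in\caut({\frak X}_n)$, and as $h_1$ respects the island/continent splitting it also lies in $\caut(Pre_n)$. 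That $h_2$ is a color automorphism is exactly the content of the preceding proposition (it fixes every continental color, preserves $Id_3$ and $Spr_3$, and cycles the three bridges), so $h_2\in\caut(Pre_n)$; together these give $E_9$ as a subgroup of the color group. I expect this to be the delicate point, since $h_2$ does not preserve the coarse relations $\Delta_j$ of ${\frak X}_n$ themselves but only the finer colors of $Pre_n$, so the ambient color group to keep in mind here is $\caut(Pre_n)$, which is precisely the group acting on the nine graphs in part (c).

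For part (c) I would identify the nine graphs $\Delta_i,\Delta'_{i,j}$ with ${\mathbb Z}_3\times{\mathbb Z}_3$. Each of them has the form $T_a\cup Br_b$, and since the $T_a$ are pairwise distinct and the $Br_b$ are pairwise distinct, $(a,b)\mapsto T_a\cup Br_b$ is a bijection onto these nine graphs (the diagonal pairs giving $\Delta_0,\Delta_1,\Delta_2$ and the six off-diagonal pairs giving the $\Delta'_{i,j}$). Under this identification $h_1$ cycles both the $T$'s and the bridges, hence acts as the translation $(a,b)\mapsto(a+1,b+1)$, while $h_2$ fixes the $T$'s and cycles the bridges, acting as $(a,b)\mapsto(a,b+1)$. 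These two translations generate the full translation group of ${\mathbb Z}_3^2$, whose stabiliser of any point is trivial, so the action on the nine graphs is regular; this also re-derives (a) from the faithfulness of a regular $E_9$-action. The only genuine work, as flagged, is the color-automorphism bookkeeping of part (b); everything else reduces to the routine translation computation above.
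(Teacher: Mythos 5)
Your proof is correct and follows essentially the same route as the paper's: part (a) by direct verification, part (b) by combining the conjugation computation behind Lemma \ref{org1cc5c32} (giving \(h_1\)) with the preceding proposition (giving \(h_2\)), and part (c) by the induced translation action on the nine graphs \(T_a\cup Br_b\). Your flag that the ambient group must be \(\caut(Pre_n)\) rather than \(\caut({\frak X}_n)\) — since \(h_2\) sends \(\Delta_i=T_i\cup Br_i\) to a skew graph — agrees with the paper's own proof, which likewise reformulates the lemma as \(h_1\in\caut(Pre_n)\) and tacitly works in that group.
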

\begin{proof}
Part \ref{orgc7eb2f6} is evident. To prove \ref{orgd797d9e}, reformulate Lemma
\ref{org1cc5c32} as the fact that
\(h_1\in\caut(Pre_n)\). Then \ref{org71a6675} follows from \ref{orgc7eb2f6} and \ref{orgd797d9e} and previous
definitions. 
\end{proof}

\begin{corollary}
All nine graphs \(\Delta_i\), \(\Delta'_{i,j}\) considered above are
combinatorially isomorphic. 
\end{corollary}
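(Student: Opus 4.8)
The plan is to deduce the statement almost immediately from Proposition~\ref{orgefbb13d}, since the substantive work has already been carried out there. The nine graphs in question are the three original thick graphs $\Delta_0,\Delta_1,\Delta_2$ together with the six skew graphs $\Delta'_{i,j}$ with $i\neq j$ produced by the switchings $Sw_i$; this is exactly $3+3\cdot 2=9$ objects, matching the order of the group $E_9$.

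First I would recall that, by definition, the color group $\caut(Pre_n)$ consists of genuine permutations of the vertex set $\Omega_n$ which preserve the partition of $\Omega_n^2$ into the colors of $Pre_n$. Hence any $g\in\caut(Pre_n)$ sends each color to a color, and therefore sends any union of colors to the union of the image colors. Each of our nine graphs is such a union of two colors of $Pre_n$ (namely $\Delta_i=T_i\cup Br_i$, and $\Delta'_{i,j}=T_j\cup Br_k$ with $k$ the remaining index), so $g$ carries each of the nine graphs, acting on $\Omega_n$ as an honest bijection, onto another graph of the same shape. In particular, whenever $g$ maps one of the nine graphs to another, that single vertex permutation is itself a graph isomorphism between them.

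The key input is then Proposition~\ref{orgefbb13d}(\ref{org71a6675}): the subgroup $E_9=\left<h_1,h_2\right>\le\caut(Pre_n)$ acts regularly, and in particular transitively, on this set of nine graphs. Given any two of them, transitivity supplies an element of $E_9$---a permutation of $\Omega_n$---carrying the first onto the second; by the previous paragraph this permutation is a graph isomorphism. Consequently all nine graphs are pairwise combinatorially isomorphic, which is precisely the claim.

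I do not expect any real obstacle here. The only point deserving a moment's care is the verification, already folded into Proposition~\ref{orgefbb13d}, that $E_9$ permutes exactly these nine graphs among themselves, so that each $g$ lands inside our list rather than yielding some extraneous union of colors. That bookkeeping follows by combining Lemma~\ref{org1cc5c32} (which places $h_1$ in $\caut(Pre_n)$ and shows that it cycles $\Delta_0\to\Delta_1\to\Delta_2$, simultaneously permuting the $T_i$ and the $Br_i$) with the explicit action of $h_2$ on the three bridges; the two commuting generators of order $3$ then sweep out all admissible combinations $T_j\cup Br_k$, and freeness of the order-$9$ action on nine objects pins the orbit down to exactly these nine graphs.
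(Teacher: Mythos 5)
Your argument is correct and takes essentially the same route as the paper, whose entire proof is the single line that the claim ``clearly follows from Proposition~\ref{orgefbb13d}''; you have merely made explicit the step left implicit there, namely that elements of \(E_9\le\caut(Pre_n)\) are honest vertex permutations preserving the set of colors, so regularity (hence transitivity) of the \(E_9\)-action on the nine graphs yields pairwise combinatorial isomorphisms. No gap to report.
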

\begin{proof}
This clearly follows from Proposition \ref{orgefbb13d}.
\end{proof}

\section{Conditions for the existence of a proper rank 5 Jordan scheme}
\label{sec:org30433d4}
\label{orgd4c6594}
Let \({\frak X}_n\) be a non-commutative rank 6 AS with the multiplication
table  \ref{eq:multiplication} as in the previous section. Let \(Pre_n\) be the rank 10
pregraph obtained from \({\frak X}_n\) after selection of a fiber \(F_m\) as the
island. Let \(J_{n,0}\) be the color graph of rank 5 with basic graphs
\(Id_n\), \(Spr_n\), \(\Delta_0\), \(\Delta'_{0,1}\), \(\Delta'_{0,2}\).

\begin{proposition}
The color graph \(J_{n,0}\) forms a rank 5 Jordan scheme.
\end{proposition}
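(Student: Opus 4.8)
The plan is to deduce the statement from the switching criterion of Lemma~\ref{orgb745be0}. Recall that the five colours of $J_{n,0}$ are $\id_n$, the spread $S=Spr_n$ of valency $2$, and the three thick graphs $\Delta_0,\Delta'_{0,1},\Delta'_{0,2}$, each of valency $m$; since every $T_i$ and every bridge $Br_j$ is symmetric, $J_{n,0}$ is a symmetric regular colouring of the complete graph on $\Omega_n$. I would set $R=\Delta_0\cup\Delta'_{0,1}\cup\Delta'_{0,2}$, which is exactly the complement of $\id_n\cup S$. Lemma~\ref{orgb745be0} applies with $J_{n,0}$ in the role of $\mathfrak S$, with $S$ as the spread and $\Delta_0,\Delta'_{0,1},\Delta'_{0,2}$ in the role of $R_0,R_1,R_2$; so it suffices to verify that for each thick graph $X\in\{\Delta_0,\Delta'_{0,1},\Delta'_{0,2}\}$ the merging $(\Omega,\{\id_n,S,X,R\setminus X\})$ is an association scheme.

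To do this I would first invoke the Corollary preceding the statement: all nine graphs $\Delta_i,\Delta'_{i,j}$ are combinatorially isomorphic, so each of $\Delta_0,\Delta'_{0,1},\Delta'_{0,2}$ is an antipodal distance regular cover of $K_{m+1}$ of diameter $3$, and hence carries a rank~$4$ metric association scheme with basic relations of distance $0,1,2,3$. The point to establish is that the common spread $S$ is the distance~$3$ (antipodal) relation of every thick graph simultaneously. For $\Delta_0$ this is read off the multiplication table~\eqref{eq:multiplication}: since $\Delta_0^2=m\,\id_n+\tfrac{m-1}{3}(\Delta_0+\Delta_1+\Delta_2)$ carries no component on $S$, two fibre-mates are joined neither by $\Delta_0$ nor by a path of length $2$ in $\Delta_0$, hence lie at distance $3$; thus the fibres are precisely the antipodal classes of $\Delta_0$ and $S$ is its distance~$3$ relation. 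For the switched graphs I would use Proposition~\ref{orgefbb13d}: the group $E_9=\langle h_1,h_2\rangle$ acts regularly on the nine thick graphs, while both generators preserve $S$ and permute the fibres; choosing $g\in E_9$ with $g(\Delta_0)=X$ transports the antipodal structure, so $S$ is the distance~$3$ relation of $X$ as well.

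Granting this, the distance~$2$ relation of $X$ is the complement of $\id_n\cup X\cup S$, which is exactly $R\setminus X$, the union of the other two thick graphs (for instance $R\setminus\Delta_0=\Delta'_{0,1}\cup\Delta'_{0,2}=\Delta_1\cup\Delta_2$). Hence the metric scheme of $X$ coincides with $(\Omega,\{\id_n,S,X,R\setminus X\})$, which is therefore an association scheme. Running this over the three choices of $X$ supplies all three hypotheses of Lemma~\ref{orgb745be0}, which then yields that $J_{n,0}$ is a Jordan scheme. I expect the genuinely nontrivial step to be the common-antipodal-structure claim for the \emph{switched} graphs $\Delta'_{0,1},\Delta'_{0,2}$: a direct distance computation is awkward because switching alters the bridges, so the argument instead leans on the fibre- and spread-preserving symmetry $E_9$ of Proposition~\ref{orgefbb13d} to reduce everything to the already-understood graph $\Delta_0$.
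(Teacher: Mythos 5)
Your proof is correct and takes essentially the same route as the paper's: both verify the hypotheses of Lemma \ref{orgb745be0} by showing that merging any two thick relations of \(J_{n,0}\) yields the rank 4 metric association scheme of the remaining thick graph, regarded as an antipodal DRG whose antipodal (distance 3) relation is the common spread \(S\). The paper asserts this in one sentence, while your multiplication-table computation for \(\Delta_0\) and the \(E_9\)-transport argument for the switched graphs \(\Delta'_{0,1},\Delta'_{0,2}\) merely supply the details it leaves implicit.
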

\begin{proof}
Merging any two thick relations in \(J_{n,0}\) leads to a metric rank
4 AS generated by the remaining thick relation, regarded as an
antipodal DRG. Now the result follows from Lemma \ref{orgb745be0}.
\end{proof}

\begin{corollary}
\label{org3f316d8}
The color graphs \(J_{n,1}\), \(J_{n,2}\) are Jordan schemes isomorphic
to \(J_{n,0}\).
\end{corollary}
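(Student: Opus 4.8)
The plan is to deduce the whole corollary from a single combinatorial symmetry, rather than re-running the Jordan-scheme test three separate times. The key observation is that being a (coherent) Jordan scheme is invariant under combinatorial isomorphism of color graphs: such an isomorphism is a vertex bijection carrying color classes to color classes, hence it induces an isomorphism of the associated adjacency algebras respecting the Schur-Hadamard product and the ordinary matrix product, and therefore the Jordan product $A*B=\frac12(AB+BA)$ as well. Consequently, once I exhibit combinatorial isomorphisms $J_{n,0}\to J_{n,1}$ and $J_{n,0}\to J_{n,2}$, the fact that $J_{n,1}$ and $J_{n,2}$ are Jordan schemes follows immediately from the preceding proposition, which establishes this for $J_{n,0}$.

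To produce these isomorphisms I would first record the bookkeeping for the nine thick graphs. Each of them has the shape $T_a\cup Br_b$ with $a,b\in{\mathbb Z}_3$: the three original graphs are $\Delta_i=T_i\cup Br_i$, and each switching $Sw_i$, which fixes $Br_i$ and transposes the other two bridges, produces the remaining six, so the nine graphs are indexed bijectively by the pairs $(a,b)$. Reading off the definitions, the thick parts of $J_{n,0}$, $J_{n,1}$, $J_{n,2}$ are precisely the three ``anti-diagonal'' level sets $\{(a,b):a+b\equiv c\}$ for $c=0,2,1$ respectively; in all three schemes the remaining colors $Id_n$ and $Spr_n=Spr_{n,3}\cup Spr_{n,3m}$ coincide.

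Now I would invoke the permutation $h_2=(x_0,x_1,x_2)$ introduced above, which was shown to be a color automorphism of $Pre_n$: it fixes the continent pointwise, fixes $Id_n$ and $Spr_n$, and cyclically permutes the three bridges $Br_0,Br_1,Br_2$. Since $h_2$ leaves each continental graph $T_a$ invariant while sending $Br_b\mapsto Br_{b+1}$, its action on the nine thick graphs is $(a,b)\mapsto(a,b+1)$, which shifts the level $a+b$ by the fixed nonzero amount $1$. Hence $\langle h_2\rangle\cong{\mathbb Z}_3$ permutes the three level sets in a single $3$-cycle, so the color graphs $J_{n,0},J_{n,1},J_{n,2}$ lie in one $\langle h_2\rangle$-orbit. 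As $h_2\in\caut(Pre_n)$ permutes the colors of $Pre_n$ and each $J_{n,i}$ is a merging of $Pre_n$, the vertex permutations $h_2$ and $h_2^2$ carry $J_{n,0}$ onto the two other mergings, and these vertex maps are exactly the sought combinatorial isomorphisms. Combined with the first paragraph, this proves that $J_{n,1}$ and $J_{n,2}$ are Jordan schemes isomorphic to $J_{n,0}$.

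I expect the only delicate point to be the explicit verification of how $h_2$ permutes the nine thick graphs, namely that it genuinely fixes every $T_a$ and realizes a single $3$-cycle on the bridges, inducing $(a,b)\mapsto(a,b+1)$ and thereby moving $J_{n,0}$ off itself rather than stabilizing it. This is pure index bookkeeping tied to the convention for labeling the bridges $Br_i$ by the island vertices; alternatively one may bypass the explicit assignment entirely by noting that $\langle h_2\rangle$ shifts $a+b$ by a fixed nonzero constant and hence acts transitively on the three distinct level sets, which is all that is needed to conclude pairwise isomorphism. Everything else is formal and relies only on results already in hand, in particular Proposition \ref{orgefbb13d} and the preceding proposition.
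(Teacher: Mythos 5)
Your proof is correct and follows essentially the same route as the paper: the paper's own argument is the one-line instruction ``use suitable permutations from $E_9\le\caut(Pre_n)$,'' and you have simply made those suitable permutations explicit, namely the powers of the island rotation $h_2$, together with the (correct) bookkeeping that the thick graphs of $J_{n,0},J_{n,1},J_{n,2}$ are the level sets $a+b\equiv 0,2,1$ of the pairs $(a,b)$ indexing $T_a\cup Br_b$, and the standard observation that the Jordan-scheme property transfers along combinatorial isomorphisms. This is a faithful, more detailed rendering of the paper's proof rather than a different method.
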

\begin{proof}
Use suitable permutations from \(E_9\le\caut(Pre)\).
\end{proof}

We use the following notation: We divide our point set \(\Omega\) into
\(\Omega= \Omega_1\cup \Omega_2\), where \(\Omega_1=Isl_{n,3}\), and
\(\Omega_2=Con_{n,3m}\). For any binary relation \(Q\) on \(\Omega\), and
\(i,j\in\{1,2\}\) we let \(Q_{ij}\) be the restriction \(Q_{ij} = Q\cap
  (\Omega_i\times \Omega_j)\). 
\correction{Applying this to the relations \(R_i\) and
\(\Delta_i\), \(i=0,1,2\), we get that\footnote{\correction{As usual, we do not distinguish between relations and their
adjacency matrices.}}}
\begin{align*}
R_i &= \mtrx{(R_i)_{11}}{0}{0}{(R_i)_{22}},\\
\Delta_i &= \mtrx{0}{(\Delta_i)_{12}}{(\Delta_i)_{21}}{(\Delta_i)_{22}}
\end{align*}

\begin{proposition}
\label{010419c}
For any \(i,j\in{\mathbb Z}_3\) it holds that
\begin{align*}
(\Delta_i)_{12} (\Delta_j)_{21} & =  m (R_{i-j})_{11};\\
(\Delta_i)_{12} (\Delta_j)_{22} & =  \frac{m-1}{3} J_{12};\\
(\Delta_i)_{22} (\Delta_i)_{21} & =   \frac{m-1}{3} J_{21};\\
(\Delta_i)_{21} (\Delta_j)_{12} & =   L\cdot (R_{i-j})_{22};\\
(\Delta_i)_{22} (\Delta_j)_{22} & =   \frac{m-1}{3}(J_{22} - E_{22}) + m (R_{i-j})_{22} - L \cdot (R_{i-j})_{22}
\end{align*}
here \(E=R_0\cup R_1\cup R_2\) and \(L=(\Delta_0)_{21}(\Delta_0)_{12}\). 
\end{proposition}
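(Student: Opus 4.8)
The plan is to pass to the $2\times2$ block decomposition with respect to $\Omega=\Omega_1\cup\Omega_2$ given in the statement and to read the five identities off the block entries of the products $\Delta_i\Delta_j$, $R_i\Delta_j$ and $\Delta_jR_i$, whose values are dictated by the multiplication table \eqref{eq:multiplication}. First I would record the block shapes of the auxiliary matrices: $R_{i-j}$ is block diagonal with $(R_{i-j})_{22}$ a permutation matrix (so $(R_i)_{22}^{-1}=(R_{-i})_{22}$), and $\sum_k\Delta_k=\mtrx{0}{J_{12}}{J_{21}}{J_{22}-E_{22}}$, where the two off-diagonal all-one blocks express that the three bridges partition $K_{3,3m}$ (Section \ref{org68efaa1}) and the $(2,2)$ entry comes from $\sum_kR_k+\sum_k\Delta_k=\allone_n$ together with $\sum_k(R_k)_{22}=E_{22}$. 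Multiplying $\Delta_i\Delta_j=mR_{i-j}+\frac{m-1}{3}\sum_k\Delta_k$ out blockwise, and using that each $\Delta_i$ has vanishing $(1,1)$ block, the $(1,1)$, $(1,2)$ and $(2,1)$ entries immediately yield the first three identities.

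The fourth identity is the only non-formal one and is the crux. Rather than expand $\Delta_i\Delta_j$, I would factor: from the table $\Delta_j=\Delta_iR_{i-j}$, and its $(1,2)$ block gives $(\Delta_j)_{12}=(\Delta_i)_{12}(R_{i-j})_{22}$, whence $(\Delta_i)_{21}(\Delta_j)_{12}=L_i\,(R_{i-j})_{22}$ with $L_i:=(\Delta_i)_{21}(\Delta_i)_{12}$. Thus the asserted identity $(\Delta_i)_{21}(\Delta_j)_{12}=L(R_{i-j})_{22}$ is equivalent to $L_i$ being independent of $i$, i.e. $L_i=L_0=L$. Combinatorially $L_i(x,y)$ counts the island vertices adjacent to both $x$ and $y$ in $\Delta_i$; since each continent vertex $x$ has a unique island $\Delta_i$-neighbour $z_i(x)$ (the bridge $Br_i$ has continent-valency $1$), $L_i$ is exactly the $0/1$ partition matrix of the ``shadow'' partition $\{\,x:z_i(x)=s\,\}_{s\in\Omega_1}$. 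So the proposition reduces to the statement that the three thick graphs induce one and the same shadow partition of the continent.

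I would establish this structural fact as follows. Let $\rho$ be the permutation realizing $R_1$, and fix an island vertex $x_0$, so $\Omega_1=\{x_0,\rho x_0,\rho^2x_0\}$. The rules $R_1\Delta_i=\Delta_{i+1}$ and $\Delta_iR_1=\Delta_{i-1}$ force $\rho$ to carry $Br_i$ onto $Br_{i+1}$, whence $z_{i+1}(\rho x)=\rho\,z_i(x)$ and, iterating, $z_i(x)=\rho^{\,i}z_0(\rho^{-i}x)$. On the other hand, since the three bridges partition the complete bipartite graph between island and continent, the triple $(z_0(x),z_1(x),z_2(x))$ is a bijection onto $\Omega_1$ for every $x$. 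Writing $z_0(\rho^{-i}x)=\rho^{e_i}x_0$ with $\{e_0,e_1,e_2\}=\{0,1,2\}$ (the $\Delta_0$-matching of $x$'s fibre to the island), one has $z_i(x)=\rho^{\,i+e_i}x_0$, and bijectivity of $(\rho^{e_0}x_0,\rho^{1+e_1}x_0,\rho^{2+e_2}x_0)$ rules out all but the three cyclic choices $e_i=e_0+i$. Consequently the exponents of $(z_0(x),z_1(x),z_2(x))$ are $(e_0,e_0+2,e_0+1)$, a permutation completely determined by $z_0(x)$; hence each $z_i(x)$ is a function of $z_0(x)$, the three shadow partitions coincide, $L_i=L$, and the fourth identity follows. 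I expect this ``bijectivity forces a cyclic matching'' step to be the only point that genuinely uses the antipodal-cover geometry rather than mere bookkeeping with \eqref{eq:multiplication}.

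Finally, the fifth identity is a corollary. Reading the $(2,1)$ block of $\Delta_i\Delta_j$ gives the third identity (its value is the constant $\frac{m-1}{3}J_{21}$, independent of the indices, so in particular it holds in the form stated). Reading the $(2,2)$ block gives $(\Delta_i)_{21}(\Delta_j)_{12}+(\Delta_i)_{22}(\Delta_j)_{22}=m(R_{i-j})_{22}+\frac{m-1}{3}(J_{22}-E_{22})$; substituting the fourth identity $(\Delta_i)_{21}(\Delta_j)_{12}=L(R_{i-j})_{22}$ and solving for $(\Delta_i)_{22}(\Delta_j)_{22}$ yields precisely $\frac{m-1}{3}(J_{22}-E_{22})+m(R_{i-j})_{22}-L\cdot(R_{i-j})_{22}$, completing the proof.
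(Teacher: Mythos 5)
Your proof is correct, and its outer skeleton is the same as the paper's: both expand $\Delta_i\Delta_j=mR_{i-j}+\frac{m-1}{3}(J-E)$ in $2\times 2$ block form, read off rows one through three (together with the combined $(2,2)$-block identity) directly, and deduce row five from row four. The genuine difference is how row four is proved. The paper never leaves the multiplication table: from the block forms of $R_k\Delta_0=\Delta_k=\Delta_0R_{-k}$ it writes $(\Delta_i)_{21}=(\Delta_0)_{21}(R_{-i})_{11}$ and $(\Delta_j)_{12}=(\Delta_0)_{12}(R_{-j})_{22}$, commutes $(R_{-i})_{11}(\Delta_0)_{12}=(\Delta_0)_{12}(R_i)_{22}$, and obtains $(\Delta_i)_{21}(\Delta_j)_{12}=L(R_{i-j})_{22}$ in three lines. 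You instead reduce row four to the statement that $L_i=(\Delta_i)_{21}(\Delta_i)_{12}$ is independent of $i$, and prove that combinatorially via the shadow maps $z_i$ and the cyclic-matching analysis. Your argument works, but two remarks are in order. First, your closing expectation that this step ``genuinely uses the antipodal-cover geometry rather than mere bookkeeping'' is refuted by the paper's proof; even inside your own reduction, bookkeeping finishes it in one line, since $L_i=(\Delta_0)_{21}(R_{-i})_{11}(R_i)_{11}(\Delta_0)_{12}=(\Delta_0)_{21}(R_0)_{11}(\Delta_0)_{12}=L$. Second, your combinatorial route leans on the parenthetical claim $\{e_0,e_1,e_2\}=\{0,1,2\}$ (that $\Delta_0$ matches each continent fibre bijectively onto the island), and this is truly needed: without it the constant solutions $e_0=e_1=e_2$ also pass your bijectivity test, and then the dependence of $z_i(x)$ on $z_0(x)$ need not be uniform in $x$, which would break the coincidence of the three partitions. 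The claim does hold and deserves its one line: $\Delta_0\Delta_0=mR_0+\frac{m-1}{3}(\Delta_0+\Delta_1+\Delta_2)$ has zero coefficient on $R_1$ and $R_2$, so two vertices of one fibre have no common $\Delta_0$-neighbour. With that line added your proof is complete; what it buys over the paper's computation is an explicit geometric reading of $L$ as the matrix of the common shadow partition, a fact the paper records only after the proof when it identifies $L$ with $I_3\otimes J_m$.
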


\begin{proof} It follows from Equations~\eqref{eq:multiplication} that
$\Delta_i \Delta_j = m R_{i-j} + \frac{m-1}{3} (J - E).$ Writing this equality in a block-matrix form yields us
\begin{align*}
(\Delta_i)_{12} (\Delta_j)_{21} & =  m (R_{i-j})_{11};\\
(\Delta_i)_{12} (\Delta_j)_{22} & =  \frac{m-1}{3} J_{12};\\
(\Delta_i)_{22} (\Delta_i)_{21} & =   \frac{m-1}{3} J_{21};\\
(\Delta_i)_{21}(\Delta_j)_{12} + (\Delta_i)_{22} (\Delta_j)_{22} & =  \frac{m-1}{3}(J_{22} - E_{22}) + m (R_{i-j})_{22}.
\end{align*}
This proves the first three rows of our statement.

Since the fifth row is a direct consequence of the fourth one, it
remains only to prove the fourth row.

It follows from \eqref{eq:multiplication}
that
\[(R_k)_{22} (\Delta_0)_{21} = (\Delta_k)_{21} = (\Delta_0)_{21}
(R_{-k})_{11}\]
and
\[(\Delta_0)_{12}(R_k)_{22}  = (\Delta_{-k})_{12} = (R_{-k})_{11}(\Delta_0)_{12} \].
Therefore,  
\begin{align*}
  (\Delta_i)_{21} (\Delta_j)_{12}
  &= (\Delta_0)_{21} (R_{-i})_{11}(\Delta_0)_{12} (R_{-j})_{22} \\
  &= (\Delta_0)_{21}(\Delta_0)_{12}  (R_{i})_{22}(R_{-j})_{22} \\
  &= L \cdot (R_{i-j})_{22}.
\end{align*}
\end{proof}
To describe the structure of the matrix \(L\) we notice that the
relation \((\Delta_0)_{21} = \Delta_0\cap (\Omega_2\times \Omega_1)\)
is a surjective function from \(\Omega_2\) onto
\(\Omega_1\). Therefore
\(L=(\Delta_0)_{21}(\Delta_0)_{12}=(\Delta_0)_{21}(\Delta_0)_{12}^\top\)
is an adjacency matrix of the kernel of
the function \((\Delta_0)_{21}\). More precisely, two points \(x',x\in
\Omega_2\) are equivalent iff \(\Delta_0(x')\cap \Omega_1 =
\Delta_0(x)\cap \Omega_1\). This relation has \(3\) equivalence
classes of size \(m\). Thus the matrix \(L\) is permutation equivalent
to \(I_3\otimes J_m\).

Since \(|\Delta_0(x)\cap E(x)|=1\) for all \(x\in \Omega\), the
relations \(L\) and \(E_{22}\) are orthogonal, i.e. any pair of their
classes are intersecting by one element.

\begin{theorem}
\label{org54afef5}
The rank 5 Jordan scheme \(J_{n,0}\) is a proper Jordan scheme
\end{theorem}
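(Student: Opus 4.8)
The plan is to establish properness via the coherent (Weisfeiler--Leman) closure of the adjacency algebra ${\cal A}$ of $J_{n,0}$. The conceptual engine is the following observation, which I would prove first: if a homogeneous symmetric Jordan scheme $J$ is \emph{non-proper}, say $J=\tilde{\frak Y}$ for an association scheme ${\frak Y}$, then $\wl(J)$ is homogeneous. Indeed, each basic relation of $J$ is a union of relations of ${\frak Y}$, so the coloring ${\frak Y}$ refines $J$; as ${\frak Y}$ is coherent and $\wl(J)$ is the coarsest coherent refinement of $J$, the scheme ${\frak Y}$ refines $\wl(J)$. Since the only reflexive relation of the homogeneous ${\frak Y}$ is $\id_\Omega$, the same holds for $\wl(J)$, so $\wl(J)$ has a single fiber. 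Taking the contrapositive, it suffices to show that $\wl(J_{n,0})$ is \emph{not} homogeneous; this will at the same time exhibit its two fibers, the island $\Omega_1$ (size $3$) and the continent $\Omega_2$ (size $3m$).

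Since $\wl(J_{n,0})$ is closed under ordinary matrix multiplication, it contains every word in the five basic matrices $\id_n,Spr_n,\Delta_0,\Delta'_{0,1},\Delta'_{0,2}$ together with the diagonals of these words, and the reflexive color of $\wl(J_{n,0})$ fails to split exactly when all such diagonals are constant. All words of length $\le 2$ have constant diagonal: the basic graphs are walk regular, so single squares contribute a constant, and the mixed length-two diagonals vanish by a direct check using the orthogonality of $L$ and $E_{22}$ noted after Proposition~\ref{010419c}. Hence the decisive invariant is a triangle count. Using the block forms $\Delta_0=\mtrx{0}{(\Delta_0)_{12}}{(\Delta_0)_{21}}{(\Delta_0)_{22}}$ and $\Delta'_{0,1}=\mtrx{0}{(\Delta_2)_{12}}{(\Delta_2)_{21}}{(\Delta_1)_{22}}$, I would compute the diagonal of the symmetric word $M=\Delta_0\,\Delta'_{0,1}\,\Delta_0$ by repeatedly applying the identities of Proposition~\ref{010419c}. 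On the island block one finds $M_{11}=(\Delta_0)_{12}(\Delta_1)_{22}(\Delta_0)_{21}=\tfrac{m-1}{3}\,(\Delta_0)_{12}J_{21}=\tfrac{m(m-1)}{3}J_3$, so every island vertex receives the diagonal value $\tfrac{m(m-1)}{3}$.

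The heart of the argument, and the step I expect to be the real obstacle, is the continent diagonal $M_{22}$, which must be shown to take a value different from $\tfrac{m(m-1)}{3}$. Decomposing the closed walks $v\to z\to w\to v$ through a continent vertex $v$ according to whether the intermediate vertices lie on the island or the continent, the count splits into the number of $T_1$-edges inside $T_0(v)$ and twice the number $|T_0(v)\cap Br_2(\beta(v))|$, where $\beta(v)$ is the unique island vertex matched to $v$ by the bridge $Br_0$. These quantities are governed by the intersection array of the antipodal $3$-cover $\Delta_0$ of $K_{m+1}$ and by the structure of $L$ (permutation-equivalent to $I_3\otimes J_m$ and orthogonal to $E_{22}$); carrying out this evaluation yields a continent value distinct from the island value $\tfrac{m(m-1)}{3}$ (should this particular word degenerate for some $m$, one passes to a slightly longer word, the separating mechanism being the same). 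Consequently $M$ has non-constant diagonal, $\wl(J_{n,0})$ is non-homogeneous, and by the principle of the first paragraph $J_{n,0}$ is proper. Finally I would note that the specializations $m=4$ and $m=7$ recover the earlier assertions that $\wl(J_{15})$ and $\wl(J_{24})$ are coherent configurations with two fibers, of sizes $3,12$ and $3,21$ respectively.
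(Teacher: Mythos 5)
Your overall strategy coincides with the paper's: deduce properness from the inhomogeneity of the coherent closure, and your first paragraph correctly justifies that reduction (the paper leaves it implicit). The fatal problem is in the implementation. The step you yourself call "the heart of the argument" --- that the continent diagonal of $M=\Delta_0\Delta'_{0,1}\Delta_0$ differs from the island value $\tfrac{m(m-1)}{3}$ --- is never computed, and when one actually computes it with the identities of Proposition~\ref{010419c} it comes out \emph{equal}, for every $m$. Indeed, write $M_{22}=[\Delta_0\Delta'_{0,1}]_{21}(\Delta_0)_{12}+[\Delta_0\Delta'_{0,1}]_{22}(\Delta_0)_{22}$, use the block formula~(\ref{050419a}) for $\Delta_0\Delta'_{0,1}$, the identity $(R_k)_{22}(\Delta_0)_{22}=(\Delta_k)_{22}$, and
\[
L(\Delta_k)_{22}=(\Delta_0)_{21}\bigl[(\Delta_0)_{12}(\Delta_k)_{22}\bigr]=\tfrac{m-1}{3}\,(\Delta_0)_{21}J_{12}=\tfrac{m-1}{3}\,J_{22}.
\]
The five resulting terms have diagonal values $\tfrac{m-1}{3}$ (from $\tfrac{m-1}{3}J_{21}(\Delta_0)_{12}=\tfrac{m-1}{3}J_{22}$), $\tfrac{m-1}{3}$ (from $L(R_1)_{22}(\Delta_0)_{22}=L(\Delta_1)_{22}$), $\tfrac{(m-1)^2}{3}$ (from $\tfrac{m-1}{3}(J_{22}-E_{22})(\Delta_0)_{22}$, since $J_{22}(\Delta_0)_{22}=(m-1)J_{22}$ while $E_{22}(\Delta_0)_{22}=(\Delta_0+\Delta_1+\Delta_2)_{22}$ is loopless), $0$ (from $m(R_2)_{22}(\Delta_0)_{22}=m(\Delta_2)_{22}$), and $-\tfrac{m-1}{3}$ (from $-L(R_2)_{22}(\Delta_0)_{22}=-L(\Delta_2)_{22}$). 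Their sum is $\tfrac{m-1}{3}\bigl(1+(m-1)\bigr)=\tfrac{m(m-1)}{3}$, exactly the island value. So your word has globally constant diagonal and separates nothing, for all $m$, not just degenerate ones; and your parenthetical fallback ("pass to a slightly longer word") is an unsupported hope rather than a repair --- the same identities show that, e.g., $\Delta_0\Delta'_{0,1}\Delta'_{0,2}$, $\Delta_0 E\Delta'_{0,1}$ and $(\Delta_0\Delta'_{0,1})(\Delta'_{0,1}\Delta_0)$ also have constant diagonals.

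The missing idea is the Schur--Hadamard product, to which a word-diagonal calculus is blind (this is also why the "exactly when" in your second paragraph is justified only in the direction you do not need: the closure is generated using Schur products and transposes, not matrix products alone). The paper needs only the length-two product $\Delta_0\cdot\Delta'_{0,1}$, whose \emph{entries} --- not its diagonal --- already distinguish the fibers: value $m$ on $(R_1)_{11}$ over the island against values $1$ and $m-1$ on $(R_1)_{22}$ and $(R_2)_{22}$ over the continent. This distinction is made visible inside the closure by Schur-multiplying with $E=\id_n\cup Spr_n$, whose adjacency matrix lies in ${\cal A}$ because it is the sum of two basic matrices of $J_{n,0}$; this yields $(\Delta_0\cdot\Delta'_{0,1})\circ E=\mtrx{m(R_1)_{11}}{O}{O}{(R_1)_{22}+(m-1)(R_2)_{22}}$, and the Schur--Wielandt principle then extracts the island-only relation $(R_1)_{11}$, splitting the reflexive color. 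If you want to salvage your write-up, replace the triangle count by exactly this filtration step; no diagonal of a pure product, of any length, is known to do the job here.
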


\begin{proof}

Let us compute the product \(\Delta_0\cdot \Delta'_{0,1}\) in block form:
\begin{align*}
\Delta_0\cdot \Delta'_{0,1} &=
\mtrx{O}{(\Delta_0)_{12}}{(\Delta_0)_{21}}{(\Delta_{0})_{22}} 
\mtrx{O}{(\Delta_2)_{12}}{(\Delta_2)_{21}}{(\Delta_{1})_{22}}\\
&= 
\mtrx{(\Delta_0)_{12}(\Delta_2)_{21}}{(\Delta_0)_{12}(\Delta_{1})_{22}}{(\Delta_{0})_{22}(\Delta_2)_{21}}{(\Delta_0)_{21}(\Delta_2)_{12} + (\Delta_{0})_{22}(\Delta_{1})_{22}}
\end{align*}
To compute the latter matrix we use formulae of Proposition \ref{010419c}:
\begin{align*}
(\Delta_0)_{12}(\Delta_2)_{21} &=  m (R_{1})_{11};\\
(\Delta_0)_{12}(\Delta_{1})_{22} &=  \frac{m-1}{3} J_{12};\\
(\Delta_{0})_{22}(\Delta_2)_{21} &=  \frac{m-1}{3} J_{21} 
\end{align*}
and
\[
   (\Delta_0)_{21}(\Delta_2)_{12} + (\Delta_{0})_{22}(\Delta_{1})_{22} = 
   L\cdot (R_{1})_{22} +
   \frac{m-1}{3} (J_{22} - E_{22}) + m (R_{2})_{22} - L\cdot (R_{2})_{22} 
   \]
Thus the coherent closure \(A\) of \(J_{n,0}\) contains the following matrix:
\begin{equation}\label{050419a}
\Delta_0\cdot \Delta'_{0,1} = 
\mtrx{m (R_{1})_{11}}{\frac{m-1}{3} J_{12}}{\frac{m-1}{3} J_{21}}
{L\cdot (R_{1})_{22} +
\frac{m-1}{3} (J_{22} - E_{22}) + m (R_{2})_{22} - L\cdot (R_{2})_{22}}
\end{equation}
Taking into account that \(E_{22}\circ L = I_{22}\) we obtain that 
\[
   (\Delta_0\cdot \Delta'_{0,1})\circ E =
   \mtrx{m (R_{1})_{11}}{O}{O}
   {(R_{1})_{22} + (m-1) (R_{2})_{22}}\in{\mathcal A}.
   \]
By the Schur-Wielandt principle we conclude that the coherent closure \({\mathcal A}\) contains the following matrices
\[
   \mtrx{(R_{1})_{11}}{O}{O}{O},
   \mtrx{O}{O}{O}{(R_{1})_{22}},
   \mtrx{O}{O}{O}{(R_{2})_{22}}.
   \]
Therefore \({\mathcal A}\) is inhomogeneous, as claimed.
\end{proof}

\begin{corollary}
\label{org2853378}
\(J_{n,0}\),    \(J_{n,1}\),    \(J_{n,2}\) are combinatorially
isomorphic proper rank 5 Jordan schemes.
\end{corollary}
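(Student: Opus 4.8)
The plan is to read this corollary as a repackaging of the two results immediately preceding it, so that essentially no new computation is required. The substantive input is Theorem~\ref{org54afef5}, which already asserts that $J_{n,0}$ is a proper rank $5$ Jordan scheme. What remains is to transfer both the Jordan-scheme property and, more importantly, the properness, to $J_{n,1}$ and $J_{n,2}$, while exhibiting explicit combinatorial isomorphisms among the three.

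For the isomorphisms I would use the group $E_9=\left<h_1,h_2\right>$ supplied by Proposition~\ref{orgefbb13d}. Since $E_9$ acts on $\Omega_n$ as a genuine permutation group (part~\ref{orgc7eb2f6}) and lies in $\caut(Pre_n)$, as established in its proof and used in Corollary~\ref{org3f316d8}, each of its elements is a \emph{vertex} permutation carrying colors of $Pre_n$ to colors of $Pre_n$. I would track the generator $h_1$, whose adjacency matrix is $R_1$ and which cyclically permutes $T_0,T_1,T_2$ and the bridges $Br_0,Br_1,Br_2$. A direct inspection of the color lists then shows that $h_1$ fixes $Id_n$ and $Spr_n$ and sends $\Delta_0\mapsto\Delta_1$, $\Delta'_{0,1}\mapsto\Delta'_{1,2}$, $\Delta'_{0,2}\mapsto\Delta'_{1,0}$; hence $h_1$ carries the color partition of $J_{n,0}$ onto that of $J_{n,1}$, and $h_1^2$ carries it onto that of $J_{n,2}$. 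This is precisely the content of Corollary~\ref{org3f316d8}, so all three are Jordan schemes and they are pairwise combinatorially isomorphic.

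The final step is to observe that being \emph{proper}, that is, not arising as the symmetrization of any association scheme, is an invariant of combinatorial isomorphism: a vertex permutation identifying the color partitions of $J_{n,0}$ and $J_{n,i}$ would transport a symmetrizing AS of $J_{n,i}$ back to one of $J_{n,0}$, contradicting Theorem~\ref{org54afef5}. Thus properness propagates from $J_{n,0}$ to $J_{n,1}$ and $J_{n,2}$, completing the argument. I do not anticipate a genuine obstacle here; the only point deserving a line of care is this isomorphism-invariance of properness, which I would record as a short inline remark rather than belabor.
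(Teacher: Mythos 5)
Your proposal is correct and follows essentially the same route as the paper, whose proof is simply to combine Corollary \ref{org3f316d8} (the $E_9\le\caut(Pre_n)$-induced combinatorial isomorphisms among $J_{n,0}$, $J_{n,1}$, $J_{n,2}$) with Theorem \ref{org54afef5} (properness of $J_{n,0}$). The only additions you make — explicitly tracking the action of $h_1$ on the colors and recording that properness is invariant under combinatorial isomorphism — are details the paper leaves implicit, and both are sound.
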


\begin{proof}
Consider together Corollary \ref{org3f316d8} and Theorem
\ref{org54afef5}. 
\end{proof}

At this moment we are familiar with two examples of proper Jordan
schemes, namely \(J_{15}\) and \(J_{24}\) (considered up to
isomorphism). The existence of these structures was established in
the first huge part of the current text. On this way computer
aided results, elements of human reasonings, as well as outlines of
some auxiliary claims were extensively exploited.

Now the existence of both of these structures follows immediately from
the claims presented in Section \ref{orgd4c6594}.

We aim to embed \(J_{15}\) and \(J_{24}\) to an infinite class of
proper rank 5 Jordan schemes, as its first members.

\section{An infinite series of non-commutative rank 6 AS's and related proper Jordan schemes}
\label{sec:org70e2dcb}
\label{org3941a9a}

In this section an imprimitive rank 6 action of the projective group
\(PSL(2,q)\) for a prime power \(q\) will be considered. The rank 6 AS
corresponding to this action satisfies all the conditions required
above and thus leads to the existence of proper Jordan schemes of
order \(n=3(q+1)\) for \(q\equiv 1 \mod 3\). 

The exploited ideas go back to R. Mathon, A. Neumaier,
H. Kharaghani, P. Gunnells and other researchers, see the discussion
at the end of the current text.

The style of the presentation below goes back to \cite{KliR12},
\cite{Rei14}, \cite{Rei19a}, although it does not coincide
literally with any of the previous incarnations of the exploited
ideas. 

Typically in this section we provide just simplified outlines of the
necessary reasonings, rather than full proofs. For all details the
reader is referred to the papers cited below.

Thus, let \(p\) be a prime number, \(q=p^\alpha\), \(q\equiv 1\mod 3\),
more precicely, \(q=3l+1\). Let \(F=F_q\) be the finite field of order
\(q\). Denote by \(F^*\) the multiplicative group of \(F\). Clearly, this
is a cyclic group of order \(q-1\). Hence, it contains a cyclic
subgroup \({\mathbb Z}_l\) of order \(l=\frac{q-1}{3}\) acting on \(F\) by
multiplication. 

Now consider a vector space \(P\) of dimension 2 over \(F\). An element
\(\vec p\in P\) is a pair \(\vec p=(p_1,p_2)\), where \(p_1,p_2\in F\),
\(\vec 0=(0,0)\) is the zero vector in \(P\).

We consider the set \(P\setminus\{\vec 0\}\) and define on it a binary
relation \(\sim\): For two vectors \(\vec p', \vec p''\) from \(P\) we
write \(\vec p'\sim \vec p''\) if \(\vec p''=\lambda \vec p'\) for some
\(\lambda\in{\mathbb Z}_l\). 

Clearly, the relation \(\sim\) is an equivalence on the set
\(P\setminus\{\vec 0\}\). we call the equivalence classes of this
relation \emph{one-third quasi-projective points}, or \emph{ot-points} for
short. 

Clearly, the cardinality \(n\) of the set \(\Omega=\Omega_n\) of all
ot-points is equal to \(n=\frac{q^2-1}l = \frac{q-1}l (q+1) =
  3(q+1)\). 

\begin{proposition}
\label{orge37853d}
Let \(A\in SL(2,q)\), \(\vec p', \vec p'' \in P\setminus\{\vec
  0\}\). Then \(\vec p'\sim \vec p''\)  implies that \(A\vec p'\sim A\vec
  p''\).
\end{proposition}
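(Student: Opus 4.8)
The plan is to exploit two elementary facts: the linearity of the action of $A$ on the vector space $P$, and the commutativity of multiplication in the field $F$, which makes scalar multiplication by elements of $\mathbb{Z}_l$ commute with the linear map $A$. First I would observe that since $\det A = 1$, the matrix $A$ is invertible, so it maps nonzero vectors to nonzero vectors; this ensures that $A\vec p'$ and $A\vec p''$ again lie in $P\setminus\{\vec 0\}$, so that the assertion $A\vec p'\sim A\vec p''$ is even meaningful. This is the one point that genuinely needs to be checked rather than merely read off, and it is the closest thing to an obstacle in an otherwise immediate argument.

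Next I would unwind the definition of $\sim$. Assuming $\vec p'\sim \vec p''$ gives a scalar $\lambda\in\mathbb{Z}_l$ with $\vec p''=\lambda\vec p'$. Applying $A$ and using the identity $A(\lambda\vec p')=\lambda(A\vec p')$ — valid because $\lambda$ is a field scalar and commutes with the matrix product — yields $A\vec p''=\lambda(A\vec p')$ with the very same $\lambda\in\mathbb{Z}_l$. By the definition of $\sim$ this is precisely the claim that $A\vec p'\sim A\vec p''$, completing the argument.

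Conceptually, I expect this proposition to be essentially a bookkeeping lemma: its whole content is that the linear action of $SL(2,q)$ on $P\setminus\{\vec 0\}$ is compatible with the equivalence relation $\sim$ and therefore descends to a well-defined action on the quotient set $\Omega_n$ of ot-points. No case analysis or computation is required; the only care needed is the invertibility remark above, which guarantees the action stays within $P\setminus\{\vec 0\}$. This well-definedness is exactly what is needed to legitimize the imprimitive rank $6$ action of $PSL(2,q)$ on $\Omega_n$ that the subsequent construction relies upon.
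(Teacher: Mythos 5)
Your proof is correct and follows essentially the same route as the paper: unwind the definition of $\sim$ to get $\vec p''=\lambda\vec p'$ with $\lambda\in\mathbb{Z}_l$, then use the fact that scalar multiplication commutes with the linear map $A$ to conclude $A\vec p''=\lambda A\vec p'$. Your additional remark that $\det A=1$ guarantees $A$ maps $P\setminus\{\vec 0\}$ into itself is a small but welcome point of care that the paper leaves implicit.
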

\begin{proof}
If \(\vec p'\sim \vec p''\), then \(\vec p''=\lambda \vec p'\) for some
\(\lambda\in{\mathbb Z}_l\). But then \(A \vec p''= A (\lambda \vec p')
  = \lambda A \vec p'\), and thus \(A\vec p'\sim A\vec p''\).
\end{proof}

\begin{corollary}
The projective group \correction{\(PSL(2,q)\)} acts faithfully on the set \(\Omega_n\).
\end{corollary}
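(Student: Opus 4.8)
The plan is to compute the kernel $K$ of the permutation action of $SL(2,q)$ on the set $\Omega_n$ of ot-points and to show that $K$ coincides with the center $Z=\{\pm I\}$ of $SL(2,q)$; then the induced action of $PSL(2,q)=SL(2,q)/Z$ on $\Omega_n$ is faithful. By Proposition~\ref{orge37853d} the group $SL(2,q)$ does permute the ot-points, so $K$ is well defined as a normal subgroup of $SL(2,q)$, and the whole statement reduces to the single identity $K=Z$.

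First I would characterize $K$. A matrix $A\in SL(2,q)$ fixes every ot-point precisely when $A\vec p\sim\vec p$ for all $\vec p\in P\setminus\{\vec 0\}$, that is, when $A\vec p=\lambda_{\vec p}\,\vec p$ with $\lambda_{\vec p}\in\mathbb{Z}_l$. Thus every nonzero vector of $P$ is an eigenvector of $A$. A standard argument (apply $A$ to two linearly independent vectors and to their sum, then invoke linear independence) shows that a linear map all of whose nonzero vectors are eigenvectors must be scalar, so $A=\mu I$ for some $\mu\in F^{*}$; the eigenvalue condition then reads $\mu\in\mathbb{Z}_l$, while $\det A=1$ forces $\mu^{2}=1$, hence $\mu=\pm 1$. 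Consequently $K=\{\mu I:\mu\in\{\pm 1\}\cap\mathbb{Z}_l\}$.

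It then remains to show $K=Z$, and the key point is that $-1\in\mathbb{Z}_l$. This is exactly where the standing hypotheses enter. If $q$ is even then $-1=1$ and $Z=\{I\}=K$, so there is nothing further to check. If $q$ is odd, write $q=3l+1$: then $3l=q-1$ is even, and since $3$ is odd, $l$ is even. Because $\mathbb{Z}_l$ is the unique cyclic subgroup of order $l$ in the cyclic group $F^{*}$, while $-1$ is the unique involution of $F^{*}$, the evenness of $l$ yields $-1\in\mathbb{Z}_l$. Hence $\pm I\in K$ and $K=\{\pm I\}=Z$ in every case, so the $SL(2,q)$-action descends to $PSL(2,q)$ with trivial kernel, which is the asserted faithfulness.

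I expect the only genuinely delicate step to be the parity bookkeeping that secures $-1\in\mathbb{Z}_l$, since this is what ties faithfulness to the congruence $q\equiv 1\bmod 3$ used to define the ot-points; the reduction to scalar matrices and the determinant computation are routine.
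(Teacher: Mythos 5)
Your proof is correct, and it pursues the same basic strategy as the paper: identify the kernel $K$ of the $SL(2,q)$-action on $\Omega_n$ with the center $\{\pm I\}$, so that the induced $PSL(2,q)$-action is faithful. The difference lies in what is actually verified. The paper's proof is a one-line assertion that $K$ coincides with the kernel of the action of $SL(2,q)$ on the honest projective points (which is $\{\pm I\}$), and it justifies neither inclusion. The inclusion $K\subseteq\{\pm I\}$ is easy (ot-equivalence refines projective equivalence, and your eigenvector argument re-derives the standard fact that the projective kernel consists of scalar matrices), but the reverse inclusion $\{\pm I\}\subseteq K$ is the genuinely non-trivial point: one must check that $-I$ fixes every ot-point, i.e.\ that $-1\in{\mathbb Z}_l$. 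Your parity bookkeeping (for $q$ odd, $q-1=3l$ is even and $3$ is odd, so $l$ is even, hence the unique involution $-1$ of the cyclic group $F^*$ lies in its unique subgroup ${\mathbb Z}_l$ of order $l$; for $q$ even there is nothing to prove) supplies exactly the verification the paper leaves implicit. Without it one would only know that $SL(2,q)$ acts with some kernel contained in $\{\pm I\}$, which is not yet the statement that $PSL(2,q)$ acts (faithfully) at all. So your write-up should be viewed as a completed version of the paper's proof rather than a different one, and it is the more careful of the two.
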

\begin{proof}
Due to Proposition \ref{orge37853d}, the action of \(SL(2,q)\) on the
set \(\Omega_n\) is well-defined. The kernel of this action coincides
with the kernel of the action of \(SL(2,q)\) on the "honest"
projective points, that is, classes of non-zero collinear vectors. 
\end{proof}

\begin{proposition}
\label{org83fe6ed}
\begin{enumerate}
\item \label{orgcc345c4}The action \((PSL(2,q), \Omega_n)\) is imprimitive with the
imprimitivity system of type \((q+1)\circ K_3\).
\item \label{org7c71ebe}The action \((PSL(2,q), \Omega_n)\) has rank 6 with valencies
\(1^3,q^3\).
\end{enumerate}
\end{proposition}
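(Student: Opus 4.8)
The plan is to realise the action on $\Omega_n$ as a $3$-fold cover of the natural action of $PSL(2,q)$ on the projective line $PG(1,q)$, and then to read off both claims from the arithmetic of the index-$3$ subgroup $\mathbb{Z}_l\le F^*$. Throughout I write $x_0=\mathbb{Z}_l(1,0)^{\top}$ for a convenient base ot-point and $\mu$ for a generator of $F^*/\mathbb{Z}_l\cong\mathbb{Z}_3$.

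First, for part (a), I would observe that $SL(2,q)$ acts transitively on $P\setminus\{\vec 0\}$ (the first column of a determinant-one matrix can be any prescribed nonzero vector), and that this action commutes with the scalar action of $\mathbb{Z}_l$ defining $\sim$; hence, by Proposition \ref{orge37853d}, $PSL(2,q)$ is transitive on $\Omega_n$. The map $\pi$ sending an ot-point $\mathbb{Z}_l\vec p$ to the honest projective point $F^*\vec p$ is then well defined, surjective, equivariant, and exactly $3$-to-$1$ because $[F^*:\mathbb{Z}_l]=3$. Its fibres constitute a $G$-invariant partition of $\Omega_n$ into $q+1$ blocks of size $3$; as $1<3<n$, this block system is nontrivial, giving imprimitivity with a system of type $(q+1)\circ K_3$.

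For part (b) I would compute the stabiliser $S\le SL(2,q)$ of $x_0$: it consists of the upper-triangular matrices $\mathrm{diag}(a,a^{-1})$ times a unipotent, with $a\in\mathbb{Z}_l$ and the off-diagonal entry arbitrary in $F$, so $|S|=lq$. The structural key is that $S$ is normal in the Borel subgroup $B$ fixing $\pi(x_0)$ (conjugation by a torus generator only rescales the unipotent part) and $[B:S]=3$; since $B$ permutes the three ot-points of the fibre over $\pi(x_0)$ regularly, its normal subgroup $S$ fixes each of them, yielding the three subdegrees equal to $1$. For the remaining $3q$ ot-points I would use that $S$ contains all translations, so it is transitive on $PG(1,q)\setminus\{\pi(x_0)\}$, and then show by a direct computation that each $S$-orbit meets every such fibre in exactly one ot-point: applying $\mathrm{diag}(a,a^{-1})$-times-unipotent to a representative $\mathbb{Z}_l(z,1)^{\top}$ returns $\mathbb{Z}_l a^{-1}(z,1)^{\top}=\mathbb{Z}_l(z,1)^{\top}$ because $a^{-1}\in\mathbb{Z}_l$. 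This forces exactly three orbits of size $q$, indexed by the cosets $\mathbb{Z}_l,\ \mu\mathbb{Z}_l,\ \mu^2\mathbb{Z}_l$ scaling the second coordinate, and hence rank $6$ with subdegrees $1^3,q^3$.

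The step I expect to be the main obstacle, and would treat most carefully, is the passage from $SL(2,q)$ to $PSL(2,q)$: the orbit computation above is carried out in $SL(2,q)$, and it transfers to the quotient only because $-I$ acts trivially on $\Omega_n$, i.e. $-1\in\mathbb{Z}_l$. This does hold under the hypothesis $q\equiv 1\bmod 3$ — in odd characteristic it forces $q\equiv1\bmod 6$, so $2\mid l$ and $\mathbb{Z}_l$ contains the involution $-1$, while in even characteristic $-1=1$ — and it is precisely the fact underlying the faithfulness corollary following Proposition \ref{orge37853d}. Granting this, the orbits of $S\le SL(2,q)$ coincide with those of its image in $PSL(2,q)$, and the consistency check $3\cdot 1+3\cdot q=3(q+1)=n$ confirms that no orbit has been missed.
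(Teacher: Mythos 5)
Your argument is correct, and for part (b) it takes a genuinely different route from the paper's. For (a) the two proofs coincide in substance: the blocks are the fibres of the natural $3$-to-$1$ projection onto the projective line (triples of collinear ot-points); you merely spell out transitivity, equivariance and the fibre count, which the paper treats as evident. For (b), however, the paper never examines the point stabilizer. It exhibits the six $2$-orbits as explicit invariant relations: the thin relations $R_0,R_1,R_2$ are determined by which coset of ${\mathbb Z}_l$ in $F^*$ contains the scalar linking two collinear ot-points, and the thick relations $\Delta_0,\Delta_1,\Delta_2$ by which coset contains the determinant $\det(\vec p',\vec p'')$; it then counts $q$ admissible $\vec p''$ for each $\Delta_i$. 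You instead compute the stabilizer $S\le SL(2,q)$ of an ot-point ($|S|=lq$), use its normality in the Borel subgroup $B$ to conclude that $S$ fixes its whole fibre pointwise (the subdegrees $1^3$), and use the translation subgroup together with invariance of the ${\mathbb Z}_l$-coset of the second coordinate to obtain three further orbits of length $q$, finally descending from $SL(2,q)$ to $PSL(2,q)$ via $-1\in{\mathbb Z}_l$. Each approach buys something. Yours is self-contained group theory, gives the subdegrees directly, and makes the $SL\to PSL$ descent explicit, a point the paper leaves inside the preceding faithfulness corollary. The paper's route produces the explicit determinant description of the $\Delta_i$, which is exactly the data consumed by the next two propositions (the antipodal covers of $K_{q+1}$, after Mathon, and the verification of the multiplication table \ref{eq:multiplication}); your subdegree computation establishes the rank but does not hand over those relations.

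One formula in your text should be repaired. Applying $\begin{pmatrix} a & b \\ 0 & a^{-1}\end{pmatrix}\in S$ to $(z,1)^{\top}$ gives $a^{-1}\,(a^2z+ab,\,1)^{\top}$, so the ot-point ${\mathbb Z}_l(z,1)^{\top}$ is sent to ${\mathbb Z}_l(a^2z+ab,\,1)^{\top}$, not to itself as your displayed identity ${\mathbb Z}_l a^{-1}(z,1)^{\top}={\mathbb Z}_l(z,1)^{\top}$ asserts. What the computation actually shows, and all that your argument needs, is that the ${\mathbb Z}_l$-coset of the second coordinate is preserved: the three coset classes are $S$-invariant sets of size $q$, on each of which the translations in $S$ already act transitively. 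With that correction the proof is complete.
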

\begin{proof}
Clearly, three collinear ot-points correspond to one projective
point. Thus they form an imprimitivity block of size 3 in the action of
\(PSL(2,q)\). This proves \ref{orgcc345c4}). To prove \ref{org7c71ebe}), distinguish three
cosets of the group \({\mathbb Z}_l\) in \(F^*\). Using these cosets
define three binary relations \(R_0=Id_n\), \(R_1, R_2\) on \(\Omega\),
each of valency 1. The relations \(R_1,R_2\) both are isomorphic to
\((q+1)\circ \vec C_3\), where \(\vec C_3\) is a directed cycle of
length 3.

To define three remaining relations \(\Delta_0, \Delta_1, \Delta_2\),
each of valency \(q\), require that the value of the determinant
\(\det\begin{pmatrix} p'_1 & p'_2 \\ p''_1 & p''_2 \end{pmatrix}\)
 belong to the subgroup \({\mathbb Z}_l\) or one of its
two cosets in \(F^*\). Show that each time for a selected ot-point
\(\vec p'\) there are exactly \(q\) options to get non-equivalent
vectors \(\vec p''\), representing different ot-points, such that
\((\vec p', \vec p'')\in\Delta_i\).
\end{proof}
Denote by \({\frak X}_n=V(PSL(2,q),\Omega_n)\) the rank 6 centralizer algebra
of the considered imprimitive action of \(PSL(2,q)\).

\begin{proposition}
\label{org5d73a22}
Each graph \(\Gamma_i=(\Delta_i,\Omega_n)\), \(i\in{\mathbb Z}_3\),
defines an antipodal distance regular cover of the complete graph
\(K_{q+1}\) with \(q+1\) vertices.
\end{proposition}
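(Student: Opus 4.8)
The plan is to recognize \({\frak X}_n=V(PSL(2,q),\Omega_n)\) as a concrete realization of the abstract rank 6 scheme from Section~\ref{org68efaa1}: I will verify that its basic relations obey the multiplication table~\eqref{eq:multiplication} with \(m=q\), and then invoke Proposition~\ref{org510a22a}, which already asserts that the thick graphs of \emph{any} scheme satisfying~\eqref{eq:multiplication} are antipodal distance regular covers of \(K_{m+1}\). Thus the whole problem collapses to a structure-constant computation in the explicit model of Proposition~\ref{org83fe6ed}. Fix a generator \(g\) of \(F^*\) and let \(H={\mathbb Z}_l=\langle g^3\rangle\) be the index-3 subgroup, with cosets \(H,gH,g^2H\). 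In this language the thin relation \(R_k\) sends the ot-point \([\vec p\,]\) to \([g^k\vec p\,]\), while \(\Delta_k=\{([\vec p\,],[\vec q\,]):\det(\vec p,\vec q)\in g^kH\}\); the latter is well defined since rescaling \(\vec p,\vec q\) by elements of \(H\) multiplies the determinant by an element of \(H\).

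First I would record that each \(\Delta_k\) is symmetric. Indeed \(\det(\vec q,\vec p)=-\det(\vec p,\vec q)\), so symmetry amounts to \(-1\in H\); this holds because for even \(q\) one has \(-1=1\), whereas for odd \(q\) the congruence \(q\equiv 1\pmod 3\) together with \(q\) odd forces \(q\equiv 1\pmod 6\), hence \(2\mid l\) and the unique involution \(-1\) of \(F^*\) lies in \(H\). The products \(R_iR_j=R_{i+j}\), \(R_i\Delta_j=\Delta_{i+j}\) and \(\Delta_jR_i=\Delta_{j-i}\) then follow at once from the fact that the \(R_k\) implement scalar multiplication by powers of \(g\), which shifts the determinant coset additively in \({\mathbb Z}_3\). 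The substantial identity is the thick-thick product, and the main work is to prove
\[
\Delta_i\Delta_j=q\,R_{i-j}+\tfrac{q-1}{3}(\Delta_0+\Delta_1+\Delta_2).
\]

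For this I would split the count of two-step paths \([\vec x\,]\xrightarrow{\Delta_i}[\vec z\,]\xrightarrow{\Delta_j}[\vec y\,]\) into three cases according to the relation between \(\vec x\) and \(\vec y\). When \([\vec x\,]=[\vec y\,]\) the two determinant conditions are compatible only if \(i=j\), and then every \(\Delta_i\)-neighbour qualifies, giving the coefficient \(q\) of \(R_0\); the collinear case \(([\vec x\,],[\vec y\,])\in R_1\cup R_2\) is handled by the same determinant bookkeeping and supplies the off-diagonal thin term \(qR_{i-j}\). The heart is the generic case, where \(\vec x,\vec y\) are linearly independent: using them as a basis and writing \(\vec z=\alpha\vec x+\beta\vec y\), one computes \(\det(\vec x,\vec z)=\beta\det(\vec x,\vec y)\) and \(\det(\vec z,\vec y)=\alpha\det(\vec x,\vec y)\), so the two membership conditions pin \(\alpha\) and \(\beta\) to prescribed cosets of \(H\), each of size \(l\). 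The \(l^2\) admissible pairs \((\alpha,\beta)\) are grouped into ot-points by the free scaling action of \(H\), leaving exactly \(l=\tfrac{q-1}{3}\) vectors \([\vec z\,]\), and this count is independent of which \(\Delta_h\) contains \(([\vec x\,],[\vec y\,])\); summing over \(h\) produces the term \(\tfrac{q-1}{3}(\Delta_0+\Delta_1+\Delta_2)\). The one delicate point, and the step I expect to demand the most care, is the exact matching of index conventions in the \(qR_{i-j}\) summand (i.e. whether \(R_1\) or \(R_2\) receives the coefficient \(q\)), which hinges on the orientation chosen for \(R_1\) and on the sign \(-1\in H\); once the labelling is fixed consistently, \eqref{eq:multiplication} holds with \(m=q\), and Proposition~\ref{org510a22a} finishes the proof.
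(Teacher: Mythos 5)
Your proof is correct, but it is not the paper's proof: the paper disposes of this proposition in a single line, citing R.~Mathon's results on imprimitive association schemes (\cite{Mat75}, \cite{Mat87}), of which the statement is a particular case. You instead verify the multiplication table \eqref{eq:multiplication} with \(m=q\) directly in the determinant model and then invoke Proposition~\ref{org510a22a}. This inverts the paper's logical order: there, the antipodal-cover property (Proposition~\ref{org5d73a22}) is established first and the table (Proposition~\ref{org403533b}) is asserted afterwards, with part of its verification deferred to Sections 6--7 of \cite{Rei19a}; your route proves Proposition~\ref{org403533b} en passant and obtains the cover property as a corollary of the abstract Section~16 result, with no circularity, since Proposition~\ref{org510a22a} is derived from the table alone. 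Your key computations are sound: well-definedness of \(\Delta_k\) under \(H\)-rescaling; symmetry via \(-1\in H\) (trivial in characteristic 2, while for odd \(q\) the congruence \(q\equiv 1 \pmod 6\) forces \(2\mid l\)); the thin products via the coset shift of the determinant; and, in the generic case, the basis trick \(\vec z=\alpha\vec x+\beta\vec y\), which pins \(\alpha,\beta\) to cosets of \(H\), so that the free diagonal \(H\)-action leaves exactly \(l=\frac{q-1}{3}\) points \([\vec z\,]\), independently of which \(\Delta_h\) contains \(([\vec x\,],[\vec y\,])\) --- which is precisely why every thick relation receives the coefficient \(\frac{m-1}{3}\). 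As to what each approach buys: the paper's citation is short and situates the construction in the classical literature on antipodal covers, whereas yours is self-contained, removes the dependence on external references, and delivers the non-commutativity of \({\frak X}_n\) (visible in \(R_1\Delta_0=\Delta_1\neq\Delta_2=\Delta_0R_1\)) as a free byproduct. Your flagged ``delicate point'' about orientation is genuine but harmless: the two possible orientations of \(R_1\) differ by the relabelling \(R_1\leftrightarrow R_2\), under which the table keeps its form.
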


\begin{proof}
This is a particular case of the result by R. Mathon, see
\cite{Mat75}, \cite{Mat87}, which was formulated in terms of
imprimitive AS's.
\end{proof}

\begin{proposition}
\label{org403533b}
The introduced AS \({\frak X}_n\) is non-commutative with multiplication table
satisfying the conditions  \ref{eq:multiplication}, presented in the previous sections,
provided that \(q=m\).
\end{proposition}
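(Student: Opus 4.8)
The plan is to realize the six basic relations of \({\frak X}_n\) explicitly as the fibres of two \({\mathbb Z}_3\)-valued invariants and then to read off the whole table from the multiplicativity of the determinant. Write \(\overline{x}\) for the image of \(x\in F^*\) under the quotient map \(F^*\to F^*/{\mathbb Z}_l\); since \(\left|F^*/{\mathbb Z}_l\right|=3\) this quotient is identified with \({\mathbb Z}_3\). For an ordered pair of ot-points choose representatives \(\vec u,\vec v\): if they are collinear, write \(\vec u=c\,\vec v\) and attach the label \(R_{\overline{c}}\); if they are independent, attach \(\Delta_{\overline{\det(\vec u,\vec v)}}\). First I would check that both labels are well defined (replacing \(\vec u\) or \(\vec v\) by a \({\mathbb Z}_l\)-multiple changes \(c\) and \(\det(\vec u,\vec v)\) only by a factor in \({\mathbb Z}_l\)), that each is \(PSL(2,q)\)-invariant (elements of \(SL(2,q)\) have determinant \(1\), hence preserve \(\det(\vec u,\vec v)\) exactly and preserve collinearity), and that the resulting partition \(R_0,R_1,R_2,\Delta_0,\Delta_1,\Delta_2\) has valencies \(1^3,q^3\). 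By Proposition \ref{org83fe6ed} this is precisely the rank-\(6\) partition into \(2\)-orbits, so these six relations are the basic relations of \({\frak X}_n\).

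The thin-thin and thin-thick products are then bookkeeping. Since each \(R_i\) has valency \(1\), the product of \(R_i\) with any basic relation is again a single basic relation, and its label is computed from the additive cocycle identities \(\overline{cd}=\overline{c}+\overline{d}\) and \(\overline{\det(c\vec u,\vec w)}=\overline{c}+\overline{\det(\vec u,\vec w)}\). This yields \(R_iR_j=R_{i+j}\), \(R_i\Delta_j=\Delta_{i+j}\) and \(\Delta_jR_i=\Delta_{j-i}\). One point to flag is the treatment of signs: swapping the arguments of the determinant multiplies it by \(-1\), but \(\overline{-1}=0\) in \({\mathbb Z}_3\) because \(F^*/{\mathbb Z}_l\) has odd order; thus \(-1\in{\mathbb Z}_l\) and every sign ambiguity is harmless.

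The substantive step is the thick-thick product \(\Delta_i\Delta_j\), which I would obtain by directly counting the ot-points \(\langle\vec w\rangle\) with \(\overline{\det(\vec u,\vec w)}=i\) and \(\overline{\det(\vec w,\vec v)}=j\), the structure constant depending only on the relation between \(\langle\vec u\rangle\) and \(\langle\vec v\rangle\). When \(\vec u,\vec v\) are independent, expand \(\vec w=a\vec u+b\vec v\); then \(\det(\vec u,\vec w)=b\,\det(\vec u,\vec v)\) and \(\det(\vec w,\vec v)=a\,\det(\vec u,\vec v)\), so the two conditions pin the cosets \(\overline{a},\overline{b}\) (in particular force \(a,b\neq0\)), giving \(l^2\) admissible vectors and hence \(l^2/l=l=\tfrac{q-1}{3}\) ot-points, independently of \(i,j\) and of the class of the pair. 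When \(\vec u,\vec v\) are collinear, pick \(\vec t\) with \(\det(\vec u,\vec t)=1\), write \(\vec w=a\vec u+b\vec t\); then \(\overline{\det(\vec u,\vec w)}=\overline{b}=i\) forces \(b\) into one coset (\(l\) choices) while \(a\) is free (\(q\) choices), and the second condition reduces, via \(\vec v=c\vec u\), to \(\overline{c}=j-i\); this gives \(lq/l=q\) ot-points exactly when the \(R\)-label of the pair equals \(i-j\), and none otherwise. Collecting both cases yields \(\Delta_i\Delta_j=q\,R_{i-j}+\tfrac{q-1}{3}(\Delta_0+\Delta_1+\Delta_2)\), which is \eqref{eq:multiplication} with \(m=q\).

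Non-commutativity is then immediate: \(\Delta_i\Delta_j\) and \(\Delta_j\Delta_i\) share the same \(\Delta\)-part but have thin parts \(q\,R_{i-j}\) and \(q\,R_{j-i}\), and \(i-j\neq j-i\) in \({\mathbb Z}_3\) whenever \(i\neq j\) (again because the quotient has odd order), so the products differ. The main obstacle is exactly the thick-thick count: the care lies in fixing representatives, tracking the \({\mathbb Z}_3\)-grading faithfully through the (bi)linearity of the determinant, and correctly dividing the vector counts by the \({\mathbb Z}_l\)-orbit size \(l\) when passing from vectors to ot-points; the collinearity split and the triviality of \(\overline{-1}\) are the two facts that make the arithmetic close up.
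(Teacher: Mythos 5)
Your proof is correct, and it supplies substantially more than the paper does: the paper's own ``proof'' of this proposition is a one-line deferral (``Part of the necessary equations are already satisfied. For the remaining ones act independently or consult Sections 6,7 in \cite{Rei19a}''), so the structure-constant computation is never actually carried out in the text. What you wrote is precisely the missing verification, and it stays inside the framework the paper sets up in Proposition \ref{org83fe6ed}: thin relations graded by the coset of the connecting scalar in \(F^*/{\mathbb Z}_l\cong {\mathbb Z}_3\), thick relations graded by the coset of the determinant. Your two counting cases are both right — for independent \(\vec u,\vec v\), expanding \(\vec w=a\vec u+b\vec v\) pins \(\bar a,\bar b\) to one coset each, giving \(l^2\) vectors and hence \(l=\frac{q-1}{3}\) ot-points for \emph{every} thick class; for collinear pairs, \(\vec w=a\vec u+b\vec t\) gives \(lq\) vectors, hence \(q\) ot-points, exactly when the thin label is \(i-j\) — and the division by \(l\) when passing from vectors to ot-points is legitimate because both solution sets are visibly \({\mathbb Z}_l\)-stable. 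You also flag the two places where the arithmetic could silently fail, namely \(\overline{-1}=0\) and \(i-j\neq j-i\) for \(i\neq j\), both consequences of the quotient having odd order; these are indeed what makes the table close up and what forces non-commutativity. Two small points to tighten: your scalar convention flips between the definition of \(R_{\bar c}\) (where you write \(\vec u=c\,\vec v\)) and the collinear count (where you write \(\vec v=c\,\vec u\)); the conclusion \(qR_{i-j}\) comes out correct, but you should fix one direction and keep it throughout. And the identification of your six invariant relations with the \(2\)-orbits deserves one explicit sentence: each of your relations is a union of \(2\)-orbits by invariance, and since both partitions of \(\Omega^2\) have exactly six classes (rank \(6\) by Proposition \ref{org83fe6ed}), the two partitions coincide.
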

\begin{proof}
Part of the necessary equations are already satisfied. For the
remaining ones act independently or consult Sections 6,7 in
\cite{Rei19a}. 
\end{proof}

Let us now proceed with the AS \({\frak X}_n\), select an arbitrary fiber in
the spread \(Spr_n=(R_1\cup R_2, \Omega_n)\) and form again a rank 10
pregraph \(Pre_n\), as we did before. Working with it, construct
isomorphic color graphs \(J_{n,0}\), \(J_{n,1}\), \(J_{n,2}\). Let us
select one of them, say \(J_{n,0}\), for further consideration.

\begin{theorem}
\label{orga416f9e}
Let \(q\) be a prime power, \(q\equiv 1 \mod 3\). Consider the pregraph
\(Pre_n\), \(n=3(q+1)\), which is obtained from \({\frak X}_n\). Then its rank 5
merging \(J_{n,0}\) is a proper Jordan scheme of order \(n\).
\end{theorem}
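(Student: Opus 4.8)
The plan is to recognize Theorem \ref{orga416f9e} as a direct instance of the general Theorem \ref{org54afef5}: the substantive work has already been done in Sections \ref{org68efaa1} and \ref{orgd4c6594}, and what remains is to check that the concrete scheme \({\frak X}_n = V(PSL(2,q),\Omega_n)\) meets the hypotheses of that general model. First I would set \(m=q\) and observe that the numerology matches: the order is \(n=3(q+1)=3(m+1)\), and the divisibility condition \(3\mid(m-1)\) demanded by the multiplication table \eqref{eq:multiplication} is exactly the standing assumption \(q\equiv 1\bmod 3\) (indeed \(q=3l+1\), so \(\frac{m-1}{3}=l\in{\mathbb N}\) and the structure constants \(\frac{m-1}{3}\) are integral).

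Next I would assemble the structural hypotheses from the three propositions already established in this section. Proposition \ref{org83fe6ed} supplies the imprimitive rank 6 action with valencies \(1^3,q^3\), hence the three thin relations \(R_0=Id_n,R_1,R_2\) and the three thick relations \(\Delta_0,\Delta_1,\Delta_2\) of valency \(m=q\). Proposition \ref{org5d73a22} certifies that each \((\Delta_i,\Omega_n)\) is an antipodal distance regular cover of \(K_{q+1}\). Proposition \ref{org403533b} delivers non-commutativity together with precisely the multiplication table \eqref{eq:multiplication}. Thus \({\frak X}_n\) is a non-commutative rank 6 AS of exactly the shape postulated in the abstract model.

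With the hypotheses verified, the construction proceeds mechanically. Choosing any fiber of the spread \(Spr_n=(R_1\cup R_2,\Omega_n)\) as the island yields the rank 10 pregraph \(Pre_n\), and the bridge-switching \(Sw_0\) produces the rank 5 color graph \(J_{n,0}\). That \(J_{n,0}\) is a Jordan scheme is the Proposition opening Section \ref{orgd4c6594}, whose argument merges each pair of thick relations into a metric rank 4 AS and then invokes Lemma \ref{orgb745be0}. Its properness, i.e.\ inhomogeneity of the coherent closure, is Theorem \ref{org54afef5}, whose proof used nothing about \({\frak X}_n\) beyond \eqref{eq:multiplication} and the block identities of Proposition \ref{010419c}; it therefore transfers verbatim.

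I expect no genuine obstacle: the whole content of Theorem \ref{orga416f9e} is the observation that the Mathon-type scheme \(V(PSL(2,q),\Omega_n)\) realizes the generic hypothesis of Theorem \ref{org54afef5}. The only point meriting care is the bookkeeping of the identification \(m\leftrightarrow q\) and the check that the three cosets of \({\mathbb Z}_l\) in \(F^*\), used to define \(R_1,R_2\) and \(\Delta_0,\Delta_1,\Delta_2\), reproduce the cyclic \({\mathbb Z}_3\) action on indices encoded in \eqref{eq:multiplication}; this has already been discharged in Proposition \ref{org403533b} (with reference to \cite{Rei19a}). Finally, Corollary \ref{org2853378} upgrades the single scheme to the trio \(J_{n,0},J_{n,1},J_{n,2}\) of combinatorially isomorphic proper rank 5 Jordan schemes.
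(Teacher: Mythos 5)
Your proposal is correct and follows exactly the paper's own route: the paper's proof is the one-line ``Combine Proposition \ref{org403533b} with Theorem \ref{org54afef5},'' which is precisely your reduction of the concrete $PSL(2,q)$ scheme to the general ``island and continent'' model via the identification $m=q$. Your additional verification of the numerology and of the supporting Propositions \ref{org83fe6ed} and \ref{org5d73a22} simply makes explicit what the paper leaves implicit.
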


\begin{proof}
Combine Proposition \ref{org403533b} with Theorem \ref{org54afef5}.
\end{proof}
\begin{corollary}
\label{orgf5082e9}
There exist infinitely many proper Jordan schemes.
\end{corollary}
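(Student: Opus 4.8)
The plan is to deduce the corollary directly from Theorem \ref{orga416f9e}, whose statement already does all the substantive work: for every prime power $q$ with $q\equiv 1\pmod 3$ it produces a proper rank $5$ Jordan scheme $J_{n,0}$ of order $n=3(q+1)$. It therefore remains only to exhibit infinitely many admissible values of $q$ and to argue that the resulting schemes are genuinely different objects, so that we are not merely re-counting isomorphic copies of a single scheme.

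First I would record that there are infinitely many primes $q$ with $q\equiv 1\pmod 3$. This is the classical theorem of Dirichlet on primes in arithmetic progressions; alternatively one may invoke the elementary Euclid-type argument based on the prime divisors of the values of the cyclotomic polynomial $x^2+x+1$, which already yields an infinite supply of primes $\equiv 1\pmod 3$ without the full strength of Dirichlet's theorem. Any resulting infinite set $\{q_1<q_2<\cdots\}$ consists of prime powers satisfying the hypotheses of Theorem \ref{orga416f9e}.

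Next, for each $q_k$ in this sequence I would apply Theorem \ref{orga416f9e} to obtain a proper Jordan scheme $J^{(k)}:=J_{n_k,0}$ of order $n_k=3(q_k+1)$. Since the map $q\mapsto 3(q+1)$ is strictly increasing, the orders satisfy $n_1<n_2<\cdots$ and are pairwise distinct. Two Jordan schemes whose vertex sets have different cardinalities cannot be combinatorially isomorphic; hence the schemes $J^{(1)},J^{(2)},\dots$ are pairwise non-isomorphic proper Jordan schemes. This is precisely the assertion of the corollary.

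The only input beyond Theorem \ref{orga416f9e} is the number-theoretic step guaranteeing infinitely many suitable $q$, and once that is in hand the remaining reasoning is immediate; the strict monotonicity of the orders is what rules out any danger of collecting only finitely many isomorphism types. Consequently there is no real obstacle at this stage: the essential difficulty was already absorbed into the construction of the pregraph and into the properness argument of Theorem \ref{org54afef5}, to which Theorem \ref{orga416f9e} appeals.
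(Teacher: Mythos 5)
Your proof is correct and follows essentially the same route as the paper, which likewise invokes Dirichlet's theorem to produce infinitely many primes $q\equiv 1\pmod 3$ and then applies Theorem \ref{orga416f9e} (the paper additionally remarks that varying the exponent $\alpha$ in $q=p^\alpha$ yields further admissible prime powers). Your explicit observation that the orders $n=3(q+1)$ are strictly increasing, so the schemes are pairwise non-isomorphic, is a point the paper leaves implicit but is a welcome clarification rather than a divergence.
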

\begin{proof}
Use Dirichlet's Theorem to get infinitely many primes \(p\) with
\(p\equiv 1 \mod 3\). Manipulation with \(\alpha\) provides more choices
for suitable values of \(q=p^\alpha\). 
\end{proof}

The reader is now welcome to identify the obtained proper schemes
\(J_{15}\) and \(J_{24}\) using \(q=4\) and \(q=7\), respectively, with the
previous two structures, whose consideration formed the essential
part of this paper.

Using values of \(q\) equal to 13, 16, 19, 25, 31, we are getting
proper Jordan schemes for orders \(n=42,51,60,78,96\), respectively
(here \(n<100\)).

In fact, the results in \cite{Rei19a} provide a wider spectrum of
attractive options for hunting for proper Jordan schemes, see again
comments in further sections.

\section{Toward more general classes of proper Jordan schemes}
\label{sec:org72f85d2}
\label{org30a0d76}
Here, we discuss an outline of ideas which imply more general
constructions of Jordan schemes. The full justification of the
formulated results will be presented in \cite{KliMR}.

Let \(\Omega=\Omega_n\) be a set of cardinality \(n\). Assume that
\({\cal M}_n=(R,\Omega)\) be a rank \(2l\) AS of order \(n=l(m+1)\) with the set
\(R\) of relations, which is split to \(l\) thin relations \(R_0, \dots,
  R_{l-1}\), and \(l\) thick relations \(\Delta_0, \dots,
  \Delta_{l-1}\). Each thin relation is of valency 1, while each thick
relation is  symmetric of valency \(m\).

We assume that \({\cal M}_n\) has a unique equivalence relation \(E=\bigcup_i
  R_i\) (the thin radical  in the sense of
\cite{Zie05}). We also assume that each of the basic graphs
\((\Delta_i, \Omega)\), \(i\in {\mathbb Z}_l\), is an \(l\)-fold antipodal
distance regular cover of the complete graph \(K_{m+1}\). Here we
require that \(l|(m-1)\).

Summing up all the assumptions, including those which were expressed
evidently above, we require that
\begin{align*}
R_iR_j &= R_{i+j},\\
R_i \Delta_j &= \Delta_{i+j},\\
(18.1) \qquad \qquad \Delta_j R_i &= \Delta_{j-i},\\
\Delta_i \Delta_j &= m R_{i-j} + \correction{\frac{m-1}{\ell}} (\Delta_0 + \dots + \Delta_{l-1}),
\end{align*}
here all arithmetic in the indices is done modulo \(l\). The symmetrization of the
above scheme \({\cal M}_n\), denoted by \(\tilde {\cal M}_n\), yields a non-proper
Jordan scheme \(NJ_n\) of rank \(r=\left\lfloor\frac{3l+2}
  2\right\rfloor\). 

The equivalence relation \(E\) has \(m+1\) fibers of size \(l\). Let \(F_i\)
be an arbitrary fiber. Let us call it the island \(Isl_{n,l}\) and set
\(\Omega_n\setminus F_i=Con_{n,n-l}\) the continent of size \(n-l\).

In the same manner as for the case \(l=3\), the selection of the
island implies the appearance of a pregraph \(Pre_n\). It has two
identity relations on the island and the continent,
\(\left\lceil\frac{l+1}{2}\right\rceil\) antireflexive symmetric
relations on the island; for \(l\) even one of them has valency 1, all
other have valency 2. Also, \(Pre_n\) has \(l\) bridges between the
island and the continent, and \(l+1\) antireflexive relations on the
continent: the remainder of the graph \(E\setminus R_0\) of kind
\(m\circ K_l\), and \(l\) restrictions \(T_i\) of the basic graphs
\(\Delta_i\) on the continent.

At this stage it is possible to define switching of the initial
scheme with respect to the selected graph \(\Delta_0\). Namely, the
thick graph \(\Delta_0\) remains unchanged. For all \(i\in{\mathbb
  Z}_l\) the bridges \(Br_i\) and \(Br_{-i}\) are transposed. In other
words, the thick graph \(\Delta_i\) with edge set \(Br_i\cup T_i\) is
substituted by a thick graph \(\Delta'_{0,i} = Br_{-i}\cup T_i\). Here, all arithmetic is done modulo \(l\). The
color graph obtained by this generalized procedure of switching is
denoted by \(J_{n,0}\). In a similar manner the color graphs \(J_{n,i}\)
are defined, \(i\in{\mathbb Z}_l\). All these graphs have rank
\(r=\left\lfloor\frac{3l+2} 2\right\rfloor\).

Below we formulate a few results. \correction{ Some of them will be proven in
\cite{KliMR}.}

\begin{proposition}
\label{orge71cdb7}
\begin{enumerate}
\item Each of the basic graphs \(\Delta'_{0,i}\) is an antipodal \(l\)-fold
distance regular cover of the complete graph \(K_{m+1}\).
\item Each color graph \(J_{n,i}\), \(i\in{\mathbb Z}_l\), is a Jordan
scheme of rank \(r\).
\end{enumerate}
\end{proposition}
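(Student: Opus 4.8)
The plan is to reduce the whole statement to the two elementary identities behind Proposition~\ref{org163c63e} and to the block-matrix bookkeeping already set up in Proposition~\ref{010419c}, now read with $\frac{m-1}{\ell}$ in place of $\frac{m-1}{3}$. Throughout I write $D_i:=\Delta'_{0,i}$, so that in the island/continent block form (island $=\Omega_1=F_m$, continent $=\Omega_2$) one has $D_i=\mtrx{0}{(\Delta_{-i})_{12}}{(\Delta_{-i})_{21}}{(\Delta_i)_{22}}$, because the switching $Sw_0$ keeps the continental part $(\Delta_i)_{22}=T_i$ and replaces the bridge $Br_i$ by $Br_{-i}$ (the $(1,1)$-block vanishes since the thick relations carry no within-fibre edges). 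I denote by $\mathcal A$ the linear span of the basic matrices of $J_{n,0}$.

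For part (a) I would avoid any distance computation and instead exhibit an explicit isomorphism onto $\Delta_i$. Let $h_1$ be the semiregular permutation of order $\ell$ from Lemma~\ref{org1cc5c32} (adjacency matrix $R_1$) and let $h_2$ be its restriction to the island extended by the identity on the continent, which is a color automorphism of $Pre_n$ by the propositions preceding Proposition~\ref{orgefbb13d}. Using $\Delta_i=R_i\Delta_0$, i.e. $(x,y)\in\Delta_i\iff(h_1^i(x),y)\in\Delta_0$, one checks that $h_2$ shifts bridge indices by $-1$; hence $\tau:=h_2^{-2i}$ fixes the continent pointwise, fixes $T_i$, and carries $Br_{-i}$ to $Br_i$, so that $\tau(D_i)=Br_i\cup T_i=\Delta_i$. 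Since $\Delta_i$ is an antipodal $\ell$-fold cover of $K_{m+1}$ by hypothesis (in the schurian case, Proposition~\ref{org5d73a22}), its isomorphic image $D_i$ is one as well. This is exactly the $\ell\ge 3$ analogue of the corollary to Proposition~\ref{orgefbb13d}.

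For part (b) the rank is immediate: switching touches only the bridges, so $J_{n,0}$ has the same basic relations off the thick part as $NJ_n$, namely $\id_n$, the $\lfloor \ell/2\rfloor$ circulant spreads $\tilde R_a=R_a\cup R_{-a}$, and the $\ell$ thick graphs $D_0,\dots,D_{\ell-1}$, giving rank $r=\lfloor(3\ell+2)/2\rfloor$. To see that $J_{n,0}$ is a Jordan scheme I would invoke Proposition~\ref{org163c63e} and verify $(A+A')^2\in\mathcal A$, equivalently $A*A'\in\mathcal A$ (the squares being the diagonal case), for every pair of basic matrices, in three cases. The thin--thin case is free: the $\tilde R_a$ together with $\id_n$ span the commutative circulant Bose--Mesner algebra of the fibres, with $\tilde R_a\tilde R_b=\tilde R_{a+b}+\tilde R_{a-b}$. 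For the thin--thick case a direct block computation using the generalized Proposition~\ref{010419c} gives $\tilde R_a\,D_i=D_i\,\tilde R_a=D_{i-a}+D_{i+a}$ (with the obvious modification when $2a\equiv 0$), hence $\tilde R_a*D_i\in\mathcal A$.

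The heart of the matter is the thick--thick case, and this is where the main obstacle lies. Multiplying out $D_iD_j$ in block form with Proposition~\ref{010419c}, the dangerous term sits in the $(2,2)$-block: it contains the matrix $L=(\Delta_0)_{21}(\Delta_0)_{12}$ (permutation-equivalent to $I_3\otimes J_m$) through the antisymmetric combination $L(R_{j-i})_{22}-L(R_{i-j})_{22}$, and it is precisely this $L$-term that leaves $\mathcal A$ and makes $J_{n,0}$ \emph{proper}, as established in Theorem~\ref{org54afef5}. The key observation I would make is that upon \emph{symmetrization} these $L$-contributions cancel: adding $D_jD_i$ contributes the opposite combination $L(R_{i-j})_{22}-L(R_{j-i})_{22}$, so the two $L$-terms sum to zero, leaving
\[
 D_iD_j+D_jD_i=m\,\tilde R_{i-j}+\tfrac{2(m-1)}{\ell}\sum_{k=0}^{\ell-1}D_k\in\mathcal A\qquad(i\ne j),
\]
and similarly $D_i^2=m\,\id_n+\tfrac{m-1}{\ell}\sum_k D_k\in\mathcal A$. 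Thus $D_i*D_j\in\mathcal A$ in all cases, and Proposition~\ref{org163c63e} yields that $J_{n,0}$ is a Jordan scheme; transport by $\langle h_1,h_2\rangle$ (of order $\ell^2$, the $\ell$-analogue of the group $E_9$ of Proposition~\ref{orgefbb13d}) together with Corollary~\ref{org3f316d8} then gives the same for every $J_{n,i}$. The one genuine difficulty compared with $\ell=3$ is that Lemma~\ref{orgb745be0} is no longer available as a shortcut: for $\ell>3$ the union $\sum_{k\ne i}D_k$ of the remaining thick relations is not a single basic relation, so there is no rank-$4$ merging to lean on, and the cancellation of the $L$-term must be verified by the explicit block computation above.
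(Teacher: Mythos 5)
Your proposal is correct. Note first that the paper itself contains no proof of Proposition~\ref{orge71cdb7}: Section~\ref{org30a0d76} is explicitly an outline, and the justification of its claims is postponed to \cite{KliMR}. So the only benchmark inside this text is the case $l=3$ (Sections~\ref{org68efaa1}--\ref{orgd4c6594}), and against that benchmark your two parts behave differently. Part (a) is the same argument as the paper's: transport of $\Delta'_{0,i}$ onto $\Delta_i$ by the color automorphism $h_2^{-2i}$, i.e.\ the $\mathbb{Z}_l\times\mathbb{Z}_l$ analogue of the group $E_9$ of Proposition~\ref{orgefbb13d} and its corollary; your sign bookkeeping ($h_2$ shifting bridge indices by $-1$, so $h_2^{-2i}$ carries $Br_{-i}$ to $Br_i$ while fixing $T_i$) checks out. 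Part (b) is a genuinely different route, and necessarily so: for $l=3$ the paper leans on Lemma~\ref{orgb745be0}, which needs the merging of any two thick relations to leave a rank-4 metric AS, and, as you correctly observe, for $l>3$ the union $\sum_{k\ne i}D_k$ is no longer a single basic relation, so that shortcut is dead. Your replacement --- generalizing Proposition~\ref{010419c} verbatim with $\frac{m-1}{l}$ in place of $\frac{m-1}{3}$ and verifying the criterion of Proposition~\ref{org163c63e} case by case --- is sound; I checked the three computations carrying the weight: the circulant identity $\tilde R_a\tilde R_b=\tilde R_{a+b}+\tilde R_{a-b}$, the relation $\tilde R_aD_i=D_i\tilde R_a=D_{i+a}+D_{i-a}$, and the cancellation of $L(R_{j-i})_{22}-L(R_{i-j})_{22}$ against the transposed combination in $D_iD_j+D_jD_i$, which together with $\sum_kD_k=\sum_k\Delta_k$ (the switching only permutes bridges) gives $D_iD_j+D_jD_i=m\tilde R_{i-j}+\frac{2(m-1)}{l}\sum_kD_k$ and $D_i^2=m\,\id_n+\frac{m-1}{l}\sum_kD_k$, all inside the span. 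This cancellation is also the conceptually right statement: it is precisely the symmetric counterpart of Theorem~\ref{org54afef5}, where the \emph{single} product $\Delta_0\cdot\Delta'_{0,1}$ retains an $L$-term and thereby witnesses properness, so your computation explains simultaneously why the Jordan property survives the switching and why coherence does not. Two small caveats, neither fatal: when $l$ is even and $i-j\equiv l/2$, the relation $\tilde R_{l/2}=R_{l/2}$ is self-paired and the first term in your displayed formula is $2mR_{l/2}$ rather than $m\tilde R_{i-j}$ (membership in $\mathcal{A}$ is unaffected); and your appeal to ``the propositions preceding Proposition~\ref{orgefbb13d}'' for $h_2\in\caut(Pre_n)$ should be acknowledged as a routine re-proof, since those propositions are stated in the paper only for $l=3$.
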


\begin{proposition}
\label{org73ea9cf}
All Jordan schemes \(J_{n,i}\) are proper and pairwise isomorphic.
\end{proposition}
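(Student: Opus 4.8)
The plan is to prove the two assertions separately: pairwise isomorphism by exhibiting enough color automorphisms of $Pre_n$, and properness by a block‑matrix computation inside the coherent closure that copies Theorem~\ref{org54afef5} with $3$ replaced by $\ell$. By Proposition~\ref{orge71cdb7} each $J_{n,i}$ is already a Jordan scheme of rank $r$, so only these two points remain.

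For the isomorphisms I would generalize Proposition~\ref{orgefbb13d}. Let $h_1\in S_{\Omega_n}$ be the permutation whose adjacency matrix is the thin relation $R_1$; since $R_1$ is of type $(m+1)\circ\vec C_l$, the permutation $h_1$ is a product of disjoint $l$‑cycles, one on each fiber, so it fixes the island setwise and preserves the island/continent splitting. Conjugation by $h_1$ realizes the index shift $R_i^{-1}\Delta_jR_i=\Delta_{j-2i}$ of Lemma~\ref{org1cc5c32}, while $h_1$ fixes $\id_n$, the spread, and the equivalence $E=\bigcup_iR_i$ and permutes the bridges $Br_i$ and the continental graphs $T_i$ compatibly; hence $h_1\in\caut(Pre_n)$. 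Together with the $l$‑cycle $h_2$ supported on the island it generates a group $\cong{\mathbb Z}_l\times{\mathbb Z}_l$ inside $\caut(Pre_n)$ acting on the colors. Exactly as in Corollary~\ref{org3f316d8}, suitable elements of this group carry the merging defining $J_{n,0}$ onto those defining $J_{n,1},\dots,J_{n,l-1}$; being induced by genuine permutations of $\Omega_n$, they furnish combinatorial isomorphisms $J_{n,0}\cong J_{n,i}$.

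For properness I would split $\Omega=\Omega_1\cup\Omega_2$ into island and continent and write every basic matrix in $2\times2$ block form. The multiplication table~(18.1), giving $\Delta_i\Delta_j=mR_{i-j}+\frac{m-1}{\ell}(J-E)$, yields block identities identical in shape to Proposition~\ref{010419c}, with $L=(\Delta_0)_{21}(\Delta_0)_{12}$ permutation equivalent to $I_l\otimes J_m$ and orthogonal to $E_{22}$, i.e.\ $E_{22}\circ L=I_{22}$. Computing the product $\Delta_0\cdot\Delta'_{0,1}$ of two thick colors of $J_{n,0}$ in block form and taking the Schur--Hadamard product with $E$ (a sum of the two colors $\id_n$ and $Spr_n$ of $J_{n,0}$, hence in the closure) gives
\[
(\Delta_0\cdot\Delta'_{0,1})\circ E=\mtrx{m(R_1)_{11}}{O}{O}{(R_1)_{22}+(m-1)(R_{-1})_{22}}\in{\mathcal A}=\wl(J_{n,0}).
\]
For $l\ge3$ and $m\ge2$ one has $R_1\ne R_{-1}$, so the coefficient $m$ occurs only in the island block; the Schur--Wielandt principle then extracts $\mtrx{(R_1)_{11}}{O}{O}{O}\in{\mathcal A}$, and multiplying this by its transpose puts $\id_{\Omega_1}$ into ${\mathcal A}$. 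Thus ${\mathcal A}$ is inhomogeneous. If $J_{n,0}$ were non‑proper it would be the symmetrization of an association scheme ${\mathcal Y}$, forcing $\wl(J_{n,0})\subseteq{\mathcal Y}$ to be homogeneous — a contradiction; so $J_{n,0}$ is proper, and by the isomorphisms above all $J_{n,i}$ are proper.

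The main obstacle is not the algebra but the degenerate low‑rank boundary: the extraction step needs the scalars $m$, $1$, $m-1$ to be separated across the two blocks, which is exactly what forces $l\ge3$ (and $m\ge2$). This is the expected threshold, since for $l\le2$ the rank $r=\lfloor(3l+2)/2\rfloor$ is at most $4$ and a regular symmetric rank‑$4$ Jordan scheme is automatically an association scheme, so no proper example can occur there. A secondary point requiring care is the orbit structure of the ${\mathbb Z}_l\times{\mathbb Z}_l$ action on $\{J_{n,0},\dots,J_{n,l-1}\}$ when $l$ is even: the shift $j\mapsto j-2i$ is then not transitive on indices, and one must combine it with the island rotation $h_2$ to reach every $J_{n,i}$.
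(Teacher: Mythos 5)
The paper contains no proof of this proposition: Section \ref{org30a0d76} states it as an announcement and defers all details of the rank-$2l$ case to \cite{KliMR}, so the only benchmark is the paper's $l=3$ treatment in Sections \ref{org68efaa1}--\ref{orgd4c6594}, and your proposal is precisely that treatment generalized --- Proposition \ref{orgefbb13d} and Corollary \ref{org3f316d8} for the pairwise isomorphisms, Proposition \ref{010419c}, Theorem \ref{org54afef5} and the Schur--Wielandt extraction for properness. Both halves of your argument are sound: the block identities of Proposition \ref{010419c} use only the multiplication table and the antipodal-cover property, both available for general $l$, and your displayed matrix $(\Delta_0\cdot\Delta'_{0,1})\circ E$ is the correct generalization of the paper's formula \eqref{050419a}.

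Two refinements. First, the even-$l$ complication you flag at the end dissolves on inspection: you never need transitivity of the shift $j\mapsto j-2i$, because for each $k$ the island rotation alone suffices --- $h_2^{b}$ with $b\equiv -2k\pmod l$ sends each thick color $T_j\cup Br_{-j}$ of $J_{n,0}$ to $T_j\cup Br_{2k-j}$, i.e.\ exactly onto the thick colors of $J_{n,k}$, while fixing $\id_n$ and every symmetrized thin class; one solves for the exponent $b$ given $k$, so the parity of $l$ never enters (for $l$ even one merely finds $J_{n,k}=J_{n,k+l/2}$ literally, which is harmless). Second, your threshold $l\ge 3$ is indeed where the extraction lives or dies (for $l=2$ one has $R_1=R_{-1}$, and the matrix collapses to $m\widetilde R_1$, giving no new information), but this case is already excluded by the hypotheses: the multiplication table of Section \ref{org30a0d76} is non-commutative only when $l\ge 3$, and Higman's theorem quoted in Section \ref{orgf276f8a} makes any rank-$4$ scheme commutative; likewise $l\mid(m-1)$ with $m>1$ forces $m\ge l+1$, so the island value $m$ is automatically distinct from $1$ and $m-1$, and the inhomogeneity of $\wl(J_{n,0})$ follows as you claim.
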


Based on the formulated results, in the next section we will
announce one more infinite class of proper Jordan schemes.

\section{More examples of proper Jordan schemes}
\label{sec:org7318abd}
\label{orgb36d93b}
At the earliest stages of this project, immediately after the
discovery of the proper Jordan scheme \(J_{15}\), a couple of similar
objects of order 40 were constructed.

Recall that the initial way to get \(J_{15}\) was simply to examine
two existing Siamese color graphs of order 15 and to figure out
which of them lead to Jordan schemes. As was discussed, both provide
such schemes, one non-proper, one proper.

The known Siamese color graphs have order \((q+1)\cdot(q^2+1)\), where
\(q\) is a prime power. For \(q=3\) we approach the possibility to work
with Jordan schemes of order \(n=40\).

Exactly this procedure was fulfilled. According to \cite{KliRW09}
we are aware of 475 Siamese designs on 40 points. For each such
design a corresponding Siamese color graph was obtained. Namely, for
each substructure \({\frak S}_i\) of a Siamese design \({\frak S}\) we
associate a color graph \(\Gamma_i\) such that points \(x,y\) form an
edge in \(\Gamma_i\) if and only if they belong to a block in \({\frak
  S}_i\). Let \(\Gamma=\Gamma({\frak S})\) be the resulting color graph. 

After that, Jordan stabilization was performed for each resulting
object. Only three of them survived the stabilization.

The first one \(NJ_{40}\) with the automorphism group \(\aut(NJ_{40})
  \cong A_6\times {\mathbb Z}_2 \cong PSL(2,9)\times {\mathbb Z}_2\) of
order 720 is the symmetrization of what we call the classical
Siamese scheme of order 40. More exactly it is the symmetrization of
the centralizer algebra of an imprimitive action of \(PSL(2,9)\) of
degree 40. The obtained scheme \(NJ_{40}\) is of rank 7. Note that its
spread \(10\circ K_4\) splits into one 1-factor and one 2-factor
consisting of 10 quadrangles. Thus, the resulting valencies are
\(1,1,2,9^4\), so \(NJ_{40}\), indeed,  has rank 7.

We also found proper Jordan schemes \(J_{40,1}\) and \(J_{40,2}\), both
having the automorphism group \((E_9:{\mathbb Z}_4)\times {\mathbb Z}_2\) of
order 72. This group has orbits of lengths 4 and 36 on the points.

It turns out that \(J_{40,1}\) can be obtained from the classical
scheme \(NJ_{40}\) using the switching in the model described as
"island and continent". The second scheme does not appear this way.

We again refer to the end of the text for further discussion of the
situation on 40 points.

The results in the previous Section \ref{org30a0d76} provide another rich
source of proper Jordan schemes. Below in Table 19.1 we
provide a summary of computations leading to confirmation of the
existence of proper Jordan schemes of order \(n\). Additional
information about the automorphism groups of the obtained proper
Jordan schemes provides evident background for a quite intelligent
guess regarding the structure of the automorphism groups of such
structures. This issue will be touched in the forthcoming
publication. 

\begin{table}
\begin{center}
\begin{tabular}{rrrrl}
\hline
q & l & n & r & Aut(J\(_{\text{n}}\))\\
\hline
8 & 7 & 63 & 11 & AGL(1,8)\\
11 & 5 & 60 & 8 & AGL(1,11)\\
16 & 5 & 85 & 8 & AGL(1,17)\\
19 & 9 & 180 & 14 & AGL(1,19)\\
23 & 11 & 264 & 17 & AGL(1,23)\\
27 & 13 & 364 & 20 & AGL(1,27)\\
29 & 7 & 210 & 11 & AGL(1,29)\\
31 & 5 & 160 & 8 & AGL(1,31)\\
37 & 9 & 342 & 14 & AGL(1,37)\\
41 & 5 & 210 & 8 & AGL(1,41)\\
43 & 7 & 308 & 11 & AGL(1,43)\\
61 & 5 & 310 & 8 & AGL(1,61)\\
64 & 7 & 455 & 11 & E\(_{\text{64}}\):Z\(_{\text{9}}\):Z\(_{\text{2}}\):Z\(_{\text{2}}\):Z\(_{\text{7}}\)\\
67 & 11 & 748 & 17 & AGL(1,67)\\
\hline
\end{tabular}
\end{center}
\caption{Proper Jordan schemes coming from the action of \(PGL(2,q)\). \label{table:proper-schemes}}
\end{table}

\section{Toward a prolific construction of Jordan schemes of rank 5 and order \(\binom{3^d+1}{2}\), \(d\in{\mathbb N}\)}
\label{sec:orgb2227ef}
\label{org682bc3a} \label{org3f2a5c7}
Below we outline a prolific construction of rank 5 Jordan schemes
of order \({3^d+1}\choose 2\), \(d\ge 1\). All introduced Jordan schemes
are algebraically isomorphic and are defined on the same set
\(\Omega=\Omega_n\) of cardinality \(n=\binom{3^d+1}{2}\),
\(d\in{\mathbb N}\). Each scheme is formed by four antireflexive basic
relations \(S, R_0, R_1, R_2\).

The graph \((\Omega,S)\), called again the \emph{spread}, is isomorphic to
\(\frac{3^d+1}{2}\circ K_{3^d}\). The basic relations \(R_i\), \(i\in
  \{0,1,2\}\), define SRG's with the same parameter set
\(n=\binom{3^d+1}{2}\), \(k=3^{d-1} \cdot \frac{3^d-1}2\), \(\lambda =
  \mu = 3^{d-1}\cdot \frac{3^{d-1}-1}{2}\). The idea of the
construction of such a family of SRG's goes back to W.D.Wallis
\cite{Wal71}, \emph{design graphs} in his terms. The construction used
here is a particular case of Construction 1, considered by
D.G. Fon-Der-Flaass in \cite{FDF02}. The graphs, which were defined
there are nowadays commonly called WFDF SRG's, or simply \emph{WFDF
graphs}.

In fact there are some degrees of freedom in the building of all
WFDF graphs. Manipulation by these degrees provides a possibility to
prove that the number of non-isomorphic SRG's with the given parameter set
grows hyperexponentially with the number of vertices. 

Thus for each \(d\in{\mathbb N}\) the starting structure is the affine
design \(AD(3^d)\), which is defined on the vector space \({\mathbb
  Z}_3^d\) of dimension \(d\) over the field \({\mathbb Z}_3\). The elements
of \({\mathbb Z}_3^d\) are the points of the design. The blocks are
all \emph{hyperplanes} in \({\mathbb Z}_3^d\). Clearly, each hyperplane
consists of solutions of a suitable linear equation with \(d\)
variables over \({\mathbb Z}_3\); there are exactly
\(r=\frac{3^d-1}{2}\) different hyperplanes containing the zero
vector.

The set \(\Omega={\mathbb Z}_3^d \times [0,r]\) of cardinality
\(n=\binom{3^d+1}{2}\) consists of \(r+1\) copies \(\Omega_i={\mathbb
  Z}_3^d\times\{i\}\) of the space \({\mathbb Z}_3^d\), which are labeled
by the elements \(i\in[0,r]\). For each hyperplane \(H_j\) we select one
of two possible linear equations which define it, regarding the
equation as an epimorphism \(\pi_i:{\mathbb Z}_3^d\to {\mathbb Z}_3\).

The defined sets \(\Omega_i\) serve as the fibers of the relation
\(S=(r+1)\circ K_{3^d}\). The three other relations are defined with
the aid of the Construction 1 from \cite{FDF02}, adjusted for the
current case. Using this approach, first we consider an SRG
\(\Gamma=\Gamma_0=(\Omega, R_0)\) with the parameter set given
above. Applying the classical result from \cite{HaeT96} about the
spread formed by Hoffman cocliques of the SRG we claim that
\((\Omega, \{\id, S, R_0, \overline{S\cup R_0}\})\) is a rank 4 AS.

The trickiest and in fact innovative part of our approach is related
to the necessity to split the relation \(\overline{S\cup R_0}\) to a
disjoint union \(R_1\cup R_2\) of two more relations, each of which
defines an SRG on the set \(\Omega\) with the same parameter set as
for \(R_0\). Moreover, the same relation \(S\) appears to define a spread
which consists of the Hoffman cocliques for both SRG's
\(\Gamma_1=(\Omega, R_1)\)
and
\(\Gamma_2=(\Omega, R_2)\). For this purpose the original techniques
by D. Fon-Der-Flaas is exploited in a more tricky manner.

Finally with the aid of Lemma \ref{orgb745be0} we claim that the color
graph  \[(\Omega, \{\id, S, R_0, R_1, R_2,\})\] is a rank 5 Jordan
scheme.

In principle, some of the defined rank 5 color graphs could form a non-proper
Jordan scheme. Good news is that, due to the prolific nature of the
presented construction it is possible to prove that at least  part of the
results lead to proper Jordan schemes.

The remaining details of the construction, proofs of its properties,
as well as clarification of the asymptotical features of the
suggested approach, will appear in the paper \cite{KliMR} in
preparation. 

Below we restrict ourselves to the  consideration of the degenerate case
\(d=1\) and the first non-trivial case \(d=2\), leading to schemes of
order 45. 

\section{Degenerate case of order 6: Shah's scheme \(NJ_6\)}
\label{sec:org39ab405}
\label{orgcbc9bab}
The smallest case of the situation discussed in the previous section
corresponds to \(d=1\). Clearly we get here a rank 5 Jordan scheme of
order \(\binom{4}{2} = 6\). Let us discuss it from a few points of
view.

\begin{model}
\label{org5a81b06}
Applying the general prolific construction to \(d=1\) we get a rank 5
scheme with valencies \(1,1,1,1,2\). Here, relations
\(R_0\) - \(R_2\) define SRG's with parameters \((6,1,0,0)\). Clearly, such
an SRG is a disconnected graph \(3\circ K_2\). The relation \(S\)
defines a spread \(2\circ K_3\). It is obvious that the corresponding
color graph is defined uniquely up to isomorphism. It is depicted in
Figure \ref{figure:21.1}. Let us denote it by \(NJ_6\). The notation
will be justified below; at this point the diagram introduces just a
nice small structure, nothing else is assumed.
\end{model}

\begin{figure}
\begin{center}
\input{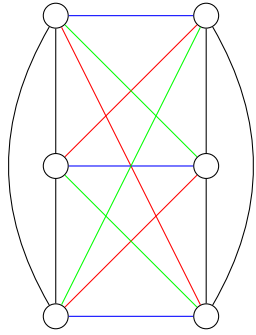}
\end{center}
\caption{Shah's scheme \label{figure:21.1}}
\end{figure}

\begin{model}
Let us consider the symmetric group \(S_3\), acting on the set
\(\{0,1,2\}\). Assume that the elements of \(S_3\) are as follows:
\(h_0=\id_3\), \(h_1=(0,1)\), \(h_2=(0,2)\), \(h_3=(1,2)\), \(h_4=(0,1,2)\),
\(h_5=(0,2,1)\). Let us consider the regular action \((\tilde S_3,
  S_3)\) of \(S_3\) on the set of all elements of \(S_3\), regarded as new
points. Then the centralizer algebra \(W=V(S_3,S_3)\) defines a thin
non-commutative AS of rank and order 6. In fact this is the smallest
non-commutative AS. Its symmetrisation provides a non-proper Jordan
scheme \(NJ_6\). The two directed relations corresponding to elements
of order 3 in \(S_3\) are glued in the symmetrization. Thus \(NJ_6\) has
rank 5.
\end{model}

\begin{model}
We consider the group \(PSL(2,2)=SL(2,2)=GL(2,2)\) of order 6, which
is isomorphic to \(S_3\). The points of \(PG(2,2)\) bijectively
correspond to the nonzero vectors from \({\mathbb Z}_2^2\), namely
\(\vec v_1=(0,1)\),
\(\vec v_2=(1,0)\),
\(\vec v_3=(1,1)\).
There are also three hyperplanes, which are defined by the equations 
\(x_1=1\),
\(x_2=1\),
\(x_1+x_2=1\). Note that here we consider only affine hyperplanes
which do not contain the zero vector. Finally, let us consider
\emph{ordered hyperplanes}; e.g., to the equation \(x_1=1\) we associate
the pairs \((\vec v_2,\vec v_3)\), \((\vec v_3,\vec v_2)\). Let us call
such ordered pairs degenerate quasiprojective points. Then the group
\(PSL(2,2)\) acts regularly on the set of such objects, and the
symmetrization of the appearing centralizer algebra is
\(NJ_6\). Though this is an artificial analogy with genuine
quasiprojective points, it has some sense provided we wish to appeal
to the imprimitive action of the group \(PSL(2,2,)\). (The essence of
the analogy is that, while quasiprojective points may be interpreted
as subsets of vectors, here we deal with ordered sets of vectors.)
\end{model}

\begin{model}
\label{org6680041}
We follow according to \cite{Sha59}. It is remarkable that this
paper was published in the same volume 30 of the Annals of
Mathematical Statistics, where the classical text \cite{BosM59} by
Bose and Mesner
appeared. 

While \cite{BosM59} is devoted to establishing background concepts
of the now classical theory of (commutative) AS's, the author
B.V. Shah of \cite{Sha59} goes further. Namely, he immediately
considers a natural generalization of AS's to what is now called
Jordan schemes. The wording itself is, of course, not used; the
motivation goes back to B. Harshbarger, see \cite{Har49}. The
reader is easily recognizing in Example 4.1 of \cite{Sha59}
a rank 5 color graph  (in terms of the
current paper), such that the corresponding
vector space of symmetric matrices of order 6 is closed with respect
to the introduced operation \(\frac 12 (AC + CA)\) for two matrices
\(A\) and \(C\). Clearly, this is exactly the Jordan product of
matrices. Note that the basic graphs, depicted in Figure
\ref{figure:21.1}, up to the used labelling of the vertices, are literally
coinciding with the basic "associates of treatments", which are
presented in Table 4.2, accompanying the Example 4.1 of \cite{Sha59}.
\end{model}

\begin{proposition}
Models \ref{org5a81b06}-\ref{org6680041}, described above, represent the unique non-proper rank 5
Jordan scheme \(NJ_6\) of order 6.
\end{proposition}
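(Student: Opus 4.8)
The plan is to split the claim into two independent tasks: first, that Models~\ref{org5a81b06}--\ref{org6680041} all describe one and the same color graph up to isomorphism, and second, that this color graph is the unique rank~5 Jordan scheme on six points. I would handle the identification of models quickly and then spend the real effort on the classification.

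\textbf{Identification of the models and non-properness.} Models~2, 3 and~4 are three costumes of the regular representation of $S_3$: via the abstract isomorphisms $S_3\cong PSL(2,2)=GL(2,2)$ they describe literally the same centralizer algebra $\frak{V}(S_3,S_3)$, the thin non-commutative AS of rank~6 attached to $S_3$. In it $\id_6$ is the identity relation, the two $3$-cycles give mutually transposed antisymmetric relations, and the three involutions give symmetric relations. Its symmetrization merges the two $3$-cycle relations into the spread $2\circ K_3$ (the two cosets of $A_3$) and leaves the three involutions as perfect matchings $3\circ K_2$; hence the symmetrization has rank~5. By the Proposition asserting that the symmetrization of any CC is a Jordan configuration, this object is a Jordan scheme; it is non-proper by definition, and genuine because $S_3$ is non-commutative. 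Model~4 (Shah's Example~4.1) coincides with Figure~\ref{figure:21.1} up to relabeling, and for Model~1 (the prolific construction at $d=1$) I would verify directly that with $\Omega=\mathbb{Z}_3\times\{0,1\}$ the spread $S$ is $2\circ K_3$ on the two fibers while $R_0,R_1,R_2$ are exactly the three matchings exhausting the edges between the fibers, i.e. a $1$-factorization of $K_{3,3}$ --- the same picture.

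\textbf{The classification.} Let $\mathfrak{X}=(\Omega,\{\id_6,R_1,R_2,R_3,R_4\})$ be an arbitrary rank~5 Jordan scheme on six points. Each basic graph is walk regular, hence regular, and the four non-reflexive valencies are positive integers summing to $5$; the only partition of $5$ into four parts is $\{2,1,1,1\}$. Thus three basic graphs are perfect matchings ($3\circ K_2$) and one, say $G$ with adjacency matrix $C$, is $2$-regular, so $G\in\{C_6,\,2\circ K_3\}$, these being the only $2$-regular simple graphs on six vertices.

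\textbf{Elimination of $C_6$ and conclusion.} This is the crux. Suppose $G=C_6$. Then $C^2=2I+D$, where $D$ is the adjacency matrix of the distance-$2$ graph of $C_6$, which is $2\circ K_3$ on the two parity triangles; the three matchings then $1$-factorize the complement $\overline{C_6}$, a triangular prism. I would show $D\notin\mathcal{A}:=\langle I,C,A(R_1),A(R_2),A(R_3)\rangle$ as follows. The antipodal matching $M_0$ of the prism (its three ``vertical'' edges) is never itself a factor of a $1$-factorization, since deleting it leaves $2\circ K_3$, which has no perfect matching; hence any factor meeting $M_0$ also carries a non-$M_0$ edge, and a coefficient comparison (the factors partition the prism edges, so a linear combination has a single free coefficient on each factor) forces $A(M_0)\notin\mathcal{A}$. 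As $D=(J-I-C)-A(M_0)$ and $J-I-C\in\mathcal{A}$, also $D\notin\mathcal{A}$, so $C^2\notin\mathcal{A}$, contradicting the Jordan axiom $C*C=C^2\in\mathcal{A}$ (Proposition~\ref{org163c63e}). Therefore $G=2\circ K_3$, the two triangles are the spread cliques, the three matchings lie in the complementary $K_{3,3}$ and $1$-factorize it; since all $1$-factorizations of $K_{3,3}$ are equivalent under relabeling points and permuting colors, $\mathfrak{X}$ is determined up to combinatorial isomorphism and equals $NJ_6$. That $NJ_6$ genuinely is a Jordan scheme is already secured by the group model of the first paragraph (or, alternatively, by Lemma~\ref{orgb745be0}). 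The one delicate point, and the main obstacle, is making the $C_6$ elimination airtight uniformly over all $1$-factorizations of the prism; the support argument above does this because the antipodal matching is never a full color class. All remaining steps are routine verifications.
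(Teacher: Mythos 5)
Your proof is correct; the interesting point of comparison is that the paper itself gives no proof at all --- its ``proof'' of this proposition is the single sentence that it is a straightforward, almost trivial exercise for the reader, so your write-up supplies exactly the content the authors omitted. Your reduction steps are sound: walk regularity of basic graphs (recorded by the paper in Subsection \ref{sec:org22e8ce6}) gives regularity, the only partition of $5$ into four positive valencies is $2+1+1+1$, and a $2$-regular graph on six vertices is $C_6$ or $2\circ K_3$. The crux, eliminating $C_6$, is the one genuinely non-routine step and you handle it correctly: every matrix in $\mathcal{A}$ is constant on each basic class; since deleting the antipodal matching $M_0$ from the prism $\overline{C_6}$ leaves $2\circ K_3$, which has no perfect matching, no factor equals $M_0$, so any matching class meeting $M_0$ contains both a vertical and a non-vertical edge; hence $A(M_0)\notin\mathcal{A}$, and therefore $C^2=2I+\bigl((J-I-C)-A(M_0)\bigr)\notin\mathcal{A}$, contradicting closure under squares (Proposition \ref{org163c63e}). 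Two remarks. First, you could shorten the elimination: the prism in fact has a unique one-factorization (each factor uses exactly one vertical edge), so you may simply exhibit it; or, even more directly, $D=A(2\circ K_3)$ could lie in $\mathcal{A}$ only as a union of exactly two matching classes, yet a union of two disjoint perfect matchings is a disjoint union of even cycles and so contains no triangle. Second, your argument proves more than the literal statement: uniqueness of $NJ_6$ among \emph{all} rank-$5$ Jordan schemes of order $6$, proper or not, which also re-derives the order-$6$ case of the enumeration reported in Section \ref{org5dcbc7f}. Your fallback appeal to Lemma \ref{orgb745be0} to certify that $NJ_6$ is a Jordan scheme is likewise legitimate, since the union of any two of the three matchings is a hexagon and the corresponding rank-$4$ merging is its metric (distance) scheme.
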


The proof is a straightforward, almost trivial exercise, which
hopefully will be enjoyed by the reader.\noproof

\begin{remark}
We can apply to any of the above models of \(NJ_6\) the approach
"island and continent". For this purpose in the spread \(S=2\circ
  K_3\), any of two triangles should be claimed as the island, while
the second one as the continent. Clearly, any of the three possible
switchings leaves the scheme \(NJ_6\) on the place (transposing two of
its thin relations). Thus switchings provide suitable elements of
\(\caut(NJ_6)/\aut(NJ_6) \cong S_3\). 
\end{remark}

\section{Computer aided enumeration of some Jordan schemes of order 45}
\label{sec:org47dd719}
\label{orgf5cb3c0}
Here we approach the first non-degenerate case of the prolific
construction outlined in Section \ref{org682bc3a}. Recall that for
\(d=2\) we obtain the valencies \(1,8,12, 12, 12\). Each of the
primitive basic graphs \(\Gamma_i\), \(i\in\{0,1,2\}\), is a
pseudogeometric SRG with the parameters \((45,12,3,3)\). A classical
geometric graph with these parameters, that is the point graph of a
\(GQ(4,2)\), has a rank 3 group \(P\Gamma U(4,2)\) of order \(2^7\cdot
  3^4\cdot 5 = 51840\). In fact, this is a remarkable group, which has
five different primitive actions, all of rank 3, and of degrees 27,
36, 40, 40, and 45.

The structure \(GQ(4,2)\) is unique up to isomorphism. The easiest way
to observe this is to consider the point graph of the dual structure
\(GQ(2,4)\), that is an SRG with the parameters \((27,10,1,5)\). The
uniqueness of this graph was proved by J.J. Seidel \cite{Sei68}.

The question of the enumeration of necessary SRG's of order 45 is
much more sophisticated, see below.

First, let us briefly touch on the prolific construction for rank 5
Jordan schemes of order 45. The task of a more accurate outline for
the current case \(d=2\) seems to be simpler, compared to the general
case. 

We start with the vector space \({\mathbb Z}_3^2\) in dimension \(d=2\)
over the finite field \({\mathbb Z}_3\). It contains
\(r=\frac{3^2-1}{2}=4\) different (vector) hyperplanes. Thus the point
set \(\Omega\) of the coming construction consists of \(r+1=5\) copies
\(\Omega_i\) of the vector space \({\mathbb Z}_3^2\), \(i\in[0,4]\). If
\(\vec v \in {\mathbb Z}_3^2\), we denote by \(\vec v_i = (\vec v, i)\)
its copy in the set \(\Omega_i\).

The spread \(S=5\circ K_9\) here consists of
a disjoint union of 5 copies of a complete graph with nine vertices
in \(\Omega_i\).

Following the WFDF construction we identify the set \([0,4]\) with
\({\mathbb Z}_5\) and define for \(x,y\in {\mathbb Z}_5\) the binary
operation \(x\diamond y = x-y\), here subtraction is done in \({\mathbb
  Z}_5\). 

We also choose for each vector hyperplane \(H_i\) in \({\mathbb Z}_3^2\)
a linear epimorphism \(\pi_i:{\mathbb Z}_3^2 \to {\mathbb Z}_3\), such
that its kernel coincides with the corresponding hyperplane
\(H_i\). Thus let us choose for \((x,y)\in{\mathbb Z}_3^2\):
\begin{align*}
\pi_1(x,y)&=x\\ \pi_2(x,y)&=y\\ \pi_3(x,y)&=x+y\\ \pi_4(x,y)&=x-y.
\end{align*}
Now it is necessary to choose bijections \(\sigma_{ij}\in
  Sym({\mathbb Z}_3)\), \(i\neq j\in{\mathbb Z}_5\). They have to
satisfy the condition \(\sigma_{ji}=\sigma_{ij}^{-1}\). Clearly there
are \((3!)^{\binom 52}=6^{10}\) such choices.

To define the relation \(R_0\), we choose \(\sigma_{ij}=\id_{{\mathbb
  Z}_3}\) for all \(i,j\in{\mathbb Z}_5\). Then the description of the
relation is as follows: 
\[
  R_0 = \left\{
  ((\vec u, i), (\vec v, j)) \mid 
  i\neq j \wedge \pi_{i-j}(\vec u) = \pi_{j-i}(\vec v)
  \right\}.
  \]
The relation \(R_0\) is symmetric by definition.

On the next stage we have to define relations \(R_1\) and \(R_2\), both
disjoint with \(R_0\) and with each other. Analysis of the general
construction for arbitrary dimension \(d\) shows that this requirement
implies that each selected permutation \(\sigma_{ij}\) from
\(Sym({\mathbb Z}_3)\) should be a 3-cycle. 

Let us pick up the simplest choice \(\theta_{ij} = (0,1,2)\) for all
\(0\le i < j \le 4\), that is \(\theta_{ij}(x) = x+1\), \(x\in{\mathbb
  Z}_3\). 

Then we are using the general definition of the relations \(R_1\) and
\(R_2\) and are relying on the compact notation \(\vec  u_i = (\vec u,
  i)\), \(\vec v_j = (\vec v,j)\).

The resulting descriptions are as follows:
\begin{align*}
R_1 &= \left\{
((\vec u, i), (\vec v, j)) \mid 
i\neq j \wedge \pi_{i-j}(\vec u) = \pi_{j-i}(\vec v) + \varepsilon_{ij}
\right\}\\
R_2 &= \left\{
((\vec u, i), (\vec v, j)) \mid 
i\neq j \wedge \pi_{i-j}(\vec u) = \pi_{j-i}(\vec v)-\varepsilon_{ij}
\right\}.
\end{align*}

Here, \(\varepsilon_{ij}=1\) if \(i<j\), and \(\varepsilon_{ij}=-1\) if
\(i>j\). 

\begin{remark}
The presented construction provides a description of one concrete
example of a rank 5 Jordan scheme of order 45 which includes three
WFDF graphs. In order to analyze all possibilities for the
appearance of the prolific case one has to manipulate by all options
to select necessary functions on each step of the process. 
\end{remark}

\begin{proposition}
The described construction provides a concrete example of a rank 5
Jordan scheme of order 45.
\end{proposition}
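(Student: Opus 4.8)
The plan is to deduce the statement from Lemma~\ref{orgb745be0}. That lemma reduces the Jordan–scheme property of the rank~$5$ coloring $\mathfrak{S}=(\Omega,\{\id,S,R_0,R_1,R_2\})$ to two checks: that $\mathfrak{S}$ is a symmetric regular coloring of the complete graph on $\Omega$, and that for each $k\in\{0,1,2\}$ the merging $(\Omega,\{\id,S,R_k,\overline{S\cup R_k}\})$ is an association scheme. Note that $\overline{S\cup R_k}$ is exactly the relation $R\setminus R_k$ appearing in the lemma, since $\id\cup S\cup R_0\cup R_1\cup R_2=\Omega^2$. So I would organize the argument entirely around verifying these hypotheses and then invoking the lemma.

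First I would check that $\{\id,S,R_0,R_1,R_2\}$ really partitions $\Omega^2$ into symmetric regular relations. Within a fibre only $\id$ and $S=5\circ K_9$ (of valency $8$) occur. For a cross-fibre pair $((\vec u,i),(\vec v,j))$ with $i\neq j$, the residue $\pi_{i-j}(\vec u)-\pi_{j-i}(\vec v)\in{\mathbb Z}_3$ equals one of $0,\varepsilon_{ij},-\varepsilon_{ij}$; since $\varepsilon_{ij}=\pm 1$ these are exactly $\{0,1,2\}$, so the pair lies in precisely one of $R_0,R_1,R_2$. Symmetry of each $R_k$ follows from the defining identity $\varepsilon_{ji}=-\varepsilon_{ij}$: interchanging the two points swaps the roles of $\pi_{i-j}$ and $\pi_{j-i}$ and flips the sign of the shift, reproducing the same equation. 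Counting solutions of $\pi_{j-i}(\vec v)=c$ for fixed $c$ gives $|\ker\pi_{j-i}|=3$ per fibre, hence valency $4\cdot 3=12$ for each $R_k$, so the valencies are $1,8,12,12,12$.

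The heart of the proof, and the step I expect to be the main obstacle, is to show that each $\Gamma_k=(\Omega,R_k)$ is a strongly regular graph with parameters $(45,12,3,3)$. This is the content of the WFDF construction specialized to the affine design $AD(3^2)$ (see \cite{Wal71},\cite{FDF02}), and I would establish $\lambda=\mu=3$ by counting common $R_k$-neighbours of two vertices across each ordered pair of fibres, using the epimorphisms $\pi_1,\dots,\pi_4$; the hyperplane structure forces this count to be independent of adjacency. Granting strong regularity, the eigenvalues of $\Gamma_k$ are $12,3,-3$, so Hoffman's ratio bound for a coclique is $45\cdot\tfrac{3}{12+3}=9$. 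Each fibre is a coclique of exactly this size and the five fibres partition $\Omega$, so $S$ is a spread of Hoffman cocliques. By the theorem of Haemers and Tonchev \cite{HaeT96} such a spread makes the partition equitable, giving that every vertex has exactly $3$ neighbours in each foreign fibre; concretely this yields $A(R_k)\,A(S)=2\,A(R_k)+3\,A(\overline{S\cup R_k})$ and $A(R_k)^2=9I+3J$, which together with the trivial products for $S$ close the span $\langle I,A(S),A(R_k),A(\overline{S\cup R_k})\rangle$ under multiplication and exhibit the required rank $4$ association scheme.

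With all three mergings shown to be association schemes, Lemma~\ref{orgb745be0} applies verbatim, so $\mathfrak{S}$ is a Jordan scheme; having the five classes $\id,S,R_0,R_1,R_2$, it has rank $5$, as claimed. In the final write-up I would isolate the strong regularity of the $\Gamma_k$ as a separate lemma, with its full verification for general $d$ deferred to \cite{KliMR}, and keep the present argument at the level of assembling the pieces through the ratio bound, Haemers--Tonchev, and Lemma~\ref{orgb745be0}.
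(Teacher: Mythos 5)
Your proposal is correct and follows essentially the same route as the paper: establish that each $\Gamma_i$ is a $(45,12,3,3)$ SRG via the WFDF construction \cite{FDF02}, use the Haemers--Tonchev result \cite{HaeT96} on spreads of Hoffman cocliques to conclude that each $(\Omega,\{\id,S,R_i,\overline{S\cup R_i}\})$ is a rank 4 association scheme, and then invoke Lemma \ref{orgb745be0}. Your write-up simply fills in details (symmetry and valency checks, the eigenvalue/ratio-bound computation, the explicit matrix identities) that the paper leaves implicit or delegates to the cited references.
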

\begin{proof}
On the first stage one has to check that basic graphs
\(\Gamma_i=(\Omega, R_i)\), \(i\in\{0,1,2\}\) are indeed SRGs with the
parameters \((45,12,3,3)\). For this purpose one may simply trust to
\cite{FDF02} or fulfill from scratch all necessary inspections.

It is clear that \(S\) appears as a common spread of the basic SRGs
\(\Gamma_i\). It satisfies conditions, formulated in
\cite{HaeT96}. Thus each color graph \((\Omega, \{S, R_i,
  \overline{R_i\cup S}\})\) is a rank 4 AS.

Finally we apply again the sufficient condition for the existence of
a rank 5 Jordan scheme which was exploited before a few times.
\end{proof}

\begin{corollary}
Exploiting all degrees of freedom for the selection of necessary
functions on each step we approach a big amount of rank 5 Jordan
schemes of order 45.
\end{corollary}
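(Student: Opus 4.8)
The plan is to regard the construction of Section~\ref{org682bc3a}, specialized to $d=2$, as a map from a set of \emph{admissible data} to the set of rank $5$ colour graphs on $\Omega=\mathbb{Z}_3^2\times[0,4]$, and then to argue that this map has a large image even after passing to combinatorial isomorphism. The admissible data consist of the choice of a linear epimorphism $\pi_i\colon\mathbb{Z}_3^2\to\mathbb{Z}_3$ for each of the four hyperplanes, together with a family $(\sigma_{ij})_{i\ne j}$ of bijections of $\mathbb{Z}_3$ subject to $\sigma_{ji}=\sigma_{ij}^{-1}$. By the preceding Proposition, every admissible choice in which each $\sigma_{ij}$ is a $3$-cycle yields a genuine rank $5$ Jordan scheme of order $45$ (the verification rests on the Haemers--Tonchev spread condition \cite{HaeT96} and on Lemma~\ref{orgb745be0}). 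Thus it suffices to exhibit many \emph{pairwise non-isomorphic} outputs.

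First I would count the raw number of admissible choices. Each $3$-cycle in $\mathrm{Sym}(\mathbb{Z}_3)$ is a translation $x\mapsto x+t$ with $t\in\mathbb{Z}_3^{*}$, so the family $(\sigma_{ij})$ is encoded by an antisymmetric labelling $t_{ij}\in\mathbb{Z}_3^{*}$, $t_{ji}=-t_{ij}$, of the $\binom{5}{2}=10$ pairs in $[0,4]$. This already produces $2^{10}$ distinct data sets, and the freedom in the $\pi_i$ multiplies this further. So the source of the map is large; the whole issue is collapse under isomorphism.

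Next I would control the isomorphisms. Since $S=5\circ K_9$ is the only basic graph of valency $8$, any isomorphism between two of our schemes preserves $S$, hence permutes the five fibers and acts inside each fiber as a bijection of $\mathbb{Z}_3^2$. The relabellings that preserve the \emph{form} of the construction therefore form a \emph{gauge group} $\mathcal{G}$ generated by a subgroup of $S_5$ permuting the indices, the affine group acting on the base space $\mathbb{Z}_3^2$, and the scalar normalizations that may be absorbed into the $\pi_i$; its action on the label data includes the coboundary shifts $t_{ij}\mapsto t_{ij}+c_i-c_j$ for $(c_i)\in\mathbb{Z}_3^{5}$. Consequently the number of isomorphism classes produced is at least the number of $\mathcal{G}$-orbits on admissible labellings, which is bounded below by $2^{10}/|\mathcal{G}|$.

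The main obstacle is exactly this last estimate: one must bound $|\mathcal{G}|$ (equivalently, decide precisely when two labellings are gauge-equivalent) and check that the resulting orbit count is genuinely large. This is the Wallis--Fon-Der-Flaass non-isomorphism analysis \cite{Wal71,FDF02}; for variable $d$ it is what yields hyperexponential growth, but here only a crude finite bound is needed. I would obtain it either cohomologically, by counting antisymmetric $\mathbb{Z}_3^{*}$-labellings of $K_5$ modulo coboundaries and fiber symmetries, or---matching the computational remark preceding this corollary---by direct enumeration, which already exhibits at least $340$ pairwise non-isomorphic (and in fact proper) rank $5$ Jordan schemes of order $45$. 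The complete argument, together with the asymptotic version for general $d$, is deferred to \cite{KliMR}.
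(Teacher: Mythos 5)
Your proposal is correct and is essentially the paper's own (largely implicit) justification: the paper states this corollary without a proof, the intended argument being exactly that the preceding proposition's verification (strong regularity of each $R_k$ via the Fon-Der-Flaass construction, the Haemers--Tonchev spread condition, and Lemma~\ref{orgb745be0}) applies verbatim to every admissible choice of the epimorphisms $\pi_i$ and of the $3$-cycles $\sigma_{ij}$, with the non-isomorphism question deferred --- just as you defer it --- to the prolific property of the WFDF graphs and to the computer enumeration (at least $340$ pairwise non-isomorphic proper schemes) reported later in the same section. The one point to watch in your write-up is that the estimate $2^{10}/|\mathcal{G}|$ presupposes that every isomorphism between two produced schemes is a gauge transformation, whereas a priori such an isomorphism only preserves the spread and hence acts on each fiber as an arbitrary bijection of nine points, not necessarily an affine one; establishing this rigidity is the real content of the WFDF non-isomorphism analysis you invoke, so your hedge is placed at exactly the right step.
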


We can regard the schemes which appear due to the last Corollary as the
results of the prolific construction for \(d=2\). The evidence for
such a formulation follows for example from the prolific property of
the WFDF graphs. See also the discussion at the end of the text.

As soon as this prolific construction is implemented, in principle,
a reasonable pattern of activity would be to try to construct all
appearing structures (or pick them pseudo-randomly) and to classify
the obtained Jordan schemes up to isomorphism. Of course, a crucial
question would also be to figure out which structures are proper
Jordan schemes and which are not.

Nevertheless, due to diverse reasons of mainly logistical nature,
another more general approach was selected for the current case \(n=45\).

In the case \(d=2\), leading to SRGs of order 45, one may naturally
think of the structure of subgraphs of a considered SRG induced by
selected fibers of size 9. Each one fiber induces an empty graph of
order 9. It is easy to figure out that any two fibers of an
arbitrary WFDF graph induce a subgraph of order 18 and valency 3
which consists of a disjoint union of three complete bipartite
graphs \(K_{3,3}\). Let us define this property of a pair
\((\Gamma,S)\), where \(\Gamma\) is an SRG of valency 12 and order 45,
S is a spread in \(\overline \Gamma\) (a Hoffman coloring in other
words) as property \((\alpha)\). 

At this stage we consider the full catalogue of SRGs of order 45 and
valency 12, available on the home page \cite{Spe} of Ted
Spence. This part of the catalog corresponds to the results
reported in \cite{CooDS06}. For each of the 78 SRGs \(\Gamma\) with
the parameters \((45, 12, 3, 3)\) we enumerate all spreads (up to
isomorphism) in \(\overline \Gamma\). We obtain 768 such pairs. Good
news is that this number coincides with the one reported in
J. Degraer's PhD thesis \cite{Deg07}.

It turns out that only three pairs in this list satisfy condition
\((\alpha)\). Namely, we get the following pairs. (The numbering of
SRGs is according to Spence's catalogue.)

\begin{itemize}
\item SRG \# 12 has a unique spread with property \((\alpha)\). Thus this
spread is invariant with respect to the group
\(\aut(SRG_{12})\cong {\mathbb Z}_3\times S_3\) of order 18.
\item SRG \# 15 with the group \({\mathbb Z}_{10}\) and SRG \# 61 with the
group \(E_9\) also admit such good spreads.
\end{itemize}

At this stage we concentrated on the enumeration of all rank 5
Jordan schemes of order 45 which contain the SRG \# 12 with a good
spread.

For the solution of such kinds of problems a special program was
written.
\begin{itemize}
\item The algorithm starts with a prescribed pair \((\Gamma, S)\) and the
group \(H=\aut(\Gamma, S)\).
\item Let \(G=\aut(S)\), clearly \(H\le G\).
\item Find all permutations \(g\in G\) which map \(\Gamma\) to a spanning
subgraph \(\Gamma'\) of \(\overline{\Gamma\cup S}\). The solutions
are split to double cosets of the group \(H\) in the group
\(G\). Here, \(\Gamma'\) is assumed to be isomorphic to \(\Gamma\).
\item For each obtained double coset check that
\(\Gamma''=\overline{\Gamma\cup S}\setminus \Gamma'\) is also an
SRG with the requested parameters:
in fact we just take \(\Gamma,\Gamma'\) and stabilize.
\item Then the color graph with vertex set \(\Omega\) and unicolor
antireflexive graphs \(S,\Gamma, \Gamma', \Gamma''\) provides an
example of a Jordan scheme.
\item All Jordan schemes obtained this way should be sorted up to
isomorphism and checked if they provide a proper Jordan scheme.
\end{itemize}

At the first stage of the experimentation in the role of both
\(\Gamma\) and \(\Gamma'\) the graph SRG \# 12 was used. In all found
solutions the third SRG was also isomorphic to SRG \# 12.

The preliminary result is that there exist at least 340
non-isomorphic rank 5 Jordan schemes of order 45. Striking news is
that all detected structures provide example of proper Jordan
schemes. 

\begin{remark}
A similar algorithm was tested in its rudimentary form for the
enumeration of all Jordan schemes of order \(n\le 14\), see for more
details the next section.
\end{remark}

\begin{remark}
The obtained results provide some background for the conjecture
that any rank 5 Jordan scheme of order 45 and valencies
\(1,1,12,12,12\) with three pseudo geometric basic graphs is
proper. The attempts to justify such a conjecture are in the
proceeding. Clearly such task belongs to the theory of ASs.
\end{remark}

\begin{remark}
In fact the developed program was first tested for the case when
\(\Gamma\) is the geometric graph with a large rank 3 group of
order 51840. Not a single rank 5 Jordan scheme was found which
contains at least one copy of this SRG \# 5 according to the
catalogue. Thus this portion of computations adds a bit of evidence
to consider condition \((\alpha)\) for SRGs as close to be a
necessary one in order to get a proper rank 5 Jordan scheme of
order 45.
\end{remark}

\section{Computer aided classification of small Jordan schemes}
\label{sec:orge37516c}
\label{org0919997} \label{org5dcbc7f}
This section provides a brief report about computer activities
toward the enumeration of Jordan schemes of order \(n\le 14\). The
results belong to the author S.R., they were discussed with the two
other authors, subject to possible algorithmic and theoretical
improvements. 

The goal was to enumerate all proper Jordan schemes of order \(n\),
provided they exists.

Initial simple assumptions:
\begin{itemize}
\item We know on a theoretical level that the smallest possible rank of
such a scheme is equal to 5;
\item All basic graphs are regular and symmetric;
\item For odd orders \(n\) all non-reflexive valencies \(n_i\) are even.
\end{itemize}

\subsection{Preliminary algorithmic experiments}
\label{sec:org1767b94}

Prior to the current project the third author was writing his own
program for the orderly generation of all AS's of a given order
\(n\). (In fact this project was part of a more ambitious one related
to the enumeration of all small CC's, cf. \cite{ZA18}.) The
algorithm so far works for orders \(n\le 15\). The obtained results,
clearly, agree with those by predecessors in this kind of activity,
see \cite{HanM}, where all available results in the enumeration of
small AS's are presented and discussed. 

The accumulated experience was, in principle, enough to quickly
modify the existing algorithm to the program of the search of all
Jordan schemes.

In fact, there exist two modifications of the program, a naive one and
a more advanced. A naive program, based on a very simple brute force
search, worked for \(n\le 11\).  A more advanced program was also
tested on small cases; the results were the same as for the naive
program.

Below we briefly discuss the obtained results. Each time the
possible valencies are mentioned together with the actual
results. Some simple arguments, based on the theoretical
reasonings, related to reasonably evident conditions for the
existence of a proper Jordan scheme, may remain hidden at the
course of consideration.

\subsection{Case \(n=8\)}
\label{sec:org6112228}

The following multisets of valencies and related solutions appear:  
\begin{itemize}
\item \(1,1^7\), Rank 8, this is \(V(E_8)\), theoretical;
\item \(1,1^5,2\), rank 7, no solutions;
\item \(1,1^3,2^2\), rank 6, one solution: \(V(D_4\times{\mathbb Z}_2)\);
\item \(1,1^4,3\), rank 5, no solution;
\item \(1,1,2^3\), rank 5, two Schurian solutions;
\item \(1,1^2,2,3\), rank 5, no solutions;
\item \(1,1^3,4\), rank 5, one Schurian solution.
\end{itemize}

\subsection{Case \(n=9\)}
\label{sec:org6a05dc6}

The only possibility is
\begin{itemize}
\item \(1,2^4\), rank 5, two Schurian solutions.
\end{itemize}

\subsection{Case \(n=10\)}
\label{sec:orgd55a29f}

There are 18 possible valency multisets,
only four of them satisfy an extra condition: less than two thin
relations. 
\begin{itemize}
\item \(1,1,2^4\), rank 6, one solution \(V(D_{10})\);
\item \(1,1,2,3^2\), rank 5, no solution;
\item \(1,2^3,3\), rank 5, no solutions;
\item \(1,1,2^2, 4\), rank 5, one solution, symmetrization of $V(AGL(1,5))$ (acting on pairs).
\end{itemize}

Assume that there are two 1-factors. Together they form either a
10-cycle or a 4-cycle and a 6-cycle. The 10-cycle generates a rank
8 Jordan scheme which is the symmetrization of the full S-ring over
\(D_5\), so it is not a proper Jordan scheme. The union of two cycles
of different sizes does not survive Jordan stabilization.

\subsection{Cases \(n=11,13\)}
\label{sec:org22e5073}

Because the rank should be at least 5 there exists a basic relation
of valency 2. Such a relation survives stabilization if all
connected components have the same size. Taking into account that
here \(n\) is prime this means that we have an \(n\)-cycle
\(C_n\). Clearly its Jordan stabilization is a symmetric AS, which is
a non-proper Jordan scheme, and a maximal symmetric scheme. Hence,
no proper Jordan schemes of orders 11 and 13 exist.

\subsection{Cases \(n=12,14\)}
\label{sec:org7f530a8}
For the remaining cases \(n=12,14\) more advanced search techniques
were exploited. However so far no conclusive results were obtained.
A reasonably clear sketch of related activities is expected in the
next update of the current preprint or elsewhere.

\section{Miscellanea}
\label{sec:orgd2aa00f}
\label{org495439e}
A number of issues which do not fit into the mainstream line of the
presentation but may add additional information of a potential
interest for the reader are discussed in this section.

\subsection{}
\label{sec:orgdba5c9b}
One can distinguish a few kinds of color graphs, which are
considered in the current paper. The most general kind may be
associated with the term \emph{rainbow}, after D.G. Higman
\cite{Hig90}. The two most significant kinds of color graphs in our
paper are coherent configurations (CCs) and what we call
(coherent) Jordan configurations.

The notion of a pregraph, introduced by us, is a particular
concrete kind of rainbow for which the required properties are
fulfilled. 

We also considered regular colorings, or regular color graphs,
where each unicolor graph is regular.

\subsection{}
\label{sec:org5d91725}
Let \({\frak X} = (\Omega, {\cal R})\), where \({\cal R}\) is a
partition of \(\Omega^2\), be a color graph with vertex set
\(\Omega\). We associate with it two permutation groups \(\aut({\frak
   X})\) and \(\caut({\frak X})\), both acting on \(\Omega\). The first
group preserves each color, while the second one may interchange
them. Clearly, \(\aut({\frak X}) \normal \caut({\frak X})\). In case
\({\frak X}\) is a CC, the problem of computer-aided calculation of
\(\caut({\frak X})\) was considered for a reasonably long while. Thus
we were able to successfully exploit the accumulated
experience. For the case of Jordan configurations the problem of
computing of related groups appeared quite suddenly. It is clear
that in future it will require more serious and systematical
efforts. 

Note that for the calculation of \(\caut({\cal J})\), where \({\cal
   J}\) is a Jordan scheme, knowledge of \(\aaut(J)\) is very helpful,
like in the classical case of CCs, see again \cite{KliPRWZA10}. 

We were also facing the necessity to compute groups of an arbitrary
color graph, in particular, the color group. Here sometimes also
appear unexpected difficulties. One of the reasons is related to
the fact that the absence of structure constants does not provide
in advance an efficient possibility of the embedding of the
quotient group of the desired group to the corresponding algebraic
group.

\subsection{}
\label{sec:orgcb6324a}
As was announced at the beginning, one of the essential features of
the developed text is balancing between three kinds of
arguments. Visual proofs were provided in the main part of the
paper in a larger proportion, probably even too large.

Patterns of theoretical reasonings also appear frequently. A reader
who is interested to learn more details is welcome to await the text
\cite{KliMR} to appear soon.

Ongoing interactions between computational activities and simple
(on the edge of trivial) theoretical arguments, on purpose, in some
cases were hidden from the reader, like it was in the previous
section.

Below are a couple of illustrations of such collisions.

\begin{examp}
Both WL and Jordan stabilization destroy the truncated tetrahedron
TT of order 12. Indeed, TT contains two kinds of edges: those which
appear inside of triangles, and those which do not. Therefore, TT
cannot appear as a basic graph of a CC or a Jordan scheme, in spite
of the fact that it is vertex transitive.
\end{examp}

\begin{examp}
The Hoffman graph \(H_{16}\) of order 16 and valency 4 was described
in \cite{Hof63} It is cospectral and switching equivalent to the
4-dimensional cube \(Q_4\). The graph is walk regular. This is why
there was sense to investigate it a bit with the aid of a
computer. 

It turns out that \(\aut(H_{16})\cong S_4\times S_2\) is a group of
order 48 with three orbits of lengths 2, 6 and 8 on the vertex
set. \(WL(H_{16})\) is Schurian; the Jordan closure coincides with
\(SWL(H_{16})\). 

We think there is a potential to pay more attention to this PWRG;
the obtained results might be published elsewhere. 
\end{examp}

\subsection{}
\label{sec:org5e8e9c2}
As was mentioned in Section \ref{orgf276f8a}, we were simultaneously
using \gap and COCO. This was dictating some features of the
presentation of technical data. For example, the reference to file
15aa.map aims to show that the computations were done twice, and step
aa was in a certain sence a clarification of step a.

On the other hand we were sometimes hesitating to show literally
COCO file with the main protocol of results. One of the reasons is
that the name for 2-orbits seems to be  obsolete, in particular for
the case of intransitive induced actions. A few outputs were
provided in traditional \gap format, which should be familiar
to a  \gap user. Also, sometimes we were facing the necessity
to switch to the COCO style, where for a permutation group
\((H,\Omega)\) the smallest element of \(\Omega\) is 0, while it is 1 in
 \gap. 

The latter issue was relevant in particular when we were using the classical
catalog \cite{Sim70}, in fact, one of the first catalogs of small
permutation groups. Here we also shifted certain sets of generators
to adjust it to COCO notation. 

Another small issue is the notation \({\mathbb Z}_n\) for the cyclic
group of order \(n\), while by \(C_n\) we understand an undirected
connected cycle of order \(n\).
\subsection{}
\label{sec:orgb2425ca}

The alternating group \(A_5\) of order 60 is the initial hero of the
development of the current paper. Its imprimitive transitive action
of degree 15 and rank 6 allows to observe existence of both
\(NJ_{15}\) and \(J_{15}\). For this purpose we were relying on the
property of the number six, as it is treated in
\cite{CamvL91}. Other transitive and intransitive actions of \(A_5\)
are also of an essential interest in AGT. The paper \cite{KliZA13}
investigates all S-rings over \(A_5\); among which there are many
quite attractive non-Schurian objects. Part of them still require
deeper understanding. 

In the paper \cite{GyuKZA19} the central object of interest is an
intransitive action of \(A_5\) of degree 26 with two orbits of
lengths 6 and 20. In these terms a quite nice SRG of order 26 and
valency 10 appears, which was selected as the logo of the recent
conference "Symmetry versus regularity. First 50 years since
Weisfeiler-Leman stabilization", see \cite{WeiL18}.

\subsection{}
\label{sec:org4b8c9bb}

Our next proper example of a Jordan scheme appears on 24
vertices. Here the main structure is the Klein graph of valency 7,
aka distance regular cover of the complete graph \(K_8\). The
uniqueness of this DRG was proved in the PhD thesis \cite{Jur95} of
Alexandar Jurišić. We were already discussing the restriction of
the graph \(\operatorname{Kle}_{24}\) on the continent of size 21,
due to \cite{Jur95}. 

The Klein graph \(\operatorname{Kle}_{24}\) is of an independent
interest in modern AGT and map theory, not only due to its amazing
links with the classical papers by F. Klein, mentioned in Section
\ref{org5e0938c}. 

It was considered in the pioneering paper \cite{HaeS95}, where it
was proved, using a computer, that there exist exactly 10
non-isomorphic graphs, besides \(\operatorname{Kle}_{24}\), which are
cospectral with \(\operatorname{Kle}_{24}\) (and are not
DRGs). Before, the Hoffman graph of order 16 was the smallest known
such graph. This line of investigations was extended in
\cite{vDamHKS06}. 

Different aspects of the genus 3 map related to
\(\operatorname{Kle}_{24}\) have been considered in map theory for a quite
long while. The paper \cite{SchW85} describes a polyhedral
realization of this map. The survey \cite{Ned01} touches the Klein
map only implicitly, however it played a cornerstone role in the
systematical promotion of regular map theory. 

For J.J. van Wijk the Klein map was a source of inspiration in
creation of a computer aided approach for the 3D-visualization of
regular maps. About 45 new examples, see \cite{Wij14}, were
elaborated on this way.

A nice feature of \(\operatorname{Kle}_{24}\) is, in comparison with
say the line graph of the Petersen graph, that it appears as a
Cayley graph, namely over \(S_4\). This allows to treat the Jordan
scheme \(NJ_{24}\) as an analogue of an S-ring. This property of
\(\operatorname{Kle}_{24}\) explains its appearance in
\cite{vDamJ19}. Proposition 7.2 in \cite{vDamJ19} claims that there
exist exactly nine Cayley DRGs with girth 3 and valency 6 or 7. The
number of vertices in these graphs varies from 6 to 27. Clearly,
\(\operatorname{Kle}_{24}\) is in this family. (As opposed to the
other eight DRGs, \(\operatorname{Kle}_{24}\) can be regarded as a
Cayley graph only over a non-abelian group.)

There is also more texts which consider some aspects of
\(\operatorname{Kle}_{24}\) from diverse viewpoints of AGT. Among
these we cite \cite{Dej12}, where \(\operatorname{Kle}_{24}\) and its
dual graph of order 56 are considered in the context of their
symmetry properties. 

\subsection{}
\label{sec:orge8da00a}

Recall that \(\operatorname{Kle}_{24}\) is a DRG, however not a
DTG. In other words, it generates an imprimitive rank 4 AS, which is
non-Schurian: its automorphism group of order 336, isomorphic to
\(\operatorname{PGL}(2,7)\), acts as rank 6 permutation group of
degree 24.  

This fact, which is well-known in AGT, was easily obtained by us
with the aid of COCO. After that, acting in an ad hoc manner and
using the Heawood graph \({\cal H}_{14}\) as an intermediate
structure, we managed to find a reasonably clear computer-free
justification of the fact that the metric scheme generated by
\(\operatorname{Kle}_{24}\) is not Schurian. 

The justification remains on the periphery of the presentation in
the current paper. Nevertheless, it seems it may lead to a very
helpful pattern for the consideration of so-called Tatra ASs, see
\cite{Rei19a} and extra comments in the next section.

\subsection{}
\label{sec:orgba8ed74}

A significant concept in our paper is related to what is usually
called a Delsarte clique in an SRG \(\Gamma\), or, equivalently, a
Hoffman coclique in \(\overline\Gamma\). A spread \(S\) consisting of
such cliques/cocliques is frequently called a \emph{Hoffman coloring.}
The removal of \(S\) from \(\Gamma\) leads to an imprimitive rank 4 AS.
(In general, the removal of any spread from an SRG does not
necessarily imply a rank 4 AS.) It seems that the terminology was
established in \cite{HaeT96}, where also relevant references and
interesting examples are provided. A helpful brief consideration of
the central facts related to Hoffman colorings appears in Sections
3.5 and 3.6 of \cite{BroH12}. In our paper, Hoffman colorings are
playing a crucial role, being exploited for the formulation of
sufficient conditions for the existence of rank 5 Jordan schemes.

\subsection{}
\label{sec:org8b01d83}

Following \cite{BanI84}, in this paper by an association scheme we
understand an arbitrary CC with one fiber, that is a homogeneous CC
in the sense of D.G. Higman. It is necessary to stress that during
the last two decades many experts prefer to use the wording AS in a
more restrictive sense, that is, requiring in addition that it is
commutative or even symmetric. One of the initial sources of this
modern tendency can be traced to a very helpful text
\cite{Cam99}. Here the author justifies such a tendency, appealing
to at least two kinds of arguments:
\begin{itemize}
\item the classical theory of ASs, developed by R.C. Bose and his
colleagues, was working in conjunction with the design of
statistical experiments, where all associate classes were
symmetric by default, see, e.g., \cite{BosM59};
\item the theory of ASs, developed by Delsarte in \cite{Del73},
together with its applications to codes and designs, does not
obligatorily require symmetry, however assumes that the basic
matrices of the adjacency algebra of the considered CC commute.
\end{itemize}

The relevant point of view is further brilliantly justified in the book
\cite{Bai04}, as well as in some further papers by Cameron, like
\cite{Cam03}. 

Fully respecting and understanding this position, we however prefer
to follow the habits, terminology and traditions which were
accepted in the framework of the former Soviet school of algebraic
combinatorics since its inception. 

In particular, the author M.K. participated in the preparation to
the publication of
the Russian translation (1987) of the seminal book \cite{BanI84} by
E. Bannai and T. Ito. For us the notion of a "non-commutative AS"
sounds very natural; it was used in the major part of relevant
publications. Thus we decided to maintain this tradition also in
the current text.

A number of text in AGT, mentioned in Section \ref{orgf276f8a}, are
written in a style and notation compatible with the current
preprint. Besides them there are many other significant sources,
e.g., \cite{CheP19}, \cite{God18}, \cite{Hig87}, see also the next
section. 

\section{Historical remarks}
\label{sec:org607ce07}
\label{org03b7322}

A number of hints of historical nature to a few fields of research
touched in the current text are collected in this section together
with some references.

\subsection{Jordan algebras}
\label{sec:org9d08348}
\label{org7b1a622}
\begin{itemize}
\item The authors are novices in this area. Thus our starting
\emph{disclaimer} in the current subsection aims to stress that the
presented information reflects initial impressions from the
acquaintaince with considered literature. The readers who are
more proficient in Jordan algebras are welcome to put here their
virtual smile and to go ahead.
\item There are at least two reasons which motivated our coming to this
field. First, Jordan algebras and Jordan schemes are naturally
related to coherent algebras and ASs, which are at the permanent
agenda of the authors' interests. Here the roots go to the
intersection of group theory with computer algebra (D.G. Higman
and the Moscow group guided by G.A. Adel'son-Vel'skiǐ and
I.A. Faradžev). 

The second reason has traces in the design of statistical
experiments, see below. (Note that the participation by M.K. at
the R.C. Bose memorial conference, Calcutta 1988, was the first
scientific event abroad that he ever attended. In comparison, in
1978 the invitation to him to attend a conference in Hungary as
an invited speaker was brutally rejected by Soviet authorities.)
\item Already at the intial stage of the project it became necessary to
rely on a few texts which served as origins of the background
information about Jordan algebras. These were \cite{Jac68},
\cite{See71} and \cite{Mal86}. On the further stages we became
more curious, looking for papers and books which aim to provide a
wider panorama of this field.

In this context we definitely wish to mention a nice expository
paper \cite{McC78}, directed toward a general mathematical
audience, as well as the book \cite{McC04}, which was published
25 years later but is still written in a friendly manner. Its
initial chapters were really helpful to get a taste of Jordan
algebras, as well as to understand the roots of the creation of
the corresponding concepts.

Two other background texts are written for a more refined
audience. 

The book \cite{Koe99}, in fact its second edition, deals with the
domains of positivity in a given real Jordan algebra \(A\). Such a
domain appears from the "positive" cone \(A_+=\{a^2\mid a\in
     A\}\). The basic discovery by Koecher was the existence of a
bijection between formally real Jordan algebras and domains of
positivity. This topic is considered in a more general
context of semisimple Jordan algebras and so-called domains of
invertibility.

We also mention the book \cite{FarK94}, which touches links of
Jordan algebras with geometry of symmetric cones and, in
particular, with harmonic analysis on some significant cases of
cones.

\item The appearance of the concept of a Jordan algebra is traced very
well to the fundamental paper \cite{JorNW34}. This is a purely
mathematical text with rigorous claims and detailed accurate
proofs, though not so easily accessible to a contemporary
reader. Diverse sources, from Wikipedia to blogs of experts allow
to trace the motivation of the authors and their goals.

A helpful attempt to clarify the goals of the authors of
\cite{JorNW34} is presented in the book \cite{McC04} by Kevin
McCrimmon. The claim is
that, roughly speaking, Quantum Mechanics was developed
simultaneously with modern linear algebra. In this context the
physical observables are represented by certain matrices
(operators on a Hilbert space). 

The trouble however is that some of the basic operations on
matrices are not "observable" for a physicist. This is why
as early as 1932 the physicist Pascual Jordan proposed to change
the existing paradigm in Linear Algebra and to create a more
suitable algebraic platform for Quantum Mechanics.

It took Jordan a couple of years of hard work to convince his
future coauthors to join him and to prepare this text which is
now considered classical. The main issue was to substitute the
usual multiplication of matrices by what they called
\emph{quasi-multiplication} in 1934, while nowadays it is commonly
accepted as the Jordan product. Its main advantage was to obtain
a symmetric bilinear operation which \emph{preserves} observables. We
will discuss a bit later on how long this marriage of Physics and
Algebra was respected in Physics.

\item Reasonably quickly the field of Jordan algebras, as well as the
related categories of alternative algebras and other structures
became significant in algebra
which is flourishing today, not depending on any relevance to
applications, though the latter might be significant. 

A dozen further names below provide just a sample of experts who made
a serious input to the development of the theory of Jordan
algebras and its various applications:

A.A. Albert, P.M. Cohn, H. Freudenthal, R.L. Griess,
Jr., I.G. Herstein, I.L. Kantor, I.G. MacDonald, R. Moufang,
T.A. Springer, J. Tits, E.B. Vinberg, E. Zel'manov.

The selection of names is based on very subjective criteria,
relying on diverse issues, such as familiarity with a paper, books,
understanding of inputs, short personal acquaintance, or,
simply, occasional email correspondence. A couple of these names
will be mentioned again below in various contexts.

\item Recall that a \emph{special} Jordan algebra appears from a usual
associative algebra via substituting the initial multiplication by
Jordan multiplication. Any Jordan subalgebra of special algebra
is also regarded as a special one. All other Jordan algebras are
called \emph{exceptional}. 

The main result in the above-mentioned fundamental paper
\cite{JorNW34} was that any finite-dimensional so-called formally
real Jordan algebra can be obtained with the aid of direct sums
from five kinds of building blocks. The first three kinds are, in a
sense, traditional special simple Jordan algebras. The fifth kind of
these algebras denoted by \(J(Q)\) will not be discussed in this text. 

A really surprising object is the exceptional algebra \(H_3({\mathbb O})\) of
rank 27 over the octonions.

\item Recall that there exist exactly four finite-dimensional \correction{alternative} division
algebras over the field \({\mathbb R}\), having dimension 1, 2, 4,
8 and denoted by \({\mathbb R}, {\mathbb C}, {\mathbb H}, {\mathbb
     O}\),
respectively. While the real and complex numbers as well as the
quaternions form associative algebras and are familiar to every
educated mathematician, the situation with the algebra \({\mathbb O}\) is
less common.

This structure with a bit orthodoxal name Cayley-Dickson algebra
is nowadays typically jargonically called \emph{octonions}. The
algebra \({\mathbb O}\) is non-associative: There is a famous \emph{Fano plane
mnemonic,} which helps to memorize the rule of multiplication of
units of octonions, also called octaves. We refer to the
fascinating book \cite{ConS03} and the comprehensive survey
\cite{Bae02} as two very helpful sources of information about the
octonions. The style and kind of exposition in these two texts
are complementary to each other. 

The fact that the algebra \(H_3({\mathbb O})\) is really an exceptional
Jordan algebra was rigorously proved by Albert \cite{Alb34}
in 1934. The algebra \(H_3({\mathbb O})\) belongs to a wider class of
simple Jordan algebras, which are also exceptional. An accurate formal
description of this class of structures \emph{(Albert algebras)} is
out of the scope of the current text. We only mention that acting in the
framework of Albert algebras, one may conjecture that, under natural
additional assumptions, the algebra \(H_3({\mathbb O})\) is the only
exceptional Jordan algebra.

A remarkable table, which is closely related to this conjecture,
is usually called the Freudenthal-Tits Magic Square, see, e.g.,
\cite{McC78}, \cite{Bae02} for more details.

Finally, the uniqueness of \(H_3({\mathbb O})\) as the exceptional
simple Jordan
algebra in the terms formulated above, was proved by Efim
Zel'manov, see \cite{Zel79}, a future Fields medalist. Thus
nowadays the algebra \(H_3({\mathbb O})\) is frequently called \emph{the
exceptional Jordan algebra} or the (not an) \emph{Albert algebra}.

Note that our consideration here is deliberately presented in a
slightly simplified form. For example, over the field \({\mathbb
     R}\), up to isomorphism, there exist three exceptional simple
algebras. All of them, in a certain sense, are close to the
algebra \(H_3({\mathbb O})\).

The paper \cite{McC84}, published a few years after the
breakthrough results in \cite{Zel79}, provides a detailed
introduction, mainly oriented to experts in algebra, to what
Kevin McCrimmon calls "The Russian revolution in Jordan
algebras."

Paradoxically, this remarkable page in the history of the
considered subject seems not to be strictly relevant to our
ongoing interests. Indeed, any exceptional algebra cannot appear
as a subalgebra of a suitable matrix algebra, and thus of a
coherent J-algebra.

\item Jordan algebras have also very nice links with projective
geometry. A brief helpful account is given in Section 0.8 of
\cite{McC04}, as well as in the last few pages of
\cite{ConS03}. A comprehensive treatment is provided in
\cite{Fau14}.

\item We now touch more closely topics related to AGT. It seems that
the book \cite{Bai04} was the first where evident and implicit
links of Jordan algebras to the design of statistical experiments
were discussed with enough details. In particular, we learned of
the paper \cite{Sha59} from this book. Working with Shah's paper
we found the text \cite{Har49}. Note that the book by Bailey also
contains references to a few texts by B. Harshbarger. It should
be mentioned that the first papers by Harshbarger were written
before \cite{BosS52}, where concepts of associate classes were
coined. This is why reading of \cite{Har49} required some efforts
in order to reconstruct unicolor relations in the structure with
12 vertices, considered in the example, which goes through all
the paper. The paper \cite{BaiS86} was quite helpful to overcome
ongoing difficulties. Our understanding now is that the example,
considered in \cite{Har49} presents a rank 5 regular symmetric
color graph with valencies 1, 2, 2, 2, 5, which does not provide
a Jordan scheme. 

It is also a pleasure to stress that the book \cite{Bai04} may
literally help our readers. Just one example: Proposition
\ref{org163c63e} in Section \ref{sec:org6af43ec}, in fact, is proved
as part of Proposition 12.5 from Bailey's book.

It should be mentioned that some serious attempts of applications
of Jordan algebras for statistical analysis were done in the
past. Two such texts are mentioned in \cite{Cam18}, namely
\cite{See72} and \cite{Mal86}.

\item During the last decade there are ongoing activities to use
non-proper (in our terms) Jordan algebras with one generator for
serious applications inside of AGT. A typical goal is to
interpret/improve known feasibility conditions for the parameters
of a DRG, in particular of a SRG. One  such text
\cite{CarV04} was mentioned earlier.

Below we list a number of further publications: 
\cite{ManV07}, \cite{ManVM13}, \cite{ManMV14}, \cite{VieM15},
\cite{Vie11}, \cite{Vie19}. The authors consider a special class
of Jordan algebras, which they call \emph{Euclidean} Jordan algebra. In
the simplest cases this is a rank 3 non-proper algebra, which is
generated by a primitive SRG. Using classical matrix analysis,
knowledge of the theory of Jordan algebras on the level of say
\cite{Koe99} and some extra techniques, borrowed, e.g., from
\cite{Fay97}, \cite{Fay15}, they are struggling to establish new
inequalities for combinatorial and spectral parameters of a
putative SRG. There are two kinds of  obtained results:
asymptotical inequalities and computer aided investigations of
concrete parameter sets. Comparison of the provided examples with
the tables on the home page \cite{Bro} shows that a couple of
times the eliminated sets are even not feasible in the classical
context of AGT. Better news are that in the major part of the
reported experiments the killed parameter sets may be eliminated
by one of the Krein conditions and/or absolute bound. This
provides a hope for really good news in the future, when the developed
quite sophisticated techniques will prove their
efficiency. Clearly, the suggested techniques can be exploited
for larger ranks of Jordan algebras.

\item At this point, as a short deviation, let us return to the
consideration of new attempts to find helpful applications of
Jordan algebras in Physics. Zel'manov's proof of the
uniqueness of the exceptional rank 27 Jordan algebra was, in a
sense, for a while bad news for experts in physics: no new
striking algebraic objects are expected.

It seems it took a long time to recover from such an initial shock and
to restart new efforts. An extensive survey \cite{Ior09} is also
supplied by a very rich bibliography. A recent paper
\cite{TodD18} provides a fresh glance on the groups related to
the exceptional algebra over the octonions. See also  earlier
papers \cite{DraM10,Dram15} about the appearance of the octonions in
physics. A provocative title \cite{Ber08} justifies immediate
initial interest to this text. 

We end this very short excursion with a glance on the text
\cite{LevHS17}. In fact, this qute detailed paper is one of many
results presented by the group of researchers working on the edge of a
few branches of physics with group theory and finite
geometries. A lot of considered structures around the famous
Doily, aka \(GQ(2,2)\) and its symmetries promote expectations that
there exists some initially invisible links between a few structures
investigated in our paper and those considered in
\cite{LevHS17}. It should be mentioned that for a mathematician
who is not experienced in theories of gravity and black hole
entropy, the author's claims that their finite geometry models
are reflecting adequately the mentioned physical concepts look
very exotic on first sight.  Excellent news are coming however at
the conclusions, where the authors explain that the main topic of
the paper can be translated exactly to the language of Jordan
algebras, namely to the agenda considered in \cite{Kru07} by
S. Krutelevich (a former Ph.D. student of E. Zel'manov).

\item Let us add a few words to the justification of the terms coherent
J-algebra and coherent JC, introduced in Section
\ref{org4a0fd08}. 

Currently, the only known to us relevant concept was Euclidean
Jordan algebra. It is closely related to the symmetric cones
(domains of positivity) mentioned earlier. Euclidean Jordan
algebras are, in particular, used by the  members of a group
from Portugal, who are applying Jordan algebras to SRG's 

It seems, however, that on the axiomatical level, such algebras
are not obligatorily matrix algebras, closed with respect to SH
product.

Note that in the wording suggested by us there is no
commutativity, that is, a coherent J-algebra is not the same as
a Jordan coherent algebra.
Here by a Jordan coherent algebra one means a coherent algebra
which is simultaneously a Jordan algebra. On one hand, any
adjacency algebra of an SRG shows that these two concepts may
coincide. On the other hand, proper Jordan algebras, the
existence of which is established in the current text, are not
coherent algebras.
\end{itemize}

\subsection{Coherent algebras and WL-stabilization}
\label{sec:org9f54626}
\label{orgf71f8d9}
This is an area of expertise of all the authors and of the
essential part of the expected audience. Thus we do not face the
necessity to discuss historical issues in detail. Brief hints will
be given, as well as certain references.

\begin{itemize}
\item There are at least four relatively independent sources of the
appearance of CCs and ASs.

The first source is the seminal paper \cite{Sch33}, where the
concept, now commonly called  Schur ring or briefly S-ring, was
introduced. For a while, the paper remained unnoticed, however
after WWII, H. Wielandt, a research student of Schur, published a
few texts, including mimeographed notes, which later on appeared
as \cite{Wie64}. These activities of Wielandt's served as the
successful propaganda of Schur's ideas. Since 1964 Schur's method
is becoming popular, first, in permutation group theory and later
on in AGT. 

As was stressed earlier, the second source is activities at
Calcutta due to R.C. Bose and his followers. The first paper(s)
can be traced to 1939, while the texts \cite{BosS52},
\cite{BosM59} and \cite{Bos63} all are regarded as seminal
ones. In particular, the wording and concept of an SRG are
conceived in \cite{Bos63}. Perfect use of the tools from linear
algebra was always typical feature of the members of this
research group. 

The third origin came from the Soviet school, namely from the
paper \cite{WeiL68} and the collection \cite{Wei76}. The motivation
came from the attempts to find an efficient solution of the graph
isomorphism problem. Note that an English translation of
\cite{WeiL68} is now available from the home page \cite{WeiL18} of the conference
WL2018. Another detail is that Boris Weisfeiler is the editor,
but not the sole author of \cite{Wei76}, as one might think.

The last source is due to D.G. Higman with his first paper
\cite{Hig70}, after which there were many further publications
(\cite{Hig90} is one of them). The motivation of Higman was going
from his interests in permutation group theory (before and after
CFSG). 

The reader who is interested in more information of the
historical nature is welcome to
look through some sections of \cite{Bai04}, \cite{KliRRT99},
\cite{BanGPS09}, slides of memorial lectures at Pilsen, 2018, and
other sources, like supplement S.2 to \cite{KliG15}.

\item For a given undirected simple or color graph \(\Gamma\) we can
consider a few kinds of procedures, which are called
stabilization. 

Classical Weisfeiler-Leman stabilization returns the smallest CC
\(WL(\Gamma)\) which contains a graph \(\Gamma\). By symmetric
stabilization in this paper we mean the symmetrization \(\tilde Y=SWL(\Gamma)\)
of \(Y=WL(\Gamma)\). Clearly, the result is defined uniquely. 

The \emph{Jordan stabilization} \(JS(\Gamma)\) returns the coarsest
coherent JC which contains \(\Gamma\). Again, it is possible to
check that this notion is well-defined.

Finally, we mention the \emph{resistance distance transform}
\(RDT(\Gamma)\), which is defined with the aid of a resistance
distance in a graph. For consideration of the related concepts we
refer to, e.g., \cite{KleR93}, \cite{Big97b}, \cite{Big97a}, \cite{Bap99},
\cite{Bap10}, \cite{KagM19}. It is clear that the comparison of
the strength of these kinds of stabilization provides a challenge
for the researchers in AGT. We aim to face the appearing
questions in the future.
\end{itemize}

\subsection{From Siamese objects to Tatra schemes}
\label{sec:org56801ab}
\label{org94cd7b7}
Here we briefly discuss the history of main concepts, which were
exploited in order to reach understanding of the existence of
proper Jordan schemes. Typically, exact definitions are not
provided, the reader is welcome to look through the provided
references.

\begin{itemize}
\item The seminal text here is \cite{KhaT03}. M.K. became acquainted
with it slightly before its formal publication. Here an infinite
family of Siamese color graphs of prime-power order \(q\) is
constructed, relying on the use of weighing matrices in the
spirit of P. Gibbons and R. Mathon \cite{GibM87}. At that time
M.K. was visiting the University of Delaware and was supervising
the Ph.D. thesis of S.R., in fact his former student on the edge
between Germany and Israel.

The results observed in \cite{KhaT03} suggested very helpful line
for computer aided research with further partial interpretation
of the most attractive structures in a human-friendly
manner. Part of the obtained results are reflected in Chapter 5
of the thesis \cite{Rei03}.
\end{itemize}

After defending his thesis, S.R. for a while interrupted his
academic activity. This is why M.K. suggested to Andy Woldar
(A.W.), who should be regarded as an invisible partner in writing
the current paper, to join the tandem of two authors. Two papers
\cite{KliRW05} and \cite{KliRW09} were published; a draft
\cite{KliRW} still is awaiting finishing.

Our first examples of proper Jordan schemes of order 15 and 40
fully stem from the above-mentioned activities. In a sense, one
had to pick up the structures from a box, to dust them off with
closed eyes, and after that, already with open eyes, to figure
out that this seems to be exactly what we are looking for.

\begin{itemize}
\item To continue the line which stems from the Siamese objects, we
have to mention at least a few more issues.

Since the thesis \cite{Rei03} it was desired to get a fully
computer-free proof of the fact that there are exactly two
non-isomorphic Siamese color graphs of order 15. The result was
obtained a few years ago as a combination of efforts of
independent parties. We expect that the current return of the
interest to Siamese structures will warm up the preparation of a
paper by M.K, M. Mačaj, S.R., A. Rosa, as well as A.W. on this
subject.

M. Mačaj (2015) enumerated all Siamese color graphs of
order 40. There are exactly 25245 such structures, two of them
arising
from AS's. There is a preliminary report \cite{Mac16a}. 

Last but not least is a tremendous progress, reached again by
Mačaj, in the understanding, generalization and exploiting of the
ideas from \cite{KhaT03} for a systematical construction of
Siamese color graphs of order \(\frac{q^4-1}{q-1}\) and, in
particular, of Siamese schemes. The preliminary results were
reported at the first Pilsen conference on AGT (2016), also
organized by R. Nedela, see \cite{Mac16b}. Even a brief glance on
the tables reveals a combinatorial explosion, in particular,
hundreds and thousands of DRGs and SRGs for \(q=9\) and \(q=11\),
respectively.

There is no doubt that the results obtained by Mačaj should be
prepared for publication and submitted as soon as possible.

\item One more source of our structures is what is now called WFDF
prolific construction for SRGs. We were already paying credits to
Wallis and to Dima Fon-Der-Flaass (who died at a young age, see
book \cite{DFDF12}, dedicated to his memory). Here, in addition,
we cite three more papers: \cite{Muz07}, \cite{KliMPWZ07},
\cite{CaMS02}. Each of these texts develops the original ideas in
a new direction, both on a theoretical and computer aided
levels. 

As was mentioned, an innovative feature in the appearance of the
WFDF structures at the current text is that we are "pushing"
three disjoint SRGs of the same order, requiring that they will
share the same Hoffman coloring.

\item Recall that what we called already in \cite{KliRW09} a classical
Siamese AS of order \(\frac{q^4-1}{q-1}\) appears from a concrete
imprimitive action of the group \(PSL(2,q^2)\), which has \(q+1\)
thin and \(q+1\) thick classes. 

Quite soon it became clear that an arbitrary imprimitive action
of \(PSL(2,q)\), not only when \(q\) is a square, such that its rank
is \(2l\) for a divisor \(l\) of \(q-1\), might be considered in
order to get interesting initial ASs.

First experiments were done by M.K., who, using COCO, found a
number of new non-Schurian ASs, which appear as mergings of
classes in imprimitive Schurian ASs of even rank.

In this context it is relevant to mention that since 1995 M.K. was
running at BGU an educational seminar on algebraic combinatorics
(AC). 15 years later on or so Eran Nevo joined the Department of
Mathematics at BGU. At that time he became a coorganizer
(together with M.K.) of the already existing seminar on AC. The
ideas, stemming from the fulfilled computations, as well as from
the remarkable paper \cite{McM91} were discussed on the
seminar. It was expected to arrange a joint research project
following these ideas. The sudden death of W. Thurston, who
implicitly was involved to this research, was one of the reasons
why the project produced just one nice short text \cite{Nev15}. Soon
after, Eran returned to his Alma Mater, HUJI.

The project was resumed when S.R. found a temporary position
(until 2018) at
TU Dresden and resumed his systematical research. A first success
was reached and claimed during a bus ride from the Low Tatras to
the High Tatras to a conference organized by our kind supporter
R. Nedela. Since that time all the structures, which appear
around this stream of activity, are called \emph{Tatra schemes.} The
first draft of related results was prepared by M.K. and S.R. 4-5
years ago. Fortunately, now a paper \cite{Rei19a} is in press and
will soon be available to an interested reader. 

We will add a few words in this relation in the next section.

\item A paper \cite{Gun05} by Paul E. Gunnells was already cited in
this text. One of the structures introduced there is a graph
\(G(q)\) of order \(\frac{q^2-1}{2}\) and valency \(q\); in his text \(q\)
is prime. Gunnells claims evidently the existence of a spread
related to his graphs, though in other wording.

The authors of \cite{DeDLM07} call the graphs \(G(p)\) \emph{platonic},
referring to the fact that for \(p=3\) and \(p=5\) we are getting
skeletons of the tetrahedron and the icosahedron,
respectively. For us the first interesting case corresponds to
\(q=7\), leading to the beloved
Klein graph of valency 7. Clearly, in the case of \(q=9\) we face
the now classical Siamese graph of order 40.

The fact that the graphs \(G(q)\) are Ramanujan requires some job
for confirmation. For experts working in Extremal Graph Theory
these graphs are of minor interest because their valency grows
with the order.

Good news is that these structures are now becoming of an
independent interest in AGT. Note that in \cite{Nik07} the
structures \(G(p)\) already are called the \emph{Gunnells graphs}. 

Note also that similar graphs are investigated by other
authors. A couple of references are \cite{BanST04},
\cite{BanT09}, \cite{LiM05}, though many more remain out of this
short glance. 

There is no doubt that each family of similar graphs, appearing as
2-orbits of a suitable imprimitive action of a simple group,
deserve in the future careful investigation with the purpose to
construct new examples of proper Jordan schemes.

\item We are pleased to finish this subsection with a short glance at a
recent paper \cite{KhaS18}. The authors managed to glue together
threads which go back to a few significant lines of activity,
already appearing in the current project.

We mean the seminal book \cite{BanI84}, their own paper
\cite{KhaT03}, our text \cite{KliRW05}, the so-called
"Bannai-Muzychuk criterion", \cite{Ban91,Muz87}, as well as the
ideas of Higman. In particular, the paper introduces a
generalization of the BM criterion for the case of
non-commutative ASs.

It is clear that the new ASs presented in \cite{KhaS18} have
intersection with the Tatra schemes. Comparison of differences
may be a good task for further research.

It seems that the authors of \cite{KhaS18} were not aiming to
give a comprehensive bibliography. However, the given set of
references stresses well the main lines of interest and are
helpful to the reader.
\end{itemize}
\subsection{The current project}
\label{sec:orgdda12fc}
\label{org6b70302}
\begin{itemize}
\item One can distinguish different stages in the development of this
project. The activities around Siamese structures and Tatra
schemes may be regarded as a kind of implicit preparation. As we
will see, they were quite crucial in order to allow the authors
to jump in, as soon as the necessary direction of movement became
visible.

\item There are also other  evident ingredients of the prehistory of our
project. 

A few times M.K. and M.M. were applying research proposals for a
joint cooperation with colleagues from Israel, Germany, USA and
other countries. A few of them were successful, in particular to
GIF and for the European Mobility Grant. A lot of established
research impacts, as well as a necessity to concentrate on fresh
ideas were helpful, no matter if a concrete application was
accepted.

One more ingredient is the slides of a lecture by Cameron
\cite{Cam18} already mentioned. The slides and references there
create an attractive lead to start to think properly about the
problem of the existence of proper schemes.

\item The literal start of the project itself can be traced back to a
certain 
evening in the second half of February 2019. We even remember the
exact date, namely February 22, when a proof of the existence of
\(J_{15}\) was obtained.

\item A first announcement of the results was done by S.R. at the
conference in Bled, June 2019, see \cite{Rei19b}. 
Evident positive attention to
the talk was a pleasant encouragement for our team.
\end{itemize}

\begin{itemize}
\item Summing up this quite long section (even for such a huge text)
one may claim that the reported project appeared as an
amalgamation of activities which can be split into two parts: a
kind of preparation lasting for 15 years since 2003, and half
an year starting in February 2019. 

Taking into account that the members of our trio are involved in
a few pending projects which have taken even longer parts of
their lives, probably the fate of this preprint can be considered
reasonably lucky.
\end{itemize}

\section{Concluding discussion}
\label{sec:org3d2ed22}
\label{org7fa6e54}
\begin{itemize}
\item The preprint is dedicated to the memory of Luba Bron (April 12,
1930 - April 17, 2019), mother-in-law of M.K. She passed away
within about a month after a sudden diagnosis of a pancreatic
cancer at a late stage. The members of the authors' trio deeply
shared these sad news.

\item The genre of this huge preprint is quite unusual. We warn the
reader with a surprising patience, who found time and interest to
look through the entire paper, that the current last section is
quite different in style from the main body of the text. We
discuss here non-formally diverse issues, regarding conducted and
planned research, as well as share our opinions about links of
this part of AGT with other scientific fields.

\item A possible audience of this preprint is stratified into a few
categories, according to the origin of interest, level of patience
and expected further goals. 

Jokingly speaking,  the core of the audience consists of the authors
themselves. We wish to document the history of our understanding of
the existence of the initial new objects, in particular \(J_{15}\)
and \(J_{24}\); to reflect a pleasure from the elaboration of
starting semi-formal visual proofs, to describe outlines of the
roots toward future rigorous proofs of the formulated claims. An
extra goal is to share our enthusiasm for the process of
self-education in the subject of Jordan algebras, which is new to
all of us. 

The next category of potential readers is formed by a part of the
AGT community. We have some evidence to expect that at least a few
colleagues, who are already proficient enough in Jordan algebras,
as well as those who, like us, consider themselves novices, will
express interest to go ahead along the initiated way.

There is also hope that, say, a few well-established experts in
Jordan algebras may be challenged to become acquainted with the
background concepts in AGT and after that to start a new wave of
search for proper J-algebras.

Last but not least is the community of graduate students working
on the intersection of AGT, Jordan algebras and Computer
Science. Our past experience with texts like \cite{KliG15} shows
that an essay written in the style of annotated lecture notes may
help a beginner to quickly enter into a completely new field of
expertise.

\item After putting the first version of this preprint on the arXiv we
expect to create several updates, roughly  speaking once every
couple of months. In the end the text will be conserved for a
while. Its further future currently is not clear at all. 

The announced "dry" version \cite{KliMR} of the current text is
already in preparation. Hopefully, it will be finished soon after
the current one. In lucky circumstances the time of preparation
will be measured in weeks, not months.

In principle there also exists a quite attractive lead for the
author S.R. to prepare his own text in the style of, say, the
Journal of Symbolic Computation. Such a text will aim to describe
with all details the main elaborated algorithms (together with
portions of code), to discuss results of computations and to share
difficulties to extend the area of successful search. More
detailed tables of results will be welcome. Here goes a
microscopical example: to tell in a few words why {\sf GAP}, see Table
\ref{table:proper-schemes}, does not provide for \(q=64\) a more
intelligent name of the group in comparison with all other
appearing names.

Unfortunately, as was mentioned before, S.R. is still looking for
a permanent affiliation. This is why at this stage
the mentioned lead should be regarded as an attractive dream
rather than a realistic plan.

\item There are a number of pending tasks, mainly of computational
nature. Recall that  currently we aim to prove with the aid of a computer
 that all Jordan schemes of order \(n\le 14\) are non-proper.

We do not know if \(J_{15}\) is the unique (here and below we mean
up to isomorphism) proper Jordan scheme of order 15, or if there
are more such schemes (of larger rank). 
The next step would be the enumeration of proper Jordan schemes of
order 16 and 17. For the prime order 17 the task seems to be
reasonably realistic.

Note that for order 16 we face the first occurrence of the
explosion of the number of non-Schurian ASs. Taking this into
account one may guess that for the order \(n=16\) the task of
enumeration would be quite difficult.

Starting from \(n=18\) the task of systematic enumeration of all
Jordan schemes seems to be compuationally much more difficult. For
these orders one may think of more concrete tasks, formulated in
terms of the appearance of prescribed basic graphs. For example,
some graphs of order 18, see \cite{GyuK17} might be reasonably
interesting.

For \(n=21\) possible attractive basic graphs for a putative proper
Jordan scheme seems to be the unique generalized hexagon \(H(2,1)\),
i.e., the line graph of the Heawood graph \({\cal H}_{14}\), a
primitive DRG of valency 4 and order 21. This structure is touched
very briefly in \cite{Coh83} together with the hexagons \(H(2,2)\)
and \(H(2,8)\) of orders 63 and 819, respectively. 

What looks very remarkable is that all three of these structures
are intimately related to Jordan algebras over the octonions. Note
also that implicitely we already faced \(H(2,1)\) in Section
\ref{org508eb9e}, where it was depicted in Figure \ref{figure:14.2.b}.

There are a couple of interesting tasks for order 40. One will be
discussed below. The other is related to the possibility to switch
once more in the model "island and continent" with respect to
other islands and to inspect what is going on.

Finally a realistic task seems to be to classify all rank 5
proper Jordan schemes of order 45 with valencies \(1,8,12^3\) and to
figure out which of them really appear from the prolific
construction and which probably are "quasi-prolific" in the
context discussed above.

\item We expect that the announced paper \cite{KliMR} in preparation,
which was already mentioned a few times, will include a number of
theoretical results which in this preprint were deliberately
touched only briefly.

First we mean an accurate formulation and proof of the theorem
about the prolific construction of order \(\binom{3^d+1}{2}\) and
rank 5. This should include more precise discussion of feasibility
conditions, evaluation of the amount of non-isomorphic objects. An
extra promising lead is to prove that for even \(d\) all appearing
J-schemes are proper.

We also aim to provide accurate and transparent formulations and
proofs of propositions, related to our paradigm "island and
continent", both in its simplest and general versions.

One more attractive lead is to try to elaborate some sufficient
criterion for the algebraic isomorphism of J-schemes with given
order, rank and valencies, which however are not obligatorily
combinatorially isomorphic. For this purpose the techniques in
\cite{AbiH12} by A. Abiad and W. Haemers and developed by their
followers, see, e.g., \cite{WanQH19}, look very promising.

\item A few interesting questions and research tasks appear on the way
of further investigations. From the first sight it is not so
immediate which of them turn out to be almost trivial and which
require much more serious efforts. 

Recall that the smallest non-Schurian CC has order 14, rank 11,
and contains two fibers of size 6 and 8, both are Schurian, see
\cite{KliZA17}, \cite{ZA18}. We are not aware if, in principle,
it is possible to construct proper coherent JC such that its
restriction to each fiber is a non-proper J-scheme.

We still do not have even one example of a \emph{proper primitive
J-scheme}. In such a scheme each basic graph should be connected. 

A possible way to construct such an example is to rely on the
results from \cite{HerM17}. Namely, to take a feasible parameter
set of a primitive non-commutative AS of rank 6 (denoted by \(Y\))
and to try to get a rank 5 JS algebraically isomorphic to the
symmetrization \(\tilde Y\). Note that currently the smallest order
of such a feasible set is equal to 81.

One more issue. Let \(\Gamma\) be a DRG (in particular SRG). Let us
call it \emph{JS-friendly} if \(\Gamma\) appears as a basic graph of a
suitable proper JS. To try to formulate reasonably strong
necessary conditions to carry such a property. The particular case
when \(\Gamma\) is primitive seems to be more difficult. To express
things simply, we are aware that the Petersen graph \(\Pi\) is not
JS-friendly while its line graph \(L(\Pi)\) is.

\item A more systematical way to start to work with the tensors of
structure constants of putative Jordan algebras (not obligatorily
proper) lies through the simultaneous imitation, generalization
and development of what can be called the theory of \correction{\emph{half-integral
table J-algebras}.} We refer here to \cite{AraFM99} as a reasonably
good protagonist of such an approach, see also \cite{Bla09}. 

Here the table algebras are defined in a suitable manner as
abstract analogs of the intersection algebras of ASs and other
similar algebraic structures. Bright, though quite rare examples
of really beautiful reasonings appear, e.g., when a suitable
feasibility condition for the existence of an AS appears to be
proved without the use of any assumptions about the existence of
the corresponding adjacency algebra.

It should be mentioned that, according to our experience, in the
search for proper Jordan algebras it is easier to require the
existence of at least two generators for an algebra (in the sense of
Jordan stabilization), see also extra discussion below.

\item We also mention the existence of results which are already clear
to the authors but will not appear in \cite{KliMR} to avoid
unnecessary explosion of size of that text. For example, relying
on some computer experiments, the question of the characterization
of thin Jordan schemes was posed. Finally it was proved that each
such structure appears as the centralizer algebra of a regular
action of an elementary abelian 2-group.

\item As a first reasonably realistic step in the development of the new
branch of the theory of table algebras we suggest to create a full
parametrization of rank 5 table algebras corresponding to Jordan
schemes, both primitive and imprimitive. In addition to the issues
mentioned above, the paper \cite{KliMZA09}, which goes back to
some results and ideas of D.G. Higman, might be helpful.

\item As was already mentioned, we hope that making this text publicly
available may promote interest of a few of our colleagues to come
back to the topic of Siamese combinatorial objects.

Finishing the enumeration of those proper Jordan schemes of order
40, which relies on careful inspection of all Siamese graphs of
order 40, fully enumerated by M. Mačaj, is very welcome. Similar
activities for the orders \(\frac{q^4-1}{q-1}\), \(4\le q\le 11\),
relying on his already announced results, see, e.g., \cite{Mac16a},
are expected as well.

We also assume that a couple of texts regarding Siamese structures
by M.K., S.R., A.W. and  other colleagues, will be
resumed and finished reasonably soon.

As was mentioned before, a new wave of activities around the Tatra
ASs might also be quite promising. On one hand, here is a good
potential to glue techniques and ideas which stem from the authors
with those by Hadi Kharaghani and his colleagues. On the other
hand, the trick used by us to prove that the rank 4 AS of order 24
is not Schurian seems to be promising for a wider use.

Note also that it seems that not all potentially exploitable
structures considered in \cite{Rei19a} were used in order to find
new proper Jordan algebras.

\item One more promising area of investigations has been started by
Mikhail Kagan and M.K. Here discussions with two other coauthors
of the current text were helpful.

Recall that the concept of the resistance distance transform (RDT)
is based on the use of the Moore-Penrose inverse of a square
matrix. Indeed, for a simple (connected) graph \(\Gamma\) the
resistance distance between any two of its vertices \(x,y\) is
expressed in terms of the corresponding entries of the MP-inverse
of the Laplacian matrix of \(\Gamma\).  On the other hand it is well-known
that a matrix Jordan algebra is closed with respect to taking
MP-inverses. These two facts together imply that the results of
RDT and Jordan stabilization are closely related, while both live
in the symmetrization of the WL-stabilization. More rigorous
research in this direction is now on the agenda. Many experiments
were conducted, while almost all appearing questions still remain
open.

\item At this stage of the discussion, a few issues related to the
philosophy of mathematical research appears in order.

Recall the two concepts of \emph{explanation} and \emph{interpretation} of
computer-aided results that were coined in \cite{KliPRWZA10}. In
this text a number of explanations of obtained results were
provided. Our model "island and continent" seems to be the
brightest example. Provided a concrete proper Jordan algebra is
presented the reader is welcome to make all additional efforts in
order to get a self-contained proof of the existence of the
discussed result.

On the other hand, our models for the Klein graph
\(\operatorname{Kle}_{24}\) serve as, typically quite rare, examples
of an interpretation. Here all necessary logistics is already done
and its results are formulated. The reader is just welcome to
observe the results and to agree that the presented structure
really enjoys its claimed properties.

In this context we face a quite interesting dichotomy, which
naturally appears in consideration of "beautiful" structures in
Mathematics. We mean perceptual properties of visual
representations vs. purely abstract arguments, presented in a
so-called Bourbaki style. A number of outstanding scientists,
among them P. Erdös, J. Hadamard, G.H. Hardy, J.-C. Rota,
B. Russell, expressed their (now classical) opinions on this
subject.

Developing these issues further, one may briefly touch the concept
of abductive reasoning, already mentioned in Section
\ref{orgb33f855}. For readers with even a slight interest to
this line of philosophy of science we refer to the paper
\cite{Wal01} by D.N. Walton.

For us, however, more modest concrete, though still sensitive
issues seem to be relevant, cf. e.g., \cite{Pur19},
\cite{Sta18}, where some case of graph drawings were discussed. We
mean the following. A lot of efforts were done in our text to
present the structure \(J_{15}\). In the beginning it appears as a
result of a computer. Later on we depict a couple of unicolor
graphs on the canvas of the same diagram. Then the same model is
explained in terms of island and continent. Finally, the concrete
model \(J_{15}\) is discussed as a very particular case of a general
abstract formulation. 

Which of these ways is the best for a concrete reader? Clearly, we
do not know a universal reply. However we assume that for each
mode of discussed presentation there will be readers for whom this
concrete pattern will be the best. This assumption is one of the
main justifications of the selected style and size of this
text. Of course, one may rely on such an assumption, writing a huge
preprint, however, it would be impossible to follow the same
pattern in a text like \cite{KliMR}, intended for publication in a
regular journal.

The question of the creation of nice drawings is also quite
painful for us from a purely logistical point of view. Indeed, the
preparation of each of the diagrams presented in the preprint
required a lot of effort, both for the initial draft and the final
rendering. 

Quite recently we became aware of \cite{Wol18}, an excellent Master's
Thesis by Jessica Wolz at Tübingen. A number of nice pictures,
created with the aid of the approach developed there, are already
cited on Wikipedia. We expect in the future to learn the art
presented in that thesis.

\item Remaining a bit longer on a level of philosophical discussion, let
us consider the main difference between the field of AGT, in its
classical sense, and the new field of proper Jordan algebras.

The most interesting structures in AGT are those which have one
generator. The striking kinds of such generators are DRGs and, in
particular, SRGs. Many other structures can be obtained (via
WL-stabilization) from one unicolor graph, which however can
finally split into a few basic graphs. The exceptional objects
from this point of view are exactly amorphic ASs, see, e.g.,
\cite{GolIK94}, \cite{KliKW16}. 

The field of proper Jordan schemes is different in this
context. As we are aware, every Jordan algebra polynomially
generated by  one generator
is non-proper. Thus a requirement that the structure has at least
two generators is a significant one, provided we are interested to face
proper objects.

In a sense this issue explains our essential difficulties in the
initial comprehension of these new structures, as well as further
development of the appearing theory.

\item We feel that one more issue should be briefly touched at this
place.

As was mentioned, the selection of the attribution Jordan to the
algebras and schemes considered in this text is naturally
motivated by the common name "Jordan product" for the considered
binary operation. In turn, this selection was influenced by the
texts and slides by P. Cameron cited above, who also relies on
this terminology. 

There is however another possibility to call these structure by
the name of Shah. The input of B.V. Shah to the generalization of
classical AS's is reflected in the monograph \cite{Bai04} by
Rosemary A. Bailey. In particular, she cites three relevant papers
by Shah, published in 1958-60. The selection of the name, done by
the authors of the current text, is already clear to the reader.
The distance of about 15 years between these two independent
attempts looks very convincing for the accepted decision.

\item Above some quite concrete research plans for the authors and their
close colleagues were discussed.

Here we are taking liberty to imagine a few possible promising
targets. For other experts we regard this as a wild guess, which
is based on the comparison of their results, skills, research
interests. Each time we are trying to motivate our guess by
references.

\item The famous non-associative Griess algebra, described in
\cite{Gri82}, played a background role in the proof of the
existence of the \emph{Monster group}, the largest sporadic finite
simple group. The paper \cite{Gri90} is a quite short but
reasonably friendly text, which clarifies links between the
exceptional Jordan algebra of dimension 27 over \({\mathbb O}\), Moufang loops
and cubic forms. Note also that it was Griess who invested a lot
of time and energy into the preparation of the special issue of
the  Michigan Mathematical Journal dedicated to the memory of
D.G. Higman. Such an impact creates a hope to start quite deep
efforts in order to reveal a striking mystery: is there a new kind
of combinatorial structure which in some sense shares common
properties with ASs and Jordan schemes, but which satisfies
different axioms. Clearly, a concrete example of a model for such
a structure should stem from \(H_3({\mathbb O})\).

\item Antipodal DRG covers of complete graphs are playing a crucial role
in many constructed proper Jordan schemes. Sometimes nowadays such
a graph is attributed by the abbreviation DRACKN (here N stands
for the order of the complete graph \(K_n\)). There is a continuing
interest to DRACKNs, a cornerstone paper is \cite{GodH92}. In
\cite{KliP11} an attempt of a survey of constructions known at that
time is provided, also an idea to construct DRACKNs from
generalized Hadamard matrices is coined and developed. 

A new splash of interest to DRACKNs is related to their close
links with equiangular lines (a classical concept which goes back
to J.J. Seidel, see, e.g., Chapter 11 of \cite{GodR01}) and
so-called finite tight frames. A recent book \cite{Wal18} provides
a detailed introduction to this relatively fresh field. A handful
of references below reflect recent activities, related to this
impact of ideas: \cite{FicMT12}, \cite{CouGSZ16}, \cite{Rah16},
 \cite{FicJKM17}, \cite{FicJMP18}. 

We also mention potentially promising links with such texts as
\cite{KinPV09}, \cite{CFKLT06}.

\item Let us return once more to the papers by Paul Gunnells. Recall that
the exploited graphs \(G(q)\) of order \(\frac{q^2-1}{2}\) and valency
\(q\) were constructed by him. His recent article \cite{Gun18}
strictly deals with Jordan algebras, referring to already
discussed texts \cite{JorNW34}, \cite{Koe99}, \cite{McC04} and a
few other attractive sources. This lovely text is devoted
to  some structure which is called by the author the exotic
matrix model. 
 Why not to hope that such  knowledge of  two so far almost
non-intersecting fields may lead to a progress of the mystery
which was already discussed a few times above: Can one somehow use
the exceptional Albert algebra for new strict purposes in AGT?

\item We end the last section of this essay approaching again a
philosophical issue.

It was a pleasant surprise to discover the tip of the iceberg
considered above. It was quite uncomfortable to lose many times
the feeling of balance in the course of the ongoing attempts to
stand on it quite stably.

It seems that indeed a new field of research has been started. No
doubt that in the future our followers will reach much deeper,
more transparent and general understanding of this area.

As happens frequently in mathematics our initial examples quite
probably will play a minor role in the forthcoming general theory
yet to be developed. Nevertheless we do hope that a very specific
kind of credit should be given later to this text: Coining of the
term "proper Jordan scheme".

With such positive expectations the \emph{proper} part of the essay is
finished.
\end{itemize}

\section{Acknowledgments}
\label{sec:org62a19f7}

We are much obliged to Rosemary Bailey and Peter Cameron for helpful
discussions and texts, which promoted our interest to the concept of
Jordan algebras.

Collaboration with Mikhail Kagan, Martin Mačaj, Alex Rosa and Andy
Woldar was quite crucial in preparation to the research, reported in
our essay.

We thank Roman Nedela for the ongoing attention and kind support
during many years. 

Logistical support on diverse stages of this project by Katie
Broadhead, Andries Brouwer, Chris Godsil, Stefan Gyürki, Danny
Kalmanovich, Jan Karabaš, Wolfgang Knapp and his colleagues, Eran
Nevo, Grigory Ryabov is appreciated.

Many colleagues expressed interest to some earlier activities
reflected at this essay. Among them are Eiichi Bannai, Henry Cohn,
Igor Faradžev, Gabor Gevay, Ivan Gutman, Allen Hermann, Gareth
Jones, Bill Kantor, Otto Kegel, Hadi Kharaghani, Josef Lauri, Felix
Lazebnik, Doug Klein, Ilia Ponomarenko, Milan Randič, Metod Saniga,
Bangteng Xu. We thank everybody of them.

\nocite{Wol18}

\bibliographystyle{alpha}

\bibliography{jordan}

\newcommand{\etalchar}[1]{$^{#1}$}
\begin{thebibliography}{MMdAVAM13}

\bibitem[AFM99]{AraFM99}
Z.~Arad, E.~Fisman, and M.~Muzychuk.
\newblock Generalized table algebras.
\newblock {\em Israel J. Math.}, 114:29--60, 1999.

\bibitem[AH12]{AbiH12}
Aida Abiad and Willem~H. Haemers.
\newblock Cospectral graphs and regular orthogonal matrices of level 2.
\newblock {\em Electron. J. Combin.}, 19(3):Paper 13, 16, 2012.

\bibitem[Alb34]{Alb34}
A.~Adrian Albert.
\newblock On a certain algebra of quantum mechanics.
\newblock {\em Ann. of Math. (2)}, 35(1):65--73, 1934.

\bibitem[Bae02]{Bae02}
John~C. Baez.
\newblock The octonions.
\newblock {\em Bull. Amer. Math. Soc. (N.S.)}, 39(2):145--205, 2002.

\bibitem[Bai04]{Bai04}
R.~A. Bailey.
\newblock {\em Association schemes. {D}esigned experiments, algebra and
  combinatorics}, volume~84 of {\em Cambridge Studies in Advanced Mathematics}.
\newblock Cambridge University Press, Cambridge, 2004.

\bibitem[Ban91]{Ban91}
Eiichi Bannai.
\newblock Subschemes of some association schemes.
\newblock {\em J. Algebra}, 144(1):167--188, 1991.

\bibitem[Bap99]{Bap99}
R.~B. Bapat.
\newblock Resistance distance in graphs.
\newblock {\em Math. Student}, 68(1-4):87--98, 1999.

\bibitem[Bap14]{Bap10}
Ravindra~B. Bapat.
\newblock {\em Graphs and matrices}.
\newblock Universitext. Springer, London; Hindustan Book Agency, New Delhi,
  second edition, 2014.

\bibitem[BCKP]{BabCKP10}
Luitpold Babel, Irina~V. Chuvaeva, Mikhail Klin, and Dmitrii~V. Pasechnik.
\newblock {Algebraic Combinatorics in Mathematical Chemistry}. {M}ethods and
  algorithms. {II}. {P}rogram implementation of the {Weisfeiler-Leman
  Algorithm}.
\newblock \url{https://arxiv.org/abs/1002.1921}.

\bibitem[BCN89]{BroCN89}
A.~E. Brouwer, A.~M. Cohen, and A.~Neumaier.
\newblock {\em Distance-regular graphs}, volume~18 of {\em Ergebnisse der
  Mathematik und ihrer Grenzgebiete (3) [Results in Mathematics and Related
  Areas (3)]}.
\newblock Springer-Verlag, Berlin, 1989.

\bibitem[Ber08]{Ber08}
Wolfgang Bertram.
\newblock Is there a {J}ordan geometry underlying quantum physics?
\newblock {\em Internat. J. Theoret. Phys.}, 47(10):2754--2782, 2008.

\bibitem[BGPS09]{BanGPS09}
Eiichi Bannai, Robert~L. Griess, Jr., Cheryl~E. Praeger, and Leonard Scott.
\newblock The mathematics of {D}onald {G}ordon {H}igman.
\newblock {\em Michigan Math. J.}, 58(1):3--30, 2009.

\bibitem[BH12]{BroH12}
Andries~E. Brouwer and Willem~H. Haemers.
\newblock {\em Spectra of graphs}.
\newblock Universitext. Springer, New York, 2012.

\bibitem[BI84]{BanI84}
Eiichi Bannai and Tatsuro Ito.
\newblock {\em Algebraic combinatorics. {I}}.
\newblock The Benjamin/Cummings Publishing Co., Inc., Menlo Park, CA, 1984.
\newblock Association schemes.

\bibitem[Big97a]{Big97b}
Norman Biggs.
\newblock Algebraic potential theory on graphs.
\newblock {\em Bull. London Math. Soc.}, 29(6):641--682, 1997.

\bibitem[Big97b]{Big97a}
Norman~L. Biggs.
\newblock Potential theory on distance-regular graphs.
\newblock In {\em Combinatorics, geometry and probability ({C}ambridge, 1993)},
  pages 107--119. Cambridge Univ. Press, Cambridge, 1997.

\bibitem[Bla09]{Bla09}
Harvey~I. Blau.
\newblock Table algebras.
\newblock {\em European J. Combin.}, 30(6):1426--1455, 2009.

\bibitem[BM59]{BosM59}
R.~C. Bose and Dale~M. Mesner.
\newblock On linear associative algebras corresponding to association schemes
  of partially balanced designs.
\newblock {\em Ann. Math. Statist.}, 30:21--38, 1959.

\bibitem[Bos63]{Bos63}
R.~C. Bose.
\newblock Strongly regular graphs, partial geometries and partially balanced
  designs.
\newblock {\em Pacific J. Math.}, 13:389--419, 1963.

\bibitem[Bro]{Bro}
Andries~E. Brouwer.
\newblock Home page.
\newblock \url{https://www.win.tue.nl/~aeb/}.

\bibitem[BS52]{BosS52}
R.~C. Bose and T.~Shimamoto.
\newblock Classification and analysis of partially balanced incomplete block
  designs with two associate classes.
\newblock {\em J. Amer. Statist. Assoc.}, 47:151--184, 1952.

\bibitem[BS86]{BaiS86}
R.~A. Bailey and T.~P. Speed.
\newblock Rectangular lattice designs: efficiency factors and analysis.
\newblock {\em Ann. Statist.}, 14(3):874--895, 1986.

\bibitem[BST04]{BanST04}
Eiichi Bannai, Osamu Shimabukuro, and Hajime Tanaka.
\newblock Finite analogues of non-{E}uclidean spaces and {R}amanujan graphs.
\newblock {\em European J. Combin.}, 25(2):243--259, 2004.

\bibitem[BST09]{BanT09}
Eiichi Bannai, Osamu Shimabukuro, and Hajime Tanaka.
\newblock Finite {E}uclidean graphs and {R}amanujan graphs.
\newblock {\em Discrete Math.}, 309(20):6126--6134, 2009.

\bibitem[Cam99]{Cam99}
Peter~J. Cameron.
\newblock {\em Permutation groups}, volume~45 of {\em London Mathematical
  Society Student Texts}.
\newblock Cambridge University Press, Cambridge, 1999.

\bibitem[Cam03]{Cam03}
Peter~J. Cameron.
\newblock Coherent configurations, association schemes and permutation groups.
\newblock In {\em Groups, combinatorics \& geometry ({D}urham, 2001)}, pages
  55--71. World Sci. Publ., River Edge, NJ, 2003.

\bibitem[Cam18]{Cam18}
P.J. Cameron.
\newblock Jordan schemes: relaxing associativity, keeping symmetry.
\newblock Linstat, Będlewo, Poland, August 2018.
\newblock
  \url{http://www-groups.mcs.st-andrews.ac.uk/~pjc/talks/linstat18/pjc_linstat1.pdf}.

\bibitem[CDS06]{CooDS06}
Kris Coolsaet, Jan Degraer, and Edward Spence.
\newblock The strongly regular {$(45,12,3,3)$} graphs.
\newblock {\em Electron. J. Combin.}, 13(1):Research Paper 32, 9, 2006.

\bibitem[CFK{\etalchar{+}}06]{CFKLT06}
Peter~G. Casazza, Matthew Fickus, Jelena Kova\v{c}evi\'{c}, Manuel~T. Leon, and
  Janet~C. Tremain.
\newblock A physical interpretation of tight frames.
\newblock In {\em Harmonic analysis and applications}, Appl. Numer. Harmon.
  Anal., pages 51--76. Birkh\"{a}user Boston, Boston, MA, 2006.

\bibitem[CGSZ16]{CouGSZ16}
G.~Coutinho, C.~Godsil, H.~Shirazi, and H.~Zhan.
\newblock Equiangular lines and covers of the complete graph.
\newblock {\em Linear Algebra Appl.}, 488:264--283, 2016.

\bibitem[Coh83]{Coh83}
Arjeh~M. Cohen.
\newblock Exceptional presentations of three generalized hexagons of order
  {$2$}.
\newblock {\em J. Combin. Theory Ser. A}, 35(1):79--88, 1983.

\bibitem[CP19]{CheP19}
Gang Chen and Ilia Ponomarenko.
\newblock Lectures on coherent configurations.
\newblock In press., 2019.

\bibitem[CS02]{CaMS02}
Peter~J. Cameron and Dudley Stark.
\newblock A prolific construction of strongly regular graphs with the
  {$n$}-e.c. property.
\newblock {\em Electron. J. Combin.}, 9(1):Research Paper 31, 12, 2002.

\bibitem[CS03]{ConS03}
John~H. Conway and Derek~A. Smith.
\newblock {\em On quaternions and octonions: their geometry, arithmetic, and
  symmetry}.
\newblock A K Peters, Ltd., Natick, MA, 2003.

\bibitem[CV04]{CarV04}
D.~M. Cardoso and L.~Vieira.
\newblock Euclidean {J}ordan algebras with strongly regular graphs.
\newblock {\em J. Math. Sci. (N.Y.)}, 120(1):881--894, 2004.
\newblock Aveiro Seminar on Control, Optimization, and Graph Theory.

\bibitem[CvL91]{CamvL91}
P.~J. Cameron and J.~H. van Lint.
\newblock {\em Designs, graphs, codes and their links}, volume~22 of {\em
  London Mathematical Society Student Texts}.
\newblock Cambridge University Press, Cambridge, 1991.

\bibitem[Deg07]{Deg07}
Jan Degraer.
\newblock {\em Isomorph-free exhaustive generation algorithms for association
  schemes}.
\newblock PhD thesis, Universiteit Gent, 2006-2007.

\bibitem[Dej12]{Dej12}
Italo~J. Dejter.
\newblock From the {C}oxeter graph to the {K}lein graph.
\newblock {\em J. Graph Theory}, 70(1):1--9, 2012.

\bibitem[Del73]{Del73}
P.~Delsarte.
\newblock An algebraic approach to the association schemes of coding theory.
\newblock Technical Report Suppl. 10, Philips Research Reports, 1973.

\bibitem[DLM07]{DeDLM07}
Michelle DeDeo, Dominic Lanphier, and Marvin Minei.
\newblock The spectrum of {P}latonic graphs over finite fields.
\newblock {\em Discrete Math.}, 307(9-10):1074--1081, 2007.

\bibitem[DM96]{DixM96}
John~D. Dixon and Brian Mortimer.
\newblock {\em Permutation groups}, volume 163 of {\em Graduate Texts in
  Mathematics}.
\newblock Springer-Verlag, New York, 1996.

\bibitem[DM10]{DraM10}
Tevian Dray and Corinne~A. Manogue.
\newblock Octonionic {C}ayley spinors and {$E_6$}.
\newblock {\em Comment. Math. Univ. Carolin.}, 51(2):193--207, 2010.

\bibitem[DM15]{Dram15}
Tevian Dray and Corinne~A. Manogue.
\newblock {\em The geometry of the octonions}.
\newblock World Scientific Publishing Co. Pte. Ltd., Hackensack, NJ, 2015.

\bibitem[Far16]{www:farrell}
Mark Farrell.
\newblock Examples of walk-regular graphs.
\newblock Blog post, December 2016.
\newblock
  \url{http://markfarrell.github.io/2016/12/17/examples-of-walk-regular-graphs.html}.

\bibitem[Fau14]{Fau14}
John~R. Faulkner.
\newblock {\em The role of nonassociative algebra in projective geometry},
  volume 159 of {\em Graduate Studies in Mathematics}.
\newblock American Mathematical Society, Providence, RI, 2014.

\bibitem[Fay97]{Fay97}
Leonid Faybusovich.
\newblock Euclidean {J}ordan algebras and interior-point algorithms.
\newblock {\em Positivity}, 1(4):331--357, 1997.

\bibitem[Fay15]{Fay15}
Leonid Faybusovich.
\newblock On {H}azan's algorithm for symmetric programming problems.
\newblock {\em J. Optim. Theory Appl.}, 164(3):915--932, 2015.

\bibitem[FDF02]{FDF02}
Dmitry~G. Fon-Der-Flaass.
\newblock New prolific constructions of strongly regular graphs.
\newblock {\em Adv. Geom.}, 2(3):301--306, 2002.

\bibitem[FJKM17]{FicJKM17}
Matthew Fickus, John Jasper, Emily~J. King, and Dustin~G. Mixon.
\newblock Equiangular tight frames that contain regular simplices, November
  2017.
\newblock \url{https://arxiv.org/abs/1711.07081}.

\bibitem[FJMP18]{FicJMP18}
Matthew Fickus, John Jasper, Dustin~G. Mixon, and Jesse Peterson.
\newblock Tremain equiangular tight frames.
\newblock {\em J. Combin. Theory Ser. A}, 153:54--66, 2018.

\bibitem[FK91]{FarK91}
Igor Faradzev and Mikhail Klin.
\newblock Computer package for computations with coherent configurations.
\newblock In S.M. Watt, editor, {\em Proc. ISSAC '91}, pages 219--223, 1991.

\bibitem[FK94]{FarK94}
Jacques Faraut and Adam Kor\'{a}nyi.
\newblock {\em Analysis on symmetric cones}.
\newblock Oxford Mathematical Monographs. The Clarendon Press, Oxford
  University Press, New York, 1994.
\newblock Oxford Science Publications.

\bibitem[FKM94]{FarKM94}
I.~A. Farad{\v{z}}ev, M.~H. Klin, and M.~E. Muzichuk.
\newblock Cellular rings and groups of automorphisms of graphs.
\newblock In {\em Investigations in algebraic theory of combinatorial objects},
  volume~84 of {\em Math. Appl. (Soviet Ser.)}, pages 1--152. Kluwer Acad.
  Publ., Dordrecht, 1994.

\bibitem[FMT12]{FicMT12}
Matthew Fickus, Dustin~G. Mixon, and Janet~C. Tremain.
\newblock Steiner equiangular tight frames.
\newblock {\em Linear Algebra Appl.}, 436(5):1014--1027, 2012.

\bibitem[Fos88]{Bow88}
Ronald~M. Foster.
\newblock {\em The {F}oster census}.
\newblock Charles Babbage Research Centre, Winnipeg, MB, 1988.
\newblock R. M. Foster's census of connected symmetric trivalent graphs, With a
  foreword by H. S. M. Coxeter, With a biographical preface by Seymour
  Schuster, With an introduction by I. Z. Bouwer, W. W. Chernoff, B. Monson and
  Z. Star, Edited and with a note by Bouwer.

\bibitem[GAP15]{GAP}
The GAP~Group.
\newblock {\em {GAP -- Groups, Algorithms, and Programming, Version 4.7.8}},
  2015.
\newblock \url{http://www.gap-system.org}.

\bibitem[GH92]{GodH92}
C.~D. Godsil and A.~D. Hensel.
\newblock Distance regular covers of the complete graph.
\newblock {\em J. Combin. Theory Ser. B}, 56(2):205--238, 1992.

\bibitem[GIK94]{GolIK94}
Ja.~Ju. Gol'fand, A.~V. Ivanov, and M.~H. Klin.
\newblock Amorphic cellular rings.
\newblock In I.~A. Farad\v{z}ev, A.~A. Ivanov, M.~H. Klin, and A.~J. Woldar,
  editors, {\em Investigations in algebraic theory of combinatorial objects},
  volume~84 of {\em Mathematics and its Applications (Soviet Series)}, pages
  167--186. Kluwer Academic Publishers Group, Dordrecht, 1994.

\bibitem[GK17]{GyuK17}
\v{S}tefan Gy\"{u}rki and Mikhail Klin.
\newblock Some new non-{S}churian association schemes on {$2p^2$} points, {$p$}
  an odd prime, and related combinatorial structures.
\newblock {\em Australas. J. Combin.}, 67:394--437, 2017.

\bibitem[GKZA19]{GyuKZA19}
\v{S}tefan Gy\"{u}rki, Mikhail Klin, and Matan Ziv-Av.
\newblock The {Paulus-Rozenfeld-Thompson} graph on 26 vertices revisited and
  related combinatorial structures.
\newblock In Jozef {\v{S}}ir{\'a}{\v{n}}, editor, {\em Isomorphisms, Symmetry
  and Computations in Algebraic Graph Theory}, PROMS. Springer, 2019.
\newblock In press.

\bibitem[GM80]{GodM80}
C.~D. Godsil and B.~D. McKay.
\newblock Feasibility conditions for the existence of walk-regular graphs.
\newblock {\em Linear Algebra Appl.}, 30:51--61, 1980.

\bibitem[GM87]{GibM87}
Peter~B. Gibbons and Rudolf Mathon.
\newblock Construction methods for {B}haskar {R}ao and related designs.
\newblock {\em J. Austral. Math. Soc. Ser. A}, 42(1):5--30, 1987.

\bibitem[God18]{God18}
Chris Godsil.
\newblock Association schemes.
\newblock \url{math.uwaterloo.ca/~cgodsil/assocs/pdfs/Assoc.pdf}, March 2018.

\bibitem[GR01]{GodR01}
Chris Godsil and Gordon Royle.
\newblock {\em Algebraic graph theory}, volume 207 of {\em Graduate Texts in
  Mathematics}.
\newblock Springer-Verlag, New York, 2001.

\bibitem[Gri82]{Gri82}
Robert~L. Griess, Jr.
\newblock The friendly giant.
\newblock {\em Invent. Math.}, 69(1):1--102, 1982.

\bibitem[Gri90]{Gri90}
Robert~L. Griess, Jr.
\newblock A {M}oufang loop, the exceptional {J}ordan algebra, and a cubic form
  in {$27$} variables.
\newblock {\em J. Algebra}, 131(1):281--293, 1990.

\bibitem[Gun05]{Gun05}
Paul~E. Gunnells.
\newblock Some elementary {R}amanujan graphs.
\newblock {\em Geom. Dedicata}, 112:51--63, 2005.

\bibitem[Gun18]{Gun18}
Paul~E. Gunnells.
\newblock Exotic matrix models: the {A}lbert algebra and the spin factor.
\newblock {\em Mosc. Math. J.}, 18(2):321--347, 2018.

\bibitem[Har49]{Har49}
Boyd Harshbarger.
\newblock Triple rectangular lattices.
\newblock {\em Biometrics}, 5:1--13, 1949.

\bibitem[Har69]{Har69}
Frank Harary.
\newblock {\em Graph theory}.
\newblock Addison-Wesley Publishing Co., Reading, Mass.-Menlo Park,
  Calif.-London, 1969.

\bibitem[Hig70]{Hig70}
D.~G. Higman.
\newblock Coherent configurations. {I}.
\newblock {\em Rend. Sem. Mat. Univ. Padova}, 44:1--25, 1970.

\bibitem[Hig75]{Hig75}
D.~G. Higman.
\newblock Coherent configurations. {I}. {O}rdinary representation theory.
\newblock {\em Geometriae Dedicata}, 4(1):1--32, 1975.

\bibitem[Hig87]{Hig87}
D.~G. Higman.
\newblock Coherent algebras.
\newblock {\em Linear Algebra Appl.}, 93:209--239, 1987.

\bibitem[Hig90]{Hig90}
D.~G. Higman.
\newblock Computations related to coherent configurations.
\newblock In {\em Proceedings of the Nineteenth Manitoba Conference on
  Numerical Mathematics and Computing (Winnipeg, MB, 1989)}, volume~75, pages
  9--20, 1990.

\bibitem[Hir98]{Hir98}
J.~W.~P. Hirschfeld.
\newblock {\em Projective geometries over finite fields}.
\newblock Oxford Mathematical Monographs. The Clarendon Press, Oxford
  University Press, New York, second edition, 1998.

\bibitem[HMX18]{HerM17}
Allen Herman, Mikhael Muzychuk, and Bangteng Xu.
\newblock Noncommutative reality-based algebras of rank 6.
\newblock {\em Comm. Algebra}, 46(1):90--113, 2018.

\bibitem[Hof63]{Hof63}
A.~J. Hoffman.
\newblock On the polynomial of a graph.
\newblock {\em Amer. Math. Monthly}, 70:30--36, 1963.

\bibitem[HP85]{HugP85}
D.~R. Hughes and F.~C. Piper.
\newblock {\em Design theory}.
\newblock Cambridge University Press, Cambridge, 1985.

\bibitem[HS93]{HolS93}
D.~A. Holton and J.~Sheehan.
\newblock {\em The {P}etersen graph}, volume~7 of {\em Australian Mathematical
  Society Lecture Series}.
\newblock Cambridge University Press, Cambridge, 1993.

\bibitem[HS95]{HaeS95}
W.~H. Haemers and E.~Spence.
\newblock Graphs cospectral with distance-regular graphs.
\newblock {\em Linear and Multilinear Algebra}, 39(1-2):91--107, 1995.

\bibitem[HT96]{HaeT96}
Willem~H. Haemers and Vladimir~D. Tonchev.
\newblock Spreads in strongly regular graphs.
\newblock {\em Des. Codes Cryptogr.}, 8(1-2):145--157, 1996.
\newblock Special issue dedicated to Hanfried Lenz.

\bibitem[Ior09]{Ior09}
Radu Iord\u{a}nescu.
\newblock {\em Jordan structures in analysis, geometry and physics}.
\newblock Editura Academiei Rom\^{a}ne, Bucharest, 2009.

\bibitem[Jac68]{Jac68}
Nathan Jacobson.
\newblock {\em Structure and representations of {J}ordan algebras}.
\newblock American Mathematical Society Colloquium Publications, Vol. XXXIX.
  American Mathematical Society, Providence, R.I., 1968.

\bibitem[JL93]{JamL93}
Gordon James and Martin Liebeck.
\newblock {\em Representations and characters of groups}.
\newblock Cambridge Mathematical Textbooks. Cambridge University Press,
  Cambridge, 1993.

\bibitem[Jur95]{Jur95}
Aleksandar Juri{\v s}i{\' c}.
\newblock {\em Antipodal covers}.
\newblock ProQuest LLC, Ann Arbor, MI, 1995.
\newblock Thesis (Ph.D.)--University of Waterloo (Canada).

\bibitem[JvNW34]{JorNW34}
P.~Jordan, J.~von Neumann, and E.~Wigner.
\newblock On an algebraic generalization of the quantum mechanical formalism.
\newblock {\em Ann. of Math. (2)}, 35(1):29--64, 1934.

\bibitem[KG15]{KliG15}
Mikhail Klin and \v{S}tefan Gy\"{u}rki.
\newblock {\em Selected topics from algebraic graph theory}.
\newblock Belianum, Vydavatel'stvo Univerzity Mateja Bela, Bansk\'{a} Bystrica,
  2015.
\newblock Lecture notes.

\bibitem[KKW16]{KliKW16}
Mikhail Klin, Nimrod Kriger, and Andrew Woldar.
\newblock On the existence of self-complementary and non-self-complementary
  strongly regular graphs with {P}aley parameters.
\newblock {\em J. Geom.}, 107(2):329--356, 2016.

\bibitem[Kle78]{Kle879}
Felix Klein.
\newblock Ueber die {T}ransformation siebenter {O}rdnung der elliptischen
  {F}unctionen.
\newblock {\em Math. Ann.}, 14(3):428--471, 1878.

\bibitem[KM19]{KagM19}
Mikhail Kagan and Brian Mata.
\newblock A physics perspective on the resistance distance for graphs.
\newblock {\em Math. Comput. Sci.}, 13(1-2):105--115, 2019.

\bibitem[KMP{\etalchar{+}}07]{KliMPWZ07}
M.~Klin, M.~Muzychuk, C.~Pech, A.~Woldar, and P.-H. Zieschang.
\newblock Association schemes on 28 points as mergings of a half-homogeneous
  coherent configuration.
\newblock {\em European J. Combin.}, 28(7):1994--2025, 2007.

\bibitem[KMRa]{KliMR}
M.~Klin, M.~Muzychuk, and S~Reichard.
\newblock Proper {J}ordan schemes exist. {A} formal approach.
\newblock Paper in preparation.

\bibitem[KMR{\etalchar{+}}b]{KliMRRW}
Mikhail Klin, Martin Ma\v{c}aj, Sven Reichard, Alex Rosa, and Andrew Woldar.
\newblock Siamese combinatorial objects. {II}. {C}omputer free enumeration of
  objects of order 15.
\newblock In progress.

\bibitem[KMZA09]{KliMZA09}
Mikhail Klin, Mikhail Muzychuk, and Matan Ziv-Av.
\newblock Higmanian rank-5 association schemes on 40 points.
\newblock {\em Michigan Math. J.}, 58(1):255--284, 2009.

\bibitem[Koe99]{Koe99}
Max Koecher.
\newblock {\em The {M}innesota notes on {J}ordan algebras and their
  applications}, volume 1710 of {\em Lecture Notes in Mathematics}.
\newblock Springer-Verlag, Berlin, 1999.
\newblock Edited, annotated and with a preface by Aloys Krieg and Sebastian
  Walcher.

\bibitem[Kon12]{DFDF12}
E.V. Konstantinova, editor.
\newblock {\em Dima Fon-Der-Flaass}.
\newblock Academic Publishing House "GEO", Novosibirsk, 2012.

\bibitem[KP11]{KliP11}
Mikhail Klin and Christian Pech.
\newblock A new construction of antipodal distance regular covers of complete
  graphs through the use of {G}odsil-{H}ensel matrices.
\newblock {\em Ars Math. Contemp.}, 4(2):205--243, 2011.

\bibitem[KPR88]{KliPosRos88b}
M.~Ch. Klin, R.~P{\"o}schel, and K.~Rosenbaum.
\newblock {\em Angewandte {A}lgebra f\"ur {M}athematiker und {I}nformatiker}.
\newblock Friedr. Vieweg \& Sohn, Braunschweig, 1988.
\newblock Einf\"uhrung in gruppentheoretisch-kombinatorische Methoden.
  [Introduction to group-theoretical combinatorial methods].

\bibitem[KPR{\etalchar{+}}10]{KliPRWZA10}
Mikhail Klin, Christian Pech, Sven Reichard, Andrew Woldar, and Matan Ziv-Av.
\newblock Examples of computer experimentation in algebraic combinatorics.
\newblock {\em Ars Math. Contemp.}, 3(2):237--258, 2010.

\bibitem[KPV09]{KinPV09}
Michael~K. Kinyon, Kyle Pula, and Petr Vojt\v{e}chovsk\'{y}.
\newblock Admissible orders of {J}ordan loops.
\newblock {\em J. Combin. Des.}, 17(2):103--118, 2009.

\bibitem[KR93]{KleR93}
D.~J. Klein and M.~Randi\'{c}.
\newblock Resistance distance.
\newblock {\em J. Math. Chem.}, 12(1-4):81--95, 1993.
\newblock Applied graph theory and discrete mathematics in chemistry
  (Saskatoon, SK, 1991).

\bibitem[KR12]{KliR12}
M.~Klin and S.~Reichard.
\newblock Non-schurian three-class association schemes, related to a
  construction by {R}.{M}athon.
\newblock Unpublished draft, September 2012.

\bibitem[KRRT99]{KliRRT99}
Mikhail Klin, Christoph R\"{u}cker, Gerta R\"{u}cker, and Gottfried Tinhofer.
\newblock Algebraic combinatorics in mathematical chemistry. {M}ethods and
  algorithms. {I}. {P}ermutation groups and coherent (cellular) algebras.
\newblock {\em Match}, 40:7--138, 1999.

\bibitem[Kru07]{Kru07}
Sergei Krutelevich.
\newblock Jordan algebras, exceptional groups, and {B}hargava composition.
\newblock {\em J. Algebra}, 314(2):924--977, 2007.

\bibitem[KRW]{KliRW}
Mikhail Klin, Sven Reichard, and Andrew Woldar.
\newblock Siamese combinatorial objects. {I}. {B}asic concepts.
\newblock Non-finished draft.

\bibitem[KRW05]{KliRW05}
Mikhail Klin, Sven Reichard, and Andrew Woldar.
\newblock Siamese objects, and their relation to color graphs, association
  schemes and {S}teiner designs.
\newblock {\em Bull. Belg. Math. Soc. Simon Stevin}, 12(5):845--857, 2005.

\bibitem[KRW09]{KliRW09}
Mikhail Klin, Sven Reichard, and Andrew Woldar.
\newblock Siamese combinatorial objects via computer algebra experimentation.
\newblock In {\em Algorithmic algebraic combinatorics and {G}r\"{o}bner bases},
  pages 67--112. Springer, Berlin, 2009.

\bibitem[KS18]{KhaS18}
Hadi Kharaghani and Sho Suda.
\newblock Non-commutative association schemes and their fusion association
  schemes.
\newblock {\em Finite Fields Appl.}, 52:108--125, 2018.

\bibitem[KT03]{KhaT03}
H~Kharaghani and R~Torabi.
\newblock On a decomposition of complete graphs.
\newblock {\em Graphs and Combinatorics}, 19(4):519--526, November 2003.

\bibitem[KZA13]{KliZA13}
Mikhail Klin and Matan Ziv-Av.
\newblock Enumeration of {S}chur rings over the group {$A_5$}.
\newblock In {\em Computer algebra in scientific computing}, volume 8136 of
  {\em Lecture Notes in Comput. Sci.}, pages 219--230. Springer, Cham, 2013.

\bibitem[KZA17]{KliZA17}
Mikhail Klin and Matan Ziv-Av.
\newblock A non-{S}churian coherent configuration on 14 points exists.
\newblock {\em Des. Codes Cryptogr.}, 84(1-2):203--221, 2017.

\bibitem[Lev99]{Lev99}
Silvio Levy, editor.
\newblock {\em {\it {T}he eightfold way}}, volume~35 of {\em Mathematical
  Sciences Research Institute Publications}.
\newblock Cambridge University Press, Cambridge, 1999.
\newblock The beauty of Klein's quartic curve.

\bibitem[LHS17]{LevHS17}
P\'{e}ter L\'{e}vay, Fr\'{e}d\'{e}ric Holweck, and Metod Saniga.
\newblock Magic three-qubit {V}eldkamp line: a finite geometric underpinning
  for form theories of gravity and black hole entropy.
\newblock {\em Phys. Rev. D}, 96(2):026018, 36, 2017.

\bibitem[LM05]{LiM05}
Wen-Ching~Winnie Li and Yotsanan Meemark.
\newblock Ramanujan graphs on cosets of {${\rm PGL}_2({\Bbb F}_q)$}.
\newblock {\em Finite Fields Appl.}, 11(3):511--543, 2005.

\bibitem[Ma{\v c}16a]{Mac16b}
Martin Ma{\v c}aj.
\newblock On siamese asssociation schemes.
\newblock Presentation at the workshop on AGT, Pilsen, 2016.
\newblock \url{https://www.savbb.sk/conf/agt16/slides/agt16_mmacaj.pdf}.

\bibitem[Ma{\v c}16b]{Mac16a}
Martin Ma{\v c}aj.
\newblock Siamese association schemes.
\newblock Unpublished draft, September 23, 2016.

\bibitem[Mal86]{Mal86}
James~D. Malley.
\newblock {\em Optimal unbiased estimation of variance components}, volume~39
  of {\em Lecture Notes in Statistics}.
\newblock Springer-Verlag, Berlin, 1986.

\bibitem[Mat75]{Mat75}
Rudolf Mathon.
\newblock {$3$}-class association schemes.
\newblock In {\em Proceedings of the {C}onference on {A}lgebraic {A}spects of
  {C}ombinatorics ({U}niv. {T}oronto, {T}oronto, {O}nt., 1975)}, pages
  123--155. Congressus Numerantium, No. XIII, 1975.

\bibitem[Mat87]{Mat87}
Rudolf Mathon.
\newblock Lower bounds for {R}amsey numbers and association schemes.
\newblock {\em J. Combin. Theory Ser. B}, 42(1):122--127, 1987.

\bibitem[McC78]{McC78}
Kevin McCrimmon.
\newblock Jordan algebras and their applications.
\newblock {\em Bull. Amer. Math. Soc.}, 84(4):612--627, 1978.

\bibitem[McC84]{McC84}
Kevin McCrimmon.
\newblock The {R}ussian revolution in {J}ordan algebras.
\newblock {\em Algebras Groups Geom.}, 1(1):1--61, 1984.

\bibitem[McC04]{McC04}
Kevin McCrimmon.
\newblock {\em A taste of {J}ordan algebras}.
\newblock Universitext. Springer-Verlag, New York, 2004.

\bibitem[McK]{nauty}
B.D. McKay.
\newblock The {nauty} package.
\newblock {\tt http://cs.anu.edu.au/people/bdm/nauty/}.

\bibitem[McM91]{McM91}
Peter McMullen.
\newblock Regular polyhedra related to projective linear groups.
\newblock {\em Discrete Math.}, 91(2):161--170, 1991.

\bibitem[MH]{HanM}
Izumi Miyamoto and Akihide Hanaki.
\newblock Classification of association schemes with small vertices.
\newblock \url{http://math.shinshu-u.ac.jp/~hanaki/as/}.

\bibitem[MMdAVAM13]{ManVM13}
Vasco Mo\c{c}o~Mano, Lu\'{\i}s~Ant\'{o}nio de~Almeida~Vieira, and Enide
  Andrade~Martins.
\newblock On generalized binomial series and strongly regular graphs.
\newblock {\em Proyecciones}, 32(4):393--408, 2013.

\bibitem[MMV14]{ManMV14}
Vasco~Mo\c{c}o Mano, Enide~Andrade Martins, and Lu\'{\i}s~Almeida Vieira.
\newblock Inequalities on the parameters of a strongly regular graph.
\newblock In {\em Modeling, dynamics, optimization and bioeconomics. {I}},
  volume~73 of {\em Springer Proc. Math. Stat.}, pages 659--671. Springer,
  Cham, 2014.

\bibitem[MPR83a]{MatPR83b}
R.~A. Mathon, K.~T. Phelps, and A.~Rosa.
\newblock Errata: ``{S}mall {S}teiner triple systems and their properties''.
\newblock {\em Ars Combin.}, 16:286, 1983.

\bibitem[MPR83b]{MatPR83}
R.~A. Mathon, K.~T. Phelps, and A.~Rosa.
\newblock Small {S}teiner triple systems and their properties.
\newblock {\em Ars Combin.}, 15:3--110, 1983.

\bibitem[Muz87]{Muz87}
M.E. Muzychuk.
\newblock {\em V-Rings of Permutation Groups with Invariant Metric}.
\newblock PhD thesis, Kiev State University, 1987.

\bibitem[Muz07]{Muz07}
Mikhail Muzychuk.
\newblock A generalization of {W}allis-{F}on-{D}er-{F}laass construction of
  strongly regular graphs.
\newblock {\em J. Algebraic Combin.}, 25(2):169--187, 2007.

\bibitem[MV14]{ManV07}
Vasco~Mo\c{c}o Mano and Lu\'{i}s~Almeida Vieira.
\newblock Addressing the feasibility of the parameters of strongly regular
  graphs with a {M}ac{L}aurin series approach.
\newblock In Panos M.~Pardalos et~al., editor, {\em Recent Advances in
  Mathematics, Statistics and Economics}, pages 136--141, Venice, March 2014.
\newblock
  \url{http://www.inase.org/library/2014/venice/bypaper/FIMATH/FIMATH-21.pdf}.

\bibitem[Ned01]{Ned01}
Roman Nedela.
\newblock Regular maps---combinatorial objects relating different fields of
  mathematics.
\newblock {\em J. Korean Math. Soc.}, 38(5):1069--1105, 2001.
\newblock Mathematics in the new millennium (Seoul, 2000).

\bibitem[Nev15]{Nev15}
Eran Nevo.
\newblock Polyhedrons and {PBIBD}s from hyperbolic manifolds.
\newblock {\em Geom. Dedicata}, 178:103--110, 2015.

\bibitem[Nik07]{Nik07}
Timothy Nikkel.
\newblock Ramanujan graphs.
\newblock Master's thesis, University of Manitoba, Manitoba, Winnipeg, 2007.
\newblock \url{https://mspace.lib.umanitoba.ca/bitstream/handle/thesis}.

\bibitem[Pfe97]{Pfe97}
G\"{o}tz Pfeiffer.
\newblock The subgroups of {$M_{24}$}, or how to compute the table of marks of
  a finite group.
\newblock {\em Experiment. Math.}, 6(3):247--270, 1997.

\bibitem[Pur14]{Pur19}
Helen~C. Purchase.
\newblock A healthy critical attitude: revisiting the results of a graph
  drawing study.
\newblock {\em J. Graph Algorithms Appl.}, 18(2):281--311, 2014.

\bibitem[Rah16]{Rah16}
Fahimeh Rahimi.
\newblock Covering graphs and equiangular tight frames.
\newblock Master's thesis, University of Waterloo, Ontario, Canada, 2016.

\bibitem[Rei03]{Rei03}
Sven Reichard.
\newblock {\em Computational and theoretical analysis of coherent
  configurations and related incidence structures}.
\newblock ProQuest LLC, Ann Arbor, MI, 2003.
\newblock Thesis (Ph.D.)--University of Delaware.

\bibitem[Rei14]{Rei14}
Sven Reichard.
\newblock Three-class association schemes based on a construction by {M}athon.
\newblock Slides, 2014.
\newblock
  \url{https://www.villanova.edu/content/villanova/artsci/mathematics/newsevents/mtagt/slides-of-all-talks/_jcr_content/pagecontent/download_43/file.res/8-SvenReichard.pdf}.

\bibitem[Rei19a]{Rei19b}
S.~Reichard.
\newblock From non-commutative association schemes of rank 6 to proper {J}ordan
  schemes.
\newblock In {\em 9th Slovenian International Conference on Graph Theory},
  Bled, June 2019.
\newblock \url{https://conferences.matheo.si/event/28/}.

\bibitem[Rei19b]{Rei19a}
S.~Reichard.
\newblock Tatra schemes and their mergings.
\newblock In Jozef {\v{S}}ir{\'a}{\v{n}}, editor, {\em Isomorphisms, Symmetry
  and Computations in Algebraic Graph Theory}, PROMS. Springer, 2019.
\newblock In press.

\bibitem[Sch33]{Sch33}
Issai Schur.
\newblock Zur theorie der einfach transitive {P}ermutationsgruppen.
\newblock {\em Sitzungsbericht der Preussischen Akademie der Wissenschaften,
  Mathematisch-physikalische Klasse}, pages 598--623, 1933.

\bibitem[See71]{See71}
Justus Seely.
\newblock Quadratic subspaces and completeness.
\newblock {\em Ann. Math. Statist.}, 42:710--721, 1971.

\bibitem[See72]{See72}
Justus Seely.
\newblock Completeness for a family of multivariate normal distributions.
\newblock {\em Ann. Math. Statist.}, 43:1644--1647, 1972.

\bibitem[Sei68]{Sei68}
J.~J. Seidel.
\newblock Strongly regular graphs with {$(-1,\,1,\,0)$} adjacency matrix having
  eigenvalue {$3$}.
\newblock {\em Linear Algebra and Appl.}, 1:281--298, 1968.

\bibitem[Sha59]{Sha59}
B.~V. Shah.
\newblock A generalisation of partially balanced incomplete block designs.
\newblock {\em Ann. Math. Statist.}, 30:1041--1050, 1959.

\bibitem[Sim70]{Sim70}
Charles~C. Sims.
\newblock Computational methods in the study of permutation groups.
\newblock In {\em Computational {P}roblems in {A}bstract {A}lgebra ({P}roc.
  {C}onf., {O}xford, 1967)}, pages 169--183. Pergamon, Oxford, 1970.

\bibitem[Soi]{GRAPE}
L.H. Soicher.
\newblock The {\sf grape} package for \gap.
\newblock Website.
\newblock \\\verb|http://www.maths.qmul.ac.uk/~/leonard/grape/|.

\bibitem[Spe]{Spe}
Ted Spence.
\newblock Home page.
\newblock \url{http://www.maths.gla.ac.uk/~es/}.

\bibitem[Sta18]{Sta18}
Irina Starikova.
\newblock Aesthetic preferences in mathematics: a case study.
\newblock {\em Philos. Math. (3)}, 26(2):161--183, 2018.

\bibitem[SW85]{SchW85}
E.~Schulte and J.~M. Wills.
\newblock A polyhedral realization of {F}elix {K}lein's map {$\{3,7\}_8$} on a
  {R}iemann surface of genus {$3$}.
\newblock {\em J. London Math. Soc. (2)}, 32(3):539--547, 1985.

\bibitem[TD18]{TodD18}
Ivan Todorov and Svetla Drenska.
\newblock Octonions, exceptional {J}ordan algebra and the role of the group
  {$F_4$} in particle physics.
\newblock {\em Adv. Appl. Clifford Algebr.}, 28(4):Art. 82, 36, 2018.

\bibitem[vD03]{vDam03}
Edwin~R. van Dam.
\newblock Strongly regular decompositions of the complete graph.
\newblock {\em J. Algebraic Combin.}, 17(2):181--201, 2003.

\bibitem[vDHKS06]{vDamHKS06}
E.~R. van Dam, W.~H. Haemers, J.~H. Koolen, and E.~Spence.
\newblock Characterizing distance-regularity of graphs by the spectrum.
\newblock {\em J. Combin. Theory Ser. A}, 113(8):1805--1820, 2006.

\bibitem[vDJ18]{vDamJ19}
Edwin~R. van Dam and Mojtaba {Jazaeri}.
\newblock {Distance-regular Cayley graphs with small valency}.
\newblock {\em arXiv e-prints}, (arXiv:1808.01428), Aug 2018.

\bibitem[vDM10]{vDamM10}
E.~R. van Dam and M.~Muzychuk.
\newblock Some implications on amorphic association schemes.
\newblock {\em J. Combin. Theory Ser. A}, 117(2):111--127, 2010.

\bibitem[Vie11]{Vie11}
Lu\'{\i}s Vieira.
\newblock Euclidean {J}ordan algebras and strongly regular graphs.
\newblock In {\em Dynamics, games and science. {II}}, volume~2 of {\em Springer
  Proc. Math.}, pages 689--701. Springer, Heidelberg, 2011.

\bibitem[Vie19]{Vie19}
Lu\'{\i}s Ant\'{o}nio de~Almeida Vieira.
\newblock Euclidean {J}ordan algebras, strongly regular graphs and {C}auchy
  {S}chwarz inequalities.
\newblock {\em Appl. Math. Inf. Sci.}, 13(3):437--444, 2019.

\bibitem[Vis07]{Vis07}
Timothy~L. Vis.
\newblock The existence and uniqueness of a simple group of order 168,
  September 2007.
\newblock \url{http://www-math.ucdenver.edu/~tvis/Coursework/Fano.pdf}.

\bibitem[VM15]{VieM15}
Lu\'{i}s~Almeida Vieira and Vasco~Mo\c{c}o Mano.
\newblock Generalized {K}rein parameters of a strongly regular graph.
\newblock {\em Applied Mathematics}, 6:37--45, 2015.
\newblock \url{http://dx.doi.org/10.4236/am.2015.61005}.

\bibitem[vW14]{Wij14}
Jarke~J. van Wijk.
\newblock Visualization of regular maps: The chase continues.
\newblock {\em IEEE Transactions on Visualization and Computer Graphics},
  20(12):2614 -- 2623, December 2014.
\newblock DOI: 10.1109/TVCG.2014.2352952.

\bibitem[Wal71]{Wal71}
W.~D. Wallis.
\newblock Construction of strongly regular graphs using affine designs.
\newblock {\em Bull. Austral. Math. Soc.}, 4:41--49, 1971.

\bibitem[Wal01]{Wal01}
Douglas~N. Walton.
\newblock Abductive, presumptive and plausible arguments.
\newblock {\em Informal Logic}, 21(2):141--169, 2001.

\bibitem[Wal18]{Wal18}
Shayne F.~D. Waldron.
\newblock {\em An introduction to finite tight frames}.
\newblock Applied and Numerical Harmonic Analysis. Birkh\"{a}user/Springer, New
  York, 2018.

\bibitem[Wei76]{Wei76}
Boris Weisfeiler, editor.
\newblock {\em On construction and identification of graphs}.
\newblock Lecture Notes in Mathematics, Vol. 558. Springer-Verlag, Berlin-New
  York, 1976.
\newblock With contributions by A. Lehman, G. M. Adelson-Velsky, V. Arlazarov,
  I. Faragev, A. Uskov, I. Zuev, M. Rosenfeld and B. Weisfeiler.

\bibitem[Wei18]{WeiL18}
Symmetry vs {R}egularity. {C}onference in {Algebraic Graph Theory}, July 2018.
\newblock \url{https://www.iti.zcu.cz/wl2018/}.

\bibitem[Wie64]{Wie64}
Helmut Wielandt.
\newblock {\em Finite permutation groups}.
\newblock Translated from the German by R. Bercov. Academic Press, New
  York-London, 1964.

\bibitem[WL68]{WeiL68}
B.~Weisfeiler and A.A. Leman.
\newblock A reduction of a graph to a canonical form and an algebra arising
  during this reduction (in {R}ussian).
\newblock {\em Nauchno-Technicheskaya Informatsia Seriya 2}, 9:12--16, 1968.

\bibitem[Wol18]{Wol18}
Jessica Wolz.
\newblock Engineering linear layouts with sat.
\newblock Master's thesis, Universit{\"a}t T{\"u}bingen, 2018.

\bibitem[WQH19]{WanQH19}
Wei Wang, Lihong Qiu, and Yulin Hu.
\newblock Cospectral graphs, {GM}-switching and regular rational orthogonal
  matrices of level {$p$}.
\newblock {\em Linear Algebra Appl.}, 563:154--177, 2019.

\bibitem[ZA18]{ZA18}
Matan Ziv-Av.
\newblock Enumeration of coherent configurations of order at most 15.
\newblock {\em Acta Univ. M. Belii Ser. Math.}, 26:65--75, 2018.

\bibitem[Zel79]{Zel79}
E.~I. Zel'manov.
\newblock Jordan division algebras.
\newblock {\em Algebra i Logika}, 18(3):286--310, 385, 1979.

\bibitem[Zie05]{Zie05}
Paul-Hermann Zieschang.
\newblock {\em Theory of association schemes}.
\newblock Springer Monographs in Mathematics. Springer-Verlag, Berlin, 2005.

\end{thebibliography}
\end{document}